\newtheorem{thm}{Theorem}[section]
\newtheorem{prop}[thm]{Proposition}
\newtheorem{cor}[thm]{Corollary}
\newtheorem{lem}[thm]{Lemma}
\newtheorem{defn}[thm]{Definition}
\newtheorem{rem}[thm]{Remark}
\newtheorem{example}[thm]{Example}
\newtheorem{pb}[thm]{Problem}
\newtheorem{conj}[thm]{Conjecture}
\newenvironment{rmk}{\begin{rem}\rm}{\end{rem}}
\numberwithin{equation}{section}
\newcommand{\supp}{{\rm supp\,}}
\newcommand{\bmo}{{\rm bmo}}
\newcommand{\BMO}{{\rm BMO}}
\newcommand{\h}{{\rm h}}
\newcommand{\M}{\mathcal M}
\newcommand{\N}{\mathcal N}
\newcommand{\D}{\mathcal D}
\newcommand{\R}{\mathbb{R}^d}
\newcommand{\e}{\varepsilon}
\newcommand{\Z}{\mathbb{Z}^d}
\newcommand{\fk}{\mathsf{k} }
\newcommand{\fE}{\mathbf{E} }
\newcommand{\fF}{\mathbf{F} }
\begin{document}

\title{Operator-valued local Hardy spaces}

\author{Runlian  XIA}

\address{Laboratoire de Math{\'e}matiques, Universit{\'e} de Franche-Comt{\'e},
25030 Besan\c{c}on Cedex, France, and Instituto de Ciencias Matem{\'a}ticas, 28049 Madrid, Spain}
\email{runlian91@gmail.com}

\thanks{{\it 2000 Mathematics Subject Classification:} Primary: 46L52, 42B30. Secondary: 46L07, 47L65}

\thanks{{\it Key words:} Noncommutative $L_p$-spaces, operator-valued Hardy spaces, operator-valued $\bmo$ spaces, duality, interpolation, Calder\'on-Zygmund theory, characterization, atomic decomposition}

\author{Xiao XIONG}

\address{Department of Mathematics and Statistics, University of Saskatchewan, Saskatoon, Saskatchewan, S7N 5E6, Canada}
\email{ufcxx56@gmail.com}

\maketitle

\markboth{R. Xia and X. Xiong}%
{Operator-valued local Hardy spaces}

\begin{abstract}
This paper gives a systematic study of operator-valued local Hardy spaces. These spaces are localizations of the Hardy spaces defined by Tao Mei, and share many properties with Mei's Hardy spaces. We prove the $\h_1$-$\bmo$ duality, as well as the $\h_p$-$\h_q$ duality for any conjugate pair $(p,q)$ when $1<p< \infty$. We show that $\h_1(\R, \M)$ and $\bmo(\R, \M)$ are also good endpoints of $L_p(L_\infty(\R) \overline{\otimes} \M)$ for interpolation. We obtain the local version of Calder\'on-Zygmund theory, and then deduce that the Poisson kernel in our definition of the local Hardy norms can be replaced by any reasonable test function. Finally, we establish the atomic decomposition of the local Hardy space $\h_1^c(\R,\M)$.
\end{abstract}

\tableofcontents

 \setcounter{section}{-1}

\section{Introduction and Preliminaries}

This paper is devoted to the study of operator-valued local Hardy spaces. It follows the current line of investigation of noncommutative harmonic analysis.  This field arose from the noncommutative integration theory developed by Murray and von Neumann, in order to provide a mathematical foundation for quantum mechanics. The objective was to construct and study a linear functional on an operator algebra which plays the role of the classical integral. In \cite{PiXu97}, Pisier and Xu developed a pioneering work on noncommutative martingale theory; since then, many classical results have been successfully transferred to the noncommutative setting, see for instance, \cite{Junge, Junge-Musat, Junge-Xu-03, Junge-Xu-08, Ran2002, PR2006, Ran2007}.

Inspired by the above mentioned developments and the Littlewood-Paley-Stein theory of quantum Markov semigroups (cf. \cite{JLX2006, JM2011, JM2010}), Mei \cite{Mei2007}  studied operator-valued Hardy spaces, which are defined by the Littlewood-Paley $g$-function and Lusin area integral function associated to the Poisson kernel.
 These spaces are shown to be very useful for many aspects of noncommutative harmonic analysis.
In \cite{XXX17}, we obtain general characterizations of Mei's Hardy spaces, which state that the Poisson kernel can be replaced by any reasonable test function. This is done mainly by using the operator-valued Calder\'on-Zygmund theory.

\medskip

In the classical setting, the theory of Hardy spaces is one of the most important topics in harmoic analysis. The local Hardy spaces $\h_p(\R)$ were first introduced by Goldberg  \cite{Goldberg1979}. These spaces are viewed as local or inhomogeneous counterparts of the classical real Hardy spaces $\mathcal{H}_p(\mathbb{R}^d)$.  Goldberg's motivation of introducing these local spaces was the study of pseudo-differential operators. It is known that pseudo-differential operators are not necessarily bounded on the classical Hardy space $\mathcal{H}_1(\mathbb{R}^d)$, but bounded on $\h_1(\R)$ under some appropriate assumptions. Afterwards, many other inhomogeneous spaces have also been studied. Our references for the classical theory are \cite{Goldberg1979, Torres1991, Fr-To-Wei-88}. However, they have not been investigated so far in the operator-valued case.

\medskip

Motivated by \cite{XXY2015, XXX17, Mei2007}, we provide a localization of Mei's operator-valued Hardy spaces on $\mathbb{R}^d$ in this paper. The norms of these spaces are partly given by the truncated versions of the Littlewood-Paley $g$-function and Lusin area integral function. Some techniques that we use to deal with our local Hardy spaces are modelled after those of \cite{XXX17}; however, some highly non-trivial modifications are needed. Since with the truncation, we only know the $L_p$-norms of the Poisson integrals of functions on the strip $\R\times (0,1)$, and lose information when the time is large. This brings some substantial difficulties that the non-local case does not have,  for example,  the duality problem. 
 Moreover, the noncommutative maximal function method is still unavailable in this setting, while in the classical case it is efficiently and frequently employed. However, based on tools developed recently, for instance, in \cite{PiXu97, Junge, Junge-Xu-03, Ran2002, Ran2007, JLX2006, Mei2007, Mei-tent}, we can overcome these difficulties.

\medskip

 Let us present here the four main results of this paper. The first family of results concerns the operator-valued local Hardy spaces $\h_{p}^{c}(\mathbb{R}^d,\mathcal{M})$ and $\bmo^c(\R,\M)$. The first major result of this part is the $\h_p^c$-$\bmo_q^c$ duality for $1\leq p<2$, where $q$ denotes the conjugate index of $p$. In particular, when $p=1$, we obtain the operator-valued local analogue of the classical Fefferman-Stein theorem. The pattern of the proof of this theorem is similar to that of Mei's non-local case. We also show that $\h_q^c(\R,\M)=\bmo_q^c(\R,\M)$ for $2<q<\infty$ like in the martingale and non-local settings. Thus the dual of $\h_p^c(\R,\M)$ agrees with $\h_q^c(\R,\M)$ when $1<p\leq 2$.
 
The second major result shows that the local Hardy spaces behave well with both complex and real interpolations. In particular, we have 
 \[
 \big(\bmo^c(\R,\M),\h_{1}^{c}(\mathbb{R}^d,\mathcal{M}) \big)_\frac{1}{p}=\h_{p}^{c}(\mathbb{R}^d,\mathcal{M})
 \]
 for $1<p<\infty$. We reduce this interpolation problem to the corresponding one on the non-local Hardy spaces in order to use Mei's interpolation result in \cite{Mei2007}. This proof is quite simple.
 
The third major result concerns the Calder\'on-Zygmund  theory. The usual $\M$-valued Calder\'on-Zygmund operators which satisfy the H\"{o}rmander condition are in general not bounded on inhomogeneous spaces. Thus, in order to guarantee the boundedness of a Calder\'on-Zygmund operator on $\h_{p}^{c}(\mathbb{R}^d,\mathcal{M})$, we need to impose an extra decay at infinity to the kernel. 

The Calder\'on-Zygmund  theory mentioned above will be applied to the general characterization of $\h_{p}^{c}(\R,\M)$ with the Poisson kernel replaced by any reasonable test function. This characterization will play an important role in our recent study of (inhomogeneous) Triebel-Lizorkin spaces on $\R$, see \cite{XX18}.

\medskip

\subsection{Notation}
In the following,we collect some notation which will be frequently used in this paper. Throughout, we will use the notation $A\lesssim B$, which is an inequality up to a constant: $A\leq cB$ for some constant
$c>0$. The relevant constants in all such inequalities may depend
on the dimension $d$, the test function $\Phi$ or $p$, etc, but
never on the function $f$ in consideration. The equivalence $A\approx B$
will mean $A\lesssim B$ and $B\lesssim A$ simultaneously.

\medskip
The Bessel and Riesz potentials are $J^\alpha =(1-(2\pi)^{-2}\Delta  )^{\frac \alpha  2}$ and  $I^\alpha =(-(2\pi)^{-2}\Delta  )^{\frac \alpha  2}$, respectively. If $\alpha=1$, we will abbreviate $J^1$ as $J$ and $I^1$ as $I$. We denote also $J_\alpha (\xi)=(1+|\xi|^2 )^{\frac \alpha  2}$ on $\mathbb{R}^d$ and $I_\alpha (\xi)=|\xi|^ \alpha  $ on $\mathbb{R}^d \setminus \{0\}$. Then $J_\alpha $ and $I_\alpha    $ are the symbols of the Fourier multipliers $J^\alpha $ and $I^\alpha $, respectively.

We denote by $H_2^\sigma(\R)$ the potential Sobolev space, consisting of all tempered distributions $f$ such that $J^\sigma(f)\in L_2(\R)$. If $\sigma> \frac d 2 $, the elements in $H_2^\sigma(\R)$ will serve as important convolution kernels in the sequel.

\subsection{Noncommutative $L_{p}$-spaces}
We also recall some preliminaries on noncommutative $L_p$-spaces and operator-valued Hardy spaces.
We start with a brief introduction of noncommutative $L_p$-spaces.
Let $\M$ be a von Neumann algebra equipped with a
normal semifinite faithful trace $\tau$ and
$S^+_{\M}$ be the set of all positive elements $x$
in $\M$ with $\tau(s(x))<\infty$, where $s(x)$ denotes the support of $x$, i.e.,
the smallest projection $e$ such that $exe=x$. Let
$S_{\M}$ be the linear span of
$S^+_{\M}$. Then every $x\in
S_{\M}$ has finite trace, and
$S_{\M}$ is a w*-dense $*$-subalgebra of
$\M$.

Let $1\leq p<\infty$. For any $x\in S_{\M}$, the
operator $|x|^p$ belongs to $S^+_{\M}$
(recalling $|x|=(x^*x)^{\frac{1}{2}}$). We define
$$\|x\|_p=\big(\tau(|x|^p)\big)^{\frac{1}{p}}.$$
One can prove that $\|\cdot\|_p$ is a norm on
$S_{\M}$. The completion of
$(S_{\M},\|\cdot\|_p)$ is denoted by $L_p(\M)$,
which is the usual noncommutative $L_p$-space associated to
$(\M,\tau)$. In this paper, the norm of $L_{p}  (\mathcal{M} )$ will be often denoted
simply by $  \|  \cdot \|  _{p}$ if there is no confusion. But if different  $L_p$-spaces appear in a same context, we will precise their norms in order to avoid possible ambiguity. We refer the reader  to \cite{Xu2007} and \cite{PX2003}
for further information on noncommutative $L_{p}$-spaces.

Now we introduce noncommutative Hilbert space-valued $L_{p}$-spaces
$L_{p}  (\mathcal{M};H^{c} )$ and $L_{p}  (\mathcal{M};H^{r} )$,
which are studied at length in \cite{JLX2006}. Let $H$ be a Hilbert
space and $v\in H$ with $  \|  v \|  =1$, and $p_{v}$ be the orthogonal projection onto the one-dimensional
subspace generated by $v$. Then define the following row and column
noncommutative $L_{p}$-spaces:
$$
L_{p}  (\mathcal{M};H^{r} )=(p_{v}\otimes1_{\mathcal{M}})L_{p}  (B(H)\overline{\otimes}\mathcal{M} )\mbox{ and }L_{p}  (\mathcal{M};H^{c} )=L_{p}  (B(H)\overline{\otimes}\mathcal{M} )(p_{v}\otimes1_{\mathcal{M}}),
$$
where the tensor product $B(H)\overline{\otimes}\mathcal{M}$ is equipped
with the tensor trace while $B(H)$ is equipped with the usual trace, and where $1_{\mathcal{M}}$ denotes the unit of $\M$. For $f\in L_{p}  (\mathcal{M};H^{c} )$,
$$
  \|  f \|  _{L_{p}  (\mathcal{M};H^{c} )}=\lVert   (f^{*}f )^{\frac{1}{2}}\rVert _p.
$$
 A similar formula holds for the row space by passing to adjoint:
$f\in L_{p}  (\mathcal{M};H^{r} )$  if and only if $f^{*}\in L_{p}  (\mathcal{M};H^{c} )$,
and $  \|  f \|  _{L_{p}  (\mathcal{M};H^{r} )}=  \|  f^{*} \|  _{L_{p}  (\mathcal{M};H^{c} )}$.
It is clear that $L_{p}  (\mathcal{M};H^{c} )$ and $L_{p}  (\mathcal{M};H^{r} )$
are 1-complemented  subspaces of $L_{p}  (B(H)\overline{\otimes}\mathcal{M} )$
for any $p$.

\subsection{Operator-valued Hardy spaces}

Throughout the remainder of the paper, unless explicitly stated otherwise,   $(\M,\tau)$ will be fixed as before  and $\N=L_\infty(\R)\overline\otimes\M$, equipped with the tensor trace. In this subsection, we introduce Mei's operator-valued Hardy spaces. Contrary to the custom, we will use letters $s, t$ to denote variables of $\R$ since letters $x, y$ are reserved for operators in noncommutative $L_p$-spaces. Accordingly, a generic element of the upper half-space $\mathbb{R}^{d+1}_+$ will be denoted by $(s,\e)$ with $\e>0$, where $\mathbb{R}^{d+1}_+=\{(s,\e): s\in\R,\, \e>0\}$.

Let $\mathrm{P}$ be the Poisson kernel on $\R$:
 $$\mathrm{P}(s)=c_d\,\frac{1}{(|s|^2+1)^{\frac{d+1}2}}$$
with $c_d$ the usual normalizing constant and $|s|$ the Euclidean norm of $s$. Let
  $$\mathrm{P}_\e(s)=\frac1{\e^d}\, \mathrm{P}(\frac s\e)=c_d\,\frac{\e}{(|s|^2+\e^2)^{\frac{d+1}2}}\,.$$
For any function $f$ on $\R$ with values in $L_1(\M)+\M$,  its Poisson integral, whenever it exists, will be denoted by $\mathrm{P}_\e(f)$:
 $$\mathrm{P}_\e(f)(s)=\int_{\mathbb{R}^d}\mathrm{P}_\e(s-t)f(t)dt, \quad (s,\e)\in \mathbb{R}^{d+1}_+.$$
Note that the Poisson integral of $f$ exists if
 $$f\in L_1\big(\M; L^c_2(\R,\frac{dt}{1+|t|^{d+1}})\big)+L_{\infty}\big(\M;L^c_2(\R,\frac{dt}{1+|t|^{d+1}})\big).$$
This space is the right space in which all functions considered in this paper live as far as only column spaces are involved. As it will appear frequently later, to simplify notation, we will denote the Hilbert space $L_2(\R,\frac{dt}{1+|t|^{d+1}})$ by $\mathrm{R}_d$:
 \begin{equation}\label{eq: Rd}
  \mathrm{R}_d=L_2(\R,\frac{dt}{1+|t|^{d+1}}).
 \end{equation}
The Lusin area square function of $f$ is defined by
 \begin{equation}\label{Lusin}
 S^c(f) (s) = \Big(\int_{\Gamma} \big|\frac{\partial}{\partial\e} \mathrm{P}_\e(f)(s+t)\big|^2\,\frac{dt\,d\e}{\e^{d-1}}\Big)^{\frac 1 2}, \quad s\in \R,
 \end{equation}
where $\Gamma$ is the cone $\{(t,\e)\in \mathbb{R}^{d+1}_+: |t|<\e\}$.
For $1\leq p<\infty$ define the column Hardy space $\mathcal{H}_p^c(\mathbb{R}^d, \M)$ to be
 $$\mathcal{H}_p^c(\mathbb{R}^d, \M)=\big\{f: \|f\|_{\mathcal{H}_p^c}
 =\|S^c(f)\|_p<\infty\big\}.$$
Note that  \cite{Mei2007} uses the gradient of $\mathrm{P}_\e(f)$ instead of the sole radial derivative in the definition of $S^c$ above,  but this does not affect  $\mathcal{H}_p^c(\mathbb{R}^d, \M)$ (up to equivalent norms). At the same time, it is proved in  \cite{Mei2007}  that $\mathcal{H}_p^c(\mathbb{R}^d, \M)$ can be equally defined by the Littlewood-Paley $g$-function:
\begin{equation}\label{LP}
 G^c(f) (s) = \Big(\int_{0}^\infty \e\, \big|\frac{\partial }{\partial\e}\mathrm{P}_\e(f)(s)\big|^2\,d\e \Big)^{\frac 1 2}, \quad s\in \mathbb{R}^d.
 \end{equation}
 Thus
  $$\|f\|_{\mathcal{H}_p^c}\approx \|G^c(f)\|_p,\quad f\in \mathcal{H}_p^c(\mathbb{R}^d, \M).$$
The row Hardy space $\mathcal{H}_p^r(\mathbb{R}^d, \M)$ is the space of all $f$ such that $f^*\in\mathcal{H}_p^c(\mathbb{R}^d, \M)$, equipped with the norm
 $\|f\|_{\mathcal{H}_p^r}= \|f^*\|_{\mathcal{H}_p^c}\,.$
Finally, we define the mixture space $\mathcal{H}_p(\mathbb{R}^d, \M)$ as
 $$\mathcal{H}_p(\mathbb{R}^d, \M)=\mathcal{H}_p^c(\mathbb{R}^d, \M)+\mathcal{H}_p^r(\mathbb{R}^d, \M)
 \;\textrm{ for }\; 1\leq p\le2$$
equipped with the sum norm
 $$\|f\|_{\mathcal{H}_p}=\inf\big\{\|f_1\|_{\mathcal{H}^c_p}+\|f_2\|_{\mathcal{H}^r_p}: f=f_1+f_2\big\},$$
and
 $$\mathcal{H}_p(\mathbb{R}^d, \M)=\mathcal{H}_p^c(\mathbb{R}^d, \M)\cap\mathcal{H}_p^r(\mathbb{R}^d, \M)
 \;\textrm{ for }\; 2< p<\infty$$
equipped with the intersection norm
 $$\|f\|_{\mathcal{H}_p}=\max\big(\|f\|_{\mathcal{H}^c_p}\,, \|f\|_{\mathcal{H}^r_p}\big).$$
 Observe that
 $$\mathcal{H}_2^c(\mathbb{R}^d, \M)=\mathcal{H}_2^r(\mathbb{R}^d, \M)
 =L_2(\N)\;\textrm{ with equivalent norms.}$$
It is proved in  \cite{Mei2007} that for $1<p<\infty$
 $$ \mathcal{H}_p(\mathbb{R}^d, \M)=L_p(\N)\;\textrm{ with equivalent norms.}$$

The operator-valued BMO spaces are also studied in  \cite{Mei2007}. Let $Q$ be a cube in $\R$ (with sides parallel to the axes) and $|Q|$ its volume. For a function $f$ with values in $\M$, $f_Q$ denotes its mean over $Q$:
 $$f_Q = \frac{1}{|Q|} \int_Q f(t)dt.$$
The column BMO norm of $f$ is defined to be
 \begin{equation}
 \|f\|_{\BMO^c}=\sup_{Q\subset\R}\Big\|\frac{1}{|Q|}\int_Q\big|f(t)-f_Q\big|^2dt\Big\|_{\M}^{\frac 12}.
 \end{equation}
Then
 $$\mathrm{BMO}^c(\R,\M)=\big\{ f \in
 L_{\infty}\big(\M; \mathrm{R}_d^c\big):\; \|f\|_{\mathrm{BMO}^c}<\infty\big\}.$$
Similarly, we define the row space $\mathrm{BMO}^r(\R,\M)$ as the space of $f$ such that $f^*$ lies in $\mathrm{BMO}^c(\R,\M)$,  and  $\mathrm{BMO}(\R,\M)=\mathrm{BMO}^c(\R,\M)\cap \mathrm{BMO}^r(\R,\M)$  with the intersection norm.

In \cite{Mei2007}, it is showed that the dual of  $\mathcal{H}_1^c(\mathbb{R}^d, \M)$ can be naturally identified with $\mathrm{BMO}^c(\R,\M)$. This is the operator-valued analogue of the celebrated Fefferman-Stein $H_1$-BMO duality theorem. 

On the other hand, one of the main results of \cite{XXX17} asserts that the Poisson kernel in the definition of Hardy spaces can be replaced by more general test functions.

Take any Schwartz function $\Phi$ with vanishing mean. We will assume that $\Phi$ is
nondegenerate in the following sense:
\begin{equation}\label{eq: nondegenerate}
  \forall\,\xi\in\R\setminus\{0\}\,\,\exists\, \varepsilon>0,\,\,\text{ s.t. }\,\widehat{\Phi}  (\varepsilon\xi )\neq0.
\end{equation}
Set $\Phi_\e(s) = \e^{-d}  \Phi(\frac{s}{\e})$ for $\e>0$. The radial and conic square functions of $f$ associated to $\Phi$ are defined by replacing the partial derivative of the Poisson kernel $\rm P$ in $S^c(f)$ and $G^c(f)$  by $\Phi$ :
\begin{equation}
 S_{\Phi}^c(f) (s) = \Big(\int_{\Gamma} | \Phi_\e *f (s+t)|^2\frac{dtd\e}{\e^{d+1}}\Big)^{\frac 1 2} \,, \quad s\in\R\label{eq: S}
\end{equation}
and 
\begin{equation}\label{eq: G}
 G_{\Phi}^c(f) (s) =  \Big(\int_0^\infty | \Phi_\e *f (s)|^2\frac{d\e}{\e}\Big)^{\frac 1 2}.
\end{equation}
The following two lemmas are taken from \cite{XXX17}. The first one says that the two square functions above define equivalent norms in $\mathcal{H}_p^c(\R,\M)$:
\begin{lem}\label{equivalence Hp}
Let $1\leq p<\infty$ and $f\in L_{1}(\M; \mathrm{R}_d^c)+L_{\infty}(\M; \mathrm{R}_d^c)$. Then $f\in\mathcal{H}_p^c(\R,\M)$  if and only if  $G_{\Phi}^c(f)\in L_{p}(\N)$  if and only if $S_{\Phi}^c(f)\in L_{p}(\N)$. If this is the case, then
 $$\|G_{\Phi}^c(f)\|_p  \approx \|S_{\Phi}^c(f)\|_p \approx \|f\|_{\mathcal{H}^c_p}$$
with the relevant constants depending only on $p, d$ and $\Phi$.
\end{lem}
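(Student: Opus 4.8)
The plan is to prove Lemma~\ref{equivalence Hp} in two stages: first settle $p=2$ by a direct Fourier computation, then transfer to all $1\le p<\infty$ by comparing $G_\Phi^c$ and $S_\Phi^c$ with the Poisson square functions $G^c$ and $S^c$ through a Calder\'on reproducing formula and the operator-valued Calder\'on-Zygmund theory of \cite{Mei2007}. At $p=2$ we have $\mathcal{H}_2^c(\R,\M)=L_2(\N)$, and since every $\Phi_\e$ is scalar-valued, the $\M$-valued Plancherel formula gives
\[
\|G_\Phi^c(f)\|_2^2=\int_0^\infty\frac{d\e}{\e}\int_\R|\widehat\Phi(\e\xi)|^2\,\tau\big(|\widehat f(\xi)|^2\big)\,d\xi=\int_\R c(\xi)\,\tau\big(|\widehat f(\xi)|^2\big)\,d\xi,
\]
where $c(\xi)=\int_0^\infty|\widehat\Phi(\e\xi)|^2\frac{d\e}{\e}$ is homogeneous of degree $0$. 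Since $\Phi$ is Schwartz with $\widehat\Phi(0)=0$, one has $|\widehat\Phi(\eta)|\lesssim\min(|\eta|,|\eta|^{-N})$ for every $N$, so $c$ is bounded above; and $c$ cannot vanish at any point of the unit sphere, for that would force $\widehat\Phi$ to vanish along an entire ray, contradicting the nondegeneracy assumption \eqref{eq: nondegenerate}. Hence, by continuity and compactness of the sphere, $c$ is also bounded below, whence $\|G_\Phi^c(f)\|_2\approx\|f\|_2$. A Fubini argument using the translation invariance of Lebesgue measure shows that $\|S_\Phi^c(f)\|_2$ equals a dimensional constant times $\|G_\Phi^c(f)\|_2$, which settles the case $p=2$.

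For general $p$, the key remark is that $G^c$ and $S^c$ are themselves of the form \eqref{eq: G} and \eqref{eq: S} with the profile $\Phi$ replaced by $Q:=\e\,\partial_\e\mathrm{P}_\e|_{\e=1}$, whose Fourier transform is $\widehat Q(\eta)=-2\pi|\eta|\,e^{-2\pi|\eta|}$; this $Q$ is not Schwartz, but it is smooth away from the origin, rapidly decreasing, of vanishing mean, and nondegenerate, which is all the argument uses. Setting $\widehat\Psi=\overline{\widehat\Phi}/c$ produces an auxiliary profile of vanishing mean for which the Calder\'on reproducing formula $\int_0^\infty\widehat\Psi(\e\xi)\widehat\Phi(\e\xi)\,\frac{d\e}{\e}=1$ holds for $\xi\neq0$; likewise $Q$ has a reproducing partner $\Theta$. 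Inserting the reproducing formula for $Q$ into $\Phi_\e*f$, and that for $\Phi$ into $Q_\e*f$, exhibits the passage between the two families $(\Phi_\e*f)_\e$ and $(\e\,\partial_\e\mathrm{P}_\e f)_\e$ as Calder\'on-Zygmund operators with operator-valued kernels, acting on $\R$-functions valued in a column Hilbert space; their H\"ormander-type regularity follows in a routine way from the smoothness and decay of $\Phi,Q,\Psi,\Theta$ and from the almost-orthogonality between scales encoded in the products $\widehat\Phi(\e\xi)\widehat\Theta(\delta\xi)$.

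Feeding these kernel estimates and the $p=2$ identity into Mei's operator-valued Calder\'on-Zygmund theorem (which gives boundedness on the relevant $L_p$- and $\mathcal{H}_1^c$-type spaces for all $1\le p<\infty$), one obtains $\|G_\Phi^c(f)\|_p\approx\|G^c(f)\|_p$ and $\|S_\Phi^c(f)\|_p\approx\|S^c(f)\|_p$, the reverse inequalities coming from the symmetry between $\Phi$ and $Q$. Since $\|G^c(f)\|_p\approx\|S^c(f)\|_p=\|f\|_{\mathcal{H}_p^c}$ by \cite{Mei2007} (recalled above), and since the reproducing formula still makes sense for $f$ in the ambient space $L_1(\M;\mathrm{R}_d^c)+L_\infty(\M;\mathrm{R}_d^c)$, this yields the three asserted equivalences with comparable norms.

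I expect the real difficulty to be the endpoint $p=1$ and, more generally, the need to route every comparison through the column structure. In the classical proof the change of aperture between the radial and conic square functions, as well as several of the intermediate estimates, rely on the Hardy-Littlewood maximal function, which is unavailable here (as emphasized in the introduction), so each step has to be recast as a column-Hilbert-space-valued singular integral amenable to the operator-valued Calder\'on-Zygmund theorem. One must in particular verify that the non-Schwartz, only polynomially decaying profiles $Q$ and $\Theta$ still produce kernels satisfying its H\"ormander-type hypotheses, and that the corresponding $p=1$ endpoint bound holds; once these kernel estimates are in place, the rest is bookkeeping.
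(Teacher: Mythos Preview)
This lemma is not proved in the present paper; it is quoted from \cite{XXX17} as a preliminary fact, so there is no proof here to compare against directly. Judging from the local analogue that \emph{is} proved in Section~\ref{section-general charact} (Theorem~\ref{thm main1}) and from the paper's own description, the argument in \cite{XXX17} runs as follows: the easy direction $\|G_\Phi^c(f)\|_p+\|S_\Phi^c(f)\|_p\lesssim\|f\|_{\mathcal H_p^c}$ is obtained exactly as you say, by viewing $f\mapsto(\Phi_\e*f)_\e$ as a Calder\'on--Zygmund operator with Hilbert-valued kernel and invoking the operator-valued CZ theorem; but the reverse inequality is \emph{not} obtained by a symmetric CZ argument. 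Instead one proves a pairing estimate $|\tau\!\int fg^*|\lesssim\|S_\Phi^c(f)\|_p\,\|g\|_{\mathrm{BMO}_q^c}$ via the reproducing formula together with the Carleson-measure description of $\mathrm{BMO}_q^c$ (compare Lemmas~\ref{lem: main1} and \ref{lem:main2}), and then concludes by $\mathcal H_p^c$--$\mathrm{BMO}_q^c$ duality.

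Your route---treating $\Phi$ and the Poisson profile $Q$ symmetrically and bounding the map $(\Phi_\delta*f)_\delta\mapsto(Q_\e*f)_\e$ as a CZ operator in both directions---is conceptually cleaner and avoids the somewhat delicate duality/Carleson machinery, but it asks for more than the CZ results actually stated in \cite{Mei2007} or in Corollary~\ref{Cor C-Z}: those are formulated for $H$-valued kernels acting on $\M$-valued functions, whereas your transition operator has kernel $s\mapsto\big((Q_\e*\Theta_\delta)(s)\big)_{\e,\delta}$ taking values in $B(H)$ and acting on $H^c\otimes\M$-valued functions. This can be recast as Lemma~\ref{C-Z lem} with $\M$ replaced by $B(H)\overline\otimes\M$, but you then owe the verification that $\sup_\xi\|\widehat K(\xi)\|_{B(H)}<\infty$ and the Lipschitz bound $\|K(s-t)-K(s)\|_{B(H)}\lesssim|t|^\gamma|s-t|^{-d-\gamma}$; the first is a Schur-type estimate on the integral kernel $\widehat Q(\e\xi)\widehat\Theta(\delta\xi)$, the second a genuine (if standard) almost-orthogonality computation, and neither is quite ``routine'' for the non-Schwartz profiles $Q,\Theta$. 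If you fill in those two estimates your argument goes through and gives an alternative proof; what it buys is a more uniform treatment of the two inequalities and of $G_\Phi^c$ versus $S_\Phi^c$, at the cost of a heavier kernel calculation than the duality route.
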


The above square functions $G_{\Phi}^c$ and $S_{\Phi}^c$ can be discretized as follows:
\begin{equation}
\begin{split}
 G_{\Phi}^{c,D}(f)(s)  &=  \Big(\sum_{j=-\infty}^{\infty} |\Phi_{2^{-j}}*f (s)|^2\Big)^{\frac 1 2}\\
 S_{\Phi}^{c,D}(f)(s)  &=  \Big(\sum_{j=-\infty}^{\infty} 2^{dj}\int_{B(s, 2^{-j})} |\Phi_{2^{-j}}*f (t)|^2 dt\Big)^{\frac 1 2}\,.
 \end{split}
\end{equation}
Here $B(s, r)$ denotes the ball of $\R$ with center $s$ and radius $r$. To prove that these discrete square functions also describe our Hardy spaces, we need to impose the following  condition on the previous Schwartz function $\Phi$, which is stronger than \eqref{eq: nondegenerate}:
 \begin{equation}\label{schwartz D}
  \forall\,\xi\in\R\setminus\{0\}\,\,\exists\, 0<2a\le b<\infty\;\text { s.t. }\; \widehat{\Phi}(\e\xi)\neq0,\;\forall\; \e\in (a,\,b].
 \end{equation}
 The following is the discrete version of Lemma \ref{equivalence Hp}:
 \begin{lem}\label{equivalence HpD}
Let $1\leq p<\infty$ and $f\in L_{1}(\M; \mathrm{R}_d^c)+L_{\infty}(\M; \mathrm{R}_d^c)$. Then $f\in\mathcal{H}_p^c(\R,\M)$  if and only if  $G_{\Phi}^{c, D}(f)\in L_{p}(\N)$  if and only if $S_{\Phi}^{c, D}(f)\in L_{p}(\N)$. Moreover,
 $$\|G_{\Phi}^{c, D}(f)\|_p \approx \|S_{\Phi}^{c, D}(f)\|_p\approx \|f\|_{\mathcal{H}^c_p}$$
with the relevant constants depending only on $p, d$ and $\Phi$.
\end{lem}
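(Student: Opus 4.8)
The plan is to deduce the discrete equivalences from the continuous ones in Lemma~\ref{equivalence Hp} by comparing, on each dyadic octave of scales $[2^{-j},2^{-j+1}]$, the sampled square functions $G_\Phi^{c,D}(f)$ and $S_\Phi^{c,D}(f)$ with their continuous counterparts. It is convenient to fix once and for all a Littlewood--Paley function $\Psi$ (Schwartz, mean zero, with $\widehat\Psi$ supported in a fixed annulus and $\sum_{j}\widehat\Psi(2^{-j}\xi)=1$ for $\xi\neq0$), so that Lemma~\ref{equivalence Hp} gives $\|f\|_{\mathcal{H}_p^c}\approx\|G_\Psi^c(f)\|_p\approx\|S_\Psi^c(f)\|_p$; everything then reduces to relating $G_\Phi^{c,D}(f)$ and $S_\Phi^{c,D}(f)$ to these.

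\textbf{Upper bounds.} For $\|G_\Phi^{c,D}(f)\|_p,\,\|S_\Phi^{c,D}(f)\|_p\lesssim\|f\|_{\mathcal{H}_p^c}$ only the nondegeneracy \eqref{eq: nondegenerate} is needed. I would introduce the companion kernel $\Xi(x)=-d\,\Phi(x)-x\cdot\nabla\Phi(x)$, for which $\e\,\partial_\e\Phi_\e=\Xi_\e$ and $\widehat\Xi(\xi)=\xi\cdot\nabla\widehat\Phi(\xi)$; it is again Schwartz with vanishing mean, and it satisfies \eqref{eq: nondegenerate} (were $t\mapsto\widehat\Phi(t\xi)$ to have vanishing derivative on $(0,\infty)$ it would be constant, hence $\equiv\widehat\Phi(0)=0$, contradicting the nondegeneracy of $\Phi$). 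Fixing $\eta\geq0$ smooth with support in $[1,2]$ and $\int_1^2\eta(t)\,\tfrac{dt}{t}=1$, telescoping in the scale variable yields, for every $j$ and every point $z$,
\[
\Phi_{2^{-j}}*f(z)=\int_1^2\eta(t)\,\Phi_{t2^{-j}}*f(z)\,\tfrac{dt}{t}-\int_1^2\eta(t)\int_{2^{-j}}^{t2^{-j}}\tfrac1r\,\Xi_r*f(z)\,dr\,\tfrac{dt}{t}=:a_j(z)-b_j(z).
\]
Applying the operator Cauchy--Schwarz inequality to each integral and summing in $j$ gives the pointwise operator inequalities $\sum_{j}|a_j(z)|^2\lesssim G_\Phi^c(f)(z)^2$ and $\sum_{j}|b_j(z)|^2\lesssim G_\Xi^c(f)(z)^2$, hence $G_\Phi^{c,D}(f)^2\lesssim G_\Phi^c(f)^2+G_\Xi^c(f)^2$ as positive elements of $\N$. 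Taking $L_{p/2}$-(quasi-)norms (the triangle inequality when $p\geq2$, the inequality $\|x+y\|_r^r\leq\|x\|_r^r+\|y\|_r^r$ with $r=p/2<1$ when $p<2$) and then applying Lemma~\ref{equivalence Hp} to $\Phi$ and to $\Xi$ gives $\|G_\Phi^{c,D}(f)\|_p\lesssim\|f\|_{\mathcal{H}_p^c}$. For $S_\Phi^{c,D}$ one telescopes the same way at a variable point $t\in B(s,2^{-j})$; since $2^{-j}\leq\e$ and $2^{dj}\approx\e^{-d}$ on the relevant octave, $B(s,2^{-j})$ lies inside the cone $\Gamma$ over $s$, and the same computation produces $S_\Phi^{c,D}(f)^2\lesssim S_\Phi^c(f)^2+S_\Xi^c(f)^2$, whence $\|S_\Phi^{c,D}(f)\|_p\lesssim\|f\|_{\mathcal{H}_p^c}$. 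All these manipulations are justified under the standing hypothesis on $f$ exactly as in \cite{XXX17}.

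\textbf{Lower bounds.} For $\|f\|_{\mathcal{H}_p^c}\lesssim\|G_\Phi^{c,D}(f)\|_p$ and $\|f\|_{\mathcal{H}_p^c}\lesssim\|S_\Phi^{c,D}(f)\|_p$ the stronger condition \eqref{schwartz D} enters, through a discrete Calder\'on reproducing formula. First, \eqref{schwartz D} forces $\Lambda(\xi):=\sum_{j}|\widehat\Phi(2^{-j}\xi)|^2$ to be bounded below on $\R\setminus\{0\}$: for fixed $\xi$ the window $(a,b]$ has $b\geq2a$, so it contains a point of the form $2^{-j_0}$ and thus $\widehat\Phi(2^{-j_0}\xi)\neq0$; since $\Lambda$ is continuous and invariant under $\xi\mapsto2\xi$, it is bounded above and below on all of $\R\setminus\{0\}$. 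One then obtains a companion $\Psi'$ with $\widehat{\Psi'}=\overline{\widehat\Phi}/\Lambda$ — the only delicate point being the regularity of $\widehat{\Psi'}$ at the origin, which is tamed by the vanishing of $\widehat\Phi$ there, working if necessary with kernels of finite (but sufficiently large) smoothness, e.g.\ in a Sobolev class $H_2^\sigma$, rather than Schwartz functions — so that $\sum_{j}\widehat{\Psi'}(2^{-j}\xi)\widehat\Phi(2^{-j}\xi)=1$ and hence, in the appropriate sense,
\[
f=\sum_{j}\Psi'_{2^{-j}}*\big(\Phi_{2^{-j}}*f\big).
\]
Writing $h_j=\Phi_{2^{-j}}*f$, so that $G_\Phi^{c,D}(f)=\big(\sum_j|h_j|^2\big)^{1/2}$ and $S_\Phi^{c,D}(f)$ is, up to equivalence, the norm of $(h_j)_j$ in a discrete column tent space analogous to those of \cite{Mei-tent}, it remains to show that the synthesis map $(h_j)_j\mapsto\sum_{j}\Psi'_{2^{-j}}*h_j$ is bounded from $L_p(\N;\ell_2^c)$, respectively from the discrete column tent space, into $\mathcal{H}_p^c(\R,\M)$. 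I would prove this by estimating $G_\Psi^c\big(\sum_j\Psi'_{2^{-j}}*h_j\big)$ (respectively $S_\Psi^c$ of it): since $\widehat\Psi$ has annular support, $\Psi_\e*\Psi'_{2^{-j}}$ is negligible unless $\e\sim2^{-j}$ and its $L_1$-norm decays rapidly in $|\log_2(\e2^j)|$, so a weighted operator Cauchy--Schwarz controls $G_\Psi^c\big(\sum_j\Psi'_{2^{-j}}*h_j\big)$ in $L_p(\N)$ by $\|(h_j)_j\|_{L_p(\N;\ell_2^c)}=\|G_\Phi^{c,D}(f)\|_p$; Lemma~\ref{equivalence Hp} then turns the left side back into $\|f\|_{\mathcal{H}_p^c}$, and the conic estimate is the analogous discrete tent-space bound.

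\textbf{Main obstacle.} The crux is the last step, the boundedness of the synthesis operator into $\mathcal{H}_p^c$, particularly in the conic formulation and for $p<2$: classically one would invoke the Fefferman--Stein vector-valued maximal inequality, but the noncommutative maximal function is unavailable here, so everything must be routed through Fourier-multiplier and almost-orthogonality estimates together with the $r$-triangle inequality — essentially replaying, in the discrete setting, the arguments of \cite{Mei2007, Mei-tent, XXX17}. A secondary technical point, already visible above, is the regularity at the frequency origin of the companion $\Psi'$ coming out of the reproducing formula, which is why one works with convolution kernels of finite smoothness rather than Schwartz functions.
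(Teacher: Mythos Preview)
The paper does not prove this lemma itself; it is quoted from \cite{XXX17}, whose method is reproduced in the paper for the local analogue, Theorem~\ref{thm: equivalence hpD} (with Lemmas~\ref{S-dualD}--\ref{g-dualD}). That is the argument against which your proposal should be compared.

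Your upper-bound argument is correct and is genuinely different from, and more elementary than, the route of \cite{XXX17}. There the discrete square functions are treated as Hilbert-valued Calder\'on--Zygmund operators (the non-local version of Corollary~\ref{Cor C-Z}). Your telescoping with the companion kernel $\Xi$ instead produces a \emph{pointwise} operator inequality $G_\Phi^{c,D}(f)^2\lesssim G_\Phi^c(f)^2+G_\Xi^c(f)^2$ (and its conic analogue), after which Lemma~\ref{equivalence Hp} finishes; no singular-integral machinery is needed.

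The lower bound, however, has a real gap for $1\le p<2$. After Cauchy--Schwarz your synthesis estimate leaves a quantity of the form $\sum_j L_j*|h_j|^2$ (with $L_j$ an $L_1$-normalized bump at scale $2^{-j}$) whose $L_{p/2}$-quasinorm must be bounded by that of $\sum_j|h_j|^2$. The $r$-triangle inequality with $r=p/2<1$ yields only $\sum_j\|h_j\|_p^p$, which is the wrong quantity, and there is no noncommutative Fefferman--Stein vector-valued maximal inequality to rescue the step; your fallback --- ``replay the arguments of \cite{XXX17}'' --- is precisely the content of the lemma. The method of \cite{XXX17} (visible here as Lemmas~\ref{S-dualD}--\ref{g-dualD}) does not attempt synthesis boundedness at all: it proves directly, via a Carleson-measure and stopping-time computation with an auxiliary discrete square function $\mathbb S^c(f)(s,j)$, the duality estimate
\[
\Big|\tau\!\int fg^*\Big|\lesssim \|S_\Phi^{c,D}(f)\|_p\,\|g\|_{\BMO_q^c}
\qquad\text{(and its radial variant)},
\]
and then the already-established $\mathcal H_p^c$--$\BMO_q^c$ duality from \cite{Mei2007} closes the loop. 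This route also makes your regularity-at-zero worry about $\Psi'$ disappear, since no companion kernel is needed for this direction.
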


\medskip

Finally, let us give some easy facts on operator-valued functions. The first one is the following Cauchy-Schwarz type inequality for the operator-valued square function,
\begin{equation}
  \big|\int_{\mathbb{R}^d}\phi (s)f(s)ds \big|^{2}\leq\int_{\mathbb{R}^d}  |\phi(s) |^{2}ds\int_{\mathbb{R}^d}  |f(s) |^{2}ds,\label{eq: 2-1}
\end{equation}
where $\phi:\mathbb{R}^d\rightarrow\mathbb{C}$ and $f:\mathbb{R}^d\rightarrow L_{1}(\mathcal{M} )+\mathcal{M}$
are functions such that all integrations of the above inequality make sense.
We also require the operator-valued version of the Plancherel formula. For sufficiently nice functions $f: \mathbb{R}^{d}\rightarrow L_{1}  (\mathcal{M} )+\M$,
for example, for $f \in L_{2}  (\mathbb{R}^{d} )\otimes L_{2}  (\mathcal{M} )$,
we have
\begin{equation}
\int_{\mathbb{R}^d} |f  (s )|^2 ds=\int_{\mathbb{R}^d}  |  \widehat{f}  (\xi )|^2  d\xi.\label{eq: Planchel}
\end{equation}
Given two nice functions $f$ and $g$, the polarized version of the above equality is
\begin{equation}
\int_{\mathbb{R}^d} f  (s )g^{*}  (s )ds=\int_{\mathbb{R}^d}\widehat{f}  (\xi )\widehat{g}  (\xi )^{*}d\xi.\label{eq: Planchel-1}
\end{equation}

\bigskip

The paper is organized as follows. In the next section, we give the definitions of operator-valued local Hardy and bmo spaces. Section \ref{section-dual} is devoted to the proofs of duality results, including the $\h_1 $-$ \bmo$ duality and the $\h_p $-$\h_q$ duality for $1<p<2$ and $\frac{1}{p} +\frac 1 q =1$. Section \ref{section-interp} gives the results of interpolation. In section \ref{section-CZ}, we develop Calder\'on-Zymund theory that is suitable for our local version of Hardy spaces. In section \ref{section-general charact}, we prove general characterizations of $\h_p^c(\R,\M)$, and then connect the local Hardy spaces $\h_p^c(\R,\M)$ with Mei's non-local Hardy spaces $\mathcal{H}_p^c(\R,\M)$. In the last section of this paper, we give the atomic decomposition of $\h_1^c(\R,\M)$.

\section{Operator-valued local Hardy spaces}

\subsection{Operator-valued local Hardy spaces}
In this subsection, we give the definition of operator-valued local Hardy spaces as well as some basic facts of them. Let $f\in L_1(\M; \mathrm{R}_d^c)+L_{\infty}(\M;\mathrm{R}_d^c)$ (recalling that the Hilbert space $\mathrm{R}_d$ is defined by \eqref{eq: Rd}). Then the Poisson integral of $f$ is well-defined and takes values in $L_1(\M)+\M$. Now we define the local analogue of the Lusin area square function of $f$
by
\begin{equation*}
s^{c}  (f )  (s )  =  \Big(\int_{\widetilde{\Gamma}}  \big|\frac{\partial}{\partial\varepsilon}\mathrm{P}_{\varepsilon}  (f )  (s+t ) \big|^{2}\frac{dtd\varepsilon}{\varepsilon^{d-1}}\Big)^{\frac{1}{2}},\,s\in\mathbb{R}^{d},
\end{equation*}
where $\widetilde{\Gamma}$ is the truncated cone $  \{ (t,\varepsilon )\in\mathbb{R}_{+}^{d+1}:  |t |<\varepsilon<1 \} $. It is the intersection of the cone $  \{ (t,\varepsilon )\in\mathbb{R}_{+}^{d+1}:  |t |<\varepsilon  \} $ and the strip $S\subset\mathbb{R}_{+}^{d+1}$ defined by:
$$S=  \{   (s,\varepsilon ):s\in\mathbb{R}^{d},0<\varepsilon< 1 \}.
$$
For $1\leq p<\infty$ define the column local Hardy space $\h _p^c(\R,\M)$ to be
$$
\h _p^c(\R,\M)=\{f\in L_1(\M; \mathrm{R}_d^c)+L_{\infty}(\M;\mathrm{R}_d^c) : \|  f\| _{\h_p^c} <\infty\},
$$
where the $\h_{p}^{c}  (\mathbb{R}^{d},\mathcal{M} )$-norm of $f$
is defined by
\begin{equation*}
  \|  f \|  _{\h_{p}^{c} }  =    \|  s^{c}  (f )  \|  _{L_{p}  (\mathcal{N} )}+  \| \mathrm{P}*f \| _{L_{p}  (\mathcal{N} )}.
\end{equation*}
The row local Hardy space $\h_{p}^{r}(\R,\M)$ is the space of all $f$ such that $f^*\in \h_{p}^{c}(\R,\M)$, equipped with the norm $ \|  f\| _{\h_p^r}= \|  f^*\| _{\h_p^c}$.
Moreover, define the mixture space $\h_{p}  (\mathbb{R}^{d},\mathcal{M} )$
as follows:
$$
\h_{p}  (\mathbb{R}^{d},\mathcal{M} )=\h_{p}^{c}  (\mathbb{R}^{d},\mathcal{M} )+\h_{p}^{r}  (\mathbb{R}^{d},\mathcal{M} )\mbox{ for }1\leq p\leq2
$$
 equipped with the sum norm
$$
  \|  f \|  _{\h_{p}}=\inf  \{   \|  g \|  _{\h_{p}^{c}}+  \|  h \|  _{\h_{p}^{r}}:\, f=g+h,g\in \h_{p}^{c}  (\mathbb{R}^{d},\mathcal{M} ),h\in \h_{p}^{r}  (\mathbb{R}^{d},\mathcal{M} ) \} ,
$$
 and
$$
\h_{p}  (\mathbb{R}^{d},\mathcal{M} )=\h_{p}^{c}  (\mathbb{R}^{d},\mathcal{M} )\cap \h_{p}^{r}  (\mathbb{R}^{d},\mathcal{M} )\mbox{ for }2<p<\infty
$$
equipped with the intersection norm
$$
  \|  f \|  _{\h_{p} }=\max  \{   \|  f \|  _{\h_{p}^{c}},  \|  f \|  _{\h_{p}^{r}} \} .
$$
The local analogue of the Littlewood-Paley $g$-function of $f$ is
defined by
\begin{equation*}
g^{c}  (f )  (s )  = \Big(\int_{0}^1  |\frac{\partial}{\partial\varepsilon} \mathrm{P}_{\varepsilon}  (f )  (s ) |^{2}\varepsilon d\varepsilon \Big)^{\frac{1}{2}},\,s\in\mathbb{R}^{d}.
\end{equation*}
We will see in section \ref{section-general charact} that
$$  \|  s^{c}  (f ) \|  _p+ \| \mathrm{P}*f \| _p \approx  \|  g^{c}  (f ) \|  _p +\| \mathrm{P}*f \| _p$$
for all $1\leq p <\infty$.

\medskip

In the following, we give some easy facts that will be frequently used later. Firstly, we have
\begin{equation}\label{eq: 1}
  \|  s^{c}  (f ) \|  _2^{2}+     \|  \mathrm{P}*f \|  _2^{2}\approx    \|  f \|  _2^{2}.
\end{equation}
Indeed, by \eqref{eq: Planchel}, we have
\begin{equation*}
\begin{split}
\int_{\mathbb{R}^{d}}  \big|\frac{\partial}{\partial\varepsilon}\mathrm{P}_{\varepsilon}  (f )  (s ) \big|^{2}ds   &= \int_{\mathbb{R}^{d}}  \big|\widehat{\frac{\partial}{\partial\varepsilon}\mathrm{P}_{\varepsilon}}  (\xi ) \big|^{2}  |\widehat{f}  (\xi ) |^{2}d\xi\\
 &   = \int_{\mathbb{R}^{d}}4\pi^{2}  |\xi |^{2}  |\widehat{f}  (\xi ) |^{2}e^{-4\pi\varepsilon  |\xi |}d\xi.
\end{split}
\end{equation*}
Then
\begin{equation*}
\int_{\mathbb{R}^{d}}\int_{0}^{1}  \big|\frac{\partial}{\partial\varepsilon}\mathrm{P}_{\varepsilon}  (f )  (s ) \big|^{2}\varepsilon d\varepsilon ds\\
 = \frac{1}{4}\int_{\mathbb{R}^{d}}  (1-e^{-4\pi  |\xi |}-4\pi |\xi |e^{-4\pi  |\xi |} )  |\widehat{f}  (\xi ) |^{2}d\xi.
\end{equation*}
Therefore
\begin{equation*}
\begin{split}
  \|  s^{c}  (f ) \|  _2^{2}  &= \tau\int_{\mathbb{R}^{d}}\int_{\widetilde{\Gamma}}  \big|\frac{\partial}{\partial\varepsilon}\mathrm{P}_{\varepsilon}  (f )  (s+t ) \big|^{2}\frac{d\varepsilon dt}{\varepsilon^{d-1}}ds \\
&  =   \tau\int_{\mathbb{R}^{d}}\int_{0}^{1}\int_{B  (s,\varepsilon )}  \big|\frac{\partial}{\partial\varepsilon}\mathrm{P}_{\varepsilon}  (f )  (t ) \big|^{2}\frac{d\varepsilon dt}{\varepsilon^{d-1}} ds\\
&   =   c_{d}\, \tau\int_{\mathbb{R}^{d}}\int_{0}^{1}  \big|\frac{\partial}{\partial\varepsilon}\mathrm{P}_{\varepsilon}  (f )  (s ) \big|^{2}\varepsilon d\varepsilon ds\\
&    =   \frac{c_d}{4}\tau \int_{\mathbb{R}^{d}}  (1-e^{-4\pi  |\xi |}-4\pi |\xi |e^{-4\pi  |\xi |} )  |\widehat{f}  (\xi ) |^{2}d\xi,
\end{split}
\end{equation*}
where $c_{d}$ is the volume of the unit ball in $\mathbb{R}^{d}$.
Meanwhile,
$$
  \|  \mathrm{P}*f \|  _2^{2}=\tau\int_{\mathbb{R}^{d}}e^{-4\pi  |\xi |}  |\widehat{f}  (\xi ) |^{2}d\xi.
$$
Then we deduce \eqref{eq: 1} from the equality
$$\frac{4}{c_d }    \|  s^{c}  (f ) \|  _2^{2}+     \|  \mathrm{P}*f \|  _2^{2}=\tau \int_{\mathbb{R}^{d}}  (1- 4\pi |\xi |e^{-4\pi  |\xi |} )  |\widehat{f}  (\xi ) |^{2}d\xi $$
and the fact that $0\leq 4\pi |\xi |e^{-4\pi  |\xi |} \leq \frac 1 e$ for every $\xi\in \mathbb{R}^d$. Passing to adjoint, \eqref{eq: 1} also tells us that $\|  f\|  _{\h_2^r(\R,\M)}\approx \|  f^*\| _2= \|  f\| _2$. Then we have

\begin{equation}\label{rem: h2=L2}
\h_{2}^{c}  (\mathbb{R}^{d},\mathcal{M} )= \h_{2}^{r}  (\mathbb{R}^{d},\mathcal{M} )= L_{2}  (\mathcal{N} )
\end{equation}
with equivalent norms.

Next, if we apply \eqref{eq: Planchel-1} instead of \eqref{eq: Planchel} in the above proof, we get the following polarized version of \eqref{eq: 1},
\begin{equation} \label{eq:polarized}
\begin{split}
 \int_{\mathbb{R}^{d}}f  (s )g^{*}  (s )ds
  &   =  4 \int_{\mathbb{R}^{d}}\int_0^1 \frac{\partial}{\partial\varepsilon}\mathrm{P}_{\varepsilon}  (f )  (s )\frac{\partial}{\partial\varepsilon}\mathrm{P}_{\varepsilon}  (g )^{*}  (s )\varepsilon\, d\varepsilon ds\\
 &\;\;\;\; +  \int_{\mathbb{R}^{d}}\mathrm{P}*f  (s )  (\mathrm{P}*g  (s ) )^{*}ds + 4\pi \int_{\mathbb{R}^{d}}\mathrm{P}*f  (s )  (I(\mathrm{P})*g  (s ) )^{*}ds.\\
&  =  \frac{4}{c_d}\int_{\R}\iint_{\widetilde{\Gamma}}\frac{\partial}{\partial \e}\mathrm{P}_\e(f)(s+t)\frac{\partial}{\partial \e}\mathrm{P}_\e(g)^*(s+t)\frac{dtd\e}{\e^{d-1}}ds \\
 & \;\;\;\;  +  \int_{\mathbb{R}^{d}}\mathrm{P}*f  (s )  (\mathrm{P}*g  (s ) )^{*}ds + 4\pi \int_{\mathbb{R}^{d}}\mathrm{P}*f  (s )  (I(\mathrm{P})*g  (s ) )^{*}ds
 \end{split}
\end{equation}
for nice $f$, $g\in L_{1}  (\mathcal{M};\mathrm{R}_{d}^{c} )+L_{\infty}  (\mathcal{M}; \mathrm R_{d}^{c} )$ (recalling that $I$ is the Riesz potential of order $1$).

\subsection{Operator-valued bmo spaces}

Now we introduce the noncommutative analogue of bmo spaces defined
in \cite{Goldberg1979}. For any cube $Q\subset \mathbb{R}^{d}$, we
denote its center by $c_{Q}$, its side length by $l(Q)$, and its volume by $  |Q |$.
Let $f\in L_{\infty}  (\mathcal{M};\mathrm{R}^c_d)$. The
mean value of $f$ over $Q$ is denoted by $f_{Q}:=\frac{1}{  |Q |}\int_{Q}f  (s )ds$.
We set
\begin{equation}\label{eq: def bmo}
  \|  f \|  _{{\bmo}^{c}}  = \max   \Big\{ \sup_{  |Q |<1}  \big\|  (\frac{1}{  |Q |}\int_{Q}  |f-f_{Q} |^{2}dt )^{\frac{1}{2}} \big\|  _{\mathcal{M}} , \sup_{  |Q |=1}  \big\|  (\int_{Q}  |f |^{2}dt)^{\frac{1}{2}} \big\|  _{\mathcal{M}} \Big\}  .
\end{equation}
Then we define
$$
{\bmo}^{c}  (\mathbb{R}^{d},\mathcal{M} )=  \{ f\in L_{\infty}  (\mathcal{M};\mathrm{R}_{d}^{c} ):\,  \|  f \|  _{{\bmo}^{c}}<\infty \} .
$$
Respectively, define ${\bmo}^{r}  (\mathbb{R}^{d},\mathcal{M} )$
to be the space of all $f\in L^{\infty}  (\mathcal{M};\mathrm{R}_{d}^{r} )$
such that $$  \|  f^{*} \|  _{{\bmo}^{c}}<\infty $$
with the norm $  \|  f \|  _{{\bmo}^{r}}=  \|  f^{*} \|  _{{\bmo}^{c}}.$
And ${\bmo}  (\mathbb{R}^{d},\mathcal{M} )$ is defined
as the intersection of these two spaces
$$
{\bmo}  (\mathbb{R}^{d},\mathcal{M} )={\bmo}^{c}  (\mathbb{R}^{d},\mathcal{M} )\cap{\bmo}^{r}  (\mathbb{R}^{d},\mathcal{M} )
$$
equipped with the norm
$$
  \|  f \|  _{{\bmo}}=\max  \{   \|  f \|  _{{\bmo}^{c}},  \|  f \|  _{{\bmo}^{r}} \} .
$$

\begin{rmk} \label{|Q|=1 eq |Q|geq1}
Let $Q$ be a cube with volume $k^d\leq |Q| < (k+1)^d$ for some positive integer $k$. Then $Q$ can be covered by at most $(k+1)^d$ cubes with volume $1$, say $Q_j$'s. Evidently, 
$$\frac{1}{  |Q |}\int_{Q}  |f  |^{2}dt\leq k^{-d} \int_{Q}  |f  |^{2}dt\leq k^{-d} \sum_{j=1}^{(k+1)^d}\int_{Q_j}  |f  |^{2}dt. $$
Hence,
$$\sup_{  |Q |\geq 1}  \big\|  (\frac{1}{|Q|}\int_{Q}  |f |^{2}dt )^{\frac{1}{2}} \big\|  _{\mathcal{M}}  \leq 2^{\frac{d}{2}}\sup_{  |Q |=1}  \big\|  (\int_{Q}  |f |^{2}dt )^{\frac{1}{2}} \big\|  _{\mathcal{M}}  .$$
Thus, if we replace the second supremum in \eqref{eq: def bmo} over all  cubes of volume one by that over all cubes of volume not less than one, we get an equivalent norm of $\bmo ^c(\R,\M)$.
\end{rmk}

\begin{prop}
\label{Prop: bmo L}
Let $f\in{\bmo}^{c}  (\mathbb{R}^{d},\mathcal{M} ).$
Then
$$
  \|  f \|  _{L_{\infty}  (\mathcal{M};\mathrm{R}_{d}^c )}\lesssim  \|  f \|  _{{\bmo}^{c}}.
$$
 Moreover, ${\bmo}  (\mathbb{R}^{d},\mathcal{M} ),$ ${\bmo}^{c}  (\mathbb{R}^{d},\mathcal{M} )$
and ${\bmo}^{r}  (\mathbb{R}^{d},\mathcal{M} )$ are Banach
spaces.\end{prop}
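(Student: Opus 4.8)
The plan is to prove the pointwise bound first, and then deduce the Banach space property from it together with a standard completeness argument. For the $L_\infty(\mathcal M;\mathrm R_d^c)$ bound, recall that $\|f\|_{L_\infty(\mathcal M;\mathrm R_d^c)}^2 = \big\|\int_{\R}|f(t)|^2\,\frac{dt}{1+|t|^{d+1}}\big\|_{\mathcal M}$. I would decompose $\R$ dyadically around the origin: write $\R = Q_0 \cup \bigcup_{k\ge 0}(Q_{k+1}\setminus Q_k)$ where $Q_k$ is the cube centered at $0$ of side length $2^k$ (so $|Q_0|=1$). On each annulus $Q_{k+1}\setminus Q_k$ one has $1+|t|^{d+1}\gtrsim 2^{k(d+1)}$, hence
$$\int_{\R}|f(t)|^2\frac{dt}{1+|t|^{d+1}} \lesssim \sum_{k\ge 0} 2^{-k(d+1)}\int_{Q_{k+1}}|f(t)|^2\,dt.$$
Now $|Q_{k+1}| = 2^{(k+1)d}\ge 1$, so by Remark \ref{|Q|=1 eq |Q|geq1} (applied to the averaged quantity $\frac{1}{|Q_{k+1}|}\int_{Q_{k+1}}|f|^2$) we get $\big\|\int_{Q_{k+1}}|f|^2\,dt\big\|_{\mathcal M} = |Q_{k+1}|\big\|\frac{1}{|Q_{k+1}|}\int_{Q_{k+1}}|f|^2\,dt\big\|_{\mathcal M}\lesssim 2^{(k+1)d}\,\|f\|_{\bmo^c}^2$. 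Summing, $\sum_{k\ge 0}2^{-k(d+1)}2^{(k+1)d}\lesssim \sum_{k\ge0}2^{-k}<\infty$, which gives $\|f\|_{L_\infty(\mathcal M;\mathrm R_d^c)}\lesssim \|f\|_{\bmo^c}$. (Here one uses the triangle inequality in $\mathcal M$ for the positive operators $\int_{Q_{k+1}}|f|^2\,dt$; convergence of the series is what makes the sum of norms finite.)

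Next, that $\|\cdot\|_{\bmo^c}$ is a norm: positivity and homogeneity are immediate from \eqref{eq: def bmo}, while the triangle inequality follows because each of the two suprema is built from the $L_2$-type seminorms $f\mapsto \big\|(\frac{1}{|Q|}\int_Q|f-f_Q|^2)^{1/2}\big\|_{\mathcal M}$ and $f\mapsto \big\|(\int_Q|f|^2)^{1/2}\big\|_{\mathcal M}$, each of which satisfies the triangle inequality (it is the $L_2(\mathcal M; L_2^c)$-type norm of $f-f_Q$, resp. $f$, restricted to $Q$, and $f\mapsto f_Q$ is linear); taking suprema preserves subadditivity. Definiteness, $\|f\|_{\bmo^c}=0\Rightarrow f=0$, follows from the first part: it forces $\|f\|_{L_\infty(\mathcal M;\mathrm R_d^c)}=0$.

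Finally, completeness of $\bmo^c(\R,\M)$. Given a Cauchy sequence $(f_n)$ in $\bmo^c$, the first part shows it is Cauchy in $L_\infty(\mathcal M;\mathrm R_d^c)$, which is a Banach space, so $f_n\to f$ there; in particular $f_n\to f$ in $L_2(\mathcal M; L_2^c(Q,dt))$ for every bounded cube $Q$ (since on a bounded $Q$ the weight $\frac{1}{1+|t|^{d+1}}$ is bounded below), and consequently $(f_n)_Q\to f_Q$ in $L_2(\mathcal M)$. One then checks $f\in\bmo^c$ with $f_n\to f$ in $\bmo^c$ by a routine lower-semicontinuity argument: for fixed $Q$ and fixed $n$, $\big\|(\frac{1}{|Q|}\int_Q|f - f_Q - (f_m - f_{m,Q}) + (f_m-f_{m,Q}) - \cdots|^2)^{1/2}\big\|$ — more cleanly, fix $\varepsilon$, choose $N$ with $\|f_n-f_m\|_{\bmo^c}<\varepsilon$ for $m,n\ge N$, let $m\to\infty$ inside the finite-cube $L_2$-norms (using the convergence just noted and the weak-* lower semicontinuity of $\|\cdot\|_{\mathcal M}$ on positive operators), to conclude $\big\|(\frac1{|Q|}\int_Q|(f_n-f) - (f_n-f)_Q|^2)^{1/2}\big\|_{\mathcal M}\le\varepsilon$ for all such $Q$ and $n\ge N$, and similarly for the $|Q|=1$ term; this gives $\|f_n-f\|_{\bmo^c}\le\varepsilon$, hence also $f\in\bmo^c$. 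The cases of $\bmo^r$ and $\bmo$ follow by taking adjoints and intersecting. The main obstacle is the annular decomposition estimate in the first step — getting the geometric decay $\sum 2^{-k}$ to close requires pairing the weight decay $2^{-k(d+1)}$ against the cube-volume growth $2^{kd}$ correctly, and invoking Remark \ref{|Q|=1 eq |Q|geq1} at exactly the right place; the completeness argument, while needing care about which topology one passes limits in, is otherwise standard.
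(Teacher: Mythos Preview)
Your proof is correct and follows essentially the same strategy as the paper: tile $\R$, bound the weight $\frac{1}{1+|t|^{d+1}}$ on each tile, control $\|\int_{\text{tile}}|f|^2\,dt\|_{\mathcal M}$ via the $\bmo^c$ norm, and sum. The only difference is in the choice of tiling: the paper uses the lattice of \emph{unit} cubes $Q_m = Q_0 + m$, $m\in\mathbb{Z}^d$, so that $\|\int_{Q_m}|f|^2\,dt\|_{\mathcal M}\le \|f\|_{\bmo^c}^2$ directly from the $|Q|=1$ part of the definition, and then sums $\sum_{m}\frac{1}{1+|m|^{d+1}}<\infty$. Your dyadic-annulus decomposition works just as well but is slightly less direct, since it forces you to invoke Remark~\ref{|Q|=1 eq |Q|geq1} to handle the large cubes $Q_{k+1}$; the paper avoids this extra step. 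Your detailed treatment of the norm axioms and completeness is more thorough than the paper's one-line ``it is then easy to check'', and is carried out correctly.
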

\begin{proof}
Let $Q_0$ be the cube centered at the origin with side length 1 and  $Q_m= Q_0+ m$ for each $m\in \mathbb{Z}^d$.
For $f\in L_{\infty}  (\mathcal{M};\mathrm{R}_{d}^{c} )$,
\begin{equation*}
\begin{split}
  \|  f \|  _{L_{\infty}  (\mathcal{M};\mathrm{R}_{d}^{c} )}^{2}  &  =  \Big\|  \int_{\mathbb{R}^{d}}\frac{  |f  (t ) |^{2}}{1+  |t |^{d+1}}dt \Big\|  _{\mathcal{M}} \leq \sum_{m\in \mathbb{Z}^d}  \Big\|  \int_{Q_m}\frac{  |f  (t ) |^{2}}{1+  |t |^{d+1}}dt \Big\|  _{\mathcal{M}}\\
&   \lesssim  \sum_{m\in \mathbb{Z}^d}   \Big\|  \frac{1}{1+  |m   |^{d+1}} \int_{Q_m}  |f  (t ) |^{2}dt \Big\|  _{\mathcal{M}}\\
&    \lesssim   \|  f \|  _{{\bmo}^{c}}^{2} \sum_{m\in \mathbb{Z}^d}  \frac{1}{1+  |m   |^{d+1}} \lesssim   \|  f \|  _{{\bmo}^{c}}^{2}.
\end{split}
\end{equation*}
It is then easy to check that $\bmo ^c(\R,\M)$ is a Banach space.
\end{proof}

\begin{prop}\label{BMO-leq-bmo}
We have the inclusion ${\bmo}^{c}  (\mathbb{R}^{d},\mathcal{M} )\subset{\rm BMO}^{c}  (\mathbb{R}^{d},\mathcal{M} )$.
More precisely,
there exists a constant $C$ depending only on the dimension $d$, such that for any $f\in{\bmo}^{c}  (\mathbb{R}^{d},\mathcal{M} ),$
\begin{equation}
  \|  f \|  _{{\rm BMO}^{c}}\leq C  \|  f \|  _{{\bmo}^{c}}.\label{eq:bmo-BMO}
\end{equation}
\end{prop}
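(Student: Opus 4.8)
The plan is to split the supremum defining $\|f\|_{\BMO^c}$ according to the size of the cube $Q$ and to bound each piece by $\|f\|_{\bmo^c}$. For cubes $Q$ with $|Q|<1$ there is nothing to prove: the quantity $\big\|(\frac{1}{|Q|}\int_Q|f-f_Q|^2dt)^{\frac12}\big\|_{\M}$ is exactly the first supremum appearing in \eqref{eq: def bmo}, hence already $\leq\|f\|_{\bmo^c}$. So the whole issue is to control the oscillation over \emph{large} cubes, those with $|Q|\geq1$.

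For such a $Q$ I would first eliminate the mean $f_Q$ by the operator identity
\[
\int_Q|f-f_Q|^2\,dt=\int_Q|f|^2\,dt-|Q|\,f_Q^*f_Q ,
\]
obtained by expanding $(f-f_Q)^*(f-f_Q)$ and using $\int_Q f\,dt=|Q|f_Q$. Since $f_Q^*f_Q\geq0$, this yields the inequality in $\M$
\[
\frac{1}{|Q|}\int_Q|f-f_Q|^2\,dt\ \leq\ \frac{1}{|Q|}\int_Q|f|^2\,dt .
\]
One should first observe that all these integrals are well-defined positive elements of $\M$: since $f\in L_\infty(\M;\mathrm{R}_d^c)$, Proposition~\ref{Prop: bmo L} together with the boundedness of $1+|t|^{d+1}$ on the bounded set $Q$ gives $\int_Q|f|^2\,dt\in\M$. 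Now Remark~\ref{|Q|=1 eq |Q|geq1} is precisely the statement that $\sup_{|Q|\geq1}\big\|(\frac{1}{|Q|}\int_Q|f|^2dt)^{\frac12}\big\|_{\M}\leq 2^{\frac d2}\sup_{|Q|=1}\big\|(\int_Q|f|^2dt)^{\frac12}\big\|_{\M}\leq 2^{\frac d2}\|f\|_{\bmo^c}$, where the last inequality is the definition of $\|f\|_{\bmo^c}$ and where we use $\|A^{\frac12}\|_{\M}=\|A\|_{\M}^{\frac12}$ for $A\geq0$.

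Combining the two cases, every cube $Q\subset\R$ satisfies $\big\|\frac{1}{|Q|}\int_Q|f-f_Q|^2\,dt\big\|_{\M}^{\frac12}\leq 2^{\frac d2}\|f\|_{\bmo^c}$, and taking the supremum over all $Q$ gives \eqref{eq:bmo-BMO} with $C=2^{\frac d2}$. The argument is elementary; the only point requiring a little care is the large-cube case, where one must pass from the $\BMO$-type oscillation to the plain local $L_2$-average via the operator identity above before invoking Remark~\ref{|Q|=1 eq |Q|geq1}, and where one must check that the relevant integrals land in $\M$ via Proposition~\ref{Prop: bmo L}.
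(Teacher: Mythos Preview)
Your proof is correct and follows essentially the same route as the paper: split into small cubes (nothing to do) and large cubes, then for $|Q|\geq1$ reduce the oscillation to the plain $L_2$-average and invoke Remark~\ref{|Q|=1 eq |Q|geq1}. The only cosmetic difference is that you pass from $\frac{1}{|Q|}\int_Q|f-f_Q|^2\,dt$ to $\frac{1}{|Q|}\int_Q|f|^2\,dt$ via the operator identity $\int_Q|f-f_Q|^2=\int_Q|f|^2-|Q|\,f_Q^*f_Q\geq0$, whereas the paper uses the triangle inequality together with the Cauchy--Schwarz bound $\|f_Q\|_{\M}\leq\big\|(\frac{1}{|Q|}\int_Q|f|^2)^{1/2}\big\|_{\M}$; your variant is slightly cleaner and yields the constant $C=2^{d/2}$ rather than $2\cdot2^{d/2}$.
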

\begin{proof}
By virtue of Remark \ref{|Q|=1 eq |Q|geq1}, it suffices to compare the term $  \Big\|  (\frac{1}{  |Q |}\int_{Q}  |f |^{2}dt )^{\frac{1}{2}} \Big\|  _{\mathcal{M}}$
and the term $  \Big\|  (\frac{1}{  |Q |}\int_{Q}  |f-f_{Q} |^{2}dt )^{\frac{1}{2}} \Big\|  _{\mathcal{M}}$ for
$  |Q |\geq1$. By the triangle inequality and \eqref{eq: 2-1}, we have
\begin{equation*}
\begin{split}
  \Big\|    (\frac{1}{  |Q |}\int_{Q}  |f-f_{Q} |^{2}dt )^{\frac{1}{2}} \Big\|  _{\mathcal{M}}  & \leq   \Big\|    (\frac{1}{  |Q |}\int_{Q}  |f |^{2}dt )^{\frac{1}{2}} \Big\|  _{\mathcal{M}}+ \|  f_{Q} \|  _{\mathcal{M}}\\
 & \leq  2  \Big\|    (\frac{1}{  |Q |}\int_{Q}  |f |^{2}dt )^{\frac{1}{2}} \Big\|  _{\mathcal{M}},
\end{split}
\end{equation*}
 which leads immediately to \eqref{eq:bmo-BMO}.
 \end{proof}
Classically, BMO functions are related to Carleson measures (see
\cite{Garnett1981}). A similar relation still holds in the present noncommutative
local setting. We say that an $\mathcal{M}$-valued measure $d\lambda$
on the strip $S=\mathbb{R}^d\times (0,1)$ is a Carleson measure if
$$
N  (\lambda )= \sup_{  |Q |< 1}  \{ \frac{1}{  |Q |}  \big\|  \int_{T  (Q )}d\lambda \big\|  _{\mathcal{M}}:Q\subset\mathbb{R}^{d}\mbox{ cube } \} <\infty,
$$
where $T  (Q )=Q\times  (0, l(Q) ]$.

\begin{lem}
\label{lem: Carleson < bmo}Let $g\in{\bmo}^{c}  (\mathbb{R}^{d},\mathcal{M} )$.
Then $d\lambda_{g}=  |\frac{\partial}{\partial\varepsilon}\mathrm{P}_{\varepsilon}  (g )  (s ) |^{2}\varepsilon \, dsd\varepsilon$
is an $\mathcal{M}$-valued Carleson measure on the strip $S$ and
$$
\max \{N  (\lambda_{g} )^\frac{1}{2},  \,\,\|  \mathrm{P}*g \|  _{L_{\infty}  (\mathcal{N} )}\} \lesssim  \|  g \|  _{{\bmo}^{c}}.
$$
\end{lem}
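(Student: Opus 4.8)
We give a plan; both asserted bounds ultimately reduce to the oscillation estimates that already appeared in the proofs of Propositions~\ref{Prop: bmo L} and~\ref{BMO-leq-bmo}. We freely use \eqref{eq: 2-1}, \eqref{eq: Planchel} and Fubini's theorem for the (localized, square-integrable or bounded) functions occurring below; their validity is routine.

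\textbf{Boundedness of $\mathrm{P}*g$.} Since $\mathrm{P}=\mathrm{P}_{1}$ is a probability density with $\mathrm{P}(u)\approx(1+|u|^{d+1})^{-1}$, \eqref{eq: 2-1} gives for every $s\in\R$
$$\big|\mathrm{P}*g(s)\big|^{2}\le\int_{\R}\mathrm{P}(s-t)\,dt\cdot\int_{\R}\mathrm{P}(s-t)\,|g(t)|^{2}\,dt\lesssim\int_{\R}\frac{|g(t)|^{2}}{1+|s-t|^{d+1}}\,dt .$$
Tiling $\R$ by the unit cubes centered at the points $s+m$, $m\in\Z$, exactly as in the proof of Proposition~\ref{Prop: bmo L}, and using $\big\|\int_{Q}|g|^{2}\big\|_{\M}\le\|g\|_{\bmo^{c}}^{2}$ for a cube of volume one, one gets $\big\|\int_{\R}\frac{|g(t)|^{2}}{1+|s-t|^{d+1}}\,dt\big\|_{\M}\lesssim\|g\|_{\bmo^{c}}^{2}$ uniformly in $s$, hence $\|\mathrm{P}*g\|_{L_{\infty}(\N)}\lesssim\|g\|_{\bmo^{c}}$.

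\textbf{The Carleson estimate: reduction and the diagonal part.} Fix a cube $Q$ with center $c_{Q}$ and side length $l=l(Q)<1$. Since $\frac{\partial}{\partial\e}\mathrm{P}_{\e}$ annihilates constants, $\frac{\partial}{\partial\e}\mathrm{P}_{\e}(g)=\frac{\partial}{\partial\e}\mathrm{P}_{\e}(g-g_{Q})$; write $g-g_{Q}=h_{1}+h_{2}$ with $h_{1}=(g-g_{Q})\un_{2Q}$ and $h_{2}=(g-g_{Q})\un_{(2Q)^{c}}$. By $|\frac{\partial}{\partial\e}\mathrm{P}_{\e}(g)|^{2}\le 2|\frac{\partial}{\partial\e}\mathrm{P}_{\e}(h_{1})|^{2}+2|\frac{\partial}{\partial\e}\mathrm{P}_{\e}(h_{2})|^{2}$, it suffices to bound $\frac{1}{|Q|}\big\|\int_{T(Q)}|\frac{\partial}{\partial\e}\mathrm{P}_{\e}(h_{i})(s)|^{2}\e\,ds\,d\e\big\|_{\M}$ for $i=1,2$. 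For $h_{1}$ we enlarge $T(Q)$ to $\R\times(0,\infty)$ and apply the Plancherel identity (as in the computation preceding \eqref{eq: 1}, but with the $\e$-integral extended to $(0,\infty)$, which gives the constant $\tfrac14$):
$$\int_{T(Q)}\Big|\frac{\partial}{\partial\e}\mathrm{P}_{\e}(h_{1})\Big|^{2}\e\,ds\,d\e\le\int_{\R}\!\!\int_{0}^{\infty}\Big|\frac{\partial}{\partial\e}\mathrm{P}_{\e}(h_{1})\Big|^{2}\e\,d\e\,ds=\tfrac14\int_{2Q}|g-g_{Q}|^{2}\,dt .$$
Splitting $g-g_{Q}=(g-g_{2Q})+(g_{2Q}-g_{Q})$ and estimating $\|g_{2Q}-g_{Q}\|_{\M}\lesssim\|g\|_{\bmo^{c}}$ through Remark~\ref{|Q|=1 eq |Q|geq1} and \eqref{eq: def bmo}, we obtain $\big\|\int_{2Q}|g-g_{Q}|^{2}\big\|_{\M}\lesssim|Q|\,\|g\|_{\bmo^{c}}^{2}$, which is the desired bound after dividing by $|Q|$.

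\textbf{The off-diagonal part (the main point).} From the explicit formula, $|\frac{\partial}{\partial\e}\mathrm{P}_{\e}(u)|\lesssim(|u|^{2}+\e^{2})^{-\frac{d+1}{2}}$ and $\big\|\frac{\partial}{\partial\e}\mathrm{P}_{\e}\big\|_{L_{1}(\R)}\lesssim\e^{-1}$. For $(s,\e)\in T(Q)$ (so $s\in Q$, $0<\e\le l$) and $t\notin 2Q$ one has $|s-t|\gtrsim|t-c_{Q}|$, hence by \eqref{eq: 2-1}
$$\e\,\Big|\frac{\partial}{\partial\e}\mathrm{P}_{\e}(h_{2})(s)\Big|^{2}\le\e\,\Big\|\frac{\partial}{\partial\e}\mathrm{P}_{\e}\Big\|_{L_{1}(\R)}\int_{(2Q)^{c}}\Big|\frac{\partial}{\partial\e}\mathrm{P}_{\e}(s-t)\Big|\,|g(t)-g_{Q}|^{2}\,dt\lesssim\int_{(2Q)^{c}}\Big|\frac{\partial}{\partial\e}\mathrm{P}_{\e}(s-t)\Big|\,|g(t)-g_{Q}|^{2}\,dt .$$
Integrating in $(s,\e)$ over $T(Q)$ and doing the $\e$-integral first — this is where the truncation $\e<l$ is crucial, since $\frac{\partial}{\partial\e}\mathrm{P}_{\e}$ carries no extra decay in $\e$ — we use $\int_{0}^{l}|\frac{\partial}{\partial\e}\mathrm{P}_{\e}(s-t)|\,d\e\lesssim l\,|s-t|^{-(d+1)}\lesssim l\,|t-c_{Q}|^{-(d+1)}$ to get
$$\int_{T(Q)}\e\,\Big|\frac{\partial}{\partial\e}\mathrm{P}_{\e}(h_{2})(s)\Big|^{2}ds\,d\e\lesssim l\,|Q|\int_{(2Q)^{c}}\frac{|g(t)-g_{Q}|^{2}}{|t-c_{Q}|^{d+1}}\,dt .$$
The proof is then finished by the tail estimate $\big\|\int_{(2Q)^{c}}\frac{|g(t)-g_{Q}|^{2}}{|t-c_{Q}|^{d+1}}\,dt\big\|_{\M}\lesssim l^{-1}\|g\|_{\bmo^{c}}^{2}$: dividing by $|Q|$, the factors $l$ and $l^{-1}$ cancel. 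For this bound, decompose $(2Q)^{c}=\bigcup_{k\ge1}(2^{k+1}Q\setminus 2^{k}Q)$, note $|t-c_{Q}|\gtrsim 2^{k}l$ on the $k$-th annulus, and use
$$\Big\|\int_{2^{k+1}Q}|g-g_{Q}|^{2}\,dt\Big\|_{\M}\lesssim|2^{k+1}Q|\big(\|g\|_{\bmo^{c}}^{2}+\|g_{2^{k+1}Q}-g_{Q}\|_{\M}^{2}\big)\lesssim(k+1)^{2}\,2^{kd}l^{d}\,\|g\|_{\bmo^{c}}^{2},$$
where the last step uses Remark~\ref{|Q|=1 eq |Q|geq1} and the telescoping bound $\|g_{2^{k+1}Q}-g_{Q}\|_{\M}\le\sum_{j=0}^{k}\|g_{2^{j+1}Q}-g_{2^{j}Q}\|_{\M}\lesssim(k+1)\|g\|_{\bmo^{c}}$; since $\sum_{k\ge1}(k+1)^{2}2^{-k}<\infty$, the claim follows.

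\textbf{Main obstacle.} The delicate step is the off-diagonal estimate: unlike in the non-local setting of \cite{Mei2007}, there is no decay of $\frac{\partial}{\partial\e}\mathrm{P}_{\e}$ as $\e\to\infty$ to exploit, and the argument succeeds only because the truncation $\e<l(Q)$ produces a factor $l$ that exactly compensates the factor $l^{-1}$ coming from the (logarithmically growing) cube averages $g_{2^{k}Q}-g_{Q}$; making this bookkeeping work, together with the correct operator-valued Cauchy--Schwarz splitting, is the crux. The remaining technical points — the Plancherel step for $h_{1}$ and the uses of Fubini — are routine.
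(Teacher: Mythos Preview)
Your proof is correct and follows essentially the same strategy as the paper: split $g-c$ (with $c=g_Q$ for you, $c=g_{2Q}$ in the paper) into a local piece on $2Q$ handled by Plancherel and an off-diagonal piece handled by the kernel bound $|\frac{\partial}{\partial\e}\mathrm{P}_\e(u)|\lesssim(|u|+\e)^{-(d+1)}$ together with the standard annular decomposition and telescoping of means; the $\mathrm{P}*g$ bound via tiling by unit cubes is likewise the same. The only differences are cosmetic bookkeeping (you integrate in $\e$ before taking the $\M$-norm, the paper does a pointwise-in-$(s,\e)$ bound first), and your commentary about the truncation $\e<l(Q)$ being ``crucial'' is slightly overstated --- the same cancellation of the $l$-factors occurs in the non-local Carleson/BMO argument as well, since one is always working on $T(Q)$.
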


\begin{proof}
Given a cube $Q$ with $|Q|<1$, denote by $2Q$ the cube with the same center and twice the side length of $Q$. We decompose $g=g_1+g_2+g_3$, where $g_1=(g-g_{2Q})\mathbbm{1}_{2Q}$ and $g_2=(g-g_{2Q})\mathbbm{1}_{\R\backslash 2Q}$. Since $\int \frac{\partial}{\partial\e}\mathrm{P}_\e(s)ds=0$ for any $\e>0$, we have $\frac{\partial}{\partial\e}\mathrm{P}_\e(g)=\frac{\partial}{\partial\e}\mathrm{P}_\e(g_1)+\frac{\partial}{\partial\e}\mathrm{P}_\e(g_2)$. By \eqref{eq: 2-1},
$$
N  (\lambda_{g} )\leq 2(N  (\lambda_{g_1} )+N  (\lambda_{g_2} )).
$$
We first deal with $N  (\lambda_{g_1} )$. By \eqref{eq: Planchel} and \eqref{eq:bmo-BMO}, we have
\begin{equation*}
\begin{split}
\int_{T(Q)}  \big|\frac{\partial}{\partial\varepsilon}\mathrm{P}_{\varepsilon}  (g_1 )  (s ) \big|^{2}\varepsilon dsd\varepsilon & \leq  \int_{\R}\int_0^\infty \big|\frac{\partial}{\partial\varepsilon}\mathrm{P}_{\varepsilon}  (g_1 )  (s ) \big|^{2}\varepsilon dsd\varepsilon\\
&  =  \int_{\R}\int_0^\infty  \Big| \widehat{\frac{\partial}{\partial\varepsilon}\mathrm{P}_{\varepsilon}}(\xi) \Big|^2 |\widehat{g}_1(\xi) |^2\e d\e ds\\
 & \lesssim \int_{\R}  | g_1(s) |^2 ds =\int_{2Q}  | g-g_{2Q} |^2 ds \lesssim |Q| \,\|g\|^2_{\bmo^c}.
\end{split}
\end{equation*}
Thus, $N  (\lambda_{g_1} )\lesssim \|g\|^2_{\bmo^c}$.
  Since $\big|\frac{\partial}{\partial\varepsilon}\mathrm{P}_{\varepsilon}(s)\big|\lesssim \frac{1}{(\e+|s |)^{d+1}}$, applying \eqref{eq: 2-1}, we obtain
\begin{equation*}
  \big| \frac{\partial}{\partial\varepsilon}\mathrm{P}_{\varepsilon}(g_2)(s) \big|^2  \lesssim \frac{1}{\e}\int_{\R\backslash {2Q}}\frac{  | g(t)-g_{2Q} |^2}{(\e+  |s-t |)^{d+1}}dt.
\end{equation*}
The integral on the right hand side of the above inequality can be treated by a standard argument as follows: for any $(s,\e)\in T(Q)$, 
\begin{equation*}
\begin{split}
\int_{\R\backslash {2Q}}\frac{  | g(t)-g_{2Q} |^2}{(\e+  |s-t |)^{d+1}}dt & \lesssim  \int_{\R\backslash {2Q}}\frac{  | g(t)-g_{2Q} |^2}{  |t-c_Q |^{d+1}}dt\\
& \lesssim   \sum_{k\geq 1} \int_{2^{k+1}Q\backslash 2^{k}Q} \frac{  | g(t)-g_{2Q} |^2}{  |t-c_Q |^{d+1}}dt\\
 &   \lesssim \frac{1}{l(Q)}\sum_{k\geq1}2^{-k}\frac{1}{|2^{k+1}Q|}\int_{2^{k+1}Q}{  | g(t)-g_{2Q} |^2}dt\\
  &  \lesssim  \frac{1}{l(Q)}\|g \|^2_{\bmo^c},
\end{split}
\end{equation*}
where $c_Q$ is  the center of $Q$. Then, it follows that  $N  (\lambda_{g_2} )\lesssim \|g\|^2_{\bmo^c}$.

Now we deal with the term $  \|  \mathrm{P}*g  (s ) \|  _{\mathcal{M}}$. Let $Q_{m}=Q_0+m$ be the translate of the cube with
volume one centered at the origin, so $\mathbb{R}^d=\cup_{m\in \mathbb{Z}^d}Q_m$. By (\ref{eq: 2-1}), for any $s\in \mathbb{R}^d$, we have
\begin{equation*}
\begin{split}
  \|  \mathrm{P} *g  (s ) \|  _{\mathcal{M}}   &  =  \big\|  \sum_{m}\int_{Q_{m}}\mathrm{P}   (t )g  (s-t )dt \big \|  _{\mathcal{M}}\\
& \leq   \sum_{m}   \big(\int_{Q_{m}}  | \mathrm{P}   (t ) |^2dt )^{\frac{1}{2}}\cdot \sup_{m\in \mathbb{Z}^d}  \|  (\int_{Q_{m}}  |g  (s-t ) |^2dt \big)^{\frac{1}{2}} \|  _{\mathcal{M}}\\
&   \lesssim   \sup_{  |Q |=1}  \Big\|  (\frac{1}{  |Q |}\int_{Q}  |g(t) |^{2}dt )^\frac{1}{2} \Big\|  _{\mathcal{M}}\\
 &    \lesssim  \|  g \|  _{{\bmo}^{c}}.
\end{split}
\end{equation*}
  Thus, $  \|  \mathrm{P} *g \|  _{L_{\infty}  (\mathcal{N} )}=\sup_{s\in\mathbb{R}^{d}}  \|  \mathrm{P} *g  (s ) \|  _{\mathcal{M}}\lesssim  \|  g \|  _{{\bmo}^{c}}$,
which completes the proof.
\end{proof}

Reexamining the above proof, we find that the facts used to prove $ N  (\lambda_{g} )^\frac{1}{2}\lesssim   \|  g \|  _{{\bmo}^{c}}$ are 
\begin{itemize}
\item $\int_{\R} \e \frac{\partial}{\partial\e}\mathrm{P}_\e(s)ds=0$ for $  \forall\, \e<0$;
\item $\sup_{\xi\in \R}\int_0^\infty  \Big| \widehat{\e\frac{\partial}{\partial\varepsilon}\mathrm{P}_{\varepsilon}}(\xi) \Big|^2 \frac{d\e}{\e} <\infty$;
\item $\big| \e \frac{\partial}{\partial\varepsilon}\mathrm{P}_{\varepsilon}(s)\big|\lesssim \frac{\e}{(\e+|s |)^{d+1}}$.
\end{itemize}
We can easily check that if we replace $\e \frac{\partial}{\partial\e}\mathrm{P}_\e$ above by $\Psi_\e= \frac{1}{\e^{d}} \Psi (\frac{ \cdot}{\e})$, where $\Psi$ is a Schwartz function such that $\widehat{\Psi}(0)=0$, the corresponding  three conditions still hold.
On the other hand, the only fact used for proving the inequality $  \|  \mathrm{P} *g \|  _{L_{\infty}  (\mathcal{N} )} \lesssim  \|  g \|  _{{\bmo}^{c}}$ is that
$$\sum_{m}   (\int_{Q_{m}}  | \mathrm{P}   (t ) |^2dt )^{\frac{1}{2}}<\infty.$$
Recall that $H_2^\sigma(\R)$ denotes the potential Sobolev space, consisting of distributions $f$ such that $J^\sigma(f)\in L_2(\R)$. It is equipped with the norm $\|f\|_{H_2^\sigma(\R)}=\|J^\sigma f\|_{L_2(\R)}$. If $\psi$ is a function on $\mathbb{R}^d$ such that $\widehat{\psi}\in H_2^\sigma(\R)$ for some $\sigma>\frac{d}{2}$, we have
 $$\sum_{m}   \big(\int_{Q_{m}}  | \psi  (s ) |^2dt \big)^{\frac{1}{2}}\lesssim   \big( \sum_m \frac{1}{(1+|m|^2)^\sigma} \big)^\frac{1}{2}   \big(\int_{\R}(1+|s|^2)^\sigma |\psi(s)|^2ds \big)^\frac{1}{2}\lesssim \|  \widehat{\psi} \| _{H_2^\sigma}.
 $$
  Based on the above observation,  we have the following generalization of Lemma \ref{lem: Carleson < bmo}:

\begin{lem}\label{lem: general Carleson}
Let $\psi$ be the (inverse) Fourier transform of a function in $H_2^\sigma(\R)$, and $\Psi$ be a Schwartz function such that $\widehat{\Psi}(0)=0$. If $g\in{\bmo}^{c}  (\mathbb{R}^{d},\mathcal{M} )$, then $d\mu_{g}=   |\Psi_\e*g (s)|^2\frac{d\e ds}{\e}$
is an $\mathcal{M}$-valued Carleson measure on the strip $S$ and
\begin{equation}\label{Carleson < bmo by psi}
\max \{N  (\mu_{g} )^\frac{1}{2}, \,\, \|  \psi*g \|  _{L_{\infty}  (\mathcal{N} )}\} \lesssim  \|  g \|  _{{\bmo}^{c}}.
\end{equation}
In particular,
\begin{equation}\label{Carleson < bmo by potential}
\max \{N  (\mu_{g} )^\frac{1}{2}, \,\, \|  J(\mathrm{P} )*g \|  _{L_{\infty}  (\mathcal{N} )}\} \lesssim  \|  g \|  _{{\bmo}^{c}}.
\end{equation}
\end{lem}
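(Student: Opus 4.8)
The plan is to reduce the proof of Lemma \ref{lem: general Carleson} to the detailed argument already carried out for Lemma \ref{lem: Carleson < bmo}, following the three-point ``reexamination'' list that immediately precedes the statement. Concretely, I would first prove the Carleson bound $N(\mu_g)^{\frac12}\lesssim\|g\|_{\bmo^c}$. Fix a cube $Q$ with $|Q|<1$ and decompose $g=g_1+g_2$ with $g_1=(g-g_{2Q})\mathds{1}_{2Q}$ and $g_2=(g-g_{2Q})\mathds{1}_{\R\setminus 2Q}$; since $\widehat\Psi(0)=0$ we have $\int\Psi_\e(s)\,ds=0$, so $\Psi_\e*g=\Psi_\e*g_1+\Psi_\e*g_2$, and by \eqref{eq: 2-1} it suffices to bound $N(\mu_{g_1})$ and $N(\mu_{g_2})$ separately. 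For the local part $g_1$: extend the integral over $T(Q)$ to all of $S$, then to $\R\times(0,\infty)$, apply Plancherel \eqref{eq: Planchel}, use $\sup_\xi\int_0^\infty|\widehat{\Psi_\e}(\xi)|^2\frac{d\e}\e=\sup_\xi\int_0^\infty|\widehat\Psi(\e\xi)|^2\frac{d\e}\e<\infty$ (finite because $\Psi$ is Schwartz with $\widehat\Psi(0)=0$, hence $|\widehat\Psi(\eta)|\lesssim\min(|\eta|,|\eta|^{-1})$), and conclude $\int_{T(Q)}|\Psi_\e*g_1|^2\frac{ds\,d\e}\e\lesssim\int_{2Q}|g-g_{2Q}|^2\,ds\lesssim|Q|\,\|g\|_{\bmo^c}^2$ via \eqref{eq:bmo-BMO}. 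For the far part $g_2$: from the Schwartz decay $|\Psi_\e(s)|\lesssim\e^{-d}(1+|s|/\e)^{-d-1}=\frac{\e}{(\e+|s|)^{d+1}}$ and \eqref{eq: 2-1} one gets $|\Psi_\e*g_2(s)|^2\lesssim\e\int_{\R\setminus 2Q}\frac{|g(t)-g_{2Q}|^2}{(\e+|s-t|)^{d+1}}\,dt$, so $\frac1\e|\Psi_\e*g_2(s)|^2\lesssim\int_{\R\setminus 2Q}\frac{|g(t)-g_{2Q}|^2}{(\e+|s-t|)^{d+1}}\,dt$, which is exactly the quantity estimated by the ``standard argument'' dyadic sum in the proof of Lemma \ref{lem: Carleson < bmo}, giving $\lesssim\frac1{l(Q)}\|g\|_{\bmo^c}^2$ and hence $N(\mu_{g_2})\lesssim\|g\|_{\bmo^c}^2$.

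Next I would prove $\|\psi*g\|_{L_\infty(\N)}\lesssim\|g\|_{\bmo^c}$. Writing $\R=\cup_{m\in\Z}Q_m$ with $Q_m=Q_0+m$ the unit cubes, the Cauchy--Schwarz inequality \eqref{eq: 2-1} gives, for each $s$,
\begin{equation*}
\|\psi*g(s)\|_\M\le\sum_m\Big(\int_{Q_m}|\psi(t)|^2\,dt\Big)^{\frac12}\sup_m\Big\|\Big(\int_{Q_m}|g(s-t)|^2\,dt\Big)^{\frac12}\Big\|_\M\lesssim\|\psi*g\text{ factor}\|\cdot\sup_{|Q|=1}\Big\|\Big(\int_Q|g|^2\Big)^{\frac12}\Big\|_\M,
\end{equation*}
and the last supremum is $\le\|g\|_{\bmo^c}$ by definition \eqref{eq: def bmo} together with Remark \ref{|Q|=1 eq |Q|geq1}. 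So the point is to control $\sum_m(\int_{Q_m}|\psi|^2)^{1/2}$, and this is precisely the inequality displayed just before the statement: since $\widehat\psi\in H_2^\sigma(\R)$ for some $\sigma>\frac d2$, on $Q_m$ one has $1\lesssim(1+|s|^2)^{\sigma}/(1+|m|^2)^{\sigma}$, so $\sum_m(\int_{Q_m}|\psi|^2)^{1/2}\lesssim(\sum_m(1+|m|^2)^{-\sigma})^{1/2}(\int_\R(1+|s|^2)^\sigma|\psi(s)|^2\,ds)^{1/2}\lesssim\|\widehat\psi\|_{H_2^\sigma}<\infty$, the first sum converging because $2\sigma>d$. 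This finishes \eqref{Carleson < bmo by psi}.

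Finally, for \eqref{Carleson < bmo by potential} I would simply specialize: take $\Psi=\e\frac{\partial}{\partial\e}\mathrm{P}_\e$ at $\e=1$, equivalently note that the Carleson measure in \eqref{Carleson < bmo by potential} is literally $d\lambda_g$ from Lemma \ref{lem: Carleson < bmo} (this already satisfies all three bullet conditions, as the remark observes), and take $\psi$ with $\widehat\psi=J_1\widehat{\mathrm{P}}=(1+|\xi|^2)^{1/2}e^{-2\pi|\xi|}$, i.e.\ $\psi=J(\mathrm{P})$; one checks $\widehat\psi\in H_2^\sigma(\R)$ for every $\sigma$ since $(1+|\xi|^2)^{\sigma/2}(1+|\xi|^2)^{1/2}e^{-2\pi|\xi|}\in L_2(\R)$ by the exponential decay, so in particular for some $\sigma>\frac d2$, and \eqref{Carleson < bmo by psi} applies.

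The only genuinely non-routine point is verifying that the hypotheses on $\Psi$ and $\psi$ give exactly the three facts isolated in the reexamination paragraph — namely that $\widehat\Psi(0)=0$ plus Schwartz decay yields both the vanishing-mean/cancellation property and the uniform-in-$\xi$ integrability $\sup_\xi\int_0^\infty|\widehat\Psi(\e\xi)|^2\frac{d\e}\e<\infty$ (one splits the $\e$-integral at $\e=1/|\xi|$ and uses $|\widehat\Psi(\eta)|\lesssim|\eta|$ near $0$ and $|\widehat\Psi(\eta)|\lesssim|\eta|^{-1}$ near $\infty$), and that $\widehat\psi\in H_2^\sigma$ with $\sigma>\frac d2$ yields summability of $\sum_m(\int_{Q_m}|\psi|^2)^{1/2}$; once these are in hand the estimates are word-for-word those of Lemma \ref{lem: Carleson < bmo}, so I would present the proof in a compressed ``same as before, with the following substitutions'' style rather than repeating every line.
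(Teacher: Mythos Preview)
Your proposal is correct and follows essentially the same approach as the paper: the paper's proof is literally two lines, stating that \eqref{Carleson < bmo by psi} ``follows from the above discussion'' (the reexamination paragraph you reference) and that \eqref{Carleson < bmo by potential} then follows by checking $(1+|\xi|^2)^{1/2}e^{-2\pi|\xi|}\in H_2^\sigma(\R)$. One small clarification: in \eqref{Carleson < bmo by potential} the Carleson measure $d\mu_g$ is still the one built from the \emph{same} arbitrary $\Psi$ as in \eqref{Carleson < bmo by psi}, so there is no need to specialize $\Psi$; only $\psi$ is specialized to $J(\mathrm{P})$, and the sole verification required is the $H_2^\sigma$ membership of its Fourier transform, exactly as you do at the end.
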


\begin{proof}
\eqref{Carleson < bmo by psi} follows from the above discussion; \eqref{Carleson < bmo by potential} is ensured by \eqref{Carleson < bmo by psi} and the fact that  $(1+|\xi|^2)^{\frac 1 2 } e^{-2\pi |\xi|} \in H_2^\sigma(\R)$, which can be checked by a direct computation.
\end{proof}

\begin{rmk}
We will see in the next section that the converse inequality of \eqref{Carleson < bmo by potential} also holds.
\end{rmk}

\section{The dual space of $\h_{p}^{c}$ for $1\leq p< 2$}\label{section-dual}

In this section, we describe the dual of $\h_p^c(\R,\M)$ for $1\leq p< 2$ as bmo type spaces. We call these spaces ${\bmo} _q^c(\R,\M)$ (with $q$ the conjugate index of $p$). The argument used here is modelled on the one used in \cite{Goldberg1979} when studying the duality between $\mathcal{H}_{p}^c(\R,\M)$ and $\BMO_q^c(\R,\M)$. However, due to the truncation of the square functions,  some highly  non-trivial modifications are needed.

\subsection{Definition of ${\bmo}_q^c$} 
Let $2<q\leq \infty$. We define ${\bmo} _q^c(\R,\M)$ to be the space of all $f\in L_q(\M;\mathrm R^c_d)$ such that
$$
\|f\|_{{\bmo} _q^c}=\Big(\Big\|\underset{\substack{s\in Q\subset \R\\ |Q|<1}}{\sup{} ^+}\frac{1}{|Q|}\int_Q|f(t)-f_Q|^2dt\Big\|_{\frac{q}{2}}^\frac{q}{2}+ \Big\|\underset{\substack{s\in Q\subset \R\\ |Q|=1}}{\sup{} ^+}\frac{1}{|Q|}\int_Q|f(t)|^2dt\Big\|_{\frac q 2 }^\frac{q}{2}\Big)^\frac{1}{q}<\infty.
$$
If $q=\infty$, ${\bmo} _q^c(\R,\M)$ coincides with the space $\bmo^c(\R,\M)$ introduced in the previous section.

Note that the norm $\|\sup _i^+ a_i\|_{\frac{q}{2}}$ is just an intuitive notation since the pointwise supremum does not make any sense in the noncommutative setting. This is the norm of the Banach space $L_{\frac{q}{2}}(\N;\ell_\infty)$; we refer to \cite{Pisier, Junge, Junge-Xu-08} for more information.

If $1\leq p< \infty $ and $(a_i)_{i\in \mathbb{Z}}$ is a sequence of positive elements in $L_p(\N)$, it has been proved by Junge (see \cite{Junge}, Remark 3.7) that
\begin{equation} \label{eq: Lp infty norm}
\|\sup _i{}^+ a_i\|_{p}=\sup \big\{\sum_{i\in \mathbb{Z}}\tau ( a_i b_i):\,b_i\in L_{q}(\N),\,b_i \geq 0, \,\|\sum_{i\in \mathbb{Z}}b_i
\|_{q} \leq 1 \big\}.
\end{equation}
It is also known that a positive sequence $(x_i)_i$ belongs to $L_p(\N; \ell_\infty)$  if and only if
there is an $a\in L_p(\N)$ such that $x_i\leq  a$ for all $i$, and moreover,
$$
\|(x_i)\|_{L_p(\N; \ell_\infty)} =\inf\{ \|a\|_p :\;\; a\in L_p(\N), x_i\leq a,   \forall\, i    \}.
$$
Then we get the following fact (which can be taken as an equivalent  definition): $f\in {\bmo} _q^c(\R,\M)$  if and only if
\begin{equation}\label{eq: a}
\exists\, a\in L_{\frac{q}{2}}(\N) \text{ s.t. } \frac{1}{|Q|}\int_{Q}|f(t)-f_Q|^2dt\leq a(s),\; \forall\,s\in Q \text{ and} \;\forall \,Q \subset \R \text{ with } |Q|<1
\end{equation}
and
\begin{equation}\label{eq: b}
\exists \,b\in L_{\frac{q}{2}}(\N) \text{ s.t. } \frac{1}{|Q|}\int_{Q}|f(t)|^2dt\leq b(s),\; \forall\,s\in Q \text{ and} \;\forall \,Q \subset \R  \text{ with } |Q|=1.
\end{equation}
If this is the case, then
$$
\|f\|_{{\bmo} _q^c}=\inf\big\{\big( \|a\|_{\frac{q}{2}}^{\frac{q}{2}}+\|b\|_{\frac{q}{2}}^{\frac{q}{2}}\big)^{\frac{1}{q}}: a, b  \text{ as in }\eqref{eq: a} \text{ and } \eqref{eq: b} \text{ respectively}\big\}.
$$

In fact, the cubes considered in the definition of ${\bmo} _q^c(\R,\M)$ can be reduced to cubes with dyadic lengths. Let $Q_s^k$ denote the cube centered at $s$ and with side length $2^{-k}$, $k\in \mathbb{Z}$. Set
$$
f_k^\# (s)=\frac{1}{|Q_s^{k}|}\int_{Q_s^{k}}\big| f(t)-f_{Q_s^k}\big|^2dt \quad \text{and}\quad f^\# (s)=\frac{1}{|Q_s^{0}|}\int_{Q_s^{0}}\big| f(t)\big|^2dt.
$$

\begin{lem}\label{bmoq-equi}
If $q> 2$, then
$$
\Big(\|\sup _{k\geq 1}{}^+f_k^\# \|_{\frac{q}{2}}^\frac{q}{2}+ \|f^\# \|_{\frac{q}{2}}^\frac{q}{2}\Big)^\frac{1}{q}
$$
gives an equivalent norm in ${\bmo} _q^c(\R,\M)$.
\end{lem}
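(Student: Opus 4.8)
The plan is to establish the comparison with $\|\cdot\|_{\bmo_q^c}$ in both directions, the delicate one being the estimate of the full $\bmo_q^c$-norm by the dyadic-cube quantity. In the easy direction, observe that for $k\ge 1$ the centered cube $Q_s^k$ has volume $2^{-dk}<1$ and for $k=0$ it has volume $1$, and in both cases $s\in Q_s^k$; hence any pair $(a,b)$ admissible in the sense of \eqref{eq: a}--\eqref{eq: b} satisfies $f_k^\#(s)\le a(s)$ for every $k\ge 1$ and $f^\#(s)\le b(s)$. Since $a,b\in L_{\frac q2}(\N)$, the domination criterion recalled just before \eqref{eq: a} gives $\sup_{k\ge1}{}^+f_k^\#\in L_{\frac q2}(\N;\ell_\infty)$ with norm $\le\|a\|_{\frac q2}$, and $\|f^\#\|_{\frac q2}\le\|b\|_{\frac q2}$; taking the infimum over $(a,b)$ shows that the displayed quantity is $\lesssim\|f\|_{\bmo_q^c}$.

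For the converse, fix nonnegative $a_0,b_0\in L_{\frac q2}(\N)$ with $f_k^\#(s)\le a_0(s)$ for every $k\ge 1$ and $f^\#(s)\le b_0(s)$, and construct from them an admissible pair $(a,b)$ for $\|\cdot\|_{\bmo_q^c}$ whose norms are controlled by $\|a_0\|_{\frac q2}+\|b_0\|_{\frac q2}$. The geometric point is the following elementary dichotomy for a cube $Q$ with $s\in Q$: since any two points of $Q$ are within $\ell_\infty$-distance $l(Q)$, if $l(Q)\le\tfrac14$ then $Q\subset Q_s^{k}$ for $k:=\lfloor\log_2\tfrac1{2l(Q)}\rfloor$, which obeys $k\ge1$ and $|Q_s^k|\le4^d|Q|$; while if $\tfrac14<l(Q)\le1$ then $Q\subset Q_s^{-1}$, and $Q_s^{-1}$ is the (essentially disjoint) union of the $2^d$ unit cubes $Q_{s+v}^0$ with $v\in\{\pm\tfrac12\}^d$. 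Using in addition the operator-valued minimization inequality $\frac1{|Q|}\int_Q|f-f_Q|^2\le\frac1{|Q|}\int_Q|f-c|^2$, valid for any constant $c$ (proved by expanding $(f-c)^*(f-c)$, which yields $\int_Q|f-c|^2=\int_Q|f-f_Q|^2+|Q|\,|f_Q-c|^2$), together with monotonicity of $\int_{(\cdot)}|f-c|^2$ in the domain, one obtains for $|Q|<1$: if $l(Q)\le\tfrac14$, taking $c=f_{Q_s^k}$, $\frac1{|Q|}\int_Q|f-f_Q|^2\le\frac{|Q_s^k|}{|Q|}f_k^\#(s)\le 4^d a_0(s)$; and if $\tfrac14<l(Q)<1$, taking $c=0$, $\frac1{|Q|}\int_Q|f-f_Q|^2\le4^d\int_{Q_s^{-1}}|f|^2=4^d\sum_{v\in\{\pm1/2\}^d}f^\#(s+v)\le 4^d\sum_{v}b_0(s+v)$. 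Similarly, for $|Q|=1$ one has $Q\subset Q_s^{-1}$ as well, so $\frac1{|Q|}\int_Q|f|^2\le\sum_{v}f^\#(s+v)\le\sum_{v}b_0(s+v)$. Hence $b(s):=\sum_{v\in\{\pm1/2\}^d}b_0(s+v)$ and $a(s):=4^d a_0(s)+4^d b(s)$ form an admissible pair for $\|\cdot\|_{\bmo_q^c}$. Since the trace on $\N=L_\infty(\R)\overline{\otimes}\M$ is translation invariant, each translate $b_0(\cdot+v)$ has the same $L_{\frac q2}(\N)$-norm as $b_0$, so $\|b\|_{\frac q2}\lesssim\|b_0\|_{\frac q2}$ and $\|a\|_{\frac q2}\lesssim\|a_0\|_{\frac q2}+\|b_0\|_{\frac q2}$; taking the infimum over $a_0,b_0$ gives $\|f\|_{\bmo_q^c}\lesssim$ the displayed quantity (and when only the latter is assumed finite, the requisite membership $f\in L_q(\M;\mathrm{R}_d^c)$ follows exactly as in Proposition \ref{Prop: bmo L}).

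The one step that is not routine bookkeeping is the handling of cubes of side length $l(Q)\in(\tfrac14,1)$: these are of order-one scale but are neither centered dyadic cubes of index $k\ge1$ nor unit cubes, a feature peculiar to the local (inhomogeneous) setting. We dispose of them by replacing the single centered unit cube $Q_s^0$ with its $2^d$ translated copies $Q_{s+v}^0$ and invoking translation invariance of the trace; everything else is the standard comparison of mean oscillations over nested cubes. Note finally that $q>2$ is used only so that $\tfrac q2>1$, which makes $L_{\frac q2}(\N)$ a Banach space and renders available the description \eqref{eq: Lp infty norm}--\eqref{eq: b} of $\bmo_q^c$ by dominating functions.
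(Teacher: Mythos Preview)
Your proof is correct and follows essentially the same approach as the paper's: both reduce arbitrary cubes $Q\ni s$ to centered cubes $Q_s^k$ of comparable size and then pass from $k\ge -1$ down to $k\ge 1$ by absorbing the borderline scales into the non-oscillatory term $f^\#$. Your argument is in fact more explicit than the paper's at the one nontrivial point: the paper writes $\|\sup_{k\ge -1}{}^+ f_k^\#\|_{q/2}^{1/2}\lesssim 2^d\|\sup_{k\ge 1}{}^+ f_k^\#\|_{q/2}^{1/2}$ without comment, whereas you spell out that $f_0^\#$ and $f_{-1}^\#$ are controlled not by the oscillation terms $f_k^\#$ with $k\ge 1$ but by the translated copies $f^\#(\cdot+v)$, and then invoke translation invariance of the trace to close the estimate.
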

\begin{proof}
It is obvious from the definition  that
$$
\|\sup _{k\geq 1}{}^+f_k^\#  \|_{\frac{q}{2}}^\frac{1}{2}\leq \|f \|_{{\bmo} _q^c}\quad \text{and}\quad \|f^\#  \|_{\frac{q}{2}}^\frac{1}{2}\leq \|f \|_{{\bmo} _q^c}.
$$
We notice that for any cube $Q$ with $|Q|<1$ and $s\in Q$, there exists $k\geq -1$ such that $Q\subset Q_s^k$ and $|Q_s^k|\leq 4^d |Q|$. Thus
$$
\frac{1}{4^d}\Big\|\underset{\substack{s\in Q\subset \R\\ |Q|<1}}{\sup{} ^+}\frac{1}{|Q|}\int_Q|f(t)-f_Q|^2dt\Big\|_{\frac{q}{2}}^\frac{1}{2}\lesssim \|\sup _{k\geq -1}{}^+f_k^\#  \|_{\frac{q}{2}}^\frac{1}{2}\lesssim 2^d \|\sup _{k\geq 1}{}^+f_k^\# \|_{\frac{q}{2}}^\frac{1}{2}.
$$
Similarly,
$$
\frac{1}{4^d}\Big\|\underset{\substack{s\in Q\subset \R\\ |Q|=1}}{\sup{} ^+}\frac{1}{|Q|}\int_Q|f(t)|^2dt\Big\|_{\frac{q}{2}}^\frac{1}{2}\leq 2^d \|f^\# \|_{\frac{q}{2}}^\frac{1}{2}.
$$
Thus the lemma is proved.
\end{proof}

From the proofs of  Proposition \ref{Prop: bmo L} and Lemma \ref{lem: Carleson < bmo}, we can easily see that their $q$-analogues still hold in the present setting. We leave the proofs to the reader.

\begin{prop}\label{prop: bmoq Lq}
Let $q> 2$ and $f\in{\bmo}_q^{c}  (\mathbb{R}^{d},\mathcal{M} )$.
Then
$$
  \|  f \|  _{L_q  (\mathcal{M};\mathrm{R}_{d}^c )}\lesssim  \|  f \|  _{{\bmo}_q^{c}}.
$$
\end{prop}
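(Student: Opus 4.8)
The plan is to mimic the argument in the proof of Proposition \ref{Prop: bmo L}, replacing $\M$-norm estimates by $L_{q/2}(\N)$-norm estimates and using the operator-space machinery around \eqref{eq: Lp infty norm}. As in that proof, partition $\R=\bigcup_{m\in\Z}Q_m$ into unit cubes $Q_m=Q_0+m$ and write, for $f\in{\bmo}_q^c(\R,\M)$,
\begin{equation*}
|f|_{\mathrm{R}_d^c}^2=\int_{\R}\frac{|f(t)|^2}{1+|t|^{d+1}}dt\le\sum_{m\in\Z}\frac{1}{1+|m|^{d+1}}\int_{Q_m}|f(t)|^2dt ,
\end{equation*}
up to an absolute constant, exactly as before. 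The point is then to bound $\big\||f|_{\mathrm{R}_d^c}^2\big\|_{q/2}$ by $\|f\|_{{\bmo}_q^c}^2$.

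First I would invoke condition \eqref{eq: b}: there is $b\in L_{q/2}(\N)$ with $\frac{1}{|Q|}\int_Q|f(t)|^2dt\le b(s)$ for all $s\in Q$ whenever $|Q|=1$, and $\|b\|_{q/2}\le\|f\|_{{\bmo}_q^c}^2$ (more precisely $\le\|f\|_{{\bmo}_q^c}$ raised to the appropriate power; one may take $b$ nearly optimal). Since $b$ is independent of the cube, for each $m$ we have $\int_{Q_m}|f(t)|^2dt\le b(s)$ for every $s\in Q_m$; integrating this in $s$ over $Q_m$ (which has volume $1$) and using that the operator inequality is preserved, we get $\int_{Q_m}|f(t)|^2dt\le\int_{Q_m}b(s)ds$ as elements of $\M$ (indeed of $L_{q/2}(\M)$ after pairing, but the pointwise-in-$m$ operator inequality is what we need). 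Summing against the weights,
\begin{equation*}
|f|_{\mathrm{R}_d^c}^2\lesssim\sum_{m\in\Z}\frac{1}{1+|m|^{d+1}}\int_{Q_m}b(s)ds\le\sum_{m\in\Z}\frac{1}{1+|m|^{d+1}}\,\big\|b\big\|_{L_\infty(Q_m;\,\cdot)}\ ,
\end{equation*}
but more cleanly: the function $s\mapsto b(s)$ lies in $L_{q/2}(\N)$, and $|f|_{\mathrm{R}_d^c}^2\lesssim (w*b)$ pointwise where $w(s)=\frac{1}{1+|s|^{d+1}}\in L_1(\R)$ is summable, so that $\big\||f|_{\mathrm{R}_d^c}^2\big\|_{L_{q/2}(\M)}\lesssim\|w\|_{L_1}\,\|b\|_{L_{q/2}(\N)}\lesssim\|f\|_{{\bmo}_q^c}^2$ by Minkowski's integral inequality in $L_{q/2}(\M)$. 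Taking square roots gives $\|f\|_{L_q(\M;\mathrm{R}_d^c)}\lesssim\|f\|_{{\bmo}_q^c}$.

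I expect the only mild subtlety — the "main obstacle", though it is not severe — to be the passage from the pointwise-in-$s$ operator inequality $\frac{1}{|Q_m|}\int_{Q_m}|f|^2dt\le b(s)$ to the norm bound on $|f|_{\mathrm{R}_d^c}^2$: one must be careful that summing/integrating operator inequalities with positive scalar weights is legitimate (it is, by positivity) and that Minkowski's inequality is applied in the noncommutative $L_{q/2}$-space rather than naively. An alternative, fully parallel to Proposition \ref{Prop: bmo L}, is to argue directly with $\|\cdot\|_{q/2}$ in place of $\|\cdot\|_{\M}$ throughout and use the triangle inequality in $L_{q/2}(\M)$ together with $\big\|\int_{Q_m}|f(t)|^2dt\big\|_{q/2}\le\|b\|_{q/2}$ for each $m$; this avoids convolution entirely and is the route I would actually write, since it is the verbatim $q$-analogue the authors allude to. Either way the proof is short, and I would simply remark that it follows the pattern of Proposition \ref{Prop: bmo L} with $\M$ replaced by $L_{q/2}(\N)$.
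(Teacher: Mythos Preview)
Your proposal is correct and is exactly the $q$-analogue of Proposition~\ref{Prop: bmo L} that the paper has in mind (the paper gives no separate proof and simply refers back to that argument). One small slip: in your first route, after Minkowski you get $\int_{\R} w(s)\|b(s)\|_{L_{q/2}(\M)}\,ds$, which is bounded by $\|w\|_{L_{(q/2)'}(\R)}\,\|b\|_{L_{q/2}(\N)}$ via H\"older (not $\|w\|_{L_1}\,\|b\|_{L_{q/2}(\N)}$, since $\|b\|_{L_{q/2}(\N)}$ is an $L_{q/2}$-norm in the $s$-variable); your ``alternative'' route avoids this entirely and is the cleanest version to write.
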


\begin{lem}\label{lem: Caleson<bmo q}
Let $f\in {\bmo} _q^c(\R,\M)$ and assume that the operators $a$ and $b$ satisfy \eqref{eq: a} and \eqref{eq: b} respectively. Then $d\lambda_f$ is a $q$-Carleson measure in the following sense:
$$
\frac{1}{|Q|}\int_{T(Q)}|\frac{\partial}{\partial\varepsilon}\mathrm{P} _{\varepsilon}(f)(t)|^2\e dtd\e\lesssim  a(s),\; \forall\,s\in Q \text{ and } \;\forall \,Q \subset \R \text{ with } |Q|<1.
$$
Moreover, $|\psi *f(s)|^2\lesssim b(s)$ for any $s\in \R$, if $\psi$ is the (inverse) Fourier transform of a function in $H_2^\sigma(\R)$.
\end{lem}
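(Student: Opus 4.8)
The statement is the pointwise, $L_q$-valued counterpart of Lemma~\ref{lem: Carleson < bmo} (for the Carleson assertion) together with the computation preceding Lemma~\ref{lem: general Carleson} (for the assertion on $\psi*f$). The plan is to rerun those two arguments, but to replace every supremum of an $\M$-norm over a family of cubes by an application of the pointwise operator inequalities \eqref{eq: a} and \eqref{eq: b} at the fixed point $s$; finiteness of all the integrals below is guaranteed by $f\in L_q(\M;\mathrm R_d^c)$ (Proposition~\ref{prop: bmoq Lq}).

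For the Carleson estimate, fix a cube $Q$ with $|Q|<1$ and a point $s\in Q$, and decompose $f=f_1+f_2+f_3$ with $f_1=(f-f_{2Q})\mathbbm{1}_{2Q}$, $f_2=(f-f_{2Q})\mathbbm{1}_{\R\setminus 2Q}$ and $f_3\equiv f_{2Q}$. Since $\int_\R\frac{\partial}{\partial\e}\mathrm P_\e=0$, one has $\frac{\partial}{\partial\e}\mathrm P_\e(f)=\frac{\partial}{\partial\e}\mathrm P_\e(f_1)+\frac{\partial}{\partial\e}\mathrm P_\e(f_2)$, so by \eqref{eq: 2-1} the two pieces can be treated separately. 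For $f_1$ I would enlarge $T(Q)$ to the whole half-space and use the operator-valued Plancherel identity exactly as in the proof of \eqref{eq: 1} to obtain $\int_{T(Q)}|\frac{\partial}{\partial\e}\mathrm P_\e(f_1)(t)|^2\e\,dt\,d\e\lesssim\int_{2Q}|f-f_{2Q}|^2$; when $|2Q|<1$ this is $\le|2Q|\,a(s)\lesssim|Q|\,a(s)$ by \eqref{eq: a}, and when $|2Q|\ge1$ one splits $2Q$ into finitely many unit cubes, applies \eqref{eq: b} to the one containing $s$, bounds the rest by averages of $b$, and first chains cube-means from scale $l(2Q)$ down to the unit scale. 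For $f_2$ I would use the kernel bound $|\frac{\partial}{\partial\e}\mathrm P_\e(u)|\lesssim(\e+|u|)^{-(d+1)}$ and \eqref{eq: 2-1} to get, for $(t,\e)\in T(Q)$, $|\frac{\partial}{\partial\e}\mathrm P_\e(f_2)(t)|^2\lesssim\frac1\e\int_{\R\setminus 2Q}\frac{|f(u)-f_{2Q}|^2}{(\e+|t-u|)^{d+1}}\,du$, cut $\R\setminus 2Q$ into the dyadic shells $2^{k+1}Q\setminus2^kQ$, bound $\int_{2^{k+1}Q}|f-f_{2Q}|^2\lesssim|2^{k+1}Q|\,(k+1)^2\max_{0\le i\le k}A_i$ with $A_i=\frac1{|2^iQ|}\int_{2^iQ}|f-f_{2^iQ}|^2$, and control each $A_i$ by $a(s)$ when $|2^iQ|<1$ and by the unit-cube decomposition coming from \eqref{eq: b} when $|2^iQ|\ge1$. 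The factor $2^{-k}$ produced by the kernel makes the $k$-sum converge, and integrating the resulting bound in $(t,\e)$ over $T(Q)=Q\times(0,l(Q)]$ restores the normalising factor $|Q|$.

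For the second assertion I would follow the computation before Lemma~\ref{lem: general Carleson}: tile $\R$ by the unit cubes $s+m+Q_0$, $m\in\Z$, centred at $s+m$, write $\psi*f(s)=\sum_m\int_{s+m+Q_0}\psi(s-u)f(u)\,du$, and apply \eqref{eq: 2-1} once inside each cube and once over the lattice with the summable scalar weights $(1+|m|^2)^{-\sigma}$. This gives $|\psi*f(s)|^2\lesssim\sum_m\big(\int_{s+m+Q_0}(1+|s-u|^2)^\sigma|\psi(s-u)|^2\,du\big)\big(\int_{s+m+Q_0}|f(u)|^2\,du\big)$, in which the $m=0$ term is at most $\|\widehat\psi\|_{H_2^\sigma}^2\,b(s)$ by \eqref{eq: b}, while the full $m$-sum of the $\psi$-factors equals $\|\widehat\psi\|_{H_2^\sigma}^2<\infty$ because $\widehat\psi\in H_2^\sigma$ with $\sigma>d/2$, exactly as in the scalar case; this yields the desired pointwise bound.

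The hard part is the bookkeeping forced by the step from an $\M$-norm estimate to a pointwise-in-$s$ operator estimate: whereas Lemma~\ref{lem: Carleson < bmo} uses $\|f\|_{\bmo^c}\approx\|f\|_{\BMO^c}$ on cubes of every size, here \eqref{eq: a} controls the oscillation of $f$ only on cubes of volume $<1$ and \eqref{eq: b} controls $\int_Q|f|^2$ only on cubes of volume $=1$. The genuinely delicate point is therefore the transition across the unit scale in the far-field sum (and, for $|2Q|\ge1$, already in the near term): there one must isolate the single unit cube containing $s$ — on which \eqref{eq: b} furnishes a clean $b(s)$ — and absorb all the other unit cubes into a Hardy–Littlewood-type average of $b$, which is still controlled in $L_{q/2}(\N)$ since $q/2>1$. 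Tracking this, together with the logarithmic losses from chaining cube-means, is what makes the two stated inequalities hold (after enlarging $a,b$, if necessary, without changing their $L_{q/2}$-norms up to constants).
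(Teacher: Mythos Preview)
Your approach is the one the paper intends: it simply points back to the proofs of Proposition~\ref{Prop: bmo L} and Lemma~\ref{lem: Carleson < bmo} and declares that the $q$-analogues follow in the same way, leaving details to the reader. You carry out that adaptation and correctly isolate the one genuine difficulty of the $q<\infty$ case, namely that \eqref{eq: a} and \eqref{eq: b} only see cubes through the fixed point $s$, whereas the $g_2$-shell argument and the $\psi*f$ estimate involve unit cubes arbitrarily far from $s$.

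There is, however, a gap in your $\psi*f$ paragraph that your final caveat does not quite repair as written. From
\[
|\psi*f(s)|^2\lesssim\sum_{m}\Big(\int_{s+m+Q_0}(1+|s-u|^2)^\sigma|\psi(s-u)|^2\,du\Big)\Big(\int_{s+m+Q_0}|f(u)|^2\,du\Big)
\]
you cannot conclude a bound by $b(s)$: knowing that the $\psi$-factors sum to $\|\widehat\psi\|_{H_2^\sigma}^2$ controls nothing unless all the $f$-factors are dominated by $b(s)$, and for $m\neq0$ the cube $s+m+Q_0$ misses $s$, so \eqref{eq: b} says nothing. In fact the literal inequality $|\psi*f(s)|^2\lesssim b(s)$ for the \emph{given} $b$ is false in general (scalar case: take $f$ supported on a unit cube far from $s$; the minimal admissible $b$ vanishes at $s$ while $\psi*f(s)\neq0$ whenever $\psi$ is not compactly supported). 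The same obstruction hits the Carleson estimate once $|2^{k+1}Q|\ge1$, and already for the near term when $|2Q|\ge1$. The remedy you sketch at the end --- replacing $b$ by a Hardy--Littlewood-type maximal average of $b$, and $a$ by $a$ plus such an average --- is correct and should be the main argument rather than a parenthetical: it produces majorants $a',b'\in L_{q/2}(\N)$ with $\|a'\|_{q/2}+\|b'\|_{q/2}\lesssim\|a\|_{q/2}+\|b\|_{q/2}$, which is exactly the form in which the lemma is used downstream (cf.\ the proof of Lemma~\ref{lem: duality}, where only the existence of such an $a$ with $\|a\|_{q/2}\lesssim\|g\|_{\bmo_q^c}^2$ is invoked).
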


\subsection{A bounded map}

In the sequel, we equip the truncated cone $\widetilde{\Gamma}=\{(s,\e)\in \mathbb{R}_+^{d+1} :   |s|<\e< 1\}$ with the measure $\frac{dtd\e}{\e^{d+1}}$.   For any $1\leq p<\infty$, we will embed $\h_{p}^{c}  (\mathbb{R}^{d},\mathcal{M} )$
into a larger space $L_{p}  \big(\mathcal{N};L_{2}^{c}(\widetilde{\Gamma}) \big) \oplus_p L_{p}  (\mathcal{N} )$. Here $L_{p}  \big(\mathcal{N};L_{2}^{c}(\widetilde{\Gamma}) \big)\oplus_p L_{p}  (\mathcal{N} )$
is the $\ell_p$-direct sum of the Banach spaces $L_{p}  \big(\mathcal{N};L_{2}^{c}(\widetilde{\Gamma}) \big)$
and $L_{p}  (\mathcal{N} )$, equipped with the norm
$$
  \|    (f,g ) \|  =  \Big(  \|  f \|  _{L_{p}  \big(\mathcal{N};L_{2}^{c}(\widetilde{\Gamma})\big)}^p+  \|  g \|  _{L_{p}  (\mathcal{N} )}^p \Big)^{\frac{1}{p}}
$$
 for $f\in L_{p}  \big(\mathcal{N};L_{2}^{c}(\widetilde{\Gamma}) \big)$
and $g\in L_{p}  (\mathcal{N} )$, with the usual modification for $p=\infty$.

\begin{defn}\label{def:Phi&Psi}
\label{def:map}
We define a map $\fE$ from $\h_{p}^{c}  (\mathbb{R}^{d},\mathcal{M} )$ 
to $L_{p}  \big(\mathcal{N};L_{2}^{c}(\widetilde{\Gamma}) \big)\oplus_p L_{p}  (\mathcal{N} )$
by
$$
\fE  (f )  (s,t,\varepsilon )=  \big(\e\,\frac{\partial}{\partial\varepsilon}\mathrm{P} _{\varepsilon}  (f )  (s+t ),\mathrm{P} *f  (s ) \big),
$$
and a map $\fF$ for sufficiently nice $h=  (h',h'' )\in L_{p}  \big(\mathcal{N};L_{2}^{c}(\widetilde{\Gamma}) \big)\oplus_p L_{p}  (\mathcal{N} )$  by
$$
\fF  (h )  (u )=\int_{\mathbb{R}^{d}}  \Big[\frac{4}{c_d}\iint_{\widetilde{\Gamma}}h' (s,t,\varepsilon )\frac{\partial}{\partial\varepsilon}\mathrm{P} _{\varepsilon}  (s+t-u )\frac{dtd\varepsilon}{\e^d} +h''  (s )(\mathrm{P} +4\pi I(\mathrm{P} ))  (s-u ) \Big]ds\,.
$$
\end{defn}

By definition, the map $\fE$ embeds $\h_{p}^{c}  (\mathbb{R}^{d},\mathcal{M} )$ isometrically into 
$L_{p}  \big(\mathcal{N};L_{2}^{c}(\widetilde{\Gamma}) \big)\oplus_p L_{p}  (\mathcal{N} )$. The following results, Theorems \ref{lem: L bmo} and \ref{thm: bdd retraction}  show that by identifying $\h_{p}^{c}  (\mathbb{R}^{d},\mathcal{M} )$ as a subspace of $L_{p}  \big(\mathcal{N};L_{2}^{c}(\widetilde{\Gamma}) \big)\oplus_p L_{p}  (\mathcal{N} )$  via $\fE$, $\h_{p}^{c}  (\mathbb{R}^{d},\mathcal{M} )$ is complemented in $L_{p}  \big(\mathcal{N};L_{2}^{c}(\widetilde{\Gamma}) \big)\oplus_p L_{p}  (\mathcal{N} )$ for every $1<p<\infty$ by virtue of the map $\fF$.

\begin{prop}
Let $1\leq p<\infty$. Then for any nice $f\in L_1(\M; \mathrm{R}_d^c)+L_{\infty}(\M;\mathrm{R}_d^c)$, we have
$$
\fF (\fE (f))=f.
$$
\end{prop}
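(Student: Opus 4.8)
The plan is to show that $\fF(\fE(f)) = f$ by computing the Fourier transform of both sides and invoking Plancherel together with the polarization identity \eqref{eq:polarized}. First I would apply $\fE$ to $f$, obtaining the pair $\big(\e\,\frac{\partial}{\partial\e}\mathrm{P}_\e(f)(s+t),\, \mathrm{P}*f(s)\big)$, and then plug this into the defining formula for $\fF$. The resulting expression for $\fF(\fE(f))(u)$ is
$$
\fF(\fE(f))(u)=\int_{\R}\Big[\frac{4}{c_d}\iint_{\widetilde{\Gamma}}\e\,\frac{\partial}{\partial\e}\mathrm{P}_\e(f)(s+t)\,\frac{\partial}{\partial\e}\mathrm{P}_\e(s+t-u)\,\frac{dtd\e}{\e^d}+\mathrm{P}*f(s)\,(\mathrm{P}+4\pi I(\mathrm{P}))(s-u)\Big]ds.
$$
The key observation is that, after changing variables $r=s+t$ in the inner integral (so that the region $\widetilde{\Gamma}$ in $(t,\e)$ becomes a ball $B(s,\e)$ in $r$ for each fixed $\e\in(0,1)$) and then using Fubini, the first term becomes
$$
\frac{4}{c_d}\int_0^1\iint_{|t|<\e}\Big(\int_\R \frac{\partial}{\partial\e}\mathrm{P}_\e(f)(r)\,\frac{\partial}{\partial\e}\mathrm{P}_\e(r-u-t)\,dr\Big)\frac{dt\,d\e}{\e^d},
$$
which after integrating the $t$-variable over the ball of radius $\e$ (contributing the factor $c_d\e^d$) collapses to $4\int_0^1\big(\int_\R \frac{\partial}{\partial\e}\mathrm{P}_\e(f)(r)\,\frac{\partial}{\partial\e}\mathrm{P}_\e(r-u)\,dr\big)\e\,d\e$.

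The crucial point is then that this is exactly the right-hand side of the polarization identity \eqref{eq:polarized} applied to $f$ and $g = \delta_u$ (the Dirac mass at $u$), or, to stay rigorous, applied to $f$ and a suitable approximation; more cleanly, one recognizes that
$$
\fF(\fE(f))(u)=4\int_\R\int_0^1\frac{\partial}{\partial\e}\mathrm{P}_\e(f)(s)\,\frac{\partial}{\partial\e}\mathrm{P}_\e(s-u)\,\e\,d\e\,ds+\int_\R\mathrm{P}*f(s)\,\mathrm{P}(s-u)\,ds+4\pi\int_\R\mathrm{P}*f(s)\,I(\mathrm{P})(s-u)\,ds,
$$
and this is precisely $\int_\R f(s)\,g^*(s)\,ds$ with $g(\cdot)=\mathrm{P}(\cdot-u)$ — no wait, one must be careful about which convolution kernel plays the role of $g$; the correct reading is that the reproducing formula \eqref{eq:polarized}, read with the roles arranged so that the "test" function is the Poisson kernel translated to $u$ and using that $\mathrm{P}$ is even and $\widehat{\mathrm{P}_\e}(\xi)=e^{-2\pi\e|\xi|}$, yields $\fF(\fE(f))(u)=(\mathrm{P}_0*f)(u)=f(u)$, since $\mathrm{P}*\mathrm{P}$-type computations on the Fourier side give $\int_0^1 4\pi^2|\xi|^2 e^{-4\pi\e|\xi|}\e\,d\e$, and the three terms assemble on the Fourier side to $\big(1-4\pi|\xi|e^{-4\pi|\xi|}\big)$ minus the correction coming from the $I(\mathrm{P})$ term $4\pi|\xi|e^{-4\pi|\xi|}$... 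I should instead present the clean route: take Fourier transforms in $u$, show $\widehat{\fF(\fE(f))}(\xi)=\widehat{f}(\xi)\cdot m(\xi)$ where $m(\xi)$ is the sum of $4\int_0^1 4\pi^2|\xi|^2 e^{-4\pi\e|\xi|}\e\,d\e = 1-e^{-4\pi|\xi|}-4\pi|\xi|e^{-4\pi|\xi|}$, plus $e^{-4\pi|\xi|}$ (from the $\mathrm{P}*f\cdot\mathrm{P}$ term), plus $4\pi|\xi|e^{-4\pi|\xi|}$ (from the $\mathrm{P}*f\cdot I(\mathrm{P})$ term, using $\widehat{I(\mathrm{P})_\e}=|\xi|e^{-2\pi\e|\xi|}$ at $\e$ paired appropriately), so that $m(\xi)\equiv 1$, whence $\fF(\fE(f))=f$.

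Concretely the steps are: (i) substitute $\fE(f)$ into $\fF$; (ii) perform the change of variables $r=s+t$ and apply Fubini to reduce the truncated-cone integral to the one-dimensional-in-$\e$ expression, exactly reversing the computation in \eqref{eq:polarized}; (iii) take the Fourier transform in $u$, using $\widehat{\mathrm{P}_\e}(\xi)=e^{-2\pi\e|\xi|}$, $\widehat{\frac{\partial}{\partial\e}\mathrm{P}_\e}(\xi)=-2\pi|\xi|e^{-2\pi\e|\xi|}$, and $\widehat{I(\mathrm{P})}(\xi)=|\xi|e^{-2\pi|\xi|}$; (iv) evaluate the three resulting scalar integrals in $\e$ (these are the same elementary integrals already appearing in the derivation of \eqref{eq: 1}) and check that their sum is identically $1$; (v) conclude by Plancherel \eqref{eq: Planchel} (or its polarized form \eqref{eq: Planchel-1}) that the two sides agree as elements of $L_1(\M)+\M$-valued functions, at least for nice $f$. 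The main obstacle I anticipate is purely bookkeeping: keeping track of the normalizing constants $c_d$, the factors of $4$ and $4\pi$, and the precise Fourier-side identity $\widehat{I(\mathrm{P})}(\xi)=|\xi|e^{-2\pi|\xi|}$, so that the three pieces of $m(\xi)$ really do cancel to give $1$; justifying the interchange of integrals requires only the ``nice $f$'' hypothesis (e.g. $f\in L_2(\R)\otimes L_2(\M)$), for which Fubini and Plancherel apply without difficulty. There is no deep obstruction here — this proposition is essentially the assertion that formula \eqref{eq:polarized}, when one of the two arguments is specialized to a translated Poisson kernel (equivalently, when tested against $\delta_u$), is a reproducing formula for $f$.
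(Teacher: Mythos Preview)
Your proposal is correct and essentially the same as the paper's approach. The paper's proof is slightly slicker: rather than computing the Fourier multiplier $m(\xi)$ directly, it tests $\fF(\fE(f))$ against an arbitrary nice function $g$ and recognizes the resulting expression as precisely the right-hand side of the polarization identity \eqref{eq:polarized}, which immediately gives $\int \fF(\fE(f))(u)g(u)\,du = \int f(u)g(u)\,du$; your Fourier-side computation (showing $m(\xi)\equiv 1$) is just the re-derivation of that identity, so the two arguments differ only in packaging.
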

\begin{proof}
Applying \eqref{eq:polarized}, we get, for any nice function $g$,
\begin{equation*}
\begin{split}
\int_{\R}\fF (\fE (f))(u)g(u)du & = \int_{\R} \Big[\frac{4}{c_d}\iint_{\widetilde{\Gamma}}\frac{\partial}{\partial \e}\mathrm{P} _\e(f)(s+t)\frac{\partial}{\partial \e}\mathrm{P} _\e(s+t-u)\frac{dtd\e}{\e^{d-1}}g(u)du \\
& \;\;\;\;+\mathrm{P} *f(s)\int (\mathrm{P} (s-u)+4\pi I(\mathrm{P})(s-u))g(u)du\Big ] ds\\
& =   \int_{\R}\Big[ \frac{4}{c_d}\iint_{\widetilde{\Gamma}}\frac{\partial}{\partial \e}\mathrm{P} _\e(f)(s+t)\frac{\partial}{\partial \e}\mathrm{P} _\e(g)(s+t)\frac{dtd\e}{\e^{d-1}} \\
 & \;\;\;\;  +\mathrm{P} *f(s) (\mathrm{P} *g+4\pi I(\mathrm{P})*g)(s)\Big ] ds\\
& =  \int_{\R} f(u)g(u)du\,,
\end{split}
\end{equation*}
which completes the proof.
\end{proof}

The following dyadic covering lemma is known. Tao Mei \cite{Mei2007} proved this lemma for the $d$-torus and also for the real line. For the case $\R$ with $d> 1$, we refer the interested readers to \cite{Conde-13,Hytonen-10} for more details.  In the following,  we give a sketch of the way how we choose the dyadic covering.

\begin{lem}\label{lem: dyadic cover}
There exist a constant $C > 0$, depending only on $d$, and $d+1$ dyadic increasing filtrations $\D^i=\{\D^i_j\}_{j\in \mathbb{Z}}$ of $\sigma$-algebras on $\R$ for $0\leq i\leq d$, such that for any cube $Q\subset \R$, there is a cube  $D^i_{m, j} \in \D^i_j$ satisfying $Q\subset D^i_{m, j}$ and $ |D^i_{m, j} | \leq C |Q|$.
\end{lem}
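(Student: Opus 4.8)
The plan is to exhibit $d+1$ shifted dyadic systems on $\R$ and to check, by a pigeonhole argument carried out at a deliberately coarse scale, that every cube is contained in a cube of comparable volume belonging to one of them. For $d=1$ this is the classical $\tfrac13$-trick, where two grids suffice; the whole content of the statement is that one needs only $d+1$ grids, rather than the $2^d$ produced by shifting each coordinate independently, and I will organize the sketch around this point, leaving the routine verifications (in particular for $d>1$) to \cite{Conde-13,Hytonen-10}.

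\textit{Choice of the grids.} Fix an integer $N=N(d)$ with $2^{-N}<\tfrac1{16(d+1)}$ and set $\alpha_i=\tfrac{i}{d+1}$ for $0\le i\le d$. Let $\D^0$ be the standard dyadic system, whose cubes at scale $2^{-j}$ are $2^{-j}([0,1)^d+k)$, $k\in\mathbb{Z}^d$. For $1\le i\le d$ we take for $\D^i$, at scale $2^{-j}$, the translated cubes $2^{-j}([0,1)^d+k)+c_i(j)$, where the offsets $c_i(j)\in\R$ satisfy, first, $c_i(j)-c_i(j+1)\in 2^{-j-1}\mathbb{Z}^d$ for all $j\in\mathbb{Z}$ — precisely the condition making every scale-$2^{-j}$ cube a union of scale-$2^{-(j+1)}$ cubes, so that $\D^i=\{\D^i_j\}_{j\in\mathbb{Z}}$ is an increasing dyadic filtration — and, second, $\|2^{j}c_i(j)-\alpha_i(1,\dots,1)\|_\infty\le 2^{-N}$ whenever $j\in N\mathbb{Z}$. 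Both requirements are met by the concrete choice $c_i(j)=2^{-j}\sum_{m\ge1}2^{-m}\omega^{(i)}_{j+m}$, where $\omega^{(i)}=(\omega^{(i)}_l)_{l\in\mathbb{Z}}\in(\{0,1\}^d)^{\mathbb{Z}}$ is $N$-periodic and its first $N$ entries record the binary digits of $\alpha_i(1,\dots,1)$: the first condition is clear since $c_i(j)-c_i(j+1)=2^{-j-1}\omega^{(i)}_{j+1}$, and the second since the relative offset $2^{j}c_i(j)=\sum_{m\ge1}2^{-m}\omega^{(i)}_{j+m}$ is then $N$-periodic in $j$ and agrees with $\alpha_i(1,\dots,1)$ to $N$ binary places.

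\textit{The covering.} Let $Q\subset\R$ be a cube of side $\ell$ with lower corner $a=(a_1,\dots,a_d)$. Choose $j\in N\mathbb{Z}$ with $8(d+1)\ell<2^{-j}\le 2^{N}\cdot8(d+1)\ell$; this is possible because the admissible range of $2^{-j}$ spans a factor $2^{N}$, hence meets $\{2^{-j}:j\in N\mathbb{Z}\}$. Put $\beta_k=(a_k\,2^{j})\bmod 1$ and $\delta=\ell\,2^{j}$, so $\delta<\tfrac1{8(d+1)}$. Fixing $i$, the side-$2^{-j}$ cubes of $\D^i$ cut each coordinate axis along the lattice $2^{-j}\mathbb{Z}+c_i(j)_k$, and the coordinate-$k$ edge $[a_k,a_k+\ell)$ of $Q$ lies in a single cell of this lattice as soon as the circular distance $d_{\mathbb{R}/\mathbb{Z}}(\beta_k,2^{j}c_i(j)_k)$ is $>\delta$; by the second property of the offsets it is enough that $d_{\mathbb{R}/\mathbb{Z}}(\beta_k,\alpha_i)>\delta':=\delta+2^{-N}$ for every $k$, and $\delta'<\tfrac1{4(d+1)}$. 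The $d+1$ arcs $\{x:d_{\mathbb{R}/\mathbb{Z}}(x,\alpha_i)\le\delta'\}$, $0\le i\le d$, are centered at the equally spaced points $\tfrac{i}{d+1}$ and have length $2\delta'<\tfrac1{d+1}$, hence are pairwise disjoint; the $d$ numbers $\beta_1,\dots,\beta_d$ can lie in at most $d$ of them, so some $i_0$ satisfies $d_{\mathbb{R}/\mathbb{Z}}(\beta_k,\alpha_{i_0})>\delta'$ for all $k$. Taking the product over $k$, $Q$ is then contained in a cube $D^{i_0}_{m,j}\in\D^{i_0}_j$, of volume $(2^{-j})^d\le(2^{N}\cdot8(d+1))^d\,|Q|$. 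This proves the lemma with $C=(2^{N}\cdot8(d+1))^d$, which depends only on $d$.

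The main obstacle is conceptual rather than computational: it is the passage, in the last paragraph, from the genuinely $d$-dimensional covering problem to the one-dimensional remark that $d$ points of a circle cannot meet $d+1$ pairwise disjoint arcs. This works only because the offsets are taken along the diagonal $(1,\dots,1)$ and because one allows the covering cube to be much larger than $Q$ (by a $d$-dependent factor), which shrinks the ``bad'' arcs enough to be disjoint; these two ideas together are what brings the number of grids down from $2^d$ to $d+1$. The remaining point — realizing the diagonal offsets by honest filtrations valid at every scale while pinning down the relative offset at the scale actually used — is the periodic digit construction above, spelled out in detail in \cite{Conde-13,Hytonen-10}.
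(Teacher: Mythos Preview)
Your proof is correct and, in fact, considerably more detailed than the paper's own argument, which merely writes down a specific family of shifted grids (via a $\tfrac13$-type offset at large scales) and then cites \cite{Conde-13,Hytonen-10} for the covering claim without carrying out the pigeonhole step. Your construction via $N$-periodic binary digit sequences is Hyt\"onen's realization of the same idea; both produce $d+1$ diagonally shifted dyadic filtrations, and the crucial reduction from a $d$-dimensional covering to ``$d$ points on a circle cannot meet $d+1$ disjoint arcs'' is exactly the mechanism behind the cited references, so the two approaches coincide in substance.

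One cosmetic point: with an $N$-periodic $\omega^{(i)}$ the relative offset $2^{j}c_i(j)=\sum_{m\ge1}2^{-m}\omega^{(i)}_m$ differs from $\alpha_i(1,\dots,1)$ by at most $2\cdot 2^{-N}$ (the truncation error plus the periodic tail), not $2^{-N}$; this is harmless since your choice $2^{-N}<\tfrac1{16(d+1)}$ still gives $\delta'<\tfrac1{4(d+1)}$ and the arcs remain disjoint.
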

\begin{proof}
 Let $\{\alpha^i\}_{i=0}^d$ be a sequence in the interval $(0,1)$ such that
$$
\min_{i\neq i'}|\alpha^i -\alpha^{i'} |>0.
$$
Then we define
 \begin{equation}
 \alpha_j^i=
\begin{cases}
\alpha^i, \quad j\geq 0,\\
\alpha^i +\frac{1}{3}(2^{-j}-1), \quad j<0 \,\text{ and } -j \text{ even},\\
\alpha^i -\frac{1}{3}(2^{-j}+1), \quad j<0 \,\text{ and } -j \text{ odd}.
\end{cases}
  \end{equation}
The $\sigma$-algebra $\D^i_j$ is generated by the cubes
 $$
D^i_{m,j}=  (\alpha_j^i+  m_{1} 2^{-j}, \alpha_j^i+(m_{1}+1) 2^{-j} ]\times\cdots\times  ( \alpha_j^i+  m_{d} 2^{-j}, \alpha_j^i+(m_{d}+1) 2^{-j} ],
$$
for all $m=  (m_{1},\cdots,m_{d} )\in\mathbb{Z}^{d}$.

For any cube $Q\subset \R$, there exist a constant $C$, depending only on $\{\alpha^i\}_{i=0}^d$ and $d$, and a dyadic cube  $D^i_{m, j}$ such that $Q\subset D^i_{m, j}$ and $ |D^i_{m, j} | \leq C |Q|$.
\end{proof}

To show the boundedness of the map $\fF$, we need the following assertion by Mei, see \cite[Proposition 3.2]{Mei2007}; we include a proof for this lemma, since the one in \cite{Mei2007} is the one dimensional case. Let $1\leq p<\infty$, and $f\in L_p(\N)$ be a positive function. Let $Q$ be a cube centered at the origin, and denote $Q_t = t +Q$. Then we define
$$
f^Q(t)=\frac{1}{|Q|}\int_{Q_t}f(s)ds.
$$

\begin{lem}\label{lem: Mean function}
Let $1\leq p<\infty$ and  let $(f_k)_{k\in\mathbb{Z}}$ be a positive sequence in $L_p(\N)$ and $(Q^k)_{k\in\mathbb{Z}}$ be a sequence of cubes centered at the origin. Then
$$
\|\sum_{k\in\mathbb{Z}}(f_{k})^{Q^k}\|_p \lesssim \|\sum_{k\in\mathbb{Z}}f_{k}\|_p.
$$
\end{lem}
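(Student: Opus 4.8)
The plan is to reduce the estimate to the dual form of the noncommutative Doob maximal inequality, using the shifted dyadic systems of Lemma~\ref{lem: dyadic cover} to replace averages over translated cubes by martingale averages. We may assume $F:=\sum_{k}f_{k}\in L_{p}(\N)$, since otherwise there is nothing to prove; then $f_{k}\le F$ for each $k$, so $f_{k}\in L_{p}(\N)$, and the partial sums of $\sum_{k}(f_{k})^{Q^{k}}$ form an increasing sequence of positive elements, so it is enough to bound them uniformly in $L_{p}(\N)$.

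First I would localize each term. Fix $k$ and let $\ell_{k}$ be the side length of $Q^{k}$. Given $t\in\R$, the cube $t+Q^{k}$ has side $\ell_{k}$ and contains the point $t$, because $0\in Q^{k}$. By Lemma~\ref{lem: dyadic cover} there are $i=i_{k}(t)\in\{0,\dots,d\}$, a scale $j=j_{k}(t)\in\mathbb{Z}$ and a cube $D^{i}_{m,j}\in\D^i_j$ with $t+Q^{k}\subset D^{i}_{m,j}$ (hence $t\in D^{i}_{m,j}$) and $|D^{i}_{m,j}|\le C|Q^{k}|$; since an axis-parallel cube of side $\ell_{k}$ fits only into a dyadic cube of side in $[\ell_{k},C^{1/d}\ell_{k}]$, the scale $j_{k}(t)$ ranges over a set $A_{k}\subset\mathbb{Z}$ of consecutive integers with $|A_{k}|\le N_{0}$, where $N_{0}$ depends only on $d$. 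Writing $\mathbb{E}^i_j$ for the trace-preserving conditional expectation associated with $\D^i_j$ (acting on the variable in $\R$), so that $\mathbb{E}^i_j(f_{k})(t)$ is the average of $f_{k}$ over the $\D^i_j$-cube containing $t$, positivity of $f_{k}$ together with $t+Q^{k}\subset D^{i}_{m,j}$ and $|D^{i}_{m,j}|\le C|Q^{k}|$ gives
\[
(f_{k})^{Q^{k}}(t)=\frac{1}{|Q^{k}|}\int_{t+Q^{k}}f_{k}\;\le\;\frac{C}{|D^{i}_{m,j}|}\int_{D^{i}_{m,j}}f_{k}\;=\;C\,\mathbb{E}^{i_{k}(t)}_{j_{k}(t)}(f_{k})(t)\;\le\;C\sum_{i=0}^{d}\sum_{j\in A_{k}}\mathbb{E}^i_j(f_{k})(t).
\]
Summing over $k$ and interchanging the positive sums, I would then write
\[
\sum_{k}(f_{k})^{Q^{k}}\;\le\;C\sum_{i=0}^{d}\sum_{j\in\mathbb{Z}}\mathbb{E}^i_j(h^{i}_{j}),\qquad h^{i}_{j}:=\sum_{k:\,j\in A_{k}}f_{k}\;\ge\;0,
\]
and, since $|A_{k}|\le N_{0}$, we have $\sum_{j}h^{i}_{j}=\sum_{k}|A_{k}|f_{k}\le N_{0}F$. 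For each fixed $i$ the filtration $(\D^i_j)_{j\in\mathbb{Z}}$ is increasing, so by Junge's noncommutative Doob inequality~\cite{Junge} the map $g\mapsto(\mathbb{E}^i_j g)_{j}$ is bounded from $L_{q}(\N)$ into $L_{q}(\N;\ell_{\infty})$ for $1<q\le\infty$; taking its (pre)adjoint and recalling that a positive family $(b_{j})$ has $L_{p}(\N;\ell_{1})$-norm equal to $\|\sum_{j}b_{j}\|_{p}$ yields the dual bound $\big\|\sum_{j}\mathbb{E}^i_j(h^{i}_{j})\big\|_{p}\lesssim\big\|\sum_{j}h^{i}_{j}\big\|_{p}$ for $1<p<\infty$, while for $p=1$ it is an equality obtained by applying $\tau$ and using $\tau\circ\mathbb{E}^i_j=\tau$. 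Combining these estimates gives $\big\|\sum_{k}(f_{k})^{Q^{k}}\big\|_{p}\le C\sum_{i=0}^{d}\big\|\sum_{j}\mathbb{E}^i_j(h^{i}_{j})\big\|_{p}\lesssim (d+1)N_{0}\|F\|_{p}$, which is the assertion; the uniform bound also yields convergence of $\sum_{k}(f_{k})^{Q^{k}}$ in $L_{p}(\N)$.

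The one step that really needs care is the passage from translated-cube averages to martingale averages: one must ensure that, for cubes of a fixed size, the covering dyadic cubes supplied by Lemma~\ref{lem: dyadic cover} occur at only boundedly many scales, so that the multiplicities $|A_{k}|$ stay under control. This is exactly the purpose of the $d+1$ shifted dyadic systems — the higher-dimensional substitute for the one-third trick — and is why the one-dimensional argument of \cite{Mei2007} must be reworked here. Everything else, namely the two Fubini/positivity manipulations and the scale bookkeeping, is routine, and the only external inputs beyond Lemma~\ref{lem: dyadic cover} are the noncommutative Doob inequality and its dual.
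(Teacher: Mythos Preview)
Your proof is correct and rests on the same two ingredients as the paper's: the shifted dyadic covering of Lemma~\ref{lem: dyadic cover} and Junge's dual Doob inequality. The difference lies only in how the translation is handled. The paper first covers the centered cube $Q^{k}$ by a dyadic cube at a \emph{fixed} scale $j_{k}$ (independent of $t$), then shows that after translating by $t$ and rounding to a nearby lattice point $\widetilde t\in 2^{-j_k}\mathbb{Z}^d$, the translated dyadic cube still covers $t+Q^{k}$ and remains in the grid; this yields $(f_k)^{Q^k}\le C\sum_i\mathbb{E}(f_k\mid\D^i_{j_k})$ with a single scale per $k$, so dual Doob applies directly. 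You instead apply the covering lemma pointwise to $t+Q^{k}$, allow the scale $j_{k}(t)$ to depend on $t$, observe that it can vary only over a bounded window $A_{k}$, and absorb this multiplicity by the regrouping $h^{i}_{j}=\sum_{k:\,j\in A_{k}}f_{k}$ before invoking dual Doob. Your route trades the paper's translation-adjustment trick for a slightly larger (but harmless) constant and an extra Fubini swap; it is arguably more transparent, since it uses Lemma~\ref{lem: dyadic cover} as a black box rather than re-entering its proof.
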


\begin{proof}
Similarly to the proof of \cite[Proposition 3.2]{Mei2007}, we are going to apply \cite[Theorem~0.1]{Junge} for noncommutative martingales. By Lemma \ref{lem: dyadic cover}, we can cover every $Q^k$ by some $D^i_{m', j_k}$, and thus by some $D^i_{m, j_k -1}$, which has twice the side length of $D^i_{m', j_k}$. Moreover, $|D^i_{m, j_k -1}|\leq C |Q^k|$. Obviously, $t+Q^k$ is still covered by $t+D^i_{m, j_k-1}$, but the later is not necessary a dyadic cube in $\mathcal D^i_{j_k-1}$. Let us adjust the translation vector $t=(t_1,... , t_d)$ as follows. Write $Q^k = (-a, a] \times ...\times (-a, a]$ and $D^i_{m, j_k-1}= (b_1, b_2 ] \times...\times (b_1, b_2]$, then either $b_2 -a \geq 2^{-j_k}$ or $-a-b_1\geq  2^{-j_k} $. Without loss of generality, we can assume $b_2 -a \geq 2^{-j_k}$. Now set $\widetilde t  =  (\widetilde t_1,... , \widetilde t_d)$ with $\widetilde t_j$ the largest real number in the set $2^{-j_k} \mathbb Z$ less than $t_j$. Then we can check that $t+Q^k$ is covered by $\widetilde t+D^i_{m, j_k-1}$ and that the later is a dyadic cube. Thus, 
$$(f_{k})^{Q^k}  \leq   C \sum_{0\leq i\leq d} \mathbb{E}(f_k | \D^{i}_{j_k} ),$$
where $\mathbb{E}(\cdot | \D^{i}_{j} )$ denotes the conditional expectation with respect
to $\D^{i}_{j}$. Then the lemma follows from \cite[Theorem~0.1]{Junge}.
\end{proof}

\begin{thm}\label{lem: L bmo}
For $2<p\leq \infty$, the map $\fF$ extends to a bounded map from $L_{p}  \big(\mathcal{N};L_{2}^{c}(\widetilde{\Gamma}) \big)\oplus_p L_{p}  (\mathcal{N} )$
to ${\bmo}_p^{c}  (\mathbb{R}^{d},\mathcal{M} )$.
\end{thm}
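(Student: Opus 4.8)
The plan is to show that the two pieces of $h = (h', h'')$ are mapped by $\fF$ into $\bmo_p^c$ separately, using the characterization of $\bmo_p^c$ via the operators $a, b$ in \eqref{eq: a} and \eqref{eq: b} (equivalently, via Lemma \ref{bmoq-equi} with dyadic cubes). So fix a nice $h = (h', h'')$ and write $F = \fF(h)$. For each cube $Q$ with $|Q| < 1$ we must produce a bound $\frac{1}{|Q|}\int_Q |F(t) - F_Q|^2\, dt \lesssim a(s)$ for $s \in Q$ with $a \in L_{p/2}(\N)$ controlled by $\|h\|$, and similarly the $|Q| = 1$ condition with some $b$. The guiding principle — exactly as in Mei's non-local argument and in \cite{Goldberg1979} — is that $\fF$ is essentially adjoint (via \eqref{eq:polarized}) to the embedding $\fE$, so boundedness $L_p(\cdots)\oplus_p L_p(\N) \to \bmo_p^c$ should follow from the fact that $\fE$ sends $\h_{q}^c \to L_q(\cdots)\oplus_q L_q(\N)$ isometrically together with the Carleson-measure estimates of Lemma \ref{lem: Carleson < bmo}/Lemma \ref{lem: general Carleson}.

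First I would treat the second component: $\fF(0, h'') = h'' * (\mathrm{P} + 4\pi I(\mathrm{P}))$. Write $\mathrm{P} + 4\pi I(\mathrm{P}) = J(\mathrm{P}) \cdot (\text{smooth correction})$ — more precisely its Fourier symbol is $(1 + 4\pi|\xi|)e^{-2\pi|\xi|}$, which up to a bounded factor is $(1+|\xi|^2)^{1/2} e^{-2\pi|\xi|} \in H_2^\sigma(\R)$ for every $\sigma$; so this convolution kernel $\psi$ satisfies $\widehat\psi \in H_2^\sigma$. Then the $|Q|<1$ oscillation of $\psi * h''$ over a small cube is controlled by differences of $\psi$, which one estimates with \eqref{eq: 2-1} by decomposing $\R$ into unit cubes and using $\sum_m (\int_{Q_m}|\nabla\psi|^2)^{1/2} < \infty$ (the same computation as in Lemma \ref{lem: Carleson < bmo} and the discussion after it), producing a pointwise bound by the "maximal function'' $(h'')^Q$ for a fixed cube $Q$ of volume $\sim 1$; Lemma \ref{lem: Mean function} then shows this maximal-type operator is bounded on $L_p(\N)$ (taking the $b$ in \eqref{eq: b} to be, say, $C\sum_i \mathbb E(|h''|^2\,|\,\D^i_0)$ and similarly for $a$), which gives the needed $a, b \in L_{p/2}(\N)$.

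The main work — and the step I expect to be the obstacle — is the first component $\fF(h', 0)(u) = \frac{4}{c_d}\int_{\R}\iint_{\widetilde\Gamma} h'(s,t,\e)\,\frac{\partial}{\partial\e}\mathrm{P}_\e(s+t-u)\,\frac{dt\,d\e}{\e^d}\, ds$. For a cube $Q$ with $|Q| < 1$ and $s_0 \in Q$, split the $s$-integration (and hence $h'$) into the "local'' part where $(s,t,\e)$ lies in the Carleson box $T(2Q)$ over $2Q$, and the "far'' part. For the local part one subtracts nothing (or subtracts the mean) and estimates $\int_Q |\cdots|^2$ directly by Plancherel \eqref{eq: Planchel}/\eqref{eq: Planchel-1}, reducing to $\iint_{T(2Q)} |h'(s,t,\e)|^2 \frac{dt\,d\e}{\e^{d+1}}$, which is $\le |Q|$ times a pointwise bound by an $L_{p/2}(\N)$-function obtained from the square function of $h'$ in $L_2^c(\widetilde\Gamma)$ and Lemma \ref{lem: Mean function}; here one crucially uses the truncation $\e < 1$ to keep everything inside the strip. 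For the far part, the oscillation $F(t) - F(t')$ for $t, t' \in Q$ brings in the difference $\frac{\partial}{\partial\e}\mathrm{P}_\e(s+t-u) - \frac{\partial}{\partial\e}\mathrm{P}_\e(s+t'-u)$, which has a smoothing gain of $l(Q)/|s - c_Q|$ beyond the kernel decay $|\partial_\e\mathrm{P}_\e| \lesssim (\e + |s|)^{-d-1}$; summing over dyadic annuli $2^{k+1}Q \setminus 2^k Q$ as in Lemma \ref{lem: Carleson < bmo} and applying Cauchy--Schwarz \eqref{eq: 2-1} in $(s,t,\e)$ against the measure $\frac{dt\,d\e}{\e^{d+1}}$ produces $\sum_k 2^{-k}$ times, again, a Carleson-averaged quantity dominated in $L_{p/2}(\N)$ via Lemma \ref{lem: Mean function}. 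The delicate point is that, because of the truncation, the far kernel estimate must be run only over $\e \in (0,1)$ and one loses the homogeneity that would otherwise make the annular sum transparent; the off-diagonal decay together with the boundedness of $\iint_{\widetilde\Gamma}|\partial_\e\mathrm P_\e(\cdot)|\frac{dt\,d\e}{\e^{d}} < \infty$ has to be checked by hand. Finally the $|Q| = 1$ case: here one does not subtract a mean, and since $\e < 1 \le l(Q)$, every relevant box $T(Q')$ with $|Q'|\le 1$ meeting the estimate is comparable to a unit box, so the same local/far decomposition with unit-scale annuli applies and yields the $b$ of \eqref{eq: b}. Assembling the $|Q|<1$ and $|Q|=1$ bounds and taking $\|a\|_{p/2}, \|b\|_{p/2} \lesssim \|h\|$ gives $\|\fF(h)\|_{\bmo_p^c} \lesssim \|h\|$, and density of nice $h$ finishes the proof.
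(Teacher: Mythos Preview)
Your proposal follows essentially the same blueprint as the paper---split $h'$ into a local and a far part according to the position of $s$ relative to the cube, use kernel smoothness for the far part and an $L_2$/Plancherel bound for the local part, and control everything by averaging operators handled via Lemma~\ref{lem: Mean function}---so the strategy is correct. There is, however, one genuine technical difference worth noting, and one place where your outline is rougher than it needs to be.

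\medskip

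\textbf{Difference in technique.} You propose to verify the $\bmo_p^c$ condition by \emph{constructing explicit majorants} $a,b\in L_{p/2}(\N)$ as in \eqref{eq: a}--\eqref{eq: b}, built from conditional expectations or maximal functions of $|h''|^2$ and of the square function $\iint_{\widetilde\Gamma}|h'|^2\,\frac{dtd\e}{\e^{d+1}}$. The paper instead uses the dual description \eqref{eq: Lp infty norm} of $\|\sup_k^+\cdots\|_{p/2}$: it tests $\sum_k \varphi_k^\#\, a_k$ against an arbitrary positive sequence $(a_k)$ with $\|\sum_k a_k\|_{(p/2)'}\le 1$, which turns the supremum over cubes into a sum. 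After Fubini the $a_k$'s reappear as averages $2^{kd}\int_{Q_s^{k-1}} a_k$, and Lemma~\ref{lem: Mean function} bounds $\|\sum_k 2^{kd}\int_{Q_s^{k-1}}a_k\|_{(p/2)'}$ directly. Your route is viable (the explicit $a$ you need is essentially the noncommutative Hardy--Littlewood maximal function of the square function of $h'$, which lies in $L_{p/2}$ since $p/2>1$), but the duality route is cleaner: it avoids building a single majorant dominating all scales simultaneously, and it is exactly what makes the far-part annular sum (your ``$\sum_k 2^{-k}$ times a Carleson-averaged quantity'') transparent.

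\medskip

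\textbf{The $h''$ term.} You over-complicate this. The paper does not control the \emph{oscillation} of $\psi*h''$ (where $\psi=\mathrm{P}+4\pi I(\mathrm{P})$) at all. Instead, it replaces the mean $(\fF h)_{Q_v^k}$ by the easier constant $B^{Q_0^k}(v)$ built \emph{only} from the far piece $h_2'$; the $h''$ contribution then enters the estimate as $\frac{1}{|Q_v^k|}\int_{Q_v^k}|\psi*h''(u)|^2\,du$ with \emph{no} subtraction. After testing against $(a_k)$ and applying Lemma~\ref{lem: Mean function} to $\sum_k 2^{kd}\int_{Q^{k-1}}a_k$, one is left with $\||\psi*h''|^2\|_{p/2}=\|\psi*h''\|_p^2$, which is $\lesssim\|h''\|_p^2$ simply because $\psi$ is an $L_p$-bounded Fourier multiplier (its symbol $(1+4\pi|\xi|)e^{-2\pi|\xi|}$ is Schwartz). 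No gradient estimates or unit-cube decomposition of $\psi$ are needed here.
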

\begin{proof}
We have to show that for any $h=  (h',h'' )\in L_p  \big(\mathcal{N};L_{2}^{c}(\widetilde{\Gamma})\big) \oplus_p  L_p  (\mathcal{N} )$,
$$
  \| \fF (h ) \|  _{{\bmo}_p^{c}}\lesssim  \|  h \|  _{L_p  \big(\mathcal{N};L_{2}^{c}(\widetilde{\Gamma})\big)\oplus_p L_{p}  (\mathcal{N} )}.
$$
Fix $h= (h', h'')\in L_p  \big(\mathcal{N};L_{2}^{c}(\widetilde{\Gamma})\big) \oplus_p  L_p  (\mathcal{N} )$
and set $\varphi=\fF  (h )$. We will apply Lemma \ref{bmoq-equi} to estimate the $\bmo_p^c$-norm of $\fF(h)$.
For $v\in \R$ and $k\in \mathbb{N}$, denote by $Q_v^k$  the cube centered at $v$ and with side length $2^{-k}$, then we have $Q_v^k=v+Q_0^k$. We set
$$
h_1'(s,t,\e)=h'(s,t,\e)\mathbbm{1}_{Q_v^{k-1}}(s), \quad h_2'(s,t,\e)=h'(s,t,\e)\mathbbm{1}_{(Q_v^{k-1})^c}(s)
$$
and
$$
\varphi_k^\# (v)=\frac{1}{|Q_v^{k}|}\int_{Q_v^{k}}\big| \varphi(u)-\varphi_{Q_v^k}\big|^2du.
$$
Let $$B^{Q_0^{k}}(v)=\int_{\R}\iint_{\widetilde{\Gamma}} (\frac{\partial}{\partial \e}\mathrm{P} _\e)^{Q_0^{k}}(s,t,v)h_2'(s,t,\e)\frac{dtd\e}{\e^d} ds$$
with $(\frac{\partial}{\partial \e}\mathrm{P} _\e)^{Q_0^{k}}(s,t,v)=\frac{1}{|{Q_v^{k}}|}\int_{Q_v^{k}} \frac{\partial}{\partial \e}\mathrm{P} _\e(s+t-u)du$. Then, we have
\begin{equation*}
\begin{split}
\varphi_k^{\#}(v)  & \lesssim \frac{1}{|{Q_v^{k}}|}\int_{Q_v^{k}} |\varphi(u)-B^{Q_0^{k}}(v) |^2du\\
 &\lesssim \frac{1}{|{Q_v^{k}}|}\int_{Q_v^{k}}\Big| \int_{(Q_v^{k-1})^c}\iint_{\widetilde{\Gamma}} h _2'(s,t,\e)\big[\frac{\partial}{\partial \e}\mathrm{P} _\e(s+t-u)-(\frac{\partial}{\partial \e}\mathrm{P} _\e)^{Q_0^{k}}(s,t,v) \big]\frac{dtd\e}{\e^d} ds \Big|^2 du\\
&\;\;\;\;+  \frac{1}{|{Q_v^{k}}|}\int_{Q_v^{k}}\Big| \int_{Q_v^{k-1}}\iint_{\widetilde{\Gamma}} h _1'(s,t,\e)\frac{\partial}{\partial \e}\mathrm{P} _\e(s+t-u)\frac{dtd\e}{\e^d} ds \Big|^2du\\
&\;\;\;\; + \frac{1}{|{Q_v^{k}}|}\int_{Q_v^{k}}\Big| \int_{\R}h''(s)[\mathrm{P} (s-u)+4\pi I(\mathrm{P})(s-u)]ds\Big|^2 du.
\end{split}
\end{equation*}
When $s\in (Q_v^{k-1})^c$, $u\in Q_v^k$ and $(t,\e) \in \widetilde{\Gamma}$, we have
$|s+t-u|+\e \approx |s-v| +\e $ with uniform constants. Then,
\begin{equation*}
\begin{split}
  &  \iint_{\widetilde{\Gamma}}\Big| \frac{\partial}{\partial \e}\mathrm{P} _\e(s+t-u)-(\frac{\partial}{\partial \e}\mathrm{P} _\e)^{Q_0^{k}}(s,t,v) \Big|^2\frac{dt d\e}{\e^{d-1}}\\
&   \lesssim  \iint_{\widetilde{\Gamma}}\Big(\frac{2^{-k}}{(|s+t-u|+\e)^{d+2}} \Big)^2 \frac{dt d\e}{\e^{d-1}}\lesssim \int_0^1 \int_{B(0,\e)}\frac{2^{-2k}}{(|s-v|^2+\e^2)^{d+2}}  dt\,\frac{ d\e}{\e^{d-1}} \\
&  = c_d \int_0^1 \frac{2^{-2k} \e}{(|s-v|^2+\e^2)^{d+2}} d\e \lesssim \frac{2^{-2k}}{|s-v|^{2d+2}}.
\end{split}
\end{equation*}
Let $(a_k)_{k\in \mathbb{N}}$ be a  positive sequence  such that  $\|\sum_{k\geq 1}a_k\|_{(\frac{p}{2})'}\leq 1$, where $r'$ denotes the conjugate index of $r$.  Let 
\begin{equation*}
\begin{split}
\rm A &= \sum_{k\geq 1}\tau \int_{\R}\int_{(Q_v^{k-1})^c}\frac{ 2^{-2k}}{ |s-v|^{d+1}}ds \cdot \int_{(Q_v^{k-1})^c}\frac{1}{|s-v|^{d+1}}\iint_{\widetilde{\Gamma}}|h _2'(s,t,\e)|^2\frac{dtd\e }{\e^{d+1}}ds \cdot a_k(v)dv\\
\rm B & = \sum_{k\geq 1}\tau \int_{\R}\frac{1}{|{Q_v^{k}}|}\int_{Q_v^{k}}\big| \int_{Q_v^{k-1}}\iint_{\widetilde{\Gamma}} h _1'(s,t,\e)\frac{\partial}{\partial \e}\mathrm{P} _\e(s+t-u)\frac{dtd\e }{\e^{d+1}}ds \big|^2du\cdot a_k(v)dv\\
\rm C & =\sum_{k\geq 1}\tau \int_{\R} \frac{1}{|{Q_v^{k}}|}\int_{Q_v^{k}}\big| \int_{\R} h''(s)[\mathrm{P} (s-u)+4\pi I(\mathrm{P})(s-u)]ds\big|^2 du\cdot a_k(v)dv.
 \end{split}
\end{equation*}
Then, $$\sum_{k\geq 1}\tau \int \varphi_k^{\#}(v)a_k(v)dv\lesssim \rm A+ \rm B+\rm C.$$
First, we estimate the term $\rm A$.  Applying the Fubini theorem and the H\"older inequality, we arrive at
\begin{equation*}
\begin{split}
\rm A &\lesssim   \sum_{k\geq 1}\tau \int_{\R}2^{-k}\int_{(Q_s^{k-1})^c}|v-s|^{-d-1}\iint_{\widetilde{\Gamma}}|h _2'(s,t,\e)|^2 \frac{dtd\e }{\e^{d+1}}ds\, a_k(v)dv \\
& \leq  \Big\| \iint _{\widetilde{\Gamma}}|h _2'(\cdot,t,\e)|^2 \frac{dtd\e}{\e^{d+1}} \Big\|_{\frac{p}{2}}\cdot \Big\|\sum_{k\geq 1}2^{-k} \int_{(Q_s^{k-1})^c}| v-s |^{-d-1}a_k(v)dv\Big\|_{(\frac{p}{2})'}\\
 & \lesssim  \| h' \|_{L_p(\N;L_2^c(\widetilde{\Gamma}))}^2\cdot \Big\|\sum_{k\geq 1}2^{-k} \sum_{j\leq k} \int_{Q_s^{j-2}\backslash Q_s^{j-1}}{2^{(j-1)(d+1)}}a_k(v)dv\Big\|_{(\frac{p}{2})'}.
\end{split}
\end{equation*}
Here and in the context below, $\| \cdot\|_{(\frac{p}{2})'}$ is the norm of $L_{(\frac{p}{2})'}(\N)$ with respect to the variable $s\in \R$.
Now we apply Lemma \ref{lem: Mean function} to estimate the second factor of the last term:
\begin{equation*}
\begin{split}
   \Big\|\sum_{k\geq 1} \sum_{j\leq k}{2^{(j-1)d}} \int_{Q_s^{j-2}\backslash Q_s^{j-1}} 2^{j-k-1}a_k(v)dv\Big\|_{(\frac{p}{2})'} & \lesssim  \Big\|\sum_{j\in \mathbb{Z}} \sum_{\substack{k\geq j\\ k\geq 1}}2^{j-k-1}a_k \Big\|_{(\frac{p}{2})'} \\
&   \lesssim \big\|\sum_{k\geq 1} a_k \big\|_{(\frac{p}{2})'}\leq 1.
\end{split}
\end{equation*}
Then we move to the estimate of $\rm B$:
\begin{equation*}
\begin{split}
\rm B  & \leq  \sum_{k\geq 1}\int_{\R}{2^{kd}}\tau \int_{\R}\Big| \int_{Q_v^{k-1}}\iint_{\widetilde{\Gamma}} h _1'(s,t,\e)a_k^\frac{1}{2}(v)\frac{\partial}{\partial \e}\mathrm{P} _\e(s+t-u)\frac{dtd\e }{\e^{d}}ds\Big|^2dudv \\
& \leq   \sum_{k\geq 1}\int_{\R}{2^{kd}}\sup _{\|f\|_2=1} \Big|\tau \int_{Q_v^{k-1}}\iint_{\widetilde{\Gamma}} h _1'(s,t,\e)a_k^\frac{1}{2}(v)\frac{\partial}{\partial \e}\mathrm{P} _\e(f)^*(s+t)\frac{dtd\e }{\e^{d}}ds\Big|^2dv.
\end{split}
\end{equation*}
Since $\h_2^c(\R,\M)=L_2(\N)$ with equivalent norms,  by the Cauchy-Schwarz inequality and Lemma \ref{lem: Mean function}, we get
\begin{equation*}
\begin{split}
\rm B  & \leq   \sum_{k\geq 1}\int_{\R}{2^{kd}}\tau \int_{Q_v^{k-1}}\iint_{\widetilde{\Gamma}} | h _1'(s,t,\e)|^2 \frac{dtd\e }{\e^{d+1}}ds \, a_k(v)dv\cdot \|f\|_{\h_2^c}\\
&  \lesssim   \sum_{k\geq 1}\tau \int_{\R}\iint_{\widetilde{\Gamma}} | h _1'(s,t,\e)|^2 \frac{dtd\e }{\e^{d+1}}
{2^{kd}}\int_{Q_s^{k-1}} a_k(v)dv ds  \\
& \leq  \| h'\|^2_{L_p\big(\N;L_2^c(\widetilde{\Gamma})\big)} \Big\|\sum_{k\geq 1} {2^{kd}}\int_{Q_s^{k-1}}a_k(v)dv\Big\|_{(\frac{p}{2})'}\\
 &  \leq  2^d \| h'\|^2_{L_p\big(\N;L_2^c (\widetilde{\Gamma})\big)} \big\|\sum_{k\geq 1}a_k\big\|_{(\frac{p}{2})'} \leq  2^d\| h'\|^2_{L_p\big(\N;L_2^c (\widetilde{\Gamma})\big)}.
\end{split}
\end{equation*}
The techniques used to estimate the term $\rm C$ are similar to that of $\rm B$:
\begin{equation*}
\begin{split}
\rm C &= \sum_{k\geq 1} \tau \int_{\R}{2^{kd}}\int_{Q_v^{k-1}}\big| \int_{\R} h''(s)[\mathrm{P} (s-u)+4\pi I(\mathrm{P}) (s-u)]ds\big|^2 a_k(v)dvdu\\
 &  \leq  \Big\|\sum_{k\geq 1}{2^{kd}}\int_{Q_s^{k-1}}a_k(v)dv \Big\|_{(\frac{p}{2})'} \Big\|\big| \int_{\R} h''(s)[\mathrm{P} (s-\cdot)+4\pi I(\mathrm{P}) (s-\cdot)]ds\big|^2\Big\|_{\frac{p}{2}}\\
  & \lesssim  \Big\|\big| \int_{\R} h''(s)[\mathrm{P} (s-\cdot)+4\pi I(\mathrm{P}) (s-\cdot)]ds\big|^2\Big\|_{\frac{p}{2}},
\end{split}
\end{equation*}
Take $f\in L_{p'}(\N)$ with norm one such that 
\begin{equation*}
\Big\|\big| \int_{\R} h''(s)[\mathrm{P} (s-u)+4\pi I(\mathrm{P}) (s-u)]ds\big|^2\Big\|_{\frac{p}{2}} =  \Big| \tau\int_{\R}h''(s)[\mathrm{P} *f(s)+4\pi I(\mathrm{P})*f(s)] ds\Big|^2.
\end{equation*}
 Then 
\begin{equation*}
\begin{split}
  \Big| \tau\int_{\R}h''(s)[\mathrm{P} *f(s)+4\pi I(\mathrm{P})*f(s)] ds\Big|^2 & \leq \| h''\|_p^2 \| \mathrm{P} *f+4\pi I(\mathrm{P})*f\|_{p'}^2\\
&\lesssim   \| h''\|_p^2  \| f\|_{p'}^2 = \| h''\|_p^2 .
\end{split}
\end{equation*}
Combining the estimates of $\rm A, \rm B$ and $\rm C$ with \eqref{eq: Lp infty norm}, we obtain
$$
\|\sup _{k\geq 1}{}^+ \varphi_k^{\#}\|_{\frac{p}{2}}\lesssim   \|  h \|  _{L_p  \big(\mathcal{N};L_{2}^{c}  (\widetilde{\Gamma} ) \big)\oplus_p L_{p}  (\mathcal{N} )}^2.
$$

It remains to establish the $L_{\frac p 2}$-norm of $\varphi^\# (s)=\frac{1}{|Q_s^{0}|}\int_{Q_s^{0}}\big| \varphi(t)\big|^2dt$, which is relatively easy. For any positive operator $a$ such that  $\|a\|_{L_{(\frac{p}{2})'}(\N)}\leq 1$, we have
\begin{equation*}
\begin{split}
  \tau \int \varphi^{\#}(v)a(v)dv & \lesssim     \tau \int_{\R} \int_{Q_v^{0}}\big| \int_{\R}\iint_{\widetilde{\Gamma}} h '(s,t,\e)\frac{\partial}{\partial \e}\mathrm{P} _\e(s+t-u)\frac{dtd\e }{\e^{d+1}}ds \big|^2du\cdot a(v)dv\\
 & \;\;\;\;+ \tau \int_{\R}  \int_{Q_v^{0}}\big| \int_{\R} h''(s)[\mathrm{P} (s-u)+4\pi I(\mathrm{P})(s-u)]ds\big|^2 du\cdot a(v)dv\\
 &   \stackrel{\mathrm{def}}{=}  \rm B'+\rm C'.
\end{split}
\end{equation*}
The terms $\rm B'$ and $\rm C'$ are treated in the same way as $\rm B$ and $\rm C$ respectively. The results are
\begin{equation*}
\begin{split}
{\rm B'} &\leq  \tau \int_{\R}\iint_{\widetilde{\Gamma}} | h '(s,t,\e)|^2 \frac{dtd\e }{\e^{d+1}}
 \int_{Q_s^0} a(v)dv ds \leq  \| h'\|^2_{L_p\big(\N;L_2^c (\widetilde{\Gamma})\big)} \|a\|_{(\frac{p}{2})'}\,,\\
{\rm C'} & \leq   \Big\| \int_{Q_s^{0}}a(v)dv \Big\|_{(\frac{p}{2})'} \Big\|\big| \int_{\R} h''(s)[\mathrm{P} (s-\cdot)+4\pi I(\mathrm{P}) (s-\cdot)]ds\big|^2\Big\|_{\frac{p}{2}}\lesssim \| h''\|_p^2.
\end{split}
\end{equation*}
So we obtain
$$
\| \varphi^{\#}\|_{\frac{p}{2}}\lesssim   \|  h \|  _{L_p  \big(\mathcal{N};L_{2}^{c}  (\widetilde{\Gamma}) \big)\oplus_p L_{p}  (\mathcal{N} )}^2.
$$
Thus, Lemma \ref{bmoq-equi} ensures that
$$
  \|  \fF (h ) \|  _{{\bmo}_p^{c}}\lesssim \|  h \|  _{L_p  \big(\mathcal{N};L_{2}^{c}  (\widetilde{\Gamma} ) \big)\oplus_p L_{p}  (\mathcal{N} )},
$$
which proves the theorem.
\end{proof}

\begin{cor}
\label{cor: L_1 h1}
Let $1\leq p< 2$. For any $f\in L_{p}  \big(\mathcal{M};L_{2}^c(\mathbb{R}^{d},(1+  |t |^{d+1})dt) \big),$
we have
$$
  \|  f \|  _{\h_{p}^{c}}\lesssim  \|  f \|  _{L_{p}  \big(\mathcal{M};L_{2}^c (\mathbb{R}^{d},(1+  |t |^{d+1})dt ) \big)}.
$$
\end{cor}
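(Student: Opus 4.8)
\emph{Plan.} The plan is to obtain the estimate from Theorem \ref{lem: L bmo} by duality. Fix $1\le p<2$, let $q$ be its conjugate index ($2<q\le\infty$), and abbreviate $w(t)=1+|t|^{d+1}$, so that the space in the statement is $L_p(\M;L_2^c(\R,w\,dt))$ and $\mathrm{R}_d=L_2(\R,w(t)^{-1}dt)$. Write $X_r=L_r(\N;L_2^c(\widetilde{\Gamma}))\oplus_r L_r(\N)$. Since $\fE$ embeds $\h_p^c(\R,\M)$ isometrically into $X_p$, it suffices to bound $\|\fE(f)\|_{X_p}$; and by Hahn--Banach $\|\fE(f)\|_{X_p}=\sup\{|\langle\fE(f),h\rangle|:h\in X_q,\ \|h\|_{X_q}\le1\}$, $\langle\cdot,\cdot\rangle$ being the natural bracket between $X_p$ and $X_q=(X_p)^*$ (integration over $\widetilde{\Gamma}$ against $\frac{dtd\e}{\e^{d+1}}$ and over $\R$ against $dt$). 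The key point will be that $\langle\fE(f),h\rangle$ can be re-expressed as a bracket with $f$, against an $L_q(\M;\mathrm{R}_d^c)$-valued object essentially equal to $\fF(h)$.

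\emph{The duality computation.} Pair $L_p(\M;L_2^c(\R,w\,dt))$ with $L_q(\M;\mathrm{R}_d^c)$ via $\langle f,\psi\rangle=\tau\int_{\R}f(s)\psi(s)^*ds$; applying \eqref{eq: 2-1} to $f(s)\psi(s)^*=(f(s)w(s)^{1/2})(\psi(s)w(s)^{-1/2})^*$ yields $|\langle f,\psi\rangle|\le\|f\|_{L_p(\M;L_2^c(\R,w\,dt))}\|\psi\|_{L_q(\M;\mathrm{R}_d^c)}$. For sufficiently nice $f$ and $h=(h',h'')$, a Fubini computation of the same type as in the proof of the proposition $\fF\circ\fE=\mathrm{id}$ above — using $\mathrm{P}_\e(f)(s)=\int_{\R}\mathrm{P}_\e(s-u)f(u)\,du$ and the evenness of $\mathrm{P}$ and $I(\mathrm{P})$ — should give
\begin{equation*}
\langle\fE(f),h\rangle=\tau\int_{\R}f(u)\,\psi_h(u)^*\,du,
\end{equation*}
where $\psi_h(u)=\int_{\R}\iint_{\widetilde{\Gamma}}h'(s,t,\e)\,\frac{\partial}{\partial\e}\mathrm{P}_\e(s+t-u)\,\frac{dtd\e}{\e^d}\,ds+(\mathrm{P}*h'')(u)$. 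Comparing with the formula for $\fF$ in Definition \ref{def:Phi&Psi}, one gets $\psi_h=\frac{c_d}{4}\fF(h)+\bigl(1-\frac{c_d}{4}\bigr)\mathrm{P}*h''-\pi c_d\,I(\mathrm{P})*h''$. Now, by Theorem \ref{lem: L bmo} and Proposition \ref{prop: bmoq Lq}, $\|\fF(h)\|_{L_q(\M;\mathrm{R}_d^c)}\lesssim\|\fF(h)\|_{\bmo_q^c}\lesssim\|h\|_{X_q}$; and since $I(\mathrm{P})=-\frac{1}{2\pi}\left.\frac{\partial}{\partial\e}\mathrm{P}_\e\right|_{\e=1}$ satisfies $|I(\mathrm{P})(s)|\lesssim(1+|s|^2)^{-(d+1)/2}$ (so that $I(\mathrm{P})\in L_1(\R)$, just like $\mathrm{P}$), both $\|\mathrm{P}*h''\|_{L_q(\M;\mathrm{R}_d^c)}$ and $\|I(\mathrm{P})*h''\|_{L_q(\M;\mathrm{R}_d^c)}$ are $\lesssim\|h''\|_{L_q(\N)}\le\|h\|_{X_q}$ — by the bounded inclusion $L_q(\N)\hookrightarrow L_q(\M;\mathrm{R}_d^c)$ (triangle inequality in $L_{q/2}$ and Hölder, using $w^{-1}\in L_{(q/2)'}(\R)$) composed with Young's inequality. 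Hence $\|\psi_h\|_{L_q(\M;\mathrm{R}_d^c)}\lesssim\|h\|_{X_q}$.

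\emph{Conclusion.} Putting these together with the Cauchy--Schwarz bound above,
\begin{equation*}
|\langle\fE(f),h\rangle|=\Bigl|\tau\int_{\R}f(u)\,\psi_h(u)^*\,du\Bigr|\le\|f\|_{L_p(\M;L_2^c(\R,w\,dt))}\,\|\psi_h\|_{L_q(\M;\mathrm{R}_d^c)}\lesssim\|f\|_{L_p(\M;L_2^c(\R,w\,dt))}\,\|h\|_{X_q}.
\end{equation*}
Taking the supremum over $h$ in the unit ball of $X_q$ (nice $h$ are norming) gives $\|f\|_{\h_p^c}\approx\|\fE(f)\|_{X_p}\lesssim\|f\|_{L_p(\M;L_2^c(\R,w\,dt))}$ for nice $f$, and since such $f$ are dense in $L_p(\M;L_2^c(\R,w\,dt))$ — which embeds continuously into $L_1(\M;\mathrm{R}_d^c)+L_\infty(\M;\mathrm{R}_d^c)$, so that $\fE$ is defined on it — the estimate extends to the whole space. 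The delicate point is the Fubini identity in the second paragraph: the singularity of $\frac{\partial}{\partial\e}\mathrm{P}_\e$ at $\e=0$ must be dealt with exactly as in the proof of $\fF\circ\fE=\mathrm{id}$ (no absolute values can be brought inside), and the evenness of $\mathrm{P}$ and $I(\mathrm{P})$ is used to reverse the convolutions; everything else is routine.
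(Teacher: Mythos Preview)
Your proof is correct and follows essentially the same approach as the paper's: dualize via $X_q$, rewrite the pairing $\langle\fE(f),h\rangle$ as $\tau\int f\,\psi_h^*$ with $\psi_h$ (which is exactly the paper's $\widetilde{\fF}(h)$) controlled in $L_q(\M;\mathrm{R}_d^c)$ via Theorem \ref{lem: L bmo} and Proposition \ref{prop: bmoq Lq}, then conclude by the $L_p(\M;\mathrm{W}_d^c)$--$L_q(\M;\mathrm{R}_d^c)$ duality. The only minor difference is that you decompose $\psi_h=\frac{c_d}{4}\fF(h)+\text{(easy correction terms)}$ and bound the corrections by Young's inequality together with $L_q(\N)\hookrightarrow L_q(\M;\mathrm{R}_d^c)$, whereas the paper simply notes that the proof of Theorem \ref{lem: L bmo} applies verbatim to $\widetilde{\fF}$.
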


\begin{proof}
To simplify notation, we denote $L_{2}\big(\mathbb{R}^{d},(1+  |t |^{d+1})dt\big)$
by $\mathrm{W}_{d}$. Let $q$ be the conjugate index of $p$. By duality, we can choose $h=(h',h'')\in L_q  (\mathcal{N};L_{2}^{c} )\oplus_q  L_q  (\mathcal{N} )$ with norm one such that 
\begin{equation*}
\begin{split}
 &\|  s^{c}  (f ) \|  _p+  \|  \mathrm{P} *f \|  _p\\
 &= \Big|\tau\int_{\mathbb{R}^{d}}\iint_{\widetilde{\Gamma}}\frac{\partial}{\partial\varepsilon}\mathrm{P} _{\varepsilon}  (f )  (s+t )h'^{*}  (s,t,\varepsilon )\frac{dtd\e}{\e^d} ds+\tau\int_{\mathbb{R}^{d}}\mathrm{P} *f  (s )h''^{*}  (s )ds \Big|\\
&  =   \big|\tau\int f  (u)\widetilde{\fF} (h )^* (u )du \big|, 
 \end{split}
\end{equation*}
where 
\begin{equation}\label{eq: widetilde Gamma}
\widetilde{\fF}(h ) (u )=\int_{\mathbb{R}^{d}}  \Big[\iint_{\widetilde{\Gamma}}h' (s,t,\varepsilon )\frac{\partial}{\partial\varepsilon}\mathrm{P} _{\varepsilon}  (s+t-u )\frac{dtd\varepsilon}{\e^d} +h''  (s )\mathrm{P} (s-u ) \Big]ds\,.
\end{equation}
Following the proof of Theorem \ref{lem: L bmo}, we can easily check that $\widetilde{\fF}$ is also bounded from $L_{q}  \big(\mathcal{N};L_{2}^{c}(\widetilde{\Gamma}) \big)\oplus_q L_{q}  (\mathcal{N} )$
to ${\bmo}_q^{c}  (\mathbb{R}^{d},\mathcal{M} )$. 
They applying  Proposition \ref{prop: bmoq Lq} and Theorem \ref{lem: L bmo}, we have
\begin{equation*}
\begin{split}
 &   \big|\tau\int f  (s )\widetilde{\fF}(h)^*(s )ds \big|\\
&  \lesssim  \sup_{  \|  \varphi  \|  _{{\bmo} _q^{c}  (\mathbb{R}^{d},\mathcal{M} )}\leq1}  \big|\tau\int f  (s )\varphi^{*}  (s )ds \big|\\
&  \lesssim  \sup_{  \|  \varphi \|  _{L_q  (\mathcal{M};\mathrm{R}_{d}^{c} )}\leq1}  \Big|\tau\int  (1+  |s |^{d+1}) f  (s )\varphi^{*}  (s )\frac{ds}{ 1+  |s |^{d+1}} \Big|\\
 & =  \|    (1+  |s |^{d+1} )f \|  _{L_{p}  (\mathcal{M};{\mathrm{R}}_{d}^{c} )}=\|  f \|  _{L_{p}  (\mathcal{M};\mathrm{W}_{d}^{c} )}.
\end{split}
\end{equation*}
Thus we obtain the desired assertion.
\end{proof}

\subsection{Duality}
Now we are going to present the $\h_p^c$-$\bmo_q^c$ duality for $1\leq p<2$. We begin this subsection by a lemma which will be very useful in the sequel.
\begin{lem}\label{lem: duality}
Let $1\leq p\leq 2$ and $q$ be its conjugate index. For $f\in \h_{p}^{c}  (\mathbb{R}^{d},\mathcal{M} )\cap L_{2}  (\mathcal{N} )$
and $g\in{\bmo}_q^{c}  (\mathbb{R}^{d},\mathcal{M} )$,
\[
  \big|\tau\int_{\mathbb{R}^{d}}f  (s )g^{*}  (s )ds \big|\lesssim  \| f\|_{\h_{p}^{c} } \|  g \|  _{{\bmo}_q^{c}}.
\]
\end{lem}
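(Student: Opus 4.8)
The strategy is to use the polarization identity \eqref{eq:polarized} to express the pairing $\tau\int f g^*$ as a sum of three terms: a ``tent'' term involving $\iint_{\widetilde\Gamma}\frac{\partial}{\partial\e}\mathrm{P}_\e(f)\,\frac{\partial}{\partial\e}\mathrm{P}_\e(g)^*\,\e\,dt\,d\e$ (equivalently the form with the truncated cone and weight $\e^{-(d-1)}$), plus the term $\int \mathrm{P}*f\,(\mathrm{P}*g)^*$, plus the cross term $4\pi\int\mathrm{P}*f\,(I(\mathrm{P})*g)^*$. Since $f\in L_2(\N)$, all integrals converge absolutely and the identity is legitimate. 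Then I would estimate each of the three terms separately, in each case peeling off a factor controlled by $\|f\|_{\h_p^c}=\|s^c(f)\|_p+\|\mathrm{P}*f\|_p$ and a dual factor controlled by $\|g\|_{\bmo_q^c}$.

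For the tent term, the plan is to apply the noncommutative Cauchy--Schwarz/Hölder inequality in the form $|\tau\int F G^*|\le \|(\iint_{\widetilde\Gamma}|F|^2\,d\nu)^{1/2}\|_p\,\|(\iint_{\widetilde\Gamma}|G|^2\,d\nu)^{1/2}\|_q$ with $d\nu=\frac{dt\,d\e}{\e^{d-1}}$, $F(s,t,\e)=\frac{\partial}{\partial\e}\mathrm{P}_\e(f)(s+t)$ and $G$ the analogous object for $g$. The first factor is exactly $\|s^c(f)\|_p\le\|f\|_{\h_p^c}$. The second factor is not literally a norm of $g$, but it is pointwise dominated using the $q$-Carleson measure estimate from Lemma \ref{lem: Caleson<bmo q}: writing $g\in\bmo_q^c$ with associated operators $a,b$ as in \eqref{eq: a}, \eqref{eq: b}, one has $\frac{1}{|Q|}\int_{T(Q)}|\frac{\partial}{\partial\e}\mathrm{P}_\e(g)|^2\e\,dt\,d\e\lesssim a(s)$ for small cubes; a dyadic stopping-time/Carleson decomposition of the region under the truncated cone (covering $\widetilde\Gamma$ by tents over dyadic cubes of side $<1$, which is possible since $\e<1$ forces $|t|<1$) then yields $\|(\iint_{\widetilde\Gamma}|G|^2\,d\nu)^{1/2}\|_q\lesssim\|a\|_{q/2}^{1/2}\le\|g\|_{\bmo_q^c}$. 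This is the step where Mei's non-local argument must be modified: because of the truncation one cannot use the full Lusin functional identity, and one has to argue that the part of the cone with $\e$ bounded away from $0$ contributes a controlled amount — here the $\bmo_q^c$ localization at cubes of volume $=1$ and the decay $|\frac{\partial}{\partial\e}\mathrm{P}_\e(g)|\lesssim (\e+|s|)^{-(d+1)}$ together with the $L_q(\M;\mathrm{R}_d^c)$ bound of Proposition \ref{prop: bmoq Lq} are what save the day.

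For the second term $\tau\int \mathrm{P}*f\,(\mathrm{P}*g)^*$ I would apply Hölder directly: $\|\mathrm{P}*f\|_p\le\|f\|_{\h_p^c}$ and $\|\mathrm{P}*g\|_q\lesssim\|\mathrm{P}*g\|_{L_q(\N)}$, which is controlled by $\|g\|_{\bmo_q^c}$ via the ($q$-version of the) argument in Lemma \ref{lem: Carleson < bmo}/Lemma \ref{lem: Caleson<bmo q} — concretely $\|\mathrm{P}*g(s)\|$ is bounded by a sum over unit cubes $Q_m$ of $(\int_{Q_m}|\mathrm{P}|^2)^{1/2}$ times the unit-cube $L^2$ averages of $g$, giving $|\mathrm{P}*g(s)|^2\lesssim b(s)$ and hence $\|\mathrm{P}*g\|_q\lesssim\|b\|_{q/2}^{1/2}\le\|g\|_{\bmo_q^c}$. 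The cross term $4\pi\tau\int\mathrm{P}*f\,(I(\mathrm{P})*g)^*$ is handled identically once one notes $\widehat{I(\mathrm{P})}(\xi)=|\xi|e^{-2\pi|\xi|}$, so that $I(\mathrm{P})$ is the inverse Fourier transform of a function in $H_2^\sigma(\R)$ for every $\sigma$; then Lemma \ref{lem: Caleson<bmo q} gives $|I(\mathrm{P})*g(s)|^2\lesssim b(s)$ and again $\|I(\mathrm{P})*g\|_q\lesssim\|g\|_{\bmo_q^c}$, while $\|\mathrm{P}*f\|_p\le\|f\|_{\h_p^c}$. Summing the three estimates gives $|\tau\int f g^*|\lesssim\|f\|_{\h_p^c}\|g\|_{\bmo_q^c}$.

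**Main obstacle.** The delicate point is the second-factor estimate for the tent term, i.e. showing $\|(\iint_{\widetilde\Gamma}|\frac{\partial}{\partial\e}\mathrm{P}_\e(g)(s+t)|^2\,\frac{dt\,d\e}{\e^{d-1}})^{1/2}\|_{L_q(\N)}\lesssim\|g\|_{\bmo_q^c}$, because of the truncation: the usual globally-defined Carleson-measure/duality machinery gives control in terms of $N(\lambda_g)$ only for tents over \emph{small} cubes, and one must carefully reorganize the integral over $\widetilde\Gamma$ into a Carleson-type sum over dyadic cubes of bounded size, invoking Lemma \ref{lem: Caleson<bmo q} and (for the boundary scales $\e\approx 1$) Proposition \ref{prop: bmoq Lq}. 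Getting the dyadic bookkeeping right — so that one genuinely recovers $\|a\|_{q/2}^{1/2}$ and $\|b\|_{q/2}^{1/2}$ rather than something larger — is where the ``highly non-trivial modifications'' the authors allude to will be needed.
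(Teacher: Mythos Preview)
Your plan for the two ``Poisson'' terms is fine and matches the paper: H\"older plus Lemma~\ref{lem: Caleson<bmo q} handle $\tau\int \mathrm{P}*f\,(\mathrm{P}*g)^*$ and $\tau\int \mathrm{P}*f\,(I(\mathrm{P})*g)^*$. The problem is the tent term.

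Your direct H\"older splitting $|\tau\int FG^*|\le\|s^c(f)\|_p\,\|s^c(g)\|_q$ requires $\|s^c(g)\|_q\lesssim\|g\|_{\bmo_q^c}$, and this is \emph{false} for $q=\infty$ (so in particular for the most important case $p=1$). Take $d=1$ and $g(x)=\log|x|$; this lies in $\bmo^c$ (it is in $\mathrm{BMO}$ and $\int_{-1/2}^{1/2}|\log|x||^2\,dx<\infty$), but at $s=0$ one computes
\[
s^c(g)(0)^2=\int_0^1\!\!\int_{|t|<\e}\Big|\frac{\e}{t^2+\e^2}\Big|^2\,dt\,d\e
=\Big(\int_{-1}^1\frac{du}{(1+u^2)^2}\Big)\int_0^1\frac{d\e}{\e}=\infty.
\]
Your proposed ``dyadic covering of $\widetilde\Gamma$ by tents'' makes this failure transparent: the cone splits into tents $T(Q_s^j)$ with $j\ge0$, each contributes $\le a(s)$ by the Carleson estimate, but there are infinitely many of them. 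The Carleson condition is strictly weaker than a pointwise bound on the Lusin function, and no amount of truncation fixes this. (For $2<q<\infty$ the estimate $\|s^c(g)\|_q\lesssim\|g\|_{\bmo_q^c}$ is eventually true, but only \emph{via} Theorem~\ref{thm: hq=bmoq}, whose proof rests on the duality theorem and hence on this very lemma; so you cannot use it here.)

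The paper avoids this by inserting a weight before Cauchy--Schwarz. One writes
\[
\mathrm{I}=\frac{2^d}{c_d}\tau\int_{\R}\int_0^1\!\!\int_{B(s,\e/2)}
\frac{\partial}{\partial\e}\mathrm{P}_\e(f)\cdot s^c(f)(s,\e)^{\frac{p-2}{2}}\cdot s^c(f)(s,\e)^{\frac{2-p}{2}}\cdot\frac{\partial}{\partial\e}\mathrm{P}_\e(g)^*\,\frac{dt\,d\e}{\e^{d-1}}\,ds,
\]
where $s^c(f)(s,\e)$ is the truncated square function \eqref{eq:two variables square function P}. Cauchy--Schwarz now gives $|\mathrm{I}|^2\le A\cdot B$. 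The $A$-factor has $|\partial_\e\mathrm{P}_\e(f)|^2\,s^c(f)(s,\e)^{p-2}$; differentiating $\overline s^c(f)(s,\e)^2$ in $\e$ and using monotonicity yields $A\lesssim\|s^c(f)\|_p^p$. The $B$-factor has $|\partial_\e\mathrm{P}_\e(g)|^2\,s^c(f)(s,\e)^{2-p}$; after passing to a discretized square function $\mathbb S^c(f)(s,j)$ that is constant on dyadic cubes, one telescopes $\mathbb S^c(f)(s,j)^{2-p}=\sum_{k\le j}d(s,k)$ and swaps the sums. \emph{Now} the Carleson bound on each tent $T(\widetilde Q_{m,k})$ is used, but the divergent sum over scales $k$ is paid for by $\sum_k d(s,k)=s^c(f)(s)^{2-p}$, so $B\lesssim\|s^c(f)\|_p^{2-p}\|g\|_{\bmo_q^c}^2$. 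Multiplying, $|\mathrm{I}|\lesssim\|s^c(f)\|_p\,\|g\|_{\bmo_q^c}$. The weight is the missing idea in your proposal.
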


\begin{proof}
It suffices to prove the lemma for  compactly supported (relative to the variable of $\mathbb{R}^{d}$) $f\in \h_p^c(\R,\M)$. We assume that $f$ is sufficiently nice that all calculations below are legitimate. We need two auxiliary square functions. For $s\in\R$ and $\e\in [0,1]$, we define
\begin{equation}\label{eq:two variables square function P}
s^{c}  (f )  (s,\varepsilon ) =  \Big(\int_{\varepsilon}^{1}\int_{B  (s,r-\frac{\e}{2} )}  \big|\frac{\partial}{\partial r}\mathrm{P}_{r}  (f )  (t ) \big|^{2}\frac{dtdr}{r^{d-1}} \Big)^{\frac{1}{2}},
\end{equation}
\begin{equation}
\overline{s}^{c}  (f )  (s,\varepsilon ) =  \Big(\int_{\varepsilon}^{1}\int_{B  (s,\frac{r}{2} )}  \big|\frac{\partial}{\partial r}\mathrm{P}_{r}  (f )  (t ) \big|^{2}\frac{dtdr}{r^{d-1}} \Big)^{\frac{1}{2}}.
\end{equation}
Both $\overline{s}^{c}  (f )(s,\e)$ and $s^{c}  (f )(s,\e)$ are decreasing in $\e$ and $s^{c}  (f )(s,0)=s^{c}  (f )(s)$. In addition, it is clear that $\overline{s}^{c}  (f )(s,\e)\leq s^{c}  (f )(s,\e)$. Let $(e_i)_{i\in I}$ be an increasing family of $\tau$-finite projections of $\M$ such that $e_i$ converges to $1_\M$ in the strong operator topology. Then we can approximate $s^{c}  (f )  (s,\varepsilon )$ by $s^{c}(e_i f e_i)(s,\varepsilon )$. Thus we can assume that $\tau$ is finite; under this finiteness assumption, for any small $\delta>0$ (which will tend to zero in the end of the proof), consider $s^{c}  (f )  (s,\varepsilon ) +\delta 1_{\M}$ instead of $s^{c}  (f )  (s,\varepsilon )$, we can assume that $s^{c}(f )(s,\varepsilon)$
is invertible in $\mathcal{M}$ for every $(s,\varepsilon )\in S$. 
By \eqref{eq:polarized} and the Fubini theorem, we have
\begin{equation*}
\begin{split}
  \big|  \tau\int f  (s )g^{*}  (s )ds   \big| &   \lesssim     \Big|    \tau \int_{\mathbb{R}^{d}}\int_{0}^{1}\frac{\partial}{\partial\varepsilon}\mathrm{P}_{\varepsilon}  (f )  (s )\frac{\partial}{\partial\varepsilon}\mathrm{P}_{\varepsilon}  (g )^{*}  (s )\varepsilon\, d\varepsilon ds    \Big| \\
 &\quad \, +    \Big|   \tau\int_{\mathbb{R}^{d}}\mathrm{P}*f  (s )  (\mathrm{P}*g  (s ) )^{*}ds +    \tau\int_{\mathbb{R}^{d}}\mathrm{P}*f  (s )  (I(\mathrm{P})*g  (s ) )^{*}ds    \Big| \\
 &  = \Big|  \frac{2^d}{c_d}\tau\int_{\mathbb{R}^{d}}\int_{0}^{1}\int_{B(s,\frac{\e}{2})}\frac{\partial}{\partial\varepsilon}\mathrm{P}_{\varepsilon}  (f )  (t )\frac{\partial}{\partial\varepsilon}\mathrm{P}_{\varepsilon}  (g )^{*}(t)\frac{d\varepsilon dt}{\e^{d-1}}ds\Big|  \\
  & \;\;\;\; +  \Big| \tau\int_{\mathbb{R}^{d}}\mathrm{P}*f  (s )  (\mathrm{P}*g  (s ) )^{*}ds +   \tau\int_{\mathbb{R}^{d}}\mathrm{P}*f  (s )  (I(\mathrm{P})*g  (s ) )^{*}ds\Big| .
 \end{split}
\end{equation*}
Then, 
 \begin{equation*}
\begin{split}
&  \big|  \tau\int f  (s )g^{*}  (s )ds   \big| \\
&\lesssim 
    \Big|  \frac{2^d}{c_d}\tau\int_{\mathbb{R}^{d}}\int_{0}^{1}\int_{B(s,\frac{\e}{2})}\frac{\partial}{\partial\varepsilon}\mathrm{P}_{\varepsilon}  (f )  (t )s^{c}  (f )  (s,\varepsilon )^{\frac{p-2}{2}}s^{c}  (f )  (s,\varepsilon )^{\frac{2-p}{2}}\frac{\partial}{\partial\varepsilon}\mathrm{P}_{\varepsilon}  (g )^{*}(t)\frac{d\varepsilon dt}{\e^{d-1}}ds \Big|  \\
 &\;\;\;\; +  \Big(\,  \Big| \tau\int_{\mathbb{R}^{d}}\mathrm{P}*f  (s )  (\mathrm{P}*g  (s ) )^{*}ds \Big| +  \Big|  \tau\int_{\mathbb{R}^{d}}\mathrm{P}*f  (s )  (I(\mathrm{P})*g  (s ) )^{*}ds \Big| \, \Big)\\
 &  \stackrel{\mathrm{def}}{=} \, \mathrm{I}+\mathrm{II}.
 \end{split}
\end{equation*}
The term $\mathrm{II}$ is easy to deal with. By the H\"older inequality and \eqref{Carleson < bmo by potential}, we get
\begin{equation*}
\mathrm{II}  \leq   \|  \mathrm{P}*f \| _p  \|  \mathrm{P}*g \|  _q+  \|  \mathrm{P}*f \|  _p  \|  I(\mathrm{P})*g \|  _q.
\end{equation*}
Then by \cite[Proposition V.3 and Lemma V.3.2]{Stein1970} we have
$$  \|  \mathrm{P}*g \|_q \lesssim   \|  J(\mathrm{P})*g \|  _q,\;\;\mbox{and}\;\;   \|  I(\mathrm{P})*g \|  _q \lesssim   \|  J(\mathrm{P})*g \|  _q.$$
Hence, by Lemma \ref{lem: Caleson<bmo q},
$$\mathrm{II} \lesssim   \|  g \|  _{{\bmo}_q^{c}}  \|  f \|  _{\h_{p}^{c}}.$$

Now we estimate the term $\mathrm{I}$. By the Cauchy-Schwarz inequality
\begin{equation*}
\begin{split}
\frac{c_d^2}{4^d}\, \mathrm{I^2} & \leq  \tau\int_{\mathbb{R}^{d}}\int_{0}^{1}\Big(\int_{B(s,\frac{\e}{2})}  |\frac{\partial}{\partial\varepsilon}\mathrm{P}_{\varepsilon}  (f )  (t ) |^{2} \frac{dt}{\e^{d-1}} \Big)s^{c}  (f )  (s,\varepsilon )^{p-2} d\varepsilon ds\\
&\;\;\;\;\cdot \, \tau\int_{\mathbb{R}^{d}}\int_{0}^{1}\Big(\int_{B(s,\frac{\e}{2})}  |\frac{\partial}{\partial\varepsilon}\mathrm{P}_{\varepsilon}  (g)(t ) |^{2} \frac{dt}{\e^{d-1}} \Big) s^{c}  (f )  (s,\varepsilon )^{2-p}d\varepsilon ds\\
&  \stackrel{\mathrm{def}}{=}  A\cdot B.
\end{split}
\end{equation*}
Note here that $s^{c}  (f )(s,\e)$ is the function of two variables
defined by \eqref{eq:two variables square function P}, which is differentiable in the w$^*$-sense.
We first deal with $A$. Using $\overline{s}^{c}  (f )(s,\e)\leq s^{c}  (f )(s,\e)$, we have
\begin{equation*}
\begin{split}
A & \leq  \tau\int_{\mathbb{R}^{d}}\int_{0}^{1}\int_{B(s,\frac{\e}{2})}  |\frac{\partial}{\partial\varepsilon}\mathrm{P}_{\varepsilon}  (f )(t ) |^{2} \overline{s}^{c}(f )(s,\varepsilon )^{p-2}\frac{d\varepsilon dt}{\e^{d-1}}ds\\
& = -\tau\int_{\mathbb{R}^{d}}\int_{0}^{1}\big(\frac{\partial }{\partial\varepsilon}\overline{s}^{c}  (f )(s,\e)^{2}\big)\overline{s}^{c} (f )(s,\e)^{p-2}d\e ds\\
&  =  -2 \tau\int_{\mathbb{R}^{d}}\int_{0}^{1}\overline{s}^{c}  (f )  (s,\varepsilon )^{p-1}\frac{\partial }{\partial\varepsilon}\overline{s}^{c}  (f )(s,\e)d\varepsilon ds.
 \end{split}
\end{equation*}
Since $1\leq p< 2$ and $\overline{s}^{c}  (f )(s,\e)$ is decreasing in $\e$, $\overline{s}^{c}  (f )(s,\e)^{p-1}\leq \overline{s}^{c}  (f )(s,0)^{p-1}$.  At the same time, $\frac{\partial }{\partial\varepsilon}\overline{s}^{c}(f)(s,\e)\leq 0$. Therefore,
  \begin{equation*}
  \begin{split}
A & \lesssim - \tau\int_{\mathbb{R}^{d}}\overline{s}^{c}  (f )  (s,0 )^{p-1}\int_0^1\frac{\partial }{\partial\varepsilon}\overline{s}(f)^{c}(s,\e)d\e ds\\
 & \lesssim \tau\int_{\mathbb{R}^{d}}s^{c}  (f )  (s,0 )^{p}ds=  \|  f \|  _{\h_p^c}^p.\\
\end{split}
\end{equation*}

The estimate of $B$ is harder. For any  $ j\in\mathbb{N}$, we need to create a square net partition
in $\R$ as follows:
$$
Q_{m,j}=  (\frac{1}{\sqrt{d}}  (m_{1}-1 )2^{-j},\frac{1}{\sqrt{d} }m_{1}2^{-j} ]\times\cdots\times  (\frac{1}{\sqrt{d}}  (m_{d}-1 )2^{-j},\frac{1}{\sqrt{d}}m_{d}2^{-j} ]
$$
 with $m=  (m_{1},\cdots,m_{d} )\in\mathbb{Z}^{d}$.
 Let $c_{m,j}$ denote the center of $Q_{m,j}$. Define
\begin{equation}\label{eq: square function}
{\mathbb{S}^{c}}  (f )  (s,j )  =  \Big(\int_{2^{-j}}^1\int_{B  (c_{m,j},r )}  |\frac{\partial}{\partial r}\mathrm{P}_{r}  (f )  (t ) |^{2}\frac{dtdr}{r^{d-1}} \Big)^{\frac{1}{2}}\quad \text{if } s\in Q_{m,j}.
\end{equation}
For any $s\in\mathbb{R}^{d}$ and $ k\in\mathbb{N}_0$  ($ \mathbb{N}_0$ being the set of nonnegative integers),  we define
\begin{equation*}
d  (s,k )  =  \mathbb{S}^{c}  (f )  (s,k )^{2-p}-\mathbb{S}^{c}  (f )  (s,k-1 )^{2-p}.
\end{equation*}
Since $B(s,r-\frac{\e}{2})\subset B(c_{m,j},r)$ whenever $s\in Q_{m,j}$ and $\e\geq 2^{-j}$, we have
$$s^{c}  (f )  (s,\varepsilon ) \leq \mathbb{S}^{c}  (f )  (s,j ),\,  \forall\, s\in Q_{m,j},\varepsilon\geq 2^{-j}.
$$
It is clear that $\mathbb{S}^{c}  (f )(s,j)$ is increasing in $j$, so $d(s,k)\geq 0$.  At the same time, $d(s,k)$ is constant on $Q_{m,k}$ and $\sum_{k\leq j}d(s,k)=\mathbb{S}^{c}  (f )(s,j)^{2-p}$.
Therefore,
\begin{equation*}
\begin{split}
B  &\lesssim  \tau \sum_{m\in\Z}\sum_{j\geq 1}\int_{Q_{m,j}}\int_{2^{-j}}^{2^{-j+1}}\Big(\int_{B(s,\frac{\e}{2})}  |\frac{\partial}{\partial\varepsilon}\mathrm{P}_{\e}(g)(t) |^{2}\frac{dt}{\e^{d-1}}\Big)  \mathbb{S}^{c}  (f )  (s,j )^{2-p} d\varepsilon ds\\
&  =   \tau\int_{\mathbb{R}^{d}}\sum_{j\geq 1}\mathbb{S}^{c}  (f )  (s,j )^{2-p}\int_{2^{-j}}^{2^{-j+1}}\Big(\int_{B(s,\frac{\e}{2})}  |\frac{\partial}{\partial\varepsilon}\mathrm{P}_{\e}(g)(t) |^{2}\frac{dt}{\e^{d-1}}\Big) d\varepsilon ds\\
&   =  \tau\int_{\mathbb{R}^{d}}\sum_{j\geq 1}\sum_{1\leq k\leq j}d(s,k)\int_{2^{-j}}^{2^{-j+1}}\Big(\int_{B(s,\frac{\e}{2})}  |\frac{\partial}{\partial\varepsilon}\mathrm{P}_{\e}(g)(t) |^{2}\frac{dt}{\e^{d-1}}\Big) d\varepsilon ds\\
&  = \tau\int_{\mathbb{R}^{d}}\sum_{k\geq 1}d(s,k)\sum_{j\geq k}\int_{2^{-j}}^{2^{-j+1}}\Big(\int_{B(s,\frac{\e}{2})}  |\frac{\partial}{\partial\varepsilon}\mathrm{P}_{\e}(g)(t) |^{2}\frac{dt}{\e^{d-1}}\Big) d\varepsilon ds\\
&  =  \tau\sum_{m}\sum_{k\geq 1}d(s,k)\int_{Q_{m,k}}\int_0^{2^{-k+1}}\Big(\int_{B(s,\frac{\e}{2})}  |\frac{\partial}{\partial\varepsilon}\mathrm{P}_{\e}(g)(t) |^{2}\frac{dt}{\e^{d-1}}\Big)  d\varepsilon ds\,.
\end{split}
\end{equation*}
Since $g\in {\bmo} _q^c$, Lemma \ref{lem: Caleson<bmo q} ensures the existence of a positive operator $a\in L_{\frac{q}{2}}(\N)$ such that $\|a\|_{\frac{q}{2}}\lesssim \|g\|_{{\bmo} _q^c}^2$ and
$$
\frac{1}{|Q|}\int_{T(Q)}  |\frac{\partial}{\partial\varepsilon}\mathrm{P}_{\e}(g)(t) |^{2}\e dtd\e \leq a(s)\, \text{ and for }s\in Q  \text{ and for all cubes }Q \text{ with }|Q|<1.
$$
Let $\widetilde{Q}_{m,k}$ be the cube concentric with $Q_{m,k}$ and having side length $2^{-k+1}$.
By the Fubini theorem and Lemma \ref{lem: Carleson < bmo}, we have
\begin{equation*}
\begin{split}
\int_{Q_{m,k}}\int_0^{2^{-k+1}}\Big(\int_{B(s,\frac{\e}{2})}  |\frac{\partial}{\partial\varepsilon}\mathrm{P}_{\e}(g)(t) |^{2}\frac{dt}{\e^{d-1}}\Big)  d\varepsilon ds
&\leq  2^d \int_{\widetilde{Q}_{m,k}}\int_0^{2^{-k+1}}  |\frac{\partial}{\partial\varepsilon}\mathrm{P}_{\varepsilon}  (g )  (s ) |^{2}\varepsilon d\varepsilon ds\\
&=  2^d \int_{T(\widetilde{Q}_{m,k})}  |\frac{\partial}{\partial\varepsilon}\mathrm{P}_{\varepsilon}  (g )  (s ) |^{2}\varepsilon d\varepsilon ds\\
& \lesssim \int_{Q_{m,k}}a(s)ds.
\end{split}
\end{equation*}
Then we deduce
\begin{equation*}
\begin{split}
B & \lesssim   \tau \sum_m \sum_{k\geq 1}\int_{Q_{m,k}}d  (s,k )a(s)ds \\
& =\tau \int_{\R}\sum_{k\geq 1}d(s,k)a(s)ds\\
&  =   \tau\int_{\mathbb{R}^{d}}\mathbb{S}^{c}  (f )  (s,+\infty  )^{2-p}a(s)ds\\
& =   \tau \int_{\R} S^c(f)(s)^{2-p}a(s)ds\leq   \|  S^c(f)\|_{p}^{2-p}\| a \|_{\frac{q}{2}}\\
&  \leq   \|  f \|  _{\h_{p}^{c}}^{2-p}\| a \|_{\frac{q}{2}}\lesssim   \|  f \|  _{\h_{p}^{c}}^{2-p}\| g \|_{{\bmo} _q^c}^2.
\end{split}
\end{equation*}
Combining the estimates of $A$, $B$  and $\rm II$, we complete the proof.

\end{proof}

The following is the main theorem of  this section.
\begin{thm}\label{thm: duality P}
Let $1\leq p <2$ and $q$ be its conjugate index.
We have $  \h_{p}^{c}  (\mathbb{R}^{d},\mathcal{M} )^*={\bmo}_q^{c}  (\mathbb{R}^{d},\mathcal{M} )$
with equivalent norms. More precisely, every $g\in\bmo_q^{c}  (\mathbb{R}^{d},\mathcal{M} )$
defines a continuous linear functional on $\h_{p}^{c}  (\mathbb{R}^{d},\mathcal{M} )$
by
$$
\ell_g  (f )=\tau\int f  (s )g^{*}  (s )ds,\,  \forall\, f\in L_{p}  (\mathcal{M};\mathrm{W}_{d}^{c} ).
$$
 Conversely, every $\ell \in   \h_{p}^{c}  (\R,\M )^*$
can be written as above and is associated to some $g\in{\bmo}_q^{c}  (\mathbb{R}^{d},\mathcal{M} )$ with
$$
 \| \ell \|  _{  (\h_{p}^{c})^*}\approx\|  g \|  _{{\bmo}_q^{c}}.
$$

\end{thm}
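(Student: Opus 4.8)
The plan is to establish the two containments $\bmo_q^c(\R,\M)\subset\h_p^c(\R,\M)^*$ and $\h_p^c(\R,\M)^*\subset\bmo_q^c(\R,\M)$ with matching norm control. The first is a repackaging of Lemma~\ref{lem: duality}; the second is a Hahn--Banach argument built on the isometric embedding $\fE$ and on the boundedness of the map $\widetilde{\fF}$ introduced in the proof of Corollary~\ref{cor: L_1 h1}.

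For the first containment, the preliminary point is that the sufficiently regular, compactly supported (in the $\R$-variable) functions form a dense subspace $\mathcal D$ of $\h_p^c(\R,\M)$; this follows from a routine spatial-truncation and mollification argument, using \eqref{rem: h2=L2} and Corollary~\ref{cor: L_1 h1}, which supplies the continuous inclusion $L_p(\M;\mathrm W_d^c)\hookrightarrow\h_p^c(\R,\M)$. On $\mathcal D$ the bilinear form $\ell_g(f)=\tau\int_\R f(s)g^*(s)\,ds$ converges absolutely for every $g\in\bmo_q^c(\R,\M)$, and Lemma~\ref{lem: duality} gives $|\ell_g(f)|\lesssim\|f\|_{\h_p^c}\,\|g\|_{\bmo_q^c}$. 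Consequently $\ell_g$ extends uniquely to a bounded functional on $\h_p^c(\R,\M)$ with $\|\ell_g\|_{(\h_p^c)^*}\lesssim\|g\|_{\bmo_q^c}$.

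For the converse, given $\ell\in\h_p^c(\R,\M)^*$, I would use that $\fE$ embeds $\h_p^c(\R,\M)$ isometrically into $L_p\big(\mathcal N;L_2^c(\widetilde\Gamma)\big)\oplus_p L_p(\mathcal N)$; Hahn--Banach then extends $\ell\circ\fE^{-1}$ to a functional $\widetilde\ell$ on this direct sum with $\|\widetilde\ell\|=\|\ell\|_{(\h_p^c)^*}$. By the duality of the column Hilbert-space-valued noncommutative $L_p$-spaces of \cite{JLX2006} together with the duality of $\ell_p$-direct sums, $\widetilde\ell$ is represented by a pair $h=(h',h'')\in L_q\big(\mathcal N;L_2^c(\widetilde\Gamma)\big)\oplus_q L_q(\mathcal N)$ (reading $q=\infty$ when $p=1$) with $\|h\|\approx\|\ell\|_{(\h_p^c)^*}$. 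Setting $g=\widetilde{\fF}(h)$, the boundedness of $\widetilde{\fF}$ from $L_q\big(\mathcal N;L_2^c(\widetilde\Gamma)\big)\oplus_q L_q(\mathcal N)$ to $\bmo_q^c(\R,\M)$ (obtained by rerunning the proof of Theorem~\ref{lem: L bmo}, as noted after Corollary~\ref{cor: L_1 h1}) gives $g\in\bmo_q^c(\R,\M)$ with $\|g\|_{\bmo_q^c}\lesssim\|\ell\|_{(\h_p^c)^*}$. Finally, for $f\in\mathcal D$, unwinding the pairing $\widetilde\ell(\fE(f))$ by Fubini's theorem and the convolution identity $\int_\R\frac{\partial}{\partial\e}\mathrm P_\e(s+t-u)f(u)\,du=\frac{\partial}{\partial\e}\mathrm P_\e(f)(s+t)$ --- the same computation underlying the identity $\fF\circ\fE=\mathrm{id}$ and Corollary~\ref{cor: L_1 h1} --- yields $\ell(f)=\widetilde\ell(\fE(f))=\tau\int_\R f(u)g^*(u)\,du=\ell_g(f)$. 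Since $\ell$ and $\ell_g$ are both continuous on $\h_p^c(\R,\M)$ and agree on the dense set $\mathcal D$, they coincide; combining with the first containment gives $\|\ell\|_{(\h_p^c)^*}\approx\|g\|_{\bmo_q^c}$.

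I expect the density statement to be the step requiring the most care. The $\h_p^c$-norm controls only the \emph{truncated} square function $s^c(f)$ and the single Poisson average $\mathrm P*f$, and in particular retains no information about $\mathrm P_\e(f)$ for large $\e$, so approximating an arbitrary element of $\h_p^c(\R,\M)$ by elements of $\mathcal D$ must be carried out delicately (spatial truncation, mollification, and control of the contribution near $\e=0$), and it is here that Corollary~\ref{cor: L_1 h1} is indispensable. The remaining ingredients --- Hahn--Banach, the identification of the dual of $L_p\big(\mathcal N;L_2^c(\widetilde\Gamma)\big)\oplus_p L_p(\mathcal N)$, the boundedness of $\widetilde{\fF}$, and the explicit recomputation of the pairing --- are routine given what is already proved.
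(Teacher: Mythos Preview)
Your proposal is correct and follows essentially the same route as the paper: Lemma~\ref{lem: duality} for the inclusion $\bmo_q^c\subset(\h_p^c)^*$, and Hahn--Banach through the isometric embedding $\fE$ together with the boundedness of $\widetilde{\fF}$ for the converse. Your treatment is in fact slightly more explicit than the paper's, which simply asserts the representation formula on $L_p(\mathcal M;\mathrm W_d^c)$ without dwelling on the density step you flag.
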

\begin{proof}
First, by Lemma \ref{lem: duality}, we get
\begin{equation}\label{dual-ineq}
  | \ell_{g}  (f ) |\lesssim  \|  g \|  _{{\bmo}_q^{c}}  \|  f \|  _{\h_{p}^{c}}
\end{equation}

Now we prove the converse. Suppose that $\ell \in \h_{p}^{c}  (\R,\M )^*$.
By the Hahn-Banach theorem, $\ell$ extends to a continuous functional on
$L_{p}  \big(\mathcal{N};L_{2}^{c}(\widetilde{\Gamma}) \big)\oplus_p L_{p}  (\mathcal{N} )$
with the same norm. Thus, there exists $h=(h',h'')\in L_q  \big(\mathcal{N};L_{2}^{c}  (\widetilde{\Gamma}) \big)\oplus_q  L_q  (\mathcal{N} )$
such that
 $$  \| h \|  _{L_q  \big(\mathcal{N};L_{2}^{c}  (\widetilde{\Gamma} ) \big)\oplus_q L_q  (\mathcal{N} )}=  \|  \ell \|  _{ (\h_{p}^{c})^*}$$
 and 
\begin{equation*}
\begin{split}
\ell  (f ) & = 
\tau\int_{\mathbb{R}^{d}}\iint_{\widetilde{\Gamma}}\frac{\partial}{\partial\varepsilon}\mathrm{P} _{\varepsilon}  (f )  (s+t )h'^{*}  (s,t,\varepsilon )\frac{dtd\e}{\e^d} ds+\tau\int_{\mathbb{R}^{d}}\mathrm{P} *f  (s )h''^{*}  (s )ds,\\
&  =  \tau\int_{\mathbb{R}^{d}}f (u)\widetilde{\fF}(h)^*(u)du,
\end{split}
\end{equation*}
 where $\widetilde{\fF}$ is the map defined in \eqref{eq: widetilde Gamma}.

 Let $g=\widetilde{\fF}(h)$. Following the proof of Theorem \ref{lem: L bmo}, we have
$$\|g\|_{{\bmo}_q^{c}}\lesssim \|\ell\|_{  (\h_{p}^{c})^*}$$
 and
\begin{equation*}
\ell (f )  = \tau\int_{\mathbb{R}^{d}}f  (s )g^{*}  (s )ds,\,  \forall\, f\in L_p\big(\M;\mathrm{W}^c_{d}\big).
\end{equation*}
Thus, we have accomplished the proof of the theorem. 
\end{proof}

The following corollary gives an equivalent norm of the space $\bmo_q^c$. Note that it is a strengthening of the one-sided estimates in Lemmas \ref{lem: Carleson < bmo} and \ref{lem: general Carleson}.

\begin{cor}
\label{cor: Carleson=Dbmo}
Let $2<q\leq \infty$. Then $g\in{\bmo}_q^{c}  (\mathbb{R}^{d},\mathcal{M} )$
if and only if $d\lambda_{g}=  |\frac{\partial}{\partial\varepsilon}\mathrm{P}_{\varepsilon}  (g )  (s ) |^{2}\varepsilon dsd\varepsilon$
is an $\mathcal{M}$-valued Carleson $q$-measure on $S$ and $   \|  J( \mathrm{P})*g \|  _q<\infty $. Furthermore,
$$
  \|  g \|  _{{\bmo}_q^{c}}\approx \Big\|\underset{\substack{s\in Q\subset \R\\ |Q|<1}}{\sup{} ^+}\frac{1}{|Q|}\int_{T(Q)}|\frac{\partial}{\partial\varepsilon}\mathrm{P}_{\varepsilon}(g)(t)|^2\e dtd\e \Big\|_{\frac{q}{2}}^{\frac{1}{2}}+   \|  J( \mathrm{P})*g \|  _q .
$$
\end{cor}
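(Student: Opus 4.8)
The plan is to establish the two inclusions separately. For the easy direction, suppose $g\in{\bmo}_q^{c}(\R,\M)$; then Lemma \ref{lem: Caleson<bmo q} (the $q$-analogue of Lemma \ref{lem: Carleson < bmo}) together with the argument in Lemma \ref{lem: general Carleson} gives exactly that $d\lambda_g$ is a Carleson $q$-measure, that $\|J(\mathrm P)*g\|_q<\infty$, and that the right-hand side of the asserted equivalence is $\lesssim\|g\|_{{\bmo}_q^c}$. (The potential $J(\mathrm P)$ enters rather than $\mathrm P$ itself because, just as in Lemma \ref{lem: general Carleson}, we need $\widehat{J(\mathrm P)}=(1+|\xi|^2)^{1/2}e^{-2\pi|\xi|}\in H_2^\sigma(\R)$ to control $\|J(\mathrm P)*g\|_\infty$, resp. $\|J(\mathrm P)*g\|_q$, by the ${\bmo}_q^c$-norm.) So for this half there is essentially nothing new to prove; I would simply invoke the cited lemmas.

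For the converse — the substance of the corollary — I would exploit the duality Theorem \ref{thm: duality P} exactly as in its proof. Fix $g$ for which the quantity on the right-hand side is finite; I want to show $g\in{\bmo}_q^c$ with the reverse estimate. Set $h'(s,t,\e)=\e\,\frac{\partial}{\partial\e}\mathrm P_\e(g)(s+t)$ on $\widetilde\Gamma$ and $h''(s)=J(\mathrm P)*g(s)$; then the finiteness hypothesis says precisely that $(h',h'')$ is "$\bmo$-sized" in the appropriate sense, and one checks, using the Carleson $q$-measure condition via \eqref{eq: Lp infty norm} and Junge's characterization of $L_{q/2}(\N;\ell_\infty)$, that $(h',h'')$ generates a bounded functional. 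Concretely, I would show that the pairing $f\mapsto \tau\int f(s)g^*(s)\,ds$ coincides (after an application of the polarized identity \eqref{eq:polarized}, modulo the $J$-vs-$\mathrm P$ discrepancy handled by \cite[Proposition V.3, Lemma V.3.2]{Stein1970}) with $\langle \fE(f),(h',h'')\rangle$, hence is bounded on $\h_p^c(\R,\M)$ by $\|(h',h'')\|$, which is controlled by the right-hand side of the claimed equivalence. By Theorem \ref{thm: duality P}, a bounded functional on $\h_p^c$ is represented by an element of ${\bmo}_q^c$, and a density/uniqueness argument identifies that element with $g$ itself; this yields $\|g\|_{{\bmo}_q^c}\lesssim$ (right-hand side).

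The main obstacle I anticipate is the bookkeeping in the converse direction: one must show that finiteness of the Carleson $q$-measure $N_q(\lambda_g)$ plus $\|J(\mathrm P)*g\|_q<\infty$ is genuinely enough to make the above functional bounded — i.e. one needs the reverse of the computations in the proof of Theorem \ref{lem: L bmo}, estimating $\|\fF(h)\|_{{\bmo}_q^c}$ from below, or rather showing the functional built from $h$ is bounded. The delicate point is that the truncation to the strip $S$ means one only has information on $\widetilde\Gamma$, so to recover a genuine ${\bmo}_q^c$ element one cannot simply integrate the square function up to $\e=\infty$; instead the low-frequency part of $g$ must be controlled by the term $\|J(\mathrm P)*g\|_q$, and verifying that this substitution is lossless (again via the Bessel-potential computations behind Lemma \ref{lem: general Carleson} and Stein's comparison of the Riesz/Bessel potentials of the Poisson kernel) is where the real work lies. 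Once that is in place, the final equivalence of norms follows by combining the two one-sided estimates.
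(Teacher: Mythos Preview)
Your high-level strategy is correct and matches the paper: the easy direction is exactly Lemmas~\ref{lem: Carleson < bmo}, \ref{lem: general Carleson}, \ref{lem: Caleson<bmo q}; the converse goes by showing that $\ell_g(f)=\tau\int f g^*$ is bounded on $\h_p^c$ and then invoking Theorem~\ref{thm: duality P} to identify $g$ with an element of $\bmo_q^c$.

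Where your proposal goes slightly astray is the mechanism for bounding $\ell_g$. You package $h'(s,t,\e)=\e\,\partial_\e\mathrm P_\e(g)(s+t)$, $h''=J(\mathrm P)*g$ and try to realise $\ell_g(f)$ as $\langle \fE(f),(h',h'')\rangle$, bounded by ``$\|(h',h'')\|$''. The trouble is that the Carleson $q$-condition does \emph{not} place $h'$ in $L_q\big(\N;L_2^c(\widetilde\Gamma)\big)$ (nor in any other dual space for which the pairing with $\fE(f)$ is a priori continuous), so there is no ready-made norm to invoke. Going through Theorem~\ref{lem: L bmo} or its ``reverse'' does not help either: that theorem takes $(h',h'')$ in $L_q\oplus L_q$ as input, which is exactly what you cannot assume.

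The paper's route avoids this entirely. It simply observes that the proof of Lemma~\ref{lem: duality} --- the inequality $|\tau\int f g^*|\lesssim \|f\|_{\h_p^c}\|g\|_{\bmo_q^c}$ --- never uses the full $\bmo_q^c$-norm of $g$. Inspecting that proof, the term $B$ is controlled using only the Carleson $q$-measure property of $d\lambda_g$ (via Lemma~\ref{lem: Caleson<bmo q}), and the term $\mathrm{II}$ uses only $\|J(\mathrm P)*g\|_q$ (via the Stein potential estimates you already cite). Hence the same argument gives directly
\[
|\ell_g(f)|\lesssim \|f\|_{\h_p^c}\Big(\Big\|{\sup_{|Q|<1}}^+\tfrac1{|Q|}\int_{T(Q)}d\lambda_g\Big\|_{q/2}^{1/2}+\|J(\mathrm P)*g\|_q\Big),
\]
and Theorem~\ref{thm: duality P} plus uniqueness finishes. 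So rather than building new machinery, just point back to the proof of Lemma~\ref{lem: duality} and note which hypotheses on $g$ are actually consumed.
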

\begin{proof}
From the proof of Lemma \ref{lem: duality}, we can see that if $d\lambda_{g}=  |\frac{\partial}{\partial\varepsilon}\mathrm{P}_{\varepsilon}  (g )  (s ) |^{2}\varepsilon dsd\varepsilon$
is an $\mathcal{M}$-valued Carleson $q$-measure on $S$ and $J(\mathrm{P})*g\in L_q(\N)$, then $g$ defines
a continuous functional on $\h_p^c(\R,\M)$:
$$
\ell (f )=\tau\int_{\mathbb{R}^{d}}f  (s )g^{*}  (s )ds,
$$
 and
 $$  \|  \ell \|  _{  (\h_{p}^{c} )^{*}}\lesssim \Big\|\underset{\substack{s\in Q\subset \R\\ |Q|<1}}{\sup{} ^+}\frac{1}{|Q|}\int_{T(Q)}|\frac{\partial}{\partial\varepsilon}\mathrm{P}_{\varepsilon}(g)(t)|^2\e dtd\e \Big\|_{\frac{q}{2}}^{\frac{1}{2}}+  \|  J( \mathrm{P})*g \|  _q .$$
According to Theorem \ref{thm: duality P}, there exists a function $g^{\prime}\in{\bmo}_q^{c}  (\mathbb{R}^{d},\mathcal{M} )$
such that 
$$  \|  g^{\prime} \|  _{{\bmo}_q^{c}}\lesssim \Big\|\underset{\substack{s\in Q\subset \R\\ |Q|<1}}{\sup{} ^+}\frac{1}{|Q|}\int_{T(Q)}|\frac{\partial}{\partial\varepsilon}\mathrm{P}_{\varepsilon}(g)(t)|^2\e dtd\e \Big\|_{\frac{q}{2}}^{\frac{1}{2}}+  \|  J( \mathrm{P})*g \|  _q $$
and that
$$
\tau\int_{\mathbb{R}^{d}}f  (s )g^{*}  (s )ds=\tau\int_{\mathbb{R}^{d}}f  (s )g^{\prime*}  (s )ds,
$$
for any $f\in \h_p^c(\R,\M)$. Thus, $g=g^{\prime}$
with
$$  \|  g \|  _{{\bmo}_q^{c}}\lesssim \Big\|\underset{\substack{s\in Q\subset \R\\ |Q|<1}}{\sup{} ^+}\frac{1}{|Q|}\int_{T(Q)}|\frac{\partial}{\partial\varepsilon}\mathrm{P}_{\varepsilon}(g)(t)|^2\e dtd\e \Big\|_{\frac{q}{2}}^{\frac{1}{2}}+  \|  J( \mathrm{P})*g \|  _q .$$
The inverse inequality is already contained in Lemmas \ref{lem: Carleson < bmo} and \ref{lem: general Carleson}. We obtain the desired assertion.
\end{proof}

\subsection{The equivalence $\h_q=\bmo_q$}
We now show that $\h_q^c(\R,\M)=\bmo_q^c(\R,\M)$ for $2<q<\infty$. Thus according to the duality obtained in the last subsection, the dual of $\h_p^c(\R,\M)$ agrees with $\h_q^c(\R,\M)$ when $1<p<2$. 
Let us begin with two lemmas concerning the comparison of $s^c(f)$ and $g^c(f)$. We require an auxiliary truncated square function.
For $s\in \R$ and $\e\in [0,\frac{2}{3}]$, we define:
\begin{equation}\label{eq: two variables square function}
\widetilde g^{c}  (f )  (s,\varepsilon ) =  \Big(\int_{\varepsilon}^{\frac{2}{3}}  |\frac{\partial}{\partial r}\mathrm{P}_{r}  (f )  (s ) |^{2}rdr \Big)^{\frac{1}{2}}.
\end{equation}

\begin{lem}
\label{lem:poison g Lusin compare}
We have
$$
\widetilde g^{c}  (f )  (s,\varepsilon )\lesssim s^{c}  (f )  (s,\frac{\varepsilon}{2} ),
$$
where the relevant constant depends only on the dimension $d$.
\end{lem}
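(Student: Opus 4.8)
The plan is to use the harmonicity of the radial derivative of the Poisson integral on $\mathbb{R}^{d+1}_{+}$ together with the operator Cauchy--Schwarz inequality \eqref{eq: 2-1}: first dominate the pointwise quantity $\big|\frac{\partial}{\partial r}\mathrm{P}_r(f)(s)\big|^2$ by a solid average over a Euclidean ball of radius comparable to $r$, and then reassemble these averages (after multiplying by $r\,dr$ and integrating over $r$) into the truncated conic integral defining $s^{c}(f)(s,\frac\varepsilon2)$. As usual we may assume $f$ is nice enough that $\mathrm{P}_\varepsilon(f)$ is a smooth $(L_1(\M)+\M)$-valued harmonic function on $\mathbb{R}^{d+1}_{+}$; then $h(t,r):=\frac{\partial}{\partial r}\mathrm{P}_r(f)(t)$ is harmonic there as well. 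Throughout, $B_{d+1}(x,\delta)$ denotes the Euclidean ball in $\mathbb{R}^{d+1}$ with center $x$ and radius $\delta$.

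Fix $(s,r)$ with $\varepsilon\le r\le\frac23$ and put $B=B_{d+1}\big((s,r),\frac r4\big)$, whose closure lies in $\mathbb{R}^{d+1}_{+}$ since its lowest height is $\frac{3r}{4}\ge\frac{3\varepsilon}{4}>0$. By the mean value property $h(s,r)=\frac1{|B|}\int_B h(t,\rho)\,dtd\rho$, and hence, applying \eqref{eq: 2-1} with $\phi=|B|^{-1}\mathbbm{1}_B$,
\[
\big|\tfrac{\partial}{\partial r}\mathrm{P}_r(f)(s)\big|^2\;\le\;\frac1{|B|}\int_B\big|\tfrac{\partial}{\partial\rho}\mathrm{P}_\rho(f)(t)\big|^2\,dtd\rho\;\lesssim\;\frac1{r^{d+1}}\int_{B_{d+1}((s,r),r/4)}\big|\tfrac{\partial}{\partial\rho}\mathrm{P}_\rho(f)(t)\big|^2\,dtd\rho .
\]
Next we record the elementary geometry of these balls: if $(t,\rho)\in B_{d+1}\big((s,r),\frac r4\big)$ and $\varepsilon\le r\le\frac23$, then $\frac{3r}{4}<\rho<\frac{5r}{4}$ — so $\frac\varepsilon2<\rho<1$ and $r\approx\rho$ — and $|t-s|<\frac r4<\frac r2\le\frac{3r}{4}-\frac\varepsilon4<\rho-\frac\varepsilon4$, where the middle inequality uses $\varepsilon\le r$. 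In particular $\bigcup_{\varepsilon\le r\le 2/3}B_{d+1}\big((s,r),\frac r4\big)$ is contained in the truncated cone $\big\{(t,\rho):\ \frac\varepsilon2<\rho<1,\ |t-s|<\rho-\frac\varepsilon4\big\}$ occurring in the definition of $s^{c}(f)(s,\frac\varepsilon2)$ in \eqref{eq:two variables square function P}.

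Multiplying the first display by $r\,dr$ and integrating over $r\in[\varepsilon,\frac23]$ gives
\[
\widetilde g^{c}(f)(s,\varepsilon)^2\;\lesssim\;\int_\varepsilon^{2/3}\frac1{r^{d}}\int_{B_{d+1}((s,r),r/4)}\big|\tfrac{\partial}{\partial\rho}\mathrm{P}_\rho(f)(t)\big|^2\,dtd\rho\,dr .
\]
Interchanging the order of integration, for fixed $(t,\rho)$ the set of admissible $r$ is contained in the interval $\big(\frac{4\rho}{5},\frac{4\rho}{3}\big)$, which has length $\lesssim\rho$ and on which $r\approx\rho$; hence $\int\frac{dr}{r^d}\lesssim\rho^{1-d}$. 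Combining this with the inclusion of supports from the previous paragraph and enlarging the domain of integration to the full truncated cone (the integrand being positive), we obtain
\[
\widetilde g^{c}(f)(s,\varepsilon)^2\;\lesssim\;\int_{\varepsilon/2}^{1}\int_{B(s,\rho-\varepsilon/4)}\big|\tfrac{\partial}{\partial\rho}\mathrm{P}_\rho(f)(t)\big|^2\frac{dtd\rho}{\rho^{d-1}}\;=\;s^{c}(f)\big(s,\tfrac\varepsilon2\big)^2 ,
\]
which is the asserted inequality.

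The only point needing a word of justification is the passage to the solid $L^2$-average: one must know that the $(L_1(\M)+\M)$-valued harmonic function $h$ satisfies the mean value identity and that \eqref{eq: 2-1} then upgrades it to the $L^2$-average bound; both are routine. Everything else is one-dimensional calculus and the book-keeping of the conic geometry, so I do not anticipate a genuine obstacle.
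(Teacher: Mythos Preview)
Your proof is correct and follows essentially the same approach as the paper: use the mean value property for the harmonic function $\partial_\varepsilon\mathrm{P}_\varepsilon(f)$ over a ball of radius comparable to $r$, upgrade to an $L^2$-average via \eqref{eq: 2-1}, multiply by $r\,dr$, and apply Fubini to land inside the truncated cone defining $s^c(f)(s,\tfrac{\varepsilon}{2})$. The only cosmetic difference is your choice of ball: you take $B_{d+1}\big((s,r),\tfrac{r}{4}\big)$, whereas the paper uses the ball centered at $(0,r)$ and tangent to the cone $\{|t|<\tfrac{r-\varepsilon/2}{r}u\}$ (radius $\ge r/\sqrt{5}$); your fixed-ratio ball makes the geometric verification slightly cleaner, but the argument is otherwise identical.
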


\begin{proof}
By translation, it suffices to prove this inequality for $s=0$. Given $\e\in [0,{\frac{2}{3}}]$, for any $r$ such that $\e\leq r\leq {\frac{2}{3}}$, let us denote the ball centered at $(0,r)$ and tangent to the boundary of the cone $\{(t,u)\in \mathbb{R}^{d+1}_+: |t|<\frac{r-\frac{\e}{2}}{r}u\}$ by $\widetilde{B}_r $. We notice that the radius of $\widetilde{B}_r $ is greater than or equal to $\frac{r}{\sqrt{5}}$. By the harmonicity
of $\frac{\partial}{\partial r}\mathrm{P}_{r}  (f )$, we have
$$
\frac{\partial}{\partial r}\mathrm{P}_{r}  (f )  (0 )=\frac{1}{|\widetilde{B}_r |}\int_{\widetilde{B}_r }\frac{\partial}{\partial u}\mathrm{P}_{u}  (f )  (t )dt.
$$
Then by \eqref{eq: 2-1}, we arrive at
$$
  |\frac{\partial}{\partial r}\mathrm{P}_{r}  (f )  (0 ) |^{2}\leq\frac{\sqrt{5}^{d+1}}{c_{d+1}r^{d+1}}\int_{\widetilde{B}_r }  |\frac{\partial}{\partial u}\mathrm{P}_{u}  (f )  (t ) |^{2}dt,
$$
where $c_{d+1}$ is the volume of the unit ball of $\mathbb{R}^{d+1}$. Integrating the above inequality, we get
\begin{equation}\label{eq: g and s}
\int_{\varepsilon}^{\frac{2}{3}} |\frac{\partial}{\partial r}\mathrm{P}_{r}  (f )  (0 ) |^{2}rdr\leq\int_{\varepsilon}^{\frac{2}{3}}\frac{\sqrt{5}^{d+1}}{c_{d+1} r^{d}}\int_{\widetilde{B}_r }  |\frac{\partial}{\partial u}\mathrm{P}_{u}  (f )  (t ) |^{2}dtdudr.
\end{equation}
Since $(t,u)\in \widetilde{B}_r $ implies  $\frac{\sqrt{5}}{\sqrt{5}+1}u\leq r\leq \frac{\sqrt{5}}{\sqrt{5}-1}u$ and $\frac{\e}{2}\leq u\leq 1$,  the right hand side of \eqref{eq: g and s} can be majorized by
$$
\frac{\sqrt{5}^{d+1}}{c_{d+1} }\int_\frac{\e}{2}^1\int_{\widetilde{B}_r }  |\frac{\partial}{\partial u}\mathrm{P}_{u}  (f )  (t ) |^{2}\int_\frac{u}{2}^{2u}\frac{1}{r^d}drdtdu\leq C  |s^{c}  (f )(0,\frac{\e}{2}) |^2,
$$
where $C$ is a constant depending only on $d$. Therefore, $\widetilde g^{c}  (f )  (0,\varepsilon )\lesssim s^{c}  (f )  (0,\frac{\varepsilon}{2} )$.
\end{proof}

\begin{lem}\label{lem: s equi g}
Let $1\leq p<\infty$. Then for any $f\in \h_{p}^{c}  (\mathbb{R}^{d},\mathcal{M} )$, we have
$$
\|  s^{c}  (f )\| _p+\| \mathrm{P}*f\|_p \lesssim \| g^{c}  (f )\| _p+\| \mathrm{P}*f\|_p .
$$
\end{lem}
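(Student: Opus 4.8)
The plan is to prove the equivalent inequality $\|s^c(f)\|_p\lesssim\|g^c(f)\|_p$ (the two copies of $\|\mathrm{P}*f\|_p$ play no role in this direction), after reducing to nice $f$ by density. For $p=2$ this is immediate from the Plancherel computations behind \eqref{eq: 1}, where $\|s^c(f)\|_2^2$ and $\|g^c(f)\|_2^2$ are both shown to equal, up to the constant $c_d$, $\tfrac14\tau\int_{\R}(1-e^{-4\pi|\xi|}-4\pi|\xi|e^{-4\pi|\xi|})|\widehat f(\xi)|^2\,d\xi$; so the real content is $p>2$. The model is the classical estimate $\|S(f)\|_p\lesssim\|g(f)\|_p$, whose only unavailable ingredient here — the $L_{(p/2)'}$-boundedness of the Hardy–Littlewood maximal function — I will replace by the dyadic covering Lemma \ref{lem: dyadic cover} together with the noncommutative dual Doob inequality already exploited in Lemma \ref{lem: Mean function}.

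The heart is a pointwise operator inequality. For $j\ge0$ put $b_j(v)=\int_{2^{-j-1}}^{2^{-j}}\big|\tfrac{\partial}{\partial\e}\mathrm{P}_\e(f)(v)\big|^2\e\,d\e$, so $b_j\ge0$ and $\sum_{j\ge0}b_j=g^c(f)^2$; the claim is that
\[
s^c(f)(s)^2\ \le\ C\sum_{i=0}^{d}\sum_{j\ge0}\mathbb{E}\big(b_j\,\big|\,\D^i_j\big)(s)\qquad\text{for a.e. }s.
\]
To get it, rewrite the cone integral as a ball average, $s^c(f)(s)^2=c_d\int_0^1\e\,\big(\big|\tfrac{\partial}{\partial\e}\mathrm{P}_\e(f)\big|^2\big)^{B(0,\e)}(s)\,d\e$, where $(h)^{B(0,\e)}(s):=\frac1{|B(0,\e)|}\int_{|t|<\e}h(s+t)\,dt$. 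On each window $\e\in(2^{-j-1},2^{-j}]$ enlarge the ball to $B(0,2^{-j})$ (a factor $2^d$) and then, by Lemma \ref{lem: dyadic cover}, cover $s+B(0,2^{-j})$ by a dyadic cube $D^i_{m,j'}$ with $j'$ comparable to $j$ and $|D^i_{m,j'}|\lesssim 2^{-jd}$; the finitely many admissible $j'$ and the index $i\in\{0,\dots,d\}$ get absorbed into the constant, giving $\big(\big|\tfrac{\partial}{\partial\e}\mathrm{P}_\e(f)\big|^2\big)^{B(0,\e)}(s)\lesssim\sum_{i=0}^{d}\mathbb{E}\big(\big|\tfrac{\partial}{\partial\e}\mathrm{P}_\e(f)\big|^2\,\big|\,\D^i_j\big)(s)$ for such $\e$. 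Integrating over the window and summing in $j$ yields the displayed inequality. (Equivalently, the ball-enlargement step already gives $s^c(f)(s)^2\lesssim\sum_j(b_j)^{Q^j}(s)$ with $Q^j$ a cube centered at the origin of side $\approx2^{-j}$, which is exactly the input of Lemma \ref{lem: Mean function}.)

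Granting this, for $p>2$ one takes $L_{p/2}$-norms and uses that conditional expectations are trace preserving:
\[
\|s^c(f)\|_p^2=\big\|s^c(f)^2\big\|_{p/2}\ \lesssim\ \sum_{i=0}^{d}\Big\|\sum_{j\ge0}\mathbb{E}\big(b_j\,\big|\,\D^i_j\big)\Big\|_{p/2}\ \lesssim\ \sum_{i=0}^{d}\Big\|\sum_{j\ge0}b_j\Big\|_{p/2}=(d+1)\,\|g^c(f)\|_p^2,
\]
the middle step being the dual Doob inequality \cite{Junge} on $L_{p/2}$ (here $p/2\ge1$), applied exactly as in the proof of Lemma \ref{lem: Mean function}. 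For $1\le p<2$ the pointwise operator inequality is unchanged, and one concludes by the same computation at the exponent $p/2\in[\tfrac12,1)$ — either via the dual Doob inequality in that range, or by real interpolation with the case $p=2$, or by using the duality $(\h_p^c)^*=\bmo_q^c$ of Theorem \ref{thm: duality P}. The main obstacle is precisely the reduction in the second paragraph, i.e. doing without a maximal operator: one must dominate the "average over the cone" by the dyadic filtrations of Lemma \ref{lem: dyadic cover}, and since the dyadic cube provided there — including its filtration index $i$ — depends on the base point $s$, the sum over the $d+1$ filtrations cannot be avoided; all the remaining manipulations are routine.
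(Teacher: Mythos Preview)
Your argument for $p\ge 2$ is correct and is essentially the paper's own proof, viewed from the other side of the same duality: the paper dualizes against a positive $g\in L_{(p/2)'}(\N)$ and applies the noncommutative Hardy--Littlewood maximal inequality, which is nothing but \eqref{eq: Lp infty norm} combined with Lemma \ref{lem: Mean function}; your direct use of the pointwise domination $s^c(f)^2\lesssim\sum_j(b_j)^{Q^j}$ followed by Lemma \ref{lem: Mean function} at exponent $p/2\ge 1$ is the same estimate.

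The gap is the range $1\le p<2$. None of the three routes you sketch closes the argument:
\begin{itemize}
\item Lemma \ref{lem: Mean function} (and Junge's dual Doob inequality behind it) is stated and available in this paper only for exponents $\ge 1$; you would need it at exponent $p/2\in[\tfrac12,1)$, which is not part of the toolkit here.
\item Real interpolation requires a second endpoint below $2$ that you have not established independently; and the interpolation theorems for $\h_p^c$ (Section \ref{section-interp}) are proved only after this lemma, using Theorem \ref{thm: hq=bmoq} whose proof already invokes the present lemma.
\item The duality $(\h_p^c)^*=\bmo_q^c$ of Theorem \ref{thm: duality P} identifies $\|s^c(f)\|_p+\|\mathrm P*f\|_p$ with $\sup_{\|g\|_{\bmo_q^c}\le 1}|\tau\int fg^*|$, but to bound this supremum by $\|g^c(f)\|_p+\|\mathrm P*f\|_p$ one still has to prove a version of Lemma \ref{lem: duality} with $g^c$ in place of $s^c$ --- which is exactly the missing work, not a consequence of the duality alone.
\end{itemize}

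The paper does precisely that last step. One writes $\tau\int fg^*$ via a polarized identity on $(0,\tfrac23)$, splits by Cauchy--Schwarz into $A'\cdot B'$ plus a boundary term, and bounds $A'\lesssim\|\widetilde g^c(f)\|_p^p$ directly. The key input for $B'$ is the pointwise comparison $\widetilde g^c(f)(s,\e)\lesssim s^c(f)(s,\tfrac\e2)$ of Lemma \ref{lem:poison g Lusin compare}, which uses the \emph{harmonicity} of the Poisson integral (mean value over balls tangent to the cone); this lets one feed $B'$ into the Carleson-measure argument already carried out for the term $B$ in Lemma \ref{lem: duality}, yielding $B'\lesssim\|g\|_{\bmo_q^c}^2\|s^c(f)\|_p^{2-p}$. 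Combining and applying Theorem \ref{thm: duality P} gives $\|s^c(f)\|_p+\|\mathrm P*f\|_p\lesssim\|\widetilde g^c(f)\|_p+\|\mathrm P*f\|_p\le\|g^c(f)\|_p+\|\mathrm P*f\|_p$. So for $p<2$ the averaging/dual-Doob mechanism is not the right reduction; one has to pass through duality and exploit harmonicity via Lemma \ref{lem:poison g Lusin compare}.
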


\begin{proof}

 We first deal with the case when $1\leq p<2$. Let $g$ be a function in ${\bmo} _q^c(\R,\M)$ ($q$ is the conjugate index of $p$). Following a similar calculation as \eqref{eq:polarized}, we can easily check that 
\begin{equation*}
\begin{split}
  \tau \int_{\mathbb{R}^{d}}f  (s )g^{*}  (s )ds
& =   4 \tau \int_{\mathbb{R}^{d}}\int_{0}^{\frac{2}{3}}\frac{\partial}{\partial\varepsilon}\mathrm{P}_{\varepsilon}  (f )  (s )\frac{\partial}{\partial\varepsilon}\mathrm{P}_{\varepsilon}  (g )^{*}  (s )\varepsilon d\varepsilon ds\\
& \;\;\;\; +\Big(\tau \int_{\mathbb{R}^{d}}\mathrm{P}*f  (s )  (\mathrm{P}_{\frac{1}{3}}*g  (s ) )^{*}ds+\frac{8\pi}{3}\tau \int_{\mathbb{R}^{d}}\mathrm{P}*f  (s )  (I(\mathrm{P}_{\frac{1}{3}})*g  (s ) )^{*}ds\Big)\\
&   \stackrel{\mathrm{def}}{=}   \rm I+\rm II.
\end{split}
\end{equation*}
The term $\rm II$ can be treated in the same way as in the proof of Lemma \ref{lem: duality}:
$$
{\rm II} \lesssim \| \mathrm{P}*f\|_p\cdot \| J ({\mathrm{P}}_{\frac{1}{3}})*f \|_p.
$$
Applying Lemma \ref{lem: Caleson<bmo q},  we have 
$$
{\rm II} \lesssim \| \mathrm{P}*f\|_p\cdot \| g\|_{\bmo _q^c}.
$$
 Concerning the term $\rm I$, we have
\begin{equation*}
\begin{split}
|\rm I |^{2}  & \lesssim  \tau\int_{\mathbb{R}^{d}}\int_{0}^{\frac{2}{3}}  |\frac{\partial}{\partial\varepsilon}\mathrm{P}_{\varepsilon}  (f )  (s ) |^{2} \widetilde g^{c}  (f )  (s,\varepsilon )^{p-2}\e d\varepsilon ds \cdot \,\tau\int_{\mathbb{R}^{d}}\int_{0}^{\frac{2}{3}}  |\frac{\partial}{\partial\varepsilon}\mathrm{P}_{\varepsilon}  (g )  (s ) |^{2} \widetilde g^{c}  (f )  (s,\varepsilon )^{2-p} \e d\varepsilon ds\\
 &  \stackrel{\mathrm{def}}{=} A'\cdot B'.
 \end{split}
\end{equation*}
Following the argument for the estimate of  $A$ in the proof of Lemma \ref{lem: duality}, we deduce  similarly that $A'\lesssim \| \widetilde g^c(f)\| _p^p$.
Now we deal with term $B'$. By Lemma \ref{lem:poison g Lusin compare}, we have
$$
B'\leq \tau\int_{\mathbb{R}^{d}}\int_{0}^{\frac{2}{3}}  |\frac{\partial}{\partial\varepsilon}\mathrm{P}_{\varepsilon}  (g )  (s ) |^{2} s^{c}  (f )  (s,\frac{\varepsilon}{2} )\e d\varepsilon ds.
$$
Then we can apply almost the same argument as in the estimate of $B$. There is only one minor difference: when $\e\geq 2^{-j}$ and $s\in Q_{m,j}$, we have $s^c(f)(s,\frac{\e}{2})\leq \mathbb{S}^c(f)(s,j+1)$. We conclude that 
$$
B'\lesssim    \|  g \|  _{{\bmo}_q^{c}}^{2}  \|  s^c(f) \|  _{p}^{2-p}.
$$
Combining the estimates of $\rm I$, $ A'$ and $B'$ with Theorem \ref{thm: duality P}, we get
$$
\|  s^c( f) \| _p+\| \mathrm{P}*f\|_p \lesssim \| \widetilde g^{c}  (f )\| _p +\| \mathrm{P}*f\|_p \lesssim \| g^{c}  (f )\| _p+\| \mathrm{P}*f\|_p .
$$

The case $p=2$ is obvious. For $p>2$, choose a positive $g\in L_{(\frac{p}{2})'}(\N) $ with norm one such that,
\begin{equation*}
\begin{split}
\|s^c(f)\|_p^2 & = \Big\|  \iint_{\widetilde{\Gamma}}|\frac{\partial}{\partial \e}\mathrm{P}_\e(f)(\cdot+t)|^2\frac{dtd\e}{\e^{d-1}}\Big\|_{\frac{p}{2}}\\
& =   \tau \int_{\R}\iint_{\widetilde{\Gamma}}|\frac{\partial}{\partial \e}\mathrm{P}_\e(f)(s+t)|^2\frac{dtd\e}{\e^{d-1}} g(s)ds\\
&=\tau \int_{\R}\int_0^1 |\frac{\partial}{\partial\e}\mathrm{P}_\e(f)(t)|^2\frac{dtd\e}{\e^{d-1}} \int_{B(t,\e)}g(s)ds.
\end{split}
\end{equation*}
By the noncommutative Hardy-Littlewood  maximal inequality (the one dimension $\mathbb{R}$ case is given by \cite[Theorem 3.3]{Mei2007}, the case $\R$ is a simple corollary of \eqref{eq: Lp infty norm} and Lemma \ref{lem: Mean function}), there exists a positive $a\in L_{(\frac{p}{2})'}(\N)$ such that $\| a\|_{(\frac{p}{2})'}\leq 1$ and 
$$
\frac{1}{|B(t,2^{-k})|}\int_{B(t,2^{-k})}g(s)ds\leq a(t), \quad   \forall\, t\in \R, \,   \forall\,\e>0.
$$
Therefore, 
\begin{equation*}
\begin{split}
\|s^c(f)\|_p^2 & =\tau \int_{\R}\int_0^1 |\frac{\partial}{\partial\e}\mathrm{P}_\e(f)(t)|^2\frac{dtd\e}{\e^{d-1}} \int_{B(t,\e)}g(s)ds \\
& \leq c_d \tau \int_{\R}\int_0^1 |\frac{\partial}{\partial\e}\mathrm{P}_\e(f)(t)|^2\e a(t)dtd\e\\
& \leq c_d \big\| \int_0^1|\frac{\partial}{\partial\e}\mathrm{P}_\e(f)(t)|^2\e {dtd\e}\big\|_{\frac{p}{2}}  \| a\|_{(\frac{p}{2})'}\\
& \leq c_d \| g^c(f)\|_{p}.
\end{split}
\end{equation*}
Then the assertion for the case $p>2$ is also proved.
\end{proof}

To proceed further, we introduce the definition of tent spaces.   In the noncommutative setting, these spaces were first defined and studied by Mei \cite{Mei-tent}.

\begin{defn}\label{def-tent}
For any function defined on $\mathbb{R}^{d}\times (0,1)=S$
with values in $L_{1}  (\mathcal{M} )+\M$, whenever it exists, we
define
$$
A^{c}  (f )(s)=  \Big(\int_{\widetilde{\Gamma}}  |f(t+s,\varepsilon) |^{2}\frac{dtd\varepsilon}{\varepsilon^{d+1}} \Big)^{\frac{1}{2}},s\in\mathbb{R}^{d}.
$$
For $1\leq p<\infty$, we define
$$
T_{p}^{c}(\R,\M)=  \{f:A^{c}  (f )\in L_{p}  (\mathcal{N} ) \}
$$
equipped with the norm $  \|  f \|  _{T_{p}^{c}(\R,\M)}=  \|  A^{c}  (f ) \|  _p$.
For $p=\infty$, define the operator-valued column $T_{\infty}^{c}$ norm of $f$ as
$$
  \|  f \|  _{T_{\infty}^{c}}=\sup_{|Q| \leq 1}  \Big\| \Big(\frac{1}{  |Q |}\int_{T(Q)}  |f  (s,\varepsilon ) |^{2}\frac{ds d\varepsilon}{\varepsilon} \Big)^{\frac{1}{2}} \Big\| _{\mathcal{M}},
$$
and the corresponding space is
$$
T_{\infty}^{c}(\R,\M)=\{f: \|f\|_{T_{\infty}^{c}}<\infty\}.
$$

\end{defn}

\begin{rmk}\label{rem: duality tent}
By the same arguments used in the proof of Theorem \ref{thm: duality P}, we can prove the duality that $T_{p}^{c}(\R,\M)^*=T_{q}^{c}(\R,\M)$ for $1\leq p<\infty$ and $\frac{1}{p}+\frac{1}{q}=1$. For the case $p=1$, it suffices to replace $\frac{\partial}{\partial\varepsilon}\mathrm{P}_{\varepsilon}  (f ) (s)$ and $\frac{\partial}{\partial\varepsilon}\mathrm{P}_{\varepsilon}  (g) (s)$ in the proof of Lemma \ref{lem: duality} by $f(s,\e)$ and $g(s,\e)$ respectively. A similar argument will give us the inclusion that $T_\infty^c(\R,\M)\subset T_{1}^{c}(\R,\M)^*$. On the other hand, since $L_\infty(\N;L_2^c(\widetilde{\Gamma}))\subset T_\infty^c(\R,\M)$, we get the reverse inclusion. 
 For $1<p<\infty$,  the tent space $T_p^c(\R,\M)$ we define above is a complemented subspace of the column tent space defined in \cite{{Mei2007}}. So by Remark 4.6 in \cite{XXX17}, 
 we obtain the duality that $T_{p}^{c}(\R,\M)^*=T_{q}^{c}(\R,\M)$.
\end{rmk}

\begin{thm}\label{thm: hq=bmoq}
For $2<q<\infty$, $\h_q^c(\R,\M)={\bmo} _q^c(\R,\M)$ with equivalent norms.
\end{thm}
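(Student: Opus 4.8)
The plan is to establish separately the inclusions $\bmo_q^c(\R,\M)\subseteq\h_q^c(\R,\M)$ and $\h_q^c(\R,\M)\subseteq\bmo_q^c(\R,\M)$ with the matching norm estimates; throughout, $p$ denotes the conjugate index of $q$, so $1<p<2$. Since both spaces contain the $\M$-valued Schwartz functions as a dense subspace (a feature of \emph{local} Hardy spaces, in contrast with the homogeneous theory), it suffices to prove the norm inequalities for nice functions and then conclude by a routine density argument.

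\emph{The inclusion $\h_q^c\subseteq\bmo_q^c$.} Here I would argue by duality through Theorem \ref{thm: duality P}. Given a nice $g\in\h_q^c(\R,\M)$, it is enough to check that $f\mapsto\tau\int_{\R}f g^{*}$ is bounded on $\h_p^c(\R,\M)$ with norm $\lesssim\|g\|_{\h_q^c}$: Theorem \ref{thm: duality P} then represents this functional by an element of $\bmo_q^c(\R,\M)$ of comparable norm, and that element must equal $g$ because Schwartz functions are dense in $\h_p^c(\R,\M)$. To estimate $|\tau\int f g^{*}|$ one applies the polarized identity \eqref{eq:polarized}, which writes the pairing as a constant multiple of $\tau\int_{\R}\iint_{\widetilde{\Gamma}}\frac{\partial}{\partial\e}\mathrm{P}_\e(f)(s+t)\,\frac{\partial}{\partial\e}\mathrm{P}_\e(g)^{*}(s+t)\,\frac{dtd\e}{\e^{d-1}}\,ds$ plus $\tau\int_{\R}\mathrm{P}*f\,(\mathrm{P}*g)^{*}$ plus $4\pi\,\tau\int_{\R}\mathrm{P}*f\,(I(\mathrm{P})*g)^{*}$. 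The first term is a tent-space pairing of $\e\,\frac{\partial}{\partial\e}\mathrm{P}_\e(f)$ and $\e\,\frac{\partial}{\partial\e}\mathrm{P}_\e(g)$, whose $T_p^c$- and $T_q^c$-norms are exactly $\|s^c(f)\|_p$ and $\|s^c(g)\|_q$, so by the Hölder inequality attached to the duality $T_p^c(\R,\M)^{*}=T_q^c(\R,\M)$ (Remark \ref{rem: duality tent}) it is $\lesssim\|f\|_{\h_p^c}\|g\|_{\h_q^c}$; the second term is $\le\|\mathrm{P}*f\|_p\|\mathrm{P}*g\|_q\le\|f\|_{\h_p^c}\|g\|_{\h_q^c}$ by Hölder in $L_p(\N)$--$L_q(\N)$. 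For the third term I use that $I(\mathrm{P})*g=-\tfrac1{2\pi}\,\mathrm{P}_{1/2}*h$, where $h$ is the value at $\e=\tfrac12$ of the harmonic function $\frac{\partial}{\partial\e}\mathrm{P}_\e(g)$ on $\mathbb{R}^{d+1}_{+}$; the mean value property and \eqref{eq: 2-1} (as in Lemma \ref{lem:poison g Lusin compare}) give $|h(s)|^{2}\lesssim s^c(g)(s)^{2}$, so $\|I(\mathrm{P})*g\|_q\lesssim\|h\|_q\lesssim\|s^c(g)\|_q\le\|g\|_{\h_q^c}$, and the third term is $\le\|\mathrm{P}*f\|_p\|I(\mathrm{P})*g\|_q\lesssim\|f\|_{\h_p^c}\|g\|_{\h_q^c}$.

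\emph{The inclusion $\bmo_q^c\subseteq\h_q^c$.} Fix a nice $g\in\bmo_q^c(\R,\M)$. First, $\|\mathrm{P}*g\|_q\lesssim\|g\|_{\bmo_q^c}$ is immediate from Lemma \ref{lem: Caleson<bmo q} applied with $\psi=\mathrm{P}$ (recall also $\|g\|_{L_q(\M;\mathrm{R}_d^c)}\lesssim\|g\|_{\bmo_q^c}$ by Proposition \ref{prop: bmoq Lq}). For $\|s^c(g)\|_q$, I write $\|s^c(g)\|_q^{2}=\|s^c(g)^{2}\|_{q/2}=\sup\{\tau\int_{\R}s^c(g)(s)^{2}\varphi(s)\,ds:\varphi\in L_{(\frac q2)'}(\N),\ \varphi\ge0,\ \|\varphi\|_{(\frac q2)'}\le1\}$. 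Unfolding the definition of $s^c$ and applying the Fubini theorem,
\[
\tau\int_{\R}s^c(g)(s)^{2}\varphi(s)\,ds=c_d\,\tau\int_{S}(M_\e\varphi)(u)\,d\lambda_g(u,\e),
\]
where $M_\e\varphi(u)$ is the mean of $\varphi$ over the ball $B(u,\e)$ and $d\lambda_g=\big|\frac{\partial}{\partial\e}\mathrm{P}_\e(g)(u)\big|^{2}\e\,dud\e$. By Lemma \ref{lem: Caleson<bmo q}, $\lambda_g$ is a $q$-Carleson measure on $S$ whose Carleson function lies in $L_{q/2}(\N)$ with norm $\lesssim\|g\|_{\bmo_q^c}^{2}$; the noncommutative (local) Carleson embedding theorem then bounds the last integral by $\lesssim\|g\|_{\bmo_q^c}^{2}\,\|\sup_\e M_\e\varphi\|_{(\frac q2)'}$. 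Since the nontangential supremum of $M_\e\varphi$ is dominated by the Hardy--Littlewood maximal function of $\varphi$, which is bounded on $L_{(\frac q2)'}(\N)$ in the $\ell_\infty$-valued sense (here $(\tfrac q2)'>1$, as $q>2$), we obtain $\|s^c(g)\|_q^{2}\lesssim\|g\|_{\bmo_q^c}^{2}$, hence $\|g\|_{\h_q^c}\lesssim\|g\|_{\bmo_q^c}$.

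The step that requires the most work is the noncommutative local Carleson embedding theorem invoked above. As in Mei's treatment of the non-local case \cite{Mei2007,Mei-tent}, it is proved by discretizing the scale $\e\in(0,1)$ dyadically, using the dyadic covering Lemma \ref{lem: dyadic cover} to dominate each average $M_\e\varphi$ by a finite sum of conditional expectations onto the filtrations $\D^{i}$, and then combining the martingale maximal estimate of Lemma \ref{lem: Mean function} with the duality formula \eqref{eq: Lp infty norm} for the $L_{q/2}(\N;\ell_\infty)$-norm; the operator orderings must be handled with the usual care, and the only point particular to the local setting is the bookkeeping around the truncation at $\e=1$, which is harmless precisely because the Carleson functional in Lemma \ref{lem: Caleson<bmo q} only involves cubes of volume less than one. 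Combining the two inclusions yields $\h_q^c(\R,\M)=\bmo_q^c(\R,\M)$ with equivalent norms.
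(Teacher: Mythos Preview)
Your treatment of the inclusion $\h_q^c\subseteq\bmo_q^c$ is correct and close in spirit to the paper's, with one pleasant difference: to bound the term $\tau\int\mathrm{P}*f\,(I(\mathrm{P})*g)^{*}$ you estimate $\|I(\mathrm{P})*g\|_q$ directly via the mean--value identity $I(\mathrm{P})*g=-\tfrac{1}{2\pi}\mathrm{P}_{1/2}*h$ with $h=\partial_\e\mathrm{P}_\e(g)|_{\e=1/2}$ and the pointwise bound $|h|^2\lesssim s^c(g)^2$. The paper instead first proves $L_q(\N)\subset\bmo_q^c$ by the noncommutative Hardy--Littlewood maximal inequality, dualizes to get $\h_p^c\subset L_p(\N)$, and then uses $\|(\mathrm{P}+I(\mathrm{P}))*g\|_p\lesssim\|g\|_p\lesssim\|g\|_{\h_p^c}$ for $g\in\h_p^c$. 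Your route is a bit more self-contained here.

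The gap is in the inclusion $\bmo_q^c\subseteq\h_q^c$. You appeal to a ``noncommutative (local) Carleson embedding theorem'' to bound $\tau\int_S (M_\e\varphi)\,d\lambda_g$ by $\|g\|_{\bmo_q^c}^2\,\|\sup_\e M_\e\varphi\|_{(q/2)'}$, but no such statement is proved in the paper, and your sketch does not supply one. If one merely uses the operator inequality $M_\e\varphi\le\Phi$ coming from the maximal theorem, the integral collapses to $\tau\int\Phi\cdot g^c(g)^2$, which yields $\|s^c(g)\|_q\lesssim\|g^c(g)\|_q$ rather than $\lesssim\|g\|_{\bmo_q^c}$---so the argument becomes circular. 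A genuine Carleson embedding must exploit the tent structure, and in the noncommutative setting this is delicate precisely because the density $|\partial_\e\mathrm{P}_\e(g)|^2$ does not commute with $\varphi$; the usual stopping-time/good-$\lambda$ proofs do not transfer automatically, and the ingredients you name (Lemma~\ref{lem: dyadic cover}, Lemma~\ref{lem: Mean function}, formula~\eqref{eq: Lp infty norm}) do not by themselves produce such an embedding.

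The paper circumvents this difficulty by a different mechanism. One shows, by an argument parallel to Lemma~\ref{lem: duality}, that $f\in\bmo_q^c$ defines a bounded functional $\ell_f$ on the \emph{tent space} sum $T_p^c(\R,\M)\oplus_p L_p(\N)$ via $\ell_f(h',h'')=\tau\int\!\!\int_0^1 h'\,(\partial_\e\mathrm{P}_\e f)^*\,d\e\,ds+\tau\int h''\,((\mathrm{P}+4\pi I(\mathrm{P}))*f)^*$. Viewing $T_p^c$ as sitting inside $L_p(\N;L_2^c(\widetilde\Gamma))$, one extends $\ell_f$ by Hahn--Banach and represents it by some $(g',g'')\in L_q(\N;L_2^c(\widetilde\Gamma))\oplus_q L_q(\N)$. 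Comparing the two expressions identifies $\partial_\e\mathrm{P}_\e(f)(s)=\e^{-d-1}\int_{B(0,\e)}g'(s,t,\e)\,dt$ and $(\mathrm{P}+4\pi I(\mathrm{P}))*f=g''$; then Lemma~\ref{lem: s equi g} (for $q>2$) gives $\|f\|_{\h_q^c}\lesssim\|g'\|_{L_q(\N;L_2^c(\widetilde\Gamma))}+\|\mathrm{P}*f\|_q$, and a short Fourier-multiplier argument ($(1+I)^{-1}$ is an $L_q$-contraction) yields $\|\mathrm{P}*f\|_q\lesssim\|g''\|_q$. This closes the loop without any Carleson embedding.
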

\begin{proof}
First, we show the inclusion $\h_q^c(\R,\M)\subset {\bmo} _q^c(\R,\M)$. By Theorem \ref{thm: duality P}, it suffices to show that $\h_q^c(\R,\M)\subset \h_p^c(\R,\M)^*$. Applying \eqref{eq:polarized}, for any $f\in \h_q^c(\R,\M)$ and $g\in \h_p^c(\R,\M)$, we have
\begin{equation*}
\begin{split}
 \tau \int_{\R}g(s)f^*(s)ds  &  = \frac{4}{c_d}\int_{\R}\iint_{\widetilde{\Gamma}}\frac{\partial}{\partial \e}\mathrm{P}_\e(g)(s+t)\frac{\partial}{\partial \e}\mathrm{P}_\e(f)^*(s+t)\frac{dtd\e}{\e^{d-1}}ds\\
  & \;\;\;\;  +  \int_{\mathbb{R}^{d}}\mathrm{P}*g  (s )  (\mathrm{P}*f  (s ) )^{*}ds + 4\pi \int_{\mathbb{R}^{d}}I(\mathrm{P})*g  (s )  (\mathrm{P}*f  (s ) )^{*}ds.
  \end{split}
\end{equation*} 
 Then, by the H\"older inequality, 
 \begin{equation*}
\begin{split} 
 \big| \tau \int_{\R}g(s)f^*(s)ds \big|  &\leq  \big\|\e\cdot \frac{\partial}{\partial \e}\mathrm{P}_\e(g) \big\|_{L_p\big(\N;L_2(\widetilde{\Gamma})\big)}  \|\e\cdot \frac{\partial}{\partial \e}\mathrm{P}_\e(f) \|_{L_q\big(\N;L_2(\widetilde{\Gamma})\big)}\\
 & \;\;\;\;  +  \| (\mathrm{P}+I(\mathrm{P}) )*g\|_p\cdot \| \mathrm{P}*f \|_q\\
&   \leq \Big(  \big\|s^c(g) \|_p+\| (\mathrm{P}+I(\mathrm{P}) )*g\big\|_p \Big)\| f \|_{\h_q^c}.
 \end{split}
\end{equation*}
Now, we show that for any $1\leq p<2$ and $g\in {\h_p^c(\R,\M)} $, we have $\| (\mathrm{P}+I(\mathrm{P}) )*g\|_p \lesssim\| g\|_{\h_p^c} $. Since $2<q<\infty$, we have $1<\frac{q}{2}<\infty$. Applying the noncommutative Hardy-Littlewood maximal inequality,  we get
$$
\| f\|_{\bmo _q^c}\lesssim \Big\| {\sup_{s\in Q\subset \R}}^+\frac{1}{|Q|}\int_Q|f(t)|^2dt\Big\|_{\frac{q}{2}}^\frac{1}{2}\lesssim \||f|^2\|_{\frac{q}{2}}^\frac{1}{2}=\| f\|_{q}.
$$
This implies that $L_q(\N)\subset \bmo _q^c(\R,\M)$ for any $2<q\leq\infty$. Then by Theorem \ref{thm: duality P}, we get $\h_p^c(\R,\M)\subset L_p(\N)$. Therefore we deduce that 
\begin{equation}\label{eq: IP less hp}
\| (\mathrm{P}+I(\mathrm{P}) )*g\|_p  \lesssim \| g\|_p \lesssim\| g\|_{\h_p^c}.
\end{equation}
Thus, 
$$
\big| \tau \int_{\R}g(s)f^*(s)ds\big| \lesssim \| f \|_{\h_q^c}\| g\|_{\h_p^c}.
$$
We have proved $\h_q^c(\R,\M)\subset {\bmo} _q^c(\R,\M)$.

Let us turn to the reverse inclusion ${\bmo} _q^c(\R,\M)\subset \h_q^c(\R,\M)$. We need to make use of the tent spaces in Definition \ref{def-tent}.
We claim that for $q>2$, every $f\in {\bmo} _q^c(\R,\M)$ induces a linear functional on $T_p^c(\R,\M)\oplus _p L_p(\N)$. Indeed, for any $h=(h',h'')\in T_p^c(\R,\M)\oplus _p L_p(\N)$, we define
\begin{equation}\label{eq: l_f}
\begin{split}
 \ell_f(h)& =\tau \int_{\R}\int_0^1 h'(s,\e)\frac{\partial}{\partial \e}\mathrm{P}_\e(f)^*(s)d\e ds \\
 &\;\;\;\;+\tau \int_{\R}h''(s)[(\mathrm{P}*f)^*(s)+4\pi (I(\mathrm{P})*f)^*(s)]ds. 
 \end{split}
\end{equation}
Set
\begin{equation*}
\begin{split}
A^c(h')(s,\e) &=  \int_\e^1 \int_{B(s,r-\frac{\e}{2})}|h'(s,\e)|^2\frac{dt dr}{r^{d+1}},\\
\overline{A}^c(h')(s,\e) &=  \int_\e^1 \int_{B(s,\frac{r}{2})}|h'(s,\e)|^2\frac{dt dr}{r^{d+1}}.
\end{split}
\end{equation*}
Then by the Cauchy-Schwarz inequality, we arrive at
\begin{equation*}
\begin{split}
  | \ell_f(h) |   &\lesssim  \Big( \tau\int_{\mathbb{R}^{d}}\int_{0}^{1}\Big(\int_{B(s,\frac{\e}{2})}  |\frac{\partial}{\partial\varepsilon}\mathrm{P}_{\varepsilon}  (f )  (t ) |^{2} \frac{dt}{\e^{d-1}} \Big)\overline{A}^c(h')(s,\e) ^{p-2} d\varepsilon ds\Big)^\frac{1}{2}\\
 &\;\;\;\; \cdot\Big(\tau\int_{\mathbb{R}^{d}}\int_{0}^{1}\Big(\int_{B(s,\frac{\e}{2})}  |\frac{\partial}{\partial\varepsilon}\mathrm{P}_{\varepsilon}  (f )  (t ) |^{2} \frac{dt}{\e^{d-1}} \Big) \overline{A}^c(h')(s,\e) ^{2-p}d\varepsilon ds\Big)^\frac{1}{2}\\
& \;\;\;\; +   \big| \tau\int_{\mathbb{R}^{d}}h''(s)  (\mathrm{P}*f  (s ) )^{*}ds \big|+   \big| \tau\int_{\mathbb{R}^{d}}h''(s)  (I(\mathrm{P})*f  (s ) )^{*}ds \big| .
\end{split}
\end{equation*}
Following a similar argument as in the proof of Lemma \ref{lem: duality}, we obtain that
$$
| \ell_f(h) |\lesssim (\| h'\|_{T_p^c}+\|h''\|_{L_p})\| f\|_{{\bmo} _q^c}\lesssim \| h\|_{T_p^c\oplus_p L_p}\cdot \| f\|_{{\bmo} _q^c},
$$
which implies that $\| \ell_f \| \leq c_q \| f\|_{{\bmo} _q^c}$. So the claim is proved.

Next we show that $\|f\|_{\h_q^c}\leq C_q \| \ell_f \|$. By definition, we can regard $T_p^c$ as a closed subspace of $L_p\big(\N;L_2^c(\widetilde\Gamma)\big)$ in the natural way.
Then, $\ell_f$ extends to a linear functional on $L_p\big(\N;L_2^c(\widetilde\Gamma)\big)\oplus _p L_p(\N)$. Thus, there exists $g=(g',g'')\in L_q\big(\N;L_2^c (\widetilde\Gamma)\big)\oplus _q L_q(\N)$ such that
$$\| g\|_{L_q\big(\N;L_2^c(\widetilde\Gamma)\big)\oplus _q L_q(\N)}\leq \| \ell_f \|
$$
and for any $h=(h',h'')\in L_p\big(\N;L_2^c(\widetilde\Gamma)\big)\oplus _p L_p(\N)$,
\begin{equation*}
\begin{split}
 \ell_f(h)  & = \tau \int_{\R}\iint_{\widetilde\Gamma}h'(s,\e)g'^*(s,t,\e)\frac{dtd\e}{\e^{d+1}}ds+\tau\int_{\R}h''(s)g''^*(s)ds\\
&  =    \tau \int_{\R}\int_0^1h'(s,\e)\int_{B(s,\e)}g'^*(s,t,\e)dt \frac{dsd\e}{\e^{d+1}}+\tau\int_{\R}h''(s)g''^*(s)ds.
\end{split}
\end{equation*}
Comparing the equalities above with \eqref{eq: l_f}, we get
\begin{equation*}
\frac{\partial}{\partial \e}\mathrm{P}_\e(f)(s) =\frac{1}{\e^{d+1}}\int_{B(0,\e)}g'(s,t,\e)dt
\end{equation*}
and
\begin{equation*}
\mathrm{P}*f+4\pi I(\mathrm{P})*f=g''.
\end{equation*}
By Lemma \ref{lem: s equi g}, we have
\begin{equation*}
\begin{split}
\| f \|_{\h_q^c} & \lesssim  \Big\| \big(\int_{0}^1 |\frac{\partial}{\partial \e}\mathrm{P}_\e(f)|^2 \e d\e\big)^\frac{1}{2}\Big\|_q + \| \mathrm{P}*f \|_q\\
& \leq c_d \,\Big\| \big(\int_{0}^1\frac{1}{\e^{d+1}} \int_{B(0,\e)} | g'(s,t,\e) |^2 dt d\e\big)^\frac{1}{2}\Big\|_q + \| \mathrm{P}*f \|_q\\
& \lesssim  \| g' \|_{L_q\big(\N;L_2^c(\widetilde\Gamma )\big)}+ \| \mathrm{P}*f \|_q.
\end{split}
\end{equation*}
Now let us majorize the second term $\| \mathrm{P}*f \|_q$ by $\| g'' \|_q$. Indeed, consider the function $$G(s)=2\pi \int_0^\infty e^{-2\pi \e}\mathrm{P}_\e(s)d\e. $$
We can easily check that $G \in L_1(\R)$, $\|G\|_1\leq 1$ and $\widehat{G}(\xi)=(1+|\xi |)^{-1}$. This means that the operator $(1+I)^{-1}$ is a contractive Fourier multiplier on $L_q(\N)$. Therefore,
$$
\| \mathrm{P}*f \|_q \leq \| (\mathrm{P}+I(\mathrm{P}))*f \|_q \leq 4\pi \| g'' \|_q.
$$
Finally, we conclude that
$\|f\|_{\h_q^c}\lesssim \| \ell_f \|\lesssim \|f\|_{{\bmo} _q^c}$ and then $\h_q^c(\R,\M)={\bmo} _q^c(\R,\M)$ with equivalent norms.
\end{proof}

Armed with the theorem above, we are able to  extend the content of Theorem \ref{lem: L bmo}.

\begin{thm} \label{thm: bdd retraction}
$\, $
\begin{enumerate}[\rm(1)]
\item  The map $\fF$ extends to a bounded map from $L_\infty\big(\N;L_2^c(\widetilde\Gamma)\big)\oplus _\infty L_\infty(\N)$ into $\bmo^c(\R,\M)$ and
$$
\|\fF(h)\|_{\bmo^c}\lesssim \| h\|_{L_\infty\big(\N;L_2^c(\widetilde\Gamma)\big)\oplus_\infty L_\infty(\N)}.
$$
\item For $1<p<\infty$, $\fF$ extends to a bounded map from $L_p\big(\N;L_2^c(\widetilde\Gamma)\big)\oplus _p L_p(\N)$ into $\h_p^c(\R,\M)$ and
$$
\|\fF(h)\|_{\h_p^c}\lesssim \| h\|_{L_p\big(\N;L_2^c(\widetilde{\Gamma})\big)\oplus_p L_p(\N)}.
$$
\end{enumerate}

\end{thm}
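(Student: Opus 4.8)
The plan is to reduce every case to results already proved. Part (1) requires no new argument: it is exactly the case $p=\infty$ of Theorem \ref{lem: L bmo}, since $\bmo_\infty^c(\R,\M)=\bmo^c(\R,\M)$ by definition. For part (2) with $2<p<\infty$, Theorem \ref{thm: hq=bmoq} gives $\bmo_p^c(\R,\M)=\h_p^c(\R,\M)$ with equivalent norms, so the assertion is again immediate from Theorem \ref{lem: L bmo}. For $p=2$ I would argue directly: working on the dense class of nice $h=(h',h'')$, the polarization identity \eqref{eq:polarized}, applied exactly as in the Proposition following Definition \ref{def:map}, rewrites $\tau\int_{\R}\fF(h)(u)g^*(u)\,du$ (for nice $g\in L_2(\N)$) as the pairing of $h$ against $\big(\e\,\frac{\partial}{\partial\e}\mathrm{P}_\e(g)(s+t),\,(\mathrm{P}+4\pi I(\mathrm{P}))*g\big)$; by the operator Cauchy--Schwarz inequality \eqref{eq: 2-1}, the identity \eqref{eq: 1}, the equality \eqref{rem: h2=L2} and the $L_2(\N)$-boundedness of $g\mapsto I(\mathrm{P})*g$, this is $\lesssim\|h\|_{L_2(\N;L_2^c(\widetilde{\Gamma}))\oplus_2 L_2(\N)}\,\|g\|_2$, so $\fF(h)\in L_2(\N)=\h_2^c(\R,\M)$ with the required norm control, and one concludes by density.

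The substance of the theorem is the range $1<p<2$, which I would settle by duality; let $q=p'$, so $2<q<\infty$. The key step is the pairing estimate
\[
\Big|\tau\int_{\R}\fF(h)(s)\varphi^*(s)\,ds\Big|\lesssim\|h\|_{L_p(\N;L_2^c(\widetilde{\Gamma}))\oplus_p L_p(\N)}\,\|\varphi\|_{\bmo_q^c},\qquad\varphi\in\bmo_q^c(\R,\M).
\]
Using \eqref{eq:polarized} once more, and the fact that $\mathrm{P}$ and $I(\mathrm{P})$ are real and even, the left-hand side equals
\[
\Big|\tau\int_{\R}\Big[\frac{4}{c_d}\iint_{\widetilde{\Gamma}}h'(s,t,\e)\,\frac{\partial}{\partial\e}\mathrm{P}_\e(\varphi)^*(s+t)\,\frac{dtd\e}{\e^d}+h''(s)\big((\mathrm{P}+4\pi I(\mathrm{P}))*\varphi\big)^*(s)\Big]ds\Big|;
\]
writing $\frac{dtd\e}{\e^d}=\e\,\frac{dtd\e}{\e^{d+1}}$ and applying Hölder's inequality for the $L_p$--$L_q$ duality between $L_p(\N;L_2^c(\widetilde{\Gamma}))\oplus_p L_p(\N)$ and $L_q(\N;L_2^c(\widetilde{\Gamma}))\oplus_q L_q(\N)$, this is bounded by
\[
\|h'\|_{L_p(\N;L_2^c(\widetilde{\Gamma}))}\,\|s^c(\varphi)\|_q+\|h''\|_p\big(\|\mathrm{P}*\varphi\|_q+\|I(\mathrm{P})*\varphi\|_q\big).
\]
By Theorem \ref{thm: hq=bmoq}, $\varphi\in\h_q^c(\R,\M)$ with $\|\varphi\|_{\h_q^c}\approx\|\varphi\|_{\bmo_q^c}$, so $\|s^c(\varphi)\|_q\le\|\varphi\|_{\h_q^c}$ and $\|\mathrm{P}*\varphi\|_q\le\|\varphi\|_{\h_q^c}$; moreover $\|I(\mathrm{P})*\varphi\|_q\lesssim\|\varphi\|_{\bmo_q^c}$ because $\widehat{I(\mathrm{P})}(\xi)=|\xi|e^{-2\pi|\xi|}\in H_2^\sigma(\R)$, so that Lemma \ref{lem: Caleson<bmo q} applies (one may alternatively invoke Corollary \ref{cor: Carleson=Dbmo} together with the potential estimates of \cite{Stein1970}). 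This proves the pairing estimate.

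It remains to pass from the pairing estimate to the norm bound, which is a standard density argument. On the dense subspace of those $h=(h',h'')$ for which $h'$ is compactly supported in $\mathbb{R}_+^{d+1}$ and bounded away from $\{\e=0\}$ and $h''$ is compactly supported, $\fF(h)$ is a smooth function that is bounded near the origin and decays like $|u|^{-(d+1)}$ at infinity, hence lies in $L_p\big(\M;L_2^c(\R,(1+|t|^{d+1})dt)\big)$ and therefore in $\h_p^c(\R,\M)$ by Corollary \ref{cor: L_1 h1}; for such $\fF(h)$, Theorem \ref{thm: duality P} identifies $\|\fF(h)\|_{\h_p^c}$, up to constants, with $\sup\{|\tau\int_{\R}\fF(h)(s)\varphi^*(s)\,ds|:\|\varphi\|_{\bmo_q^c}\le1\}$, which the pairing estimate bounds by $\lesssim\|h\|$; density then yields the full statement. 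I expect the pairing estimate to be the main obstacle, and within it the bounds $\|s^c(\varphi)\|_q\lesssim\|\varphi\|_{\bmo_q^c}$ and $\|I(\mathrm{P})*\varphi\|_q\lesssim\|\varphi\|_{\bmo_q^c}$, which genuinely rely on the identification $\h_q^c=\bmo_q^c$ of Theorem \ref{thm: hq=bmoq} and so are not available earlier; the a priori membership $\fF(h)\in\h_p^c$ used in the density step, though routine, also deserves a short verification.
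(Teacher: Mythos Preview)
Your proof is correct and follows essentially the same route as the paper: part (1) and the case $p>2$ of part (2) are reduced to Theorem~\ref{lem: L bmo} via Theorem~\ref{thm: hq=bmoq}, and for $1<p<2$ the paper also argues by duality through Theorem~\ref{thm: duality P} together with $\h_q^c=\bmo_q^c$ and the same H\"older splitting of the pairing. Your write-up is in fact slightly more careful than the paper's, since you justify $\|I(\mathrm{P})*\varphi\|_q\lesssim\|\varphi\|_{\bmo_q^c}$ explicitly via Lemma~\ref{lem: Caleson<bmo q} and address the a priori membership $\fF(h)\in\h_p^c$ by a density argument, both of which the paper leaves implicit.
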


\begin{proof}
(1) is already contained in Theorem \ref{lem: L bmo}.
When $p>2$, (2) follows from Theorem  \ref{lem: L bmo} and Theorem \ref{thm: hq=bmoq}. The case $p=2$ is trivial. For the case $1<p<2$, according to Theorem \ref{thm: duality P}, we have 
$$
 \| \fF(h)\|_{\h_p^c}  \lesssim  \sup_{\| f \|_{{\bmo} _q^c}\leq 1}\big| \tau\int_{\R}\fF(h)(s)f^*(s)ds \big|.
$$
Then, by Theorem \ref{thm: hq=bmoq} and \eqref{eq: IP less hp}, for $h=(h',h'')\in L_p\big(\N;L_2^c(\widetilde{\Gamma})\big)\oplus_p L_p(\N)$, we have
\begin{equation*}
\begin{split}
&  \sup_{\| f \|_{{\bmo} _q^c}\leq 1}\big| \tau\int_{\R} \fF(h)(s)f^*(s)ds \big|\\
&\lesssim \sup_{\| f \|_{\h_q^c}\leq 1}\Big| \tau\int_{\R}\big[\iint_{\widetilde{\Gamma}}h'(s,t,\e)\frac{\partial}{\partial \e}\mathrm{P}_\e(f)^*(s+t)dtd\e  + h''(s)([\mathrm{P}+4\pi I(\mathrm{P})]*f^*(s))\big]ds \Big|\\
 & \lesssim \| h\|_{L_p\big(\N;L_2^c(\widetilde{\Gamma})\big)\oplus_p L_p(\N)}.
\end{split}
\end{equation*}
The desired inequality is proved.
\end{proof}

The above theorem shows that,  for any $1<p<\infty$, $\h _p^c(\R,\M)$ is a complemented subspace of $L_{p}  \big(\mathcal{N};L_{2}^{c}(\widetilde{\Gamma}) \big)\oplus_p L_{p}  (\mathcal{N} )$. Thus, we deduce the following duality theorem:

\begin{thm}\label{cor: duality}
We have $ \h_p^c(\R,\M)^*=\h_q^c(\R,\M)$ with equivalent norms for any $1<p<\infty$.
\end{thm}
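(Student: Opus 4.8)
The plan is to reduce everything to results already in hand, splitting according to the position of $p$. For $1<p\le2$ the statement is almost immediate: when $1<p<2$ and $q$ is the conjugate index (so $2<q<\infty$), Theorem~\ref{thm: duality P} identifies $\h_p^c(\R,\M)^*$ with $\bmo_q^c(\R,\M)$ through $g\mapsto\big(f\mapsto\tau\int f g^*\big)$, while Theorem~\ref{thm: hq=bmoq} gives $\bmo_q^c(\R,\M)=\h_q^c(\R,\M)$ with equivalent norms; composing these two identifications yields $\h_p^c(\R,\M)^*=\h_q^c(\R,\M)$, the duality bracket being $\langle f,g\rangle=\tau\int f(s)g^*(s)ds$ (which makes sense for general $f$ and $g$ via the polarized identity \eqref{eq:polarized} and H\"older's inequality). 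For $p=2$ there is nothing to do, since $\h_2^c(\R,\M)=L_2(\N)$ by \eqref{rem: h2=L2} is self-dual.

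For $2<p<\infty$ I would dualize the case just treated. Write $q\in(1,2)$ for the conjugate index, so that the previous paragraph gives $\h_q^c(\R,\M)^*=\h_p^c(\R,\M)$. The extra input needed is that $\h_q^c(\R,\M)$ is reflexive. By Theorem~\ref{thm: bdd retraction}(2) applied with the index $q$ in place of $p$ (together with the isometric section $\fE$ and the identity $\fF\circ\fE=\mathrm{id}$ recorded earlier), $\h_q^c(\R,\M)$ is isomorphic to a complemented subspace of $L_q(\N;L_2^c(\widetilde\Gamma))\oplus_q L_q(\N)$; this ambient space is reflexive, because for $1<q<\infty$ the noncommutative $L_q$-space of a semifinite von Neumann algebra is reflexive, $L_q(\N;L_2^c(\widetilde\Gamma))$ is $1$-complemented in $L_q\big(B(L_2(\widetilde\Gamma))\overline\otimes\N\big)$, and an $\ell_q$-direct sum of reflexive spaces is reflexive. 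A complemented subspace of a reflexive space is reflexive, so $\h_q^c(\R,\M)$ is reflexive, and therefore
\[
\h_p^c(\R,\M)^*=\big(\h_q^c(\R,\M)^*\big)^*=\h_q^c(\R,\M)^{**}=\h_q^c(\R,\M)
\]
with equivalent norms. Chasing the identifications through the canonical embedding into the bidual shows that an element $g\in\h_q^c(\R,\M)$ represents the functional $f\mapsto\tau\int f(s)g^*(s)ds$ on $\h_p^c(\R,\M)$ (up to an irrelevant complex conjugation), so the duality bracket is again the natural one.

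I do not expect a genuine obstacle; all the real work lives in Theorems~\ref{thm: duality P}, \ref{thm: hq=bmoq} and \ref{thm: bdd retraction}. The only points requiring care are bookkeeping ones: that the composite identification in the first paragraph is implemented by the pairing $\langle f,g\rangle=\tau\int f g^*$, that this pairing survives passage to the second dual in the reflexivity step, and that it suffices to pin down the representing element by testing against the dense subspace $L_p(\M;\mathrm{W}_d^c)\subset\h_p^c(\R,\M)$. Should one wish to avoid reflexivity for $2<p<\infty$, one can instead mimic the proof of Theorem~\ref{thm: hq=bmoq}: embed $\h_p^c(\R,\M)$ into $L_p(\N;L_2^c(\widetilde\Gamma))\oplus_p L_p(\N)$ via $\fE$, extend a given functional by the Hahn--Banach theorem, and use the boundedness of the relevant variant of $\fF$ from $L_q(\N;L_2^c(\widetilde\Gamma))\oplus_q L_q(\N)$ into $\h_q^c(\R,\M)$ (Theorem~\ref{thm: bdd retraction}(2)) together with \eqref{eq: IP less hp} to produce the representing $g\in\h_q^c(\R,\M)$; the reflexivity route is simply shorter.
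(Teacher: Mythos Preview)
Your proof is correct. The paper's own argument is a single sentence: since Theorem~\ref{thm: bdd retraction} makes $\h_p^c(\R,\M)$ a complemented subspace of $L_p(\N;L_2^c(\widetilde\Gamma))\oplus_p L_p(\N)$ for every $1<p<\infty$ via the same pair $(\fE,\fF)$, duality of the ambient spaces yields the result uniformly in $p$. Your route is a mild rearrangement of the same ingredients: for $1<p<2$ you compose Theorem~\ref{thm: duality P} with Theorem~\ref{thm: hq=bmoq} directly (no complementation needed), and for $2<p<\infty$ you use the complementation only to obtain reflexivity of $\h_q^c$ and then dualize the case already handled. The alternative Hahn--Banach argument you sketch at the end is precisely what underlies the paper's one-line justification, so the two approaches are really the same circle of ideas read in different orders; your version simply makes the logical dependencies more visible.
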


\section{Interpolation}\label{section-interp}

In this section we study the interpolation of local Hardy and bmo spaces by transferring the problem to that of the operator-valued Hardy and $\BMO$ spaces defined in \cite{Mei2007}. We begin with an easy observation on the difference between $\bmo_q^c$ and $\BMO_q^c$ norms.

\begin{lem}\label{cor: equi norm of bmo q}
For $2<q\leq \infty$, we have
$$  \|  g \|  _{{\bmo}_q^{c}}\approx \big( \|  g \|  _{{\BMO}_q^{c}} ^q +  \|  J( \mathrm{P})*g \|  _q^q\big)^\frac{1}{q}.
$$

\end{lem}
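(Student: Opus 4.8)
The plan is to prove the two inequalities $\|g\|_{\bmo_q^c}\lesssim\big(\|g\|_{\BMO_q^c}^q+\|J(\mathrm{P})*g\|_q^q\big)^{1/q}$ and $\big(\|g\|_{\BMO_q^c}^q+\|J(\mathrm{P})*g\|_q^q\big)^{1/q}\lesssim\|g\|_{\bmo_q^c}$ separately. The point to keep in mind is that the $\bmo_q^c$-norm controls the oscillation $\frac1{|Q|}\int_Q|g-g_Q|^2$ only over cubes with $|Q|<1$, together with the size $\frac1{|Q|}\int_Q|g|^2$ over cubes with $|Q|=1$, whereas $\|g\|_{\BMO_q^c}$ controls the oscillation over \emph{all} cubes. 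Throughout I would work with the equivalent description of both norms by means of a dominating positive operator in $L_{q/2}(\N)$ (for $\BMO_q^c$, the obvious analogue of the description recorded just after the definition of $\bmo_q^c$): an operator $\mathcal B\in L_{q/2}(\N)_+$ with $\|\mathcal B\|_{q/2}^{1/2}\approx\|g\|_{\BMO_q^c}$ and $\frac1{|R|}\int_R|g-g_R|^2\le\mathcal B(s)$ for all cubes $R\ni s$, and operators $a,b\in L_{q/2}(\N)_+$ with $\|a\|_{q/2}^{q/2}+\|b\|_{q/2}^{q/2}\approx\|g\|_{\bmo_q^c}^q$ witnessing the two defining quantities of $\bmo_q^c$.

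For the direction $\big(\|g\|_{\BMO_q^c}^q+\|J(\mathrm{P})*g\|_q^q\big)^{1/q}\lesssim\|g\|_{\bmo_q^c}$, the term $\|J(\mathrm{P})*g\|_q\lesssim\|g\|_{\bmo_q^c}$ is immediate from Lemma \ref{lem: Caleson<bmo q}: since $(1+|\xi|^2)^{1/2}e^{-2\pi|\xi|}\in H_2^\sigma(\R)$, $J(\mathrm{P})$ is the inverse Fourier transform of a function in $H_2^\sigma$, so $\||J(\mathrm{P})*g|^2\|_{q/2}$ is dominated by the operator $b$ produced there, of norm $\lesssim\|g\|_{\bmo_q^c}^2$. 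For $\|g\|_{\BMO_q^c}\lesssim\|g\|_{\bmo_q^c}$, the oscillation over cubes with $|Q|<1$ is controlled directly; for a cube $Q$ with $|Q|\ge1$ I would use $\frac1{|Q|}\int_Q|g-g_Q|^2\le\frac1{|Q|}\int_Q|g|^2$, cover $Q$ by $O(|Q|)$ unit cubes $Q_j$ sitting inside a concentric cube $Q^*$ of side length $\le3\,l(Q)$, and use $\int_{Q_j}|g|^2\le\int_{Q_j}b$ (valid because $\frac1{|Q_j|}\int_{Q_j}|g|^2\le b(s')$ for every $s'\in Q_j$ and $|Q_j|=1$) to get $\frac1{|Q|}\int_Q|g|^2\lesssim\frac1{|Q^*|}\int_{Q^*}b$; the noncommutative Hardy--Littlewood maximal inequality (as used in the proof of Theorem \ref{thm: hq=bmoq}) then produces a single $\tilde b\in L_{q/2}(\N)_+$ with $\|\tilde b\|_{q/2}\lesssim\|b\|_{q/2}$ dominating $\frac1{|Q|}\int_Q|g|^2$ for all $s\in Q$. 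This is exactly the noncommutative form of the computation in Remark \ref{|Q|=1 eq |Q|geq1} and Proposition \ref{BMO-leq-bmo}.

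For the direction $\|g\|_{\bmo_q^c}\lesssim\big(\|g\|_{\BMO_q^c}^q+\|J(\mathrm{P})*g\|_q^q\big)^{1/q}$, the oscillation part of $\|g\|_{\bmo_q^c}$ (cubes with $|Q|<1$) is trivially $\le\|g\|_{\BMO_q^c}$, so the work is to bound $\frac1{|Q|}\int_Q|g|^2$ for $|Q|=1$. Writing $\frac1{|Q|}\int_Q|g|^2\le 2\,\frac1{|Q|}\int_Q|g-g_Q|^2+2|g_Q|^2$, the first term is again $\BMO_q^c$-controlled, so everything reduces to estimating $|g_Q|^2$, for a unit cube $Q\ni s$, by $\mathcal B(s)$ and $|J(\mathrm{P})*g|^2(s)$. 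Since $\widehat{J(\mathrm{P})}(0)=1$, the kernel $t\mapsto|Q|^{-1}\mathbbm{1}_Q(t)-J(\mathrm{P})(s-t)$ has vanishing mean, so $g_Q-(J(\mathrm{P})*g)(s)=-\int_Q J(\mathrm{P})(s-t)(g(t)-g_Q)\,dt-\sum_{k\ge1}\int_{2^{k+1}Q\setminus 2^kQ}J(\mathrm{P})(s-t)(g(t)-g_Q)\,dt$, where $2^kQ$ is the cube concentric with $Q$ of side $2^k$. I would bound the near term using \eqref{eq: 2-1} and $\|J(\mathrm{P})\|_{L_2(\R)}<\infty$, and the $k$-th annular term using \eqref{eq: 2-1} together with the fact that $J(\mathrm{P})$ decays like the Poisson kernel (so $\int_{|u|\gtrsim 2^k}|J(\mathrm{P})(u)|^2\,du\lesssim 2^{-k(d+2)}$, up to an arbitrarily small loss in the exponent, coming from the limited smoothness of $\widehat{J(\mathrm{P})}$ at the origin) and the standard telescoping estimate $\int_{2^{k+1}Q\setminus 2^kQ}|g-g_Q|^2\lesssim 2^{kd}(k+1)^2\mathcal B(s)$, as in the tail estimate in the proof of Lemma \ref{lem: Carleson < bmo}; summing with an operator Cauchy--Schwarz inequality $\big|\sum_k a_k\big|^2\le\big(\sum_k 2^{-k}\big)\big(\sum_k 2^k|a_k|^2\big)$ yields $|g_Q-(J(\mathrm{P})*g)(s)|^2\lesssim\mathcal B(s)$. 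Hence $\frac1{|Q|}\int_Q|g|^2\lesssim\mathcal B(s)+|J(\mathrm{P})*g|^2(s)$ for $s\in Q$, $|Q|=1$, and taking $L_{q/2}$-norms completes the proof. I expect this last estimate of $g_Q$ to be the main obstacle: it is the only genuinely nontrivial step, and it rests crucially on both the cancellation $\widehat{J(\mathrm{P})}(0)=1$ and the (weighted $L_2$) decay of $J(\mathrm{P})$ needed to absorb the $\BMO$ tails.
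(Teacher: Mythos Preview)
Your argument is correct and, for the harder direction $\|g\|_{\bmo_q^c}\lesssim\|g\|_{\BMO_q^c}+\|J(\mathrm P)*g\|_q$, genuinely different from the paper's. The paper does not estimate $|g_Q|^2$ directly; instead it invokes Corollary~\ref{cor: Carleson=Dbmo} (the Carleson characterization of $\bmo_q^c$, itself a consequence of the full $\h_p^c$--$\bmo_q^c$ duality), obtaining
\[
\|g\|_{\bmo_q^c}\lesssim \Big\|\underset{\substack{s\in Q,\,|Q|<1}}{\sup{}^+}\frac{1}{|Q|}\int_{T(Q)}\big|\tfrac{\partial}{\partial\varepsilon}\mathrm P_\varepsilon(g)\big|^2\varepsilon\,dt\,d\varepsilon\Big\|_{q/2}^{1/2}+\|J(\mathrm P)*g\|_q,
\]
and then bounds the Carleson term by $\|g\|_{\BMO_q^c}$ using the analogous result for $\BMO_q^c$ from \cite{XXX17}. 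Your route is more elementary and self-contained: it avoids the duality machinery entirely and reduces everything to the pointwise operator estimate $|g_Q-(J(\mathrm P)*g)(s)|^2\lesssim\mathcal B(s)$, proved by the standard dyadic-shell/telescoping computation. What the paper's approach buys is brevity (it is two lines once Corollary~\ref{cor: Carleson=Dbmo} is in hand); what yours buys is independence from Section~\ref{section-dual}. For the other direction, your argument and the paper's (``repeat Proposition~\ref{BMO-leq-bmo} with $\|\cdot\|_\M$ replaced by $\|\cdot\|_{L_{q/2}(\N;\ell_\infty)}$'') are essentially the same, only yours makes explicit the step where the Hardy--Littlewood maximal inequality produces a single dominating operator from the averages $\frac{1}{|Q^*|}\int_{Q^*}b$.

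Two minor cleanups: (i) your annular decomposition misses the shell $2Q\setminus Q$; just start the sum at $k=0$. (ii) The decay of $J(\mathrm P)$ is in fact exactly $|J(\mathrm P)(s)|\lesssim(1+|s|)^{-d-1}$, with no loss of exponent: writing $(1+|\xi|^2)^{1/2}e^{-2\pi|\xi|}=e^{-2\pi|\xi|}+\big[(1+|\xi|^2)^{1/2}-1\big]e^{-2\pi|\xi|}$, the first piece gives $\mathrm P$ and the second is $O(|\xi|^2)$ at the origin with rapid decay, hence its inverse transform decays faster than $\mathrm P$. This yields $\int_{|u|\gtrsim 2^k}|J(\mathrm P)(u)|^2\,du\lesssim 2^{-k(d+2)}$ on the nose, which is more than enough for your summation.
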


\begin{proof}
Repeating the proof of Proposition \ref{BMO-leq-bmo} with $\|\cdot\|_\M $ replaced by $\|\cdot\|_{L_{\frac{q}{2}}(\N;\ell_\infty)}  $, we have $\|g\|_{\BMO_q^c} \lesssim \|g\|_{\bmo_q^c}$. By Lemma \ref{lem: Caleson<bmo q}, it is also evident that $\|  J( \mathrm{P})*g \|  _q \lesssim \|g\|_{\bmo_q^c}$. Then we obtain 
$$ \big( \|  g \|  _{{\BMO}_q^{c}} ^q +  \|  J( \mathrm{P})*g \|  _q^q\big)^\frac{1}{q}\lesssim \|  g \|  _{{\bmo}_q^{c}}.
$$
On the other hand, by Corollary \ref{cor: Carleson=Dbmo}, we have
$$
  \|  g \|  _{{\bmo}_q^{c}}\lesssim \Big\|\underset{\substack{s\in Q\subset \R\\ |Q|<1}}{\sup{} ^+}\frac{1}{|Q|}\int_{T(Q)}|\frac{\partial}{\partial\varepsilon}\mathrm{P}_{\varepsilon}(g)(t)|^2\e dtd\e \Big\|_{\frac{q}{2}}^{\frac{1}{2}}  + \|  J( \mathrm{P})*g \|  _q .
$$
Clearly, the first term on the right side can be estimated from above by $\|g\|_{\BMO_c^q}$ (see \cite[Theorem~3.4]{XXX17}). Therefore,
$$
  \|  g \|  _{{\bmo}_q^{c}}\lesssim \|g\|_{\BMO_c^q} + \|  J( \mathrm{P})*g \|  _q\approx   \big( \|  g \|  _{{\BMO}_q^{c}} ^q +  \|  J( \mathrm{P})*g \|  _q^q\big)^\frac{1}{q}.
$$
Thus, the lemma is proved.
\end{proof}

 Define $F_q(\N)$ to be the space of all $ f\in L_q(\M;\mathrm R^c_d)$ such that $\|  J( \mathrm{P})*f \|_q<\infty$. From the above lemma, we see that $\bmo _q^c(\R,\M)$ and $\BMO_ q^c(\R,\M)\oplus_q F_q(\N)$ have equivalent norms. By the interpolation between $\BMO_ q^c(\R,\M)$ and $\BMO^c(\R,\M)$ (see \cite{Mei2007} for more details), we deduce the following lemma:

\begin{lem}\label{lem: interpolation of dyadic}
Let $2<q< \infty$ and $0<\theta<1$. Then
$$
\big({\bmo} _{q}^c(\R,\M),   {\bmo}^c(\R,\M)  \big)_{\theta}\subset  {\bmo} _{\varrho}^c(\R,\M) \quad \text{ with }\quad \varrho=\frac{q}{1-\theta}.
$$

\end{lem}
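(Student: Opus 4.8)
The plan is to transfer the statement to two standard interpolation facts by means of Lemma~\ref{cor: equi norm of bmo q}, which already concentrates the ``truncation'' content of the $\bmo_q^c$-norm into the $L_q(\N)$-norm of the single smooth convolution $J(\mathrm{P})*g$. Recall $\bmo^c(\R,\M)=\bmo_\infty^c(\R,\M)$ and that Lemma~\ref{cor: equi norm of bmo q} gives, for every $2<q\le\infty$,
\[
\|g\|_{\bmo_q^c}\ \approx\ \big(\|g\|_{\BMO_q^c}^{\,q}+\|J(\mathrm{P})*g\|_{L_q(\N)}^{\,q}\big)^{1/q}
\]
(with the usual modification at $q=\infty$, where the estimate $\|J(\mathrm{P})*g\|_{L_\infty(\N)}\lesssim\|g\|_{\bmo^c}$ from Lemma~\ref{lem: general Carleson} is what makes the endpoint behave; note that the truncated square functions $s^c,g^c$ are \emph{not} bounded on $L_\infty(\N)$ for general $\bmo^c$-functions, which is why one must package the spaces this way rather than via the retraction $\fE,\fF$). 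In particular both the inclusion $\bmo_q^c(\R,\M)\hookrightarrow\BMO_q^c(\R,\M)$ and the operator $T\colon g\mapsto J(\mathrm{P})*g$, viewed as a map into $L_q(\N)$, are bounded, with the same at $q=\infty$.

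All these spaces ($\bmo_q^c$, $\BMO_q^c$, $L_q(\N)$ and their $q=\infty$ versions) embed continuously into the space of $\M$-valued distributions on $\R$, so $(\bmo_q^c(\R,\M),\bmo^c(\R,\M))$ is a compatible couple and the above inclusion and the operator $T$ are morphisms of it into $(\BMO_q^c(\R,\M),\BMO^c(\R,\M))$ and into $(L_q(\N),L_\infty(\N))$ respectively. By functoriality of the complex method they induce bounded maps from $(\bmo_q^c(\R,\M),\bmo^c(\R,\M))_\theta$ into $(\BMO_q^c(\R,\M),\BMO^c(\R,\M))_\theta$ and into $(L_q(\N),L_\infty(\N))_\theta$. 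Here I would invoke the interpolation theorem for Mei's operator-valued $\BMO$ scale, \cite{Mei2007}, giving $(\BMO_q^c(\R,\M),\BMO^c(\R,\M))_\theta=\BMO_\varrho^c(\R,\M)$, together with the standard noncommutative $L_p$ interpolation $(L_q(\N),L_\infty(\N))_\theta=L_\varrho(\N)$; both hold with $\varrho=q/(1-\theta)$.

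It remains to reassemble. For $g\in(\bmo_q^c(\R,\M),\bmo^c(\R,\M))_\theta$, which via the common embedding is an $\M$-valued distribution, write $\|g\|_\theta$ for its interpolation norm; the two maps above give $\|g\|_{\BMO_\varrho^c}\lesssim\|g\|_\theta$ and $\|J(\mathrm{P})*g\|_{L_\varrho(\N)}\lesssim\|g\|_\theta$. Since $q>2$ and $0<\theta<1$ force $\varrho\in(2,\infty)$, Lemma~\ref{cor: equi norm of bmo q} applies at the index $\varrho$ and turns these into $\|g\|_{\bmo_\varrho^c}\approx(\|g\|_{\BMO_\varrho^c}^{\,\varrho}+\|J(\mathrm{P})*g\|_{L_\varrho(\N)}^{\,\varrho})^{1/\varrho}\lesssim\|g\|_\theta$, that is, $g\in\bmo_\varrho^c(\R,\M)$ with controlled norm, which is exactly the claimed inclusion. (Equivalently one packages the first two steps as ``interpolate the identification $\bmo_q^c\cong\BMO_q^c\oplus_q F_q(\N)$ of Lemma~\ref{cor: equi norm of bmo q} and its $q=\infty$ version, using that interpolation commutes with $\ell_q$-direct sums''; this is the same computation.) The main step is really Lemma~\ref{cor: equi norm of bmo q} itself; what is left is the formal verification that the three couples are compatible and that the inclusion and $T$ are morphisms, plus the use of Mei's $\BMO$-interpolation as a black box, and I do not expect any genuine analytic obstacle there.
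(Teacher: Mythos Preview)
Your proposal is correct and follows essentially the same route as the paper: use Lemma~\ref{cor: equi norm of bmo q} to identify $\bmo_q^c$ with $\BMO_q^c\oplus_q F_q(\N)$ (where $F_q(\N)$ is the space of $g$ with $J(\mathrm{P})*g\in L_q(\N)$), then interpolate the two components separately via Mei's $\BMO$-interpolation and standard $L_p$-interpolation, and reassemble at the index~$\varrho$. Your parenthetical remark about ``interpolation commuting with $\ell_q$-direct sums'' is in fact exactly how the paper phrases it; your functorial formulation via the two maps (the inclusion into $\BMO_q^c$ and $T=J(\mathrm{P})*\cdot$ into $L_q(\N)$) is just a slightly more explicit unpacking of the same argument.
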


\begin{proof}
By Lemma \ref{cor: equi norm of bmo q}, we see that
$$
{\bmo} _{q}^c(\R,\M)=\BMO_ q^c(\R,\M)\oplus_q F_q(\N).
$$
with equivalent norms.
Define a map
\begin{equation*}
\begin{split}
\varUpsilon_q: F_q(\N) \longrightarrow &\, L_q(\N)\\
f   \longmapsto & \,J({\rm{P}})*f.
\end{split}
\end{equation*}
Thus,  $\varUpsilon_q$ defines an isometric embedding of $F_q(\N)$ into $L_q(\N)$. Then by the interpolation between $\BMO_ q^c(\R,\M)$ and $\BMO^c(\R,\M)$, we get
\begin{equation*}
\begin{split}
\big({\bmo} _{q}^c(\R,\M),   {\bmo}^c(\R,\M)  \big)_{\theta} &=  \big(\BMO_ q^c(\R,\M)\oplus_q F_q(\N), \BMO^c(\R,\M)\oplus_\infty F_\infty(\N)\big)_{\theta}\\
&=  \big( \BMO_ q^c(\R,\M),  \BMO^c(\R,\M) \big)_{\theta} \oplus_{\varrho}  \big(F_q(\N),  F_\infty(\N)\big)_{\theta}\\
& \subset   \BMO_ {\varrho}^c(\R,\M) \oplus_{\varrho} F_{\varrho}(\N)={\bmo} _{\varrho}^c(\R,\M),
\end{split}
\end{equation*}
which completes the proof.
\end{proof}

\begin{thm}\label{thm: interpolation}
Let $1<p<\infty$. We have
$$
\big({\bmo}^c(\R,\M), \h_1^c(\R,\M)\big)_{\frac{1}{p}}=\h_p^c(\R,\M).
$$
\end{thm}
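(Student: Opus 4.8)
The plan is to push everything over to Mei's \emph{non-local} spaces and quote his interpolation theorem $\big(\BMO^c(\R,\M),\mathcal H_1^c(\R,\M)\big)_{1/p}=\mathcal H_p^c(\R,\M)$ from \cite{Mei2007}. The bridge is the observation that each of the three spaces in the statement can be written, up to isomorphism and compatibly along the scale $1\le q\le\infty$, as a complemented summand of a direct sum of one of Mei's spaces with an auxiliary Bessel-potential $L_q$-factor. Lemma~\ref{cor: equi norm of bmo q} (and Corollary~\ref{cor: Carleson=Dbmo}) with $q=\infty$ is exactly this for the endpoint: $\bmo^c(\R,\M)$ is, via $g\mapsto(\e\tfrac{\partial}{\partial\e}\mathrm P_\e(g),J(\mathrm P)*g)$, an isomorphic copy of a complemented subspace of $T_\infty^c(\R,\M)\oplus_\infty L_\infty(\N)$, with the Carleson/tent-space factor carrying the $\BMO^c$ information and $F_q(\N)\hookrightarrow L_q(\N)$ isometrically and consistently.

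Concretely I would proceed in three steps. (i) From Theorem~\ref{thm: bdd retraction}, $\h_p^c(\R,\M)$ is a complemented subspace of $T_p^c(\R,\M)\oplus_p L_p(\N)$ for $1<p<\infty$ by a consistent section/retraction pair; since tent spaces and $\ell_p$-direct sums interpolate like $L_p$, this already yields $\big(\h_a^c,\h_b^c\big)_\theta=\h_c^c$, $1/c=(1-\theta)/a+\theta/b$, for all $1<a,b<\infty$. (ii) Using Corollary~\ref{cor: Carleson=Dbmo}/Lemma~\ref{cor: equi norm of bmo q} at the endpoint and the semigroup identity $\mathrm P_\e=\mathrm P_{\e-1}*\mathrm P$ for $\e\ge1$ to split off the ``time $\ge1$'' behaviour of the Lusin functional, one produces the same kind of decomposition for $\h_1^c$ and $\h_p^c$ relative to $\mathcal H_1^c$, $\mathcal H_p^c$, the auxiliary factor in all cases being the Bessel-potential datum $J(\mathrm P)*f$. (iii) Interpolating: Mei's theorem handles the $\BMO^c$--$\mathcal H_1^c$ block, ordinary $L_p$-interpolation handles the auxiliary $F$-scale (the symbol $(1+|\xi|^2)^{1/2}e^{-2\pi|\xi|}$ gives a kernel nice enough for all $L_q$, by the computation already used in Lemma~\ref{cor: equi norm of bmo q}), and re-applying the retraction gives $\big(\bmo^c,\h_1^c\big)_{1/p}=\h_p^c$; Remark~\ref{rem: duality tent} and Theorem~\ref{thm: hq=bmoq} supply the tent-space ingredients.

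The main obstacle is the inclusion $\big(\bmo^c(\R,\M),\h_1^c(\R,\M)\big)_{1/p}\subseteq\h_p^c(\R,\M)$. Both endpoints are non-reflexive, so one cannot simply dualize; and the naive Lusin embedding $\fE$ is useless at $\theta=0$, because the Poisson square function of a $\bmo^c$-function is merely a Carleson measure and does not live in $L_\infty(\N;L_2^c(\widetilde{\Gamma}))$ — only in the tent space $T_\infty^c$, which is precisely why the tent-space framework, not the $L_2^c(\widetilde{\Gamma})$ one, is the right ambient setting. Everything therefore rests on making the comparison with Mei's setting quantitative, i.e.\ on upgrading the one-sided Lemma~\ref{lem: interpolation of dyadic} to an equality and establishing the ``time $\ge1$'' splitting of $\h_1^c$; once these identifications are in place the proof is indeed short, and the last routine point is to check that the auxiliary Bessel-potential factors form an interpolation scale compatible with $\big(\mathcal H_q^c\big)_q$.
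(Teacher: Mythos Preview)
Your strategy and the paper's diverge at exactly the obstacle you flag. You propose to attack $\big(\bmo^c,\h_1^c\big)_{1/p}$ head-on by realising each endpoint as a complemented piece of a (tent space)\,$\oplus$\,$L_q$ scale, or equivalently by running the decomposition $\bmo_q^c\simeq\BMO_q^c\cap F_q$ of Lemma~\ref{cor: equi norm of bmo q} all the way down to $p=1$. The paper does \emph{not} do this. Instead it sidesteps the $\bmo^c$ endpoint completely: it interpolates the pair $\big(\h_p^c,\h_1^c\big)$ for a fixed $1<p<2$, where the isometric embedding $\fE$ is available at \emph{both} endpoints, yielding the easy inclusion $\big(\h_p^c,\h_1^c\big)_\theta\subset\h_{p'}^c$. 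Reflexivity of $\h_p^c$ (Theorem~\ref{cor: duality}) then identifies the dual of the left-hand side with $\big(\bmo_q^c,\bmo^c\big)_\theta$, and here the merely one-sided Lemma~\ref{lem: interpolation of dyadic} is already enough: a strict inclusion on the $\h$-side would force the strict reverse inclusion $\bmo_\varrho^c\subsetneq\big(\bmo_q^c,\bmo^c\big)_\theta$, contradicting that lemma. Dualising once more gives $\big(\h_q^c,\bmo^c\big)_\theta=\h_{q'}^c$, and Wolff's theorem glues these partial results together with your step~(i) to reach the stated endpoint formulation. So the paper never upgrades Lemma~\ref{lem: interpolation of dyadic} to an equality directly; it gets the equality for free by duality.

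Your proposed route has real gaps that do not obviously close with the tools at hand. First, the analogue of Lemma~\ref{cor: equi norm of bmo q} on the predual side is not an intersection but a sum: dualising $\bmo_q^c=\BMO_q^c\cap F_q$ yields $\h_p^c$ as $\mathcal H_p^c+(F_q)_*$, so the two endpoints sit in mismatched frameworks (intersection at $q=\infty$, sum at $p=1$), and there is no common retraction in sight. Second, in the tent-space formulation you favour, the retraction $\fF$ is established in the paper only from $L_\infty\big(\N;L_2^c(\widetilde\Gamma)\big)\oplus L_\infty(\N)$ into $\bmo^c$ (Theorem~\ref{lem: L bmo}), not from the larger $T_\infty^c\oplus L_\infty(\N)$; and at $p=1$ no boundedness of $\fF$ is proved at all (Theorem~\ref{thm: bdd retraction} covers only $1<p<\infty$). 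Third, the ``time $\ge1$'' splitting of $\h_1^c$ you invoke is nowhere available prior to the interpolation theorem, and proving it independently would be at least as hard as the theorem itself. None of this is fatal in principle, but it is substantially more work than you suggest, whereas the paper's duality-plus-Wolff manoeuvre is a genuine shortcut around precisely the difficulty you correctly identified.
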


\begin{proof}
Let $1<p<2$ and $\frac{1}{p'} =\frac{1-\theta}{p} + \theta$. Since the map $\fE$ in Definition \ref{def:map} is an isometry from $\h _p^c(\R,\M)$ to $L_{p}  \big(\mathcal{N};L_{2}^{c}(\widetilde{\Gamma}) \big)\oplus_p L_{p}  (\mathcal{N} )$, we have
\begin{equation}\label{eq: h1 hp interpolation subset}
\big(\h_p^c(\R,\M),\h _1^c(\R,\M)\big)_\theta \subset \h _{p'}^c(\R,\M).
\end{equation}
By Theorem \ref{cor: duality}, $\h_p^c$ is a reflexive Banach space. Then applying \cite[Corollary~4.5.2]{BL1976}, we know that the dual of $\big(\h_p^c(\R,\M),\h _1^c(\R,\M)\big)_\theta$ is $\big({\bmo} _{q}^c(\R,\M),   {\bmo}^c(\R,\M)  \big)_{\theta}$. Therefore, if the inclusion \eqref{eq: h1 hp interpolation subset} is proper, we will get the proper inclusion 
$${\bmo} _{\varrho}^c(\R,\M)\subsetneq \big({\bmo} _{q}^c(\R,\M),   {\bmo}^c(\R,\M)  \big)_{\theta},$$
 which is in contradiction with Lemma \ref{lem: interpolation of dyadic}. Thus, we have
\begin{equation}\label{eq: h1 hp interpolation 2}
\big(\h_p^c(\R,\M),\h _1^c(\R,\M)\big)_\theta = \h _{p'}^c(\R,\M).
\end{equation}
 By duality and \cite[Corollary~4.5.2]{BL1976} again, the above equality implies that for $q' = \frac{q}{1-\theta}$,
\begin{equation}\label{eq: h1 hp interpolation 2 bis}
\big(\h_q^c(\R,\M),\bmo^c(\R,\M)\big)_\theta = \h _{q'}^c(\R,\M).
\end{equation}

For the case where $1< p_1 ,p_2 <\infty$, the interpolation of $\h_{p_1}^c(\R,\M)$ and $ \h_{p_2}^c(\R,\M)$ is much easier to handle. Indeed, by Theorem \ref{thm: bdd retraction}, we have,  for any $1<p<\infty$, $\h _p^c(\R,\M)$ is a complemented subspace of $L_{p}  \big(\mathcal{N};L_{2}^{c}(\widetilde{\Gamma}) \big)\oplus_p L_{p}  (\mathcal{N} )$ via the maps $\fE$ and $\fF$ in Definition \ref{def:Phi&Psi}. This implies that, for any $1<p_1,p_2<\infty$,
$$
\big( \h_{p_1}^c(\R,\M), \h_{p_2}^c(\R,\M)\big)_\theta =\h_p^c(\R,\M),
$$
with $\frac 1 p =\frac {1-\theta}{p_1} +\frac{\theta}{p_2}$.
Combining this equivalence with  \eqref{eq: h1 hp interpolation 2}, \eqref{eq: h1 hp interpolation 2 bis}, and applying Wolff's interpolation theorem (see \cite{Wolff}), we get the desired assertion.
\end{proof}

 The following theorem is the mixed version of Theorem \ref{thm: interpolation}, which states that $\h_1(\R,\M)$ and $\bmo(\R,\M)$ are also good endpoints of $L_p(\N)$.

\begin{thm}\label{interp-mix}
Let $1<p<\infty$. We have $\big(X, Y \big)_{\frac 1 p } = L _p(\N)$ with equivalent norms, where $X = \bmo(\R,\M)$ or $L_\infty(\N)$, and $Y =\h_1(\R,\M)$ or $L_1(\N)$.
\end{thm}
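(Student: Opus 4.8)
The plan is to reduce the statement to its non-local counterpart, \emph{Mei's operator-valued Fefferman--Stein interpolation theorem} in \cite{Mei2007}: for every $X'\in\{\BMO(\R,\M),L_\infty(\N)\}$ and $Y'\in\{\mathcal{H}_1(\R,\M),L_1(\N)\}$ one has $\big(X',Y'\big)_{\frac1p}=L_p(\N)$ with equivalent norms. The reduction is a sandwich argument. I would first establish the chains of continuous inclusions
$$L_\infty(\N)\hookrightarrow\bmo(\R,\M)\hookrightarrow\BMO(\R,\M),\qquad \mathcal{H}_1(\R,\M)\hookrightarrow\h_1(\R,\M)\hookrightarrow L_1(\N).$$
Since every admissible $X$ satisfies $L_\infty(\N)\hookrightarrow X\hookrightarrow\BMO(\R,\M)$ and every admissible $Y$ satisfies $\mathcal{H}_1(\R,\M)\hookrightarrow Y\hookrightarrow L_1(\N)$, the functoriality of the complex interpolation method yields
$$L_p(\N)=\big(L_\infty(\N),\mathcal{H}_1(\R,\M)\big)_{\frac1p}\hookrightarrow\big(X,Y\big)_{\frac1p}\hookrightarrow\big(\BMO(\R,\M),L_1(\N)\big)_{\frac1p}=L_p(\N),$$
and a chain of continuous inclusions of function spaces whose two ends coincide (with equivalent norms) forces every term in between to coincide with them; hence $\big(X,Y\big)_{\frac1p}=L_p(\N)$. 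All the spaces involved embed continuously into the space of $(L_1(\M)+\M)$-valued tempered distributions on $\R$, so the interpolation couples are compatible and the inclusion maps are morphisms of couples.

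Three of the four inclusions are immediate. The inclusion $L_\infty(\N)\hookrightarrow\bmo(\R,\M)$ holds because a bounded function obviously satisfies both conditions in \eqref{eq: def bmo} with constant $\lesssim\|f\|_{L_\infty(\N)}$ (and likewise for the row part); $\bmo(\R,\M)\hookrightarrow\BMO(\R,\M)$ is precisely Proposition~\ref{BMO-leq-bmo} together with its row version and the definitions of the mixture spaces; and $\h_1(\R,\M)\hookrightarrow L_1(\N)$ was already obtained inside the proof of Theorem~\ref{thm: hq=bmoq} (the inclusion $\h_p^c(\R,\M)\subset L_p(\N)$ at $p=1$, its adjoint, and the definition of $\h_1$ as a sum).

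The only inclusion with genuine content — and the step I expect to be the main obstacle — is $\mathcal{H}_1(\R,\M)\hookrightarrow\h_1(\R,\M)$. Since both spaces are sums of their column and row parts it suffices to prove $\mathcal{H}_1^c(\R,\M)\hookrightarrow\h_1^c(\R,\M)$, i.e.\ $\|s^c(f)\|_1+\|\mathrm{P}*f\|_1\lesssim\|f\|_{\mathcal{H}_1^c}$. The first term is harmless: since $\widetilde\Gamma\subset\Gamma$ one has $s^c(f)\le S^c(f)$ pointwise, hence $\|s^c(f)\|_1\le\|f\|_{\mathcal{H}_1^c}$. For the second term I would test on the atoms furnished by Mei's atomic decomposition of $\mathcal{H}_1^c(\R,\M)$: if $a$ is an $\mathcal{H}_1^c$-atom supported in a cube $Q$ with $\|a\|_{L_2(\N)}\le|Q|^{-1/2}$ and $\int_Q a=0$, then splitting $\tau\int_{\R}|\mathrm{P}*a|$ into the part over $2Q$ and the part over $(2Q)^c$ gives, for the first part, $|2Q|^{1/2}\|\mathrm{P}*a\|_{L_2(\N)}\le|2Q|^{1/2}\|a\|_{L_2(\N)}\lesssim1$ because $\mathrm{P}*\,\cdot$ is an $L_2(\N)$-contraction, and, for the second, the cancellation $\int_Q a=0$ together with $|\mathrm{P}(s-t)-\mathrm{P}(s-c_Q)|\lesssim l(Q)(1+|s-c_Q|)^{-(d+1)}$ for $t\in Q$ yields $\lesssim l(Q)\int_{(2Q)^c}(1+|s-c_Q|)^{-(d+1)}\,ds\lesssim1$, uniformly in $l(Q)$. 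Summing over the atomic decomposition gives $\|\mathrm{P}*f\|_1\lesssim\|f\|_{\mathcal{H}_1^c}$, which is the operator-valued form of the classical fact that the Poisson integral of an $H_1$-function is uniformly bounded in $L_1$ by its Hardy norm. With the four inclusions in hand, the sandwich above finishes the proof, requiring no interpolation input beyond \cite{Mei2007} and the functoriality of the complex method.
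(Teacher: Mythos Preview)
Your sandwich argument is correct and gives a genuinely different, more streamlined route than the paper's proof. The paper does not sandwich directly against Mei's non-local interpolation; instead it first adapts Lemma~\ref{lem: interpolation of dyadic} to the mixture spaces to obtain $\big(\bmo_q,\bmo\big)_\theta\subset\bmo_{q'}$, dualizes to get $L_{p'}(\N)=\h_{p'}\subset\big(\h_p,\h_1\big)_\theta$ (this step invokes Proposition~\ref{equi-Hp-hp-Lp}, a forward reference whose proof uses only Theorem~\ref{thm: interpolation} and is therefore not circular), bounds above via $\h_1\subset L_1(\N)$, and then finishes with duality and Wolff's theorem. Your approach trades this internal machinery for a heavier reliance on Mei's results: you need both endpoints $\big(L_\infty(\N),\mathcal H_1(\R,\M)\big)_{1/p}=L_p(\N)$ and $\big(\BMO(\R,\M),L_1(\N)\big)_{1/p}=L_p(\N)$, not just $\big(\BMO,\mathcal H_1\big)_{1/p}=L_p$, so make sure you cite the precise statements (they are in \cite{Mei2007}, with the martingale antecedent in \cite{Musat}). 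In exchange you avoid $\bmo_q$, duality arguments, Wolff, and the forward reference.

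Two minor points. First, you overwork the inclusion $\mathcal H_1^c\hookrightarrow\h_1^c$. Since $\mathcal H_1^c(\R,\M)\subset L_1(\N)$ (immediate from $L_\infty(\N)\subset\BMO^c$ and Mei's duality, and you already use it), the one-liner $\|\mathrm P*f\|_1\le\|\mathrm P\|_{L_1(\R)}\|f\|_1\lesssim\|f\|_{\mathcal H_1^c}$ does the job; this is exactly how the paper records it later (see \eqref{eq: inclu p<2}). Your atomic argument is fine but unnecessary. Second, in that atomic argument you write $\|a\|_{L_2(\N)}\le|Q|^{-1/2}$, but Mei's atom condition is $\tau\big(\int_Q|a|^2\big)^{1/2}\le|Q|^{-1/2}$, i.e.\ a bound in $L_1(\M;L_2^c(\R))$, not in $L_2(\N)$. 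Your near-part estimate still goes through because $\int_{\R}|\mathrm P*a|^2\le\int_{\R}|a|^2$ holds as an \emph{operator} inequality (via \eqref{eq: Planchel}) and square root is operator monotone; just phrase it that way.
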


\begin{proof}
By the same argument as in the proof of Theorem \ref{thm: interpolation}, we have the inclusion
$$\big(\bmo_q(\R,\M),  \bmo (\R,\M)\big)_\theta \subset \bmo_{q'}(\R,\M)\;\quad q' =\frac q \theta ,$$
 which ensures by duality that
 $$\big(\h_p(\R,\M),  \h_1 (\R,\M)\big)_\theta \supset \h_{p'} (\R,\M) = L_{p'} (\N) $$
 for $\frac{1}{p'} =\frac{1-\theta}{p} + \theta$. Then by Proposition \ref{equi-Hp-hp-Lp},
$$L_{p'} (\N )    \subset  \big(\h_p(\R,\M),  \h_1 (\R,\M)\big)_\theta = \big(L_p(\N),  \h_1 (\R,\M)\big)_\theta.
$$
Since $\h_1 (\R,\M)\subset L_1 (\N)$, then  $$
\big(\h_p(\R,\M),  \h_1 (\R,\M)\big)_\theta \subset \big(L_p(\N),  L_1 (\N)\big)_\theta = L_{p'} (\N). $$
Combining the estimates above, we have 
$$
\big(\h_p(\R,\M),  \h_1 (\R,\M)\big)_\theta=L_{p'} (\N).
$$
Again, using duality and Wolff's interpolation theorem, we conclude the proof by the same trick as in the proof of Theorem \ref{thm: interpolation}.
\end{proof}

We end this section by some  real interpolation results.

\begin{cor}\label{real-inter-hardy}
Let $1<p<\infty$. Then we have
\begin{enumerate}[\rm(1)]
\item $\big(\bmo^c(\R,\M), \h_1^c(\R,\M)\big)_{\frac 1 p , p} = \h _p^c(\R,\M)$ with equivalent norms.
\item $\big(X, Y \big)_{\frac 1 p , p} = L _p(\N)$ with equivalent norms, where $X = \bmo(\R,\M)$ or $L_\infty(\N)$, and $Y =\h_1(\R,\M)$ or $L_1(\N)$.
\end{enumerate}
\end{cor}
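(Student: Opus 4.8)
The plan is to obtain both statements from the complex interpolation theorems already established — Theorem~\ref{thm: interpolation} for the column spaces and Theorem~\ref{interp-mix} for the mixed ones — by running the reiteration theorem of the real interpolation method, combined with the retraction of Theorem~\ref{thm: bdd retraction} and the classical real interpolation identities for noncommutative $L_p$-spaces. The underlying principle is that a real interpolation space whose second index equals the target exponent can be recovered from a complex interpolation scale by reiterating through two auxiliary finite exponents, so that the delicate endpoints never have to be touched directly.

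For Part~(1) I would fix $1<p<\infty$, choose $1<p_1<p<p_0<\infty$ and $0<\eta<1$ with $\frac1p=\frac{1-\eta}{p_0}+\frac{\eta}{p_1}$, and set $\theta_i=1/p_i$, so that $0<\theta_0<\theta_1<1$ and $\frac1p=(1-\eta)\theta_0+\eta\theta_1$. Theorem~\ref{thm: interpolation} identifies $\h_{p_i}^c(\R,\M)$ with the complex interpolation space $\big(\bmo^c(\R,\M),\h_1^c(\R,\M)\big)_{\theta_i}$, where $\bmo^c$ plays the role of the $\theta=0$ endpoint and $\h_1^c$ that of the $\theta=1$ endpoint. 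Since any complex interpolation space $[A_0,A_1]_\theta$ satisfies $(A_0,A_1)_{\theta,1}\subset[A_0,A_1]_\theta\subset(A_0,A_1)_{\theta,\infty}$ (see \cite{BL1976}), it belongs to both the $\mathcal{J}$- and $\mathcal{K}$-classes of exponent $\theta$, so the reiteration theorem \cite[Theorem~3.5.3]{BL1976} applies and yields $\big(\h_{p_0}^c,\h_{p_1}^c\big)_{\eta,p}=\big(\bmo^c,\h_1^c\big)_{1/p,p}$. The remaining task is to compute the left-hand side directly: by Theorem~\ref{thm: bdd retraction} the $r$-independent maps $\fE,\fF$ realize $\h_r^c(\R,\M)$ as a complemented subspace (retract) of $E_r:=L_r\big(\N;L_2^c(\widetilde\Gamma)\big)\oplus_r L_r(\N)$ for every $1<r<\infty$; hence $\big(\h_{p_0}^c,\h_{p_1}^c\big)_{\eta,p}$ is a retract of $(E_{p_0},E_{p_1})_{\eta,p}$ through the same pair of maps, and since $L_2^c(\widetilde\Gamma)$ is a fixed Hilbert space (so each summand of $E_r$ is a complemented subspace of a genuine noncommutative $L_r$-space via an $r$-independent projection), classical real interpolation of noncommutative $L_p$-spaces gives $(E_{p_0},E_{p_1})_{\eta,p}=E_p$. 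The retract is therefore $\fE\fF(E_p)=\fE(\h_p^c)\cong\h_p^c(\R,\M)$, and chaining the two identities finishes Part~(1).

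Part~(2) follows by the same scheme, with Theorem~\ref{interp-mix} in place of Theorem~\ref{thm: bdd retraction}: for each admissible pair $(X,Y)$ one has $L_{p_i}(\N)=(X,Y)_{1/p_i}$ for the complex method, so exactly the reiteration argument above produces $\big(L_{p_0}(\N),L_{p_1}(\N)\big)_{\eta,p}=(X,Y)_{1/p,p}$; the left-hand side equals $L_p(\N)$ by classical real interpolation, whence $(X,Y)_{1/p,p}=L_p(\N)$. The point requiring care — and the reason the whole computation is routed through the intermediate exponents $p_0,p_1\in(1,\infty)$ rather than performed directly on the endpoint couples — is the same as in the proof of Theorem~\ref{thm: interpolation}: the endpoints $\bmo^c(\R,\M)$ and $\h_1^c(\R,\M)$ are not simultaneously retracts of the ambient $L_\infty$- and $L_1$-direct sums (the map $\fF$ is not bounded at the $L_1$ end), so the retraction argument is only available after one has moved into the reflexive range $1<p<\infty$. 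Verifying the $r$-independence of the retraction maps and of the projections defining $L_r\big(\N;L_2^c(\widetilde\Gamma)\big)$, together with the interpolation of the $\ell_r$-direct sum, is the only slightly delicate bookkeeping, and it is routine.
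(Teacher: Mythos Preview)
Your proposal is correct and follows essentially the same route as the paper: both reduce the real interpolation to a real interpolation between two intermediate column (or mixed) Hardy spaces via reiteration, then compute that intermediate real interpolation using the retraction onto the ambient $L_p$-direct sum. The only cosmetic difference is that the paper invokes \cite[Theorem~4.7.2]{BL1976} directly, whereas you unpack the same statement as \cite[Theorem~3.5.3]{BL1976} together with the inclusion $(A_0,A_1)_{\theta,1}\subset[A_0,A_1]_\theta\subset(A_0,A_1)_{\theta,\infty}$; these are equivalent formulations of the same reiteration step.
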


\begin{proof}
Both (1) and (2) follow from \cite[Theorem~4.7.2]{BL1976}; we only prove (1). Let $1<p_1 < p< p_2 <\infty$ with $\frac 1 p = \frac{1-\eta}{ p_1 }  +\frac{\eta}{p_2}$. By \cite[Theorem~4.7.2]{BL1976}, we write
\begin{equation}
\begin{split}
& \big(\bmo^c(\R,\M), \h_1^c(\R,\M)\big)_{\frac 1 p , p}\\
 &= \Big(\big(\bmo^c(\R,\M), \h_1^c(\R,\M)\big)_{\frac {1}{p_1}}, \big(\bmo^c(\R,\M), \h_1^c(\R,\M)\big)_{\frac {1}{p_2}}\Big)_{\eta, p}.
\end{split}
\end{equation}
Then the assertion (1) follows from Theorem \ref{interp-mix} and the facts that $\big(L_{p_1}(\N), L_{p_2}(\N)\big)_{\eta, p } =L_p(\N) $ and that $\h_p^c(\R,\M)$ is a complemented subspace of $L_{p}  \big(\mathcal{N};L_{2}^{c}(\widetilde{\Gamma}) \big)\oplus_p L_{p}  (\mathcal{N} )$.
\end{proof}

\section{Calder\'on-Zygmund theory}\label{section-CZ}

We introduce the Calder\'on-Zygmund theory on operator-valued local Hardy spaces in this section. It is closely related to the similar results of \cite{HLMP2014}, \cite{JMP2014}, \cite{Parcet} and \cite{XXY17}. The results in the following will be used in the next section to investigate various square functions that characterize local Hardy spaces.

Let $K$ be an $L_{1}  (\mathcal{M} )+\M)$-valued tempered distribution which coincides on $\mathbb{R}^{d}\setminus  \{ 0 \} $
with a locally integrable $L_{1}  (\mathcal{M} )+\mathcal{M}$-valued
function. We define the left singular integral operator $K^{c}$ associated to $K$ by
\[
K^{c}  (f )  (s )=\int_{\mathbb{R}^{d}}K  (s-t )f  (t )dt,
\]
 and the right singular integral operator $K^{r}$ associated to $K$ by
\[
K^{r}  (f )  (s )=\int_{\mathbb{R}^{d}}f  (t )K  (s-t )dt.
\]
Both   $K^{c}  (f )$ and $K^{r}  (f )$ are well-defined
for sufficiently nice functions $f$ with values in $L_{1}  (\mathcal{M} )\cap \M$,
for instance, for $f\in\mathcal{S}\otimes  (L_{1}  (\mathcal{M} )\cap \M )$.

Let ${\bmo}_{0}^{c}  (\mathbb{R}^{d},\mathcal{M} )$ denote the subspace of 
${\bmo}^{c}  (\mathbb{R}^{d},\mathcal{M} )$ consisting of compactly supported 
functions. The following lemma is an analogue of Lemma 2.1 in \cite{XXX17} for inhomogeneous spaces. Notice that the usual Calder\'on-Zygmund operators (the operators satisfying the condition (1) and (3) in the following lemma) are not necessarily bounded on $\h_1^c(\R,\M)$. Thus, we need to impose an extra decay at infinity on the kernel $K$.

\begin{lem}
\label{C-Z lem}Assume that
\begin{enumerate}[\rm(1)]
\item the Fourier transform of $K$ is
bounded: $\sup_{\xi\in\mathbb{R}^{d}}  \|  \widehat{K}  (\xi ) \|  _{\mathcal{M}}<\infty$;
\item $K$ satisfies the size estimate at infinity: there exist $C_1$ and $\rho >0$ such that 
\[
  \|  K  (s ) \|  _{\mathcal{M}}\leq \frac{C_1}{  |s |^{d+\rho}},\thinspace  \forall\, |s|\geq 1;
\]
\item $K$ has the Lipschitz regularity:
there exist $C_2$ and $\gamma >0$ such that
\[
  \|  K  (s-t )-K  (s ) \|  _{\mathcal{M}}\leq C_2\frac{  |t |^\gamma}{  |s-t |^{d+\gamma}},\thinspace  \forall\,  |s |>2  |t |.
\]
\end{enumerate}
Then $K^{c}$ is bounded on $\h_{p}^{c}  (\mathbb{R}^{d} ,\mathcal{M} )$ for $1\leq p<\infty$
and from ${\bmo}_{0}^{c}  (\mathbb{R}^{d},\mathcal{M} )$
to ${\bmo}^{c}  (\mathbb{R}^{d},\mathcal{M} )$.

A similar statement also holds for $K^{r}$ and the corresponding
row spaces.
 \end{lem}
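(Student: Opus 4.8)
The plan is to prove boundedness on $\h_p^c$ for the two "easy" endpoints $p=2$ and $p=\infty$ (the latter being the $\bmo_0^c\to\bmo^c$ statement), and then fill in $1\le p<2$ either by the duality of Theorem~\ref{thm: duality P} together with Theorem~\ref{thm: hq=bmoq}, or by real interpolation (Corollary~\ref{real-inter-hardy}) once the endpoints are in hand; the case $2<p<\infty$ follows from $\h_p^c=\bmo_p^c$ (Theorem~\ref{thm: hq=bmoq}) together with a $\bmo_q^c$-bound obtained by interpolating the $p=2$ and $p=\infty$ estimates. So the real work is the two endpoints.

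For $p=2$: since $\h_2^c(\R,\M)=L_2(\N)$ with equivalent norms by \eqref{rem: h2=L2}, boundedness of $K^c$ on $\h_2^c$ is just $L_2$-boundedness, which is immediate from hypothesis (1) and the operator-valued Plancherel formula \eqref{eq: Planchel}: $\|K^c(f)\|_2^2=\tau\int|\widehat K(\xi)\widehat f(\xi)|^2d\xi\le \sup_\xi\|\widehat K(\xi)\|_\M^2\,\|f\|_2^2$ (with the standard care that $\widehat K$ multiplies $\widehat f$ on the left, so one uses $|\widehat K\widehat f|^2\le \|\widehat K\|_\M^2|\widehat f|^2$ as operators).

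For the $\bmo_0^c\to\bmo^c$ estimate: given $f\in\bmo_0^c$ and a cube $Q$, one must control, when $|Q|<1$, $\frac1{|Q|}\int_Q|K^c(f)-(K^c(f))_Q|^2$ in $\M$, and when $|Q|\ge1$, $\frac1{|Q|}\int_Q|K^c(f)|^2$ in $\M$ (using Remark~\ref{|Q|=1 eq |Q|geq1} to allow all $|Q|\ge1$). This is the classical Calder\'on--Zygmund $T:BMO\to BMO$ argument adapted to the local setting. Split $f=f_1+f_2$ with $f_1=(f-f_{2Q})\mathbbm 1_{2Q}$ on $2Q$ and $f_2=(f-f_{2Q})\mathbbm 1_{(2Q)^c}$; the constant $f_{2Q}$ is killed on small cubes because $\widehat K$ is bounded (so $K^c$ of a local $L_2$ function is controlled) — for $f_1$ use the $L_2$-bound just proved and Proposition~\ref{BMO-leq-bmo}/the $\BMO$-type estimate $\int_{2Q}|f-f_{2Q}|^2\lesssim|Q|\,\|f\|_{\bmo^c}^2$; for $f_2$, subtract the value $K^c(f_2)(c_Q)$ and use the Lipschitz regularity (3) to get the standard telescoping sum over annuli $2^{k+1}Q\setminus 2^kQ$, exactly as in the proof of Lemma~\ref{lem: Carleson < bmo}. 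For the large cubes $|Q|\ge1$ one cannot subtract the mean; instead one uses the decay hypothesis (2): writing $K^c(f)(s)=\int K(s-t)f(t)\,dt$ and splitting the integral into the part with $|s-t|\le1$ (handled by (1) and a Cauchy--Schwarz/covering-by-unit-cubes argument as in Proposition~\ref{Prop: bmo L} and the last part of Lemma~\ref{lem: Carleson < bmo}) and the part $|s-t|\ge1$ (handled by (2), since $\sum_m(1+|m|)^{-(d+\rho)}<\infty$ gives $\|\int_{|s-t|\ge1}K(s-t)f(t)dt\|_\M\lesssim\|f\|_{\bmo^c}$ via \eqref{eq: 2-1}), one bounds $\|K^c(f)\|_{L_\infty(\N)}\lesssim\|f\|_{\bmo^c}$, which dominates the $|Q|\ge1$ term.

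The main obstacle is the large-cube ($|Q|\ge1$) estimate: this is precisely the feature absent from the homogeneous theory of \cite{XXX17}, and it is where hypothesis (2) — the extra decay at infinity — is genuinely needed. One must be careful that $f\in\bmo_0^c$ is only locally in $L_2$ and a priori only in $L_\infty(\M;\mathrm R_d^c)$ globally, so the convolution $K^c(f)$ must be controlled through the weighted Hilbert-space structure of $\mathrm R_d$, combining the near-diagonal part (boundedness of $\widehat K$, plus the unit-cube covering of $\mathbb R^d$) with the far part (decay $|s|^{-d-\rho}$) to land in $L_\infty(\N)$, which then trivially controls the second supremum in \eqref{eq: def bmo}. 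Once both endpoints are established, the remaining $p$'s are routine via the duality and interpolation results already proved in Sections~\ref{section-dual} and \ref{section-interp}; the row case follows by taking adjoints.
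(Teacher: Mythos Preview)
Your overall strategy (prove the two endpoints $p=2$ and $\bmo_0^c\to\bmo^c$, then fill in by duality and interpolation) matches the paper's, and the $p=2$ step is fine. But there is a genuine gap in your $\bmo_0^c\to\bmo^c$ argument for large cubes.

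You claim to obtain a \emph{pointwise} bound $\|K^c(f)\|_{L_\infty(\N)}\lesssim\|f\|_{\bmo^c}$ by splitting the convolution at $|s-t|=1$ and handling the near part ``by (1) and a Cauchy--Schwarz/covering-by-unit-cubes argument.'' This fails: hypothesis~(1) only says $\widehat K\in L_\infty$, which gives $L_2$-boundedness of $K^c$ but gives no pointwise control on $K$ near the origin. A generic Calder\'on--Zygmund kernel behaves like $|s|^{-d}$ there, so $\int_{|t|\le1}|K(t)|^2\,dt=\infty$ and the Cauchy--Schwarz step you invoke (modelled on the end of Lemma~\ref{lem: Carleson < bmo}, where the \emph{integrable} Poisson kernel appears) is simply unavailable. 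The paper does not attempt an $L_\infty$ bound; for a unit cube $Q$ it splits $f=f\mathbbm 1_{2Q}+f\mathbbm 1_{(2Q)^c}$, controls $\big\|\int_Q|K^c(f\mathbbm 1_{2Q})|^2\big\|_{\M}$ by Plancherel and hypothesis~(1), and only for the far piece $f\mathbbm 1_{(2Q)^c}$ obtains a pointwise bound via hypothesis~(2). That $L_2$-average estimate is exactly what the $\bmo^c$ norm requires, and nothing stronger is true in general.

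A secondary issue: in your small-cube decomposition you write $f=f_1+f_2$ with $f_1=(f-f_{2Q})\mathbbm 1_{2Q}$ and $f_2=(f-f_{2Q})\mathbbm 1_{(2Q)^c}$, but these sum to $f-f_{2Q}$, not $f$. The missing constant piece $f_{2Q}$ cannot be dismissed by ``$\widehat K$ bounded'': $K^c$ applied to a constant is not a priori defined, and even once defined need not have zero oscillation. The paper deals with this by first assuming $K^c(\mathbbm 1_{\R})=0$ (so the small-cube estimate reduces to the known $\BMO^c$-boundedness from \cite{XXX17}), and then removes that assumption via a careful limiting extension of $K^c$ to $L_\infty(\N)$ in the spirit of \cite[Proposition~II.5.15]{C-F}, showing that $K^c(\mathbbm 1_{\R})$ is a well-defined element of $\bmo^c$. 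Your sketch omits this step entirely.
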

\begin{proof}
First suppose that $K^c$ maps constant functions to zero. This amounts to requiring that $K^c(\mathbbm{1}_{\R})=0$. Let $Q\subset \R$ be a cube with $|Q|<1$. 
Since the assumption of Lemma 2.1 in \cite{XXX17} are included in the ones of this lemma, we get
\[
  \Big\|  \big(\frac{1}{  |Q |}\int_{Q}  |K^{c}  (f )-K^{c}  (f )_{Q} |^{2}dt \big)^{\frac{1}{2}} \Big\|  _{\mathcal{M}}  \lesssim \|f\|_{\BMO^c} \lesssim \|f\|_{\bmo^c} .
\]
Now let us focus on the cubes with side length $1$. Let $Q$ be a cube with $  |Q |=1$ and $\widetilde{Q}=2Q$ be the cube concentric with $Q$ and with
side length $2$. Decompose $f$ as $f=f_{1}+f_{2}$, where
$f_{1}=\mathbbm{1}_{\widetilde{Q}}f$ and $f_{2}=\mathbbm{1}_{\mathbb{R}^{d}\setminus\widetilde{Q}}f$.
Then $K^{c}  (f )=K^{c}  (f_{1} )+K^{c}  (f_{2} )$. We have
\begin{equation*}
  \Big\|  \frac{1}{  |Q  |}\int_{Q }  |K^{c}  (f ) |^{2}ds \Big\|  _{\mathcal{M}}  \lesssim  \Big\|  \frac{1}{  |Q  |}\int_{Q }  |K^{c}  (f_{1} ) |^{2}ds \Big\|  _{\mathcal{M}}+  \Big\|  \frac{1}{  |Q |}\int_{Q}  |K^{c}  (f_{2} ) |^{2}ds \Big\|  _{\mathcal{M}}.
\end{equation*}
The first term is easy to estimate. By assumption (1) and \eqref{eq: 2-1}, 
\begin{equation*}
\begin{split}
  \Big\|  \frac{1}{  |Q |}\int_{Q}  |K^{c}  (f_{1} ) |^{2}ds \Big\|  _{\mathcal{M}} & \leq \Big\|  \frac{1}{  | Q  |}\int_{\mathbb{R}^{d}}  |\widehat{K}  (\xi )\widehat{f_{1}}  (\xi ) |^{2}d\xi\Big \|  _{\mathcal{M}}\\
 &   \lesssim  \Big \|  \frac{1}{  | Q  |}\int_{\mathbb{R}^{d}}  |\widehat{f_{1}}  (\xi ) |^{2}d\xi \Big\|  _{\mathcal{M}}\\
  &= \Big\|  \frac{1}{  | Q  |}\int_{\widetilde{Q}}  |f  (s) |^{2}ds \Big\|  _{\mathcal{M}}\\
 &  \lesssim \sup_{  |Q |= 1}  \Big\|  \frac{1}{  |Q |}\int_{Q}  |f  (s) |^{2}ds \Big\|  _{\mathcal{M}}.
\end{split}
\end{equation*}
To estimate the second term, using assumption (2) and \eqref{eq: 2-1} again, we have
\begin{equation*}
\begin{split}
  |K^{c}  (f_{2} )  (s ) |^{2}  & = \Big|\int_{\mathbb{R}^{d}}K  (s-t )f_{2}  (t )dt \Big|^{2}= \Big|\int_{\mathbb{R}^{d}\setminus\widetilde{Q}}K  (s-t )f  (t )dt \Big|^{2}\\
 &  \leq \int_{\mathbb{R}^{d}\setminus\widetilde{Q}}  \|  K  (s-t ) \|  _{\mathcal{M}}dt\cdot\int_{\mathbb{R}^{d}\setminus\widetilde{Q}}  \|  K  (s-t ) \|  _{\mathcal{M}}^{-1}  |K  (s-t )f  (t ) |^{2}dt\\
 &  \lesssim  \int_{\mathbb{R}^{d}\setminus\widetilde{Q}}  \|  K  (s-t ) \|  _{\mathcal{M}}  |f  (t ) |^{2}dt\\
&  \lesssim \int_{\mathbb{R}^{d}\setminus\widetilde{Q}}\frac{1}{  |s-t |^{d+\rho}}  |f  (t ) |^{2}dt.
 \end{split}
\end{equation*}
Set $\widetilde{Q}_m = \widetilde{Q}+2m$ for every $m\in \mathbb{Z}^d$. Then $\mathbb{R}^{d}\setminus\widetilde{Q}=\cup_{m\neq 0}\widetilde{Q}_m $. Continuing the estimate of $  |K^{c}  (f_{2} )  (s ) |^{2}$,  for
any $s\in  Q $, we have
 \begin{equation*}
 \begin{split}
  |K^{c}  (f_{2} )  (s ) |^{2} &  \leq  \sum_{m\neq 0}\int_{\widetilde{Q}_m}\frac{1}{  |s-t |^{d+\rho}}  |f  (t ) |^{2}dt\\
 &   \approx  \sum_{m\neq 0}\frac{1}{  |m |^{d+\rho}}\int_{\widetilde{Q}_m}  |f  (t ) |^{2}dt\lesssim \|f\|_{\bmo^c}.
\end{split}
\end{equation*}
Combining the previous estimates, we deduce that $K^{c}$ is bounded from ${\bmo}_{0}^{c}  (\mathbb{R}^{d},\mathcal{M} )$
to ${\bmo}^{c}  (\mathbb{R}^{d},\mathcal{M} )$.

Now we illustrate that the additional requirement $K^c(\mathbbm{1}_{\R})=0$ is not needed. First, a similar argument as above ensures that for every compactly supported $f\in L_\infty(\N)$, $\|K^c(f)\|_{\bmo^c} \lesssim \|f\|_\infty$. Then we follow the argument of \cite[Proposition~II.5.15]{C-F} to extend $K^c$ on the whole $ L_\infty (\N)$, as
$$K^c(f)(s) = \lim_j \big[K^c(f\mathbbm{1}_{B_j})(s) -\int_{1<|t|\leq j} K(-t) f(t) dt \big],\quad   \forall\, s\in \R,$$
where $B_j$ is the ball centered at the origin with radius $j$.
Let us show that the sequence on the right hand side converges pointwise in the norm $\|\cdot\|_{\M}$ and uniformly on any compact set $\Omega \subset \R$. To this end, we denote
by $g_j$ the $j$-th term of this sequence. Let $l$ be the first natural number such that $l \geq 2\sup _{s\in \Omega} |s|$. Then for $s\in \Omega$ and $j> l$, we have
$$g_j(s)=g_l(s) +\int_{ l<|t| \leq j} \big(K(s-t) -K(-t)\big) f(t) dt . $$
By assumption (3), the integral on the right hand side is bounded by a bounded multiple of $\|f\|_\infty$, uniformly on $s\in \Omega$. This ensures the convergence of $g_j$, so $K^c(f)$ is a well-defined function. Now we have to estimate the $\bmo^c$-norm of $K^c(f)$. Taking any cube $Q\subset \R$, by the uniform convergence of $g_j$ on $Q$ in $\M$, we have
\begin{equation*}
\big\| \big(\int_Q | K^c(f)(s) - (K^c(f))_Q   |^2 ds\big)^{\frac 1 2 } \big\|_{\M}  = \lim_j\big\| \big(\int_Q   | g_j(s) - (g_j)_Q   |^2 ds \big)^{\frac 1 2 } \big\|_{\M}.
\end{equation*}
Similarly,
$$
\big\| \big(\int_Q | K^c(f)(s)    |^2 ds\big)^{\frac 1 2 } \big\|_{\M} =  \lim_j\big\| \big(\int_Q   | g_j(s)    |^2 ds \big)^{\frac 1 2 } \big\|_{\M} .$$
Hence,  by the fact that $g _j$ and $K^c(f\mathbbm{1}_{B_j})$ differ by a constant, we obtain
$$ \|K^c(f)\|_{\bmo^c} =\lim_j \|g_j\|_{\bmo^c}  \lesssim\limsup_j \|K^c(f\mathbbm{1}_{B_j})\|_{\bmo^c} +\|f\|_\infty \lesssim \|f\|_\infty.$$
Therefore, $K^c$ defined above extends to a bounded operator from $L_\infty(\N) $ to $\bmo^c(\R,\M)$. In particular, $K^c(\mathbbm{1}_{\R})$ determines a function in $\bmo^c(\R,\M)$. Then for $f$ and $Q$ as  above, we have $K^c(f)=K^c(f_1)+K^c(f_2)+K^c(\mathbbm{1}_{\R})f_{\widetilde Q}$, so
 \begin{equation*}\begin{split}
 \|K^c(f)\|_{\bmo^c}
 &\leq \|K^c(f_1)\|_{\bmo^c}+\|K^c(f_2)\|_{\bmo^c}+\|K^c(\mathbbm{1}_{\R})\|_{\bmo^c}\,\|f_{\widetilde Q}\|_{\M}\\
 &\lesssim  \|f\|_{\bmo^c}+\|f_{\widetilde Q}\|_{\M}\lesssim \|f\|_{\bmo^c}\,.
   \end{split}
   \end{equation*}
 Thus we have proved the $\bmo^c$-boundedness of $K^c$ in the general case.

By duality, the boundedness of $K^c$ on $\h_1^c(\R,\M)$ is equivalent to that of its adjoint map $(K^c)^*$ on $\bmo^c_0(\R,\M)$. It is easy to see that $(K^{c})^*$ is also a singular integral operator:
\[
  (K^{c} )^{*}  (g )=\int_{\mathbb{R}^{d}}\widetilde{K}  (s-t )g  (t )dt,
\]
 where $\widetilde{K}  (s )=K^{*}  (-s )$. Obviously,
$\widetilde{K}$ also satisfies the same assumption as $K$, so $  (K^{c} )^{*}$
is bounded on ${\bmo}_0^{c}  (\mathbb{R}^{d},\mathcal{M} )$.
Thus we get the boundedness of $K^{c}$ on $\h_{1}^{c}  (\mathbb{R}^{d},\mathcal{M} )$. Then, by the interpolation between $\h_{1}^{c}  (\mathbb{R}^{d},\mathcal{M} )$ and ${\bmo}^{c}  (\mathbb{R}^{d},\mathcal{M} )$ in Theorem \ref{thm: interpolation}, we get the boundedness of $K^c$ on $\h_p^c(\R,\M)$ for $1<p<\infty$. The assertion is proved.
\end{proof}

\begin{rmk}
Under the assumption of the above lemma,  $K^c(\mathbbm{1}_{\R})$ is a constant, so it is the zero element  in $\BMO^c(\R,\M)$.

\end{rmk}

A special case of Lemma \ref{C-Z lem} concerns the Hilbert-valued
kernel $K$. Let $H$ be a Hilbert space and $\mathsf{k}:\mathbb{R}^{d}\rightarrow H$
be a $H$-valued kernel. We view the Hilbert space as the column matrices
in $B  (H )$ with respect to a fixed orthonormal basis. Put $K  (s )=\fk  (s )\otimes1_{\mathcal{M}}\in B  (H )\overline{\otimes}\mathcal{M}$.
For nice functions $f:\mathbb{R}^{d}\rightarrow L_{1}  (\mathcal{M} )+\mathcal{M} $,
$K^{c}  (f )$ takes values in the column subspace of $L_{1}  (B  (H )\overline{\otimes}\mathcal{M} )+L_{\infty}  (B  (H )\overline{\otimes}\mathcal{M} )$.
Consequently,
\[
  \|  K^{c}  (f ) \|  _{L_{p}  (B  (H )\overline{\otimes}\mathcal{N} )}=  \|  K^{c}  (f ) \|  _{L_{p}  (\mathcal{N};H^{c} )}.
\]
Since $\fk  (s )\otimes1_{\mathcal{M}}$ commutes with $\mathcal{M}$,
$K^{c}  (f )=K^{r}  (f )$ for $f\in L_{2}  (\mathcal{N} )$.
Let us denote this common operator by $\fk^{c}$. Here the superscript
$c$ refers to the previous convention that $H$ is identified with
the column matrices in $B  (H )$. Thus, Lemma \ref{C-Z lem}
implies the following
\begin{cor}\label{Cor C-Z}
Assume that
\begin{enumerate}[\rm(1)]
\item $\sup_{\xi\in\mathbb{R}^{d}}  \|  \widehat{\fk}  (\xi ) \|  _{H}<\infty$;
\item $  \|  \fk  (s ) \|  _{H}\lesssim\frac{1}{  |s |^{d+\rho}}$,
$\thinspace  \forall\, |s|\geq 1$,
for some $\rho>0$;
\item $  \|  \fk  (s-t )- \fk  (s ) \|  _{H}\lesssim\frac{  |t |^\gamma}{  |s-t |^{d+\gamma}}$,
$\thinspace  \forall\,  |s |>2  |t |$, for some $\gamma>0$.
\end{enumerate}
Then the operator $\fk^{c}$ is bounded
\begin{enumerate}[\rm(1)]
\item from ${\bmo}_{0}^{\alpha}  (\mathbb{R}^{d},\mathcal{M} )$
to ${\bmo}^{\alpha}  (\mathbb{R}^{d},B  (H )\overline{\otimes}\mathcal{M} )$,
where $\alpha=c$, $\alpha=r$ or we leave out $\alpha$;
\item and from $\h_{p}^{c}  (\mathbb{R}^{d},\mathcal{M} )$
to $\h_{p}^{c}  (\mathbb{R}^{d},B  (H )\overline{\otimes}\mathcal{M} )$ for $1\leq p<\infty$.
\end{enumerate}
\end{cor}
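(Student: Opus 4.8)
The plan is to deduce the corollary from Lemma~\ref{C-Z lem}, applied not over $\M$ but over the larger von Neumann algebra $\widetilde\M:=B(H)\overline{\otimes}\M$ with the tensor trace. Put $K(s)=\fk(s)\otimes 1_\M$; since $\fk$ is a locally integrable $H$-valued function on $\R\setminus\{0\}$, $K$ is an $L_1(\widetilde\M)+\widetilde\M$-valued tempered distribution coinciding on $\R\setminus\{0\}$ with a locally integrable function. Identifying, as in the discussion preceding the corollary, $H$ with the column matrices of $B(H)$, the operator norm of a vector $h\in H$ equals $\|h\|_H$, and since $\|a\otimes 1_\M\|_{\widetilde\M}=\|a\|_{B(H)}$ we get $\|K(s)\|_{\widetilde\M}=\|\fk(s)\|_H$, $\|\widehat K(\xi)\|_{\widetilde\M}=\|\widehat\fk(\xi)\|_H$ and $\|K(s-t)-K(s)\|_{\widetilde\M}=\|\fk(s-t)-\fk(s)\|_H$. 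Thus conditions (1)--(3) on $\fk$ are precisely conditions (1)--(3) of Lemma~\ref{C-Z lem} for $K$ over $\widetilde\M$. As the proof of that lemma (and of the duality, interpolation and Carleson-measure results it rests on) uses only general properties of noncommutative $L_p$-spaces and of local Hardy/bmo spaces over an arbitrary von Neumann algebra, it applies verbatim with $\M$ replaced by $\widetilde\M$: hence $K^c$ is bounded on $\h_p^c(\R,\widetilde\M)$ for $1\le p<\infty$ and from $\bmo_0^c(\R,\widetilde\M)$ to $\bmo^c(\R,\widetilde\M)$, and $K^r$ is bounded on the corresponding row spaces.

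The second step is to recognize $\fk^c$ as the restriction of $K^c$ to the column part. Let $p_v\in B(H)$ be the rank-one projection compatible with the column-matrix identification, so that $\fk(s)p_v=\fk(s)$ and therefore $K(s)(p_v\otimes 1_\M)=K(s)$. For a function $f$ valued in $L_1(\M)+\M$, the function $p_v\otimes f$ is column-valued and
\[
K^c(p_v\otimes f)(s)=\int_{\R}\bigl(\fk(s-t)p_v\bigr)\otimes f(t)\,dt=\int_{\R}\fk(s-t)\otimes f(t)\,dt=\fk^c(f)(s).
\]
Moreover $f\mapsto p_v\otimes f$ is an isometry of $\h_p^c(\R,\M)$ into $\h_p^c(\R,\widetilde\M)$ and of $\bmo_0^c(\R,\M)$ into $\bmo_0^c(\R,\widetilde\M)$: the Poisson integral, the truncated square functions $s^c$ and $g^c$, and the averages $f_Q$ all tensorize with $p_v$, while $\|p_v\otimes x\|_{L_p}=\|x\|_{L_p}$ and $\|p_v\otimes y\|_{\widetilde\M}=\|y\|_\M$ because $\tau_{B(H)}(p_v)=1$. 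Composing this isometry with the boundedness of $K^c$ over $\widetilde\M$ from the first step gives the boundedness of $\fk^c$ from $\h_p^c(\R,\M)$ to $\h_p^c(\R,\widetilde\M)$ for $1\le p<\infty$ and from $\bmo_0^c(\R,\M)$ to $\bmo^c(\R,\widetilde\M)$, which is the column part of the statement.

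For the row case, recall that $\fk(s)\otimes 1_\M$ commutes with $1_{B(H)}\otimes\M$, so $\fk^c=\fk^r$ on $L_2(\N)$; the $\alpha=r$ assertion is then the row half of the preceding argument (via the row part of Lemma~\ref{C-Z lem} over $\widetilde\M$), and the assertion with $\alpha$ omitted follows by intersection from $\bmo=\bmo^c\cap\bmo^r$. I do not anticipate a genuine obstacle: the only two points requiring attention are the legitimacy of running Lemma~\ref{C-Z lem} over $B(H)\overline{\otimes}\M$ rather than $\M$ --- immediate, since that proof is purely ``soft'' and uses no special feature of $\M$ --- and keeping the column/row bookkeeping honest when identifying $\fk^c$ with the compression of $K^c$, which is already prepared in the paragraph preceding the corollary.
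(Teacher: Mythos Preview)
Your argument is correct and, for part~(1), coincides with the paper's: both apply Lemma~\ref{C-Z lem} over $\widetilde\M=B(H)\overline{\otimes}\M$ and restrict to $\M$-valued inputs. For part~(2) you take a genuinely more direct route. You use the full $\h_p^c$-conclusion of Lemma~\ref{C-Z lem} over $\widetilde\M$ together with the isometric embedding $f\mapsto p_v\otimes f$ of $\h_p^c(\R,\M)$ into $\h_p^c(\R,\widetilde\M)$ and the identity $K^c(p_v\otimes f)=\fk^c(f)$. The paper instead argues indirectly: it applies Lemma~\ref{C-Z lem} to the adjoint kernel $\widetilde\fk(s)=\fk(-s)^*$, obtains that $(\fk^c)^*$ is bounded from the column subspace $\bmo^c(\R,H^c\overline{\otimes}\M)$ to $\bmo^c(\R,\M)$, invokes the $\h_1^c$--$\bmo^c$ duality to get $\fk^c:\h_1^c(\R,\M)\to\h_1^c(\R,H^c\overline{\otimes}\M)$, and then interpolates with the $\bmo^c$ endpoint from part~(1). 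Your approach avoids the duality/interpolation detour entirely; the paper's, on the other hand, shows as a byproduct that $\fk^c$ actually lands in the column subspace $\h_p^c(\R,H^c\overline{\otimes}\M)$ rather than merely in $\h_p^c(\R,\widetilde\M)$.

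One small bookkeeping point in your row case: the embedding $p_v\otimes f$ does \emph{not} intertwine $K^r$ with $\fk^c$, since $p_v\fk(s)\neq\fk(s)$ in general. What makes the row half work is the embedding $f\mapsto 1_{B(H)}\otimes f$ (valid for $\bmo$ because only the operator norm is involved), for which $K^c(1_{B(H)}\otimes f)=K^r(1_{B(H)}\otimes f)=\fk^c(f)$ by the commutation you already noted. This is presumably what you meant when you invoked the commutation with $1_{B(H)}\otimes\M$; it would be worth making the choice of embedding explicit there.
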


\begin{proof}
Since $K^c(f)=K^r(f)$ on the subspace $L_p(\N)\subset L_p(B(H)\overline{\otimes}\N)$, (1) follows immediately from Lemma \ref{C-Z lem}. Denote the column subspace of $\bmo^c(\R,B(H)\overline{\otimes}\M)$ (resp. $\h_{p}^{c}  (\mathbb{R}^{d},B(H)\overline{\otimes}\mathcal{M} )$) by $\bmo^c(\R, H^c \overline{\otimes}\M)$ (resp. $\h_{p}^{c}  (\mathbb{R}^{d},H^c\overline{\otimes}\mathcal{M} )$). Consider the adjoint operator of $\fk^c$ which is denoted by $(\fk^c)^*$. It admits the convolution kernel $\widetilde{K}(s)=\widetilde{\fk}(s) \otimes 1_\M$, where $\widetilde{\fk}(s)=\fk(-s)^*$ (so it is a row matrix). Applying Lemma \ref{C-Z lem} to $(\fk^c)^*$, we get that $(\fk^c)^*$ is bounded from $\bmo^c(\R, H^c \overline{\otimes}\M)$ to $\bmo^c(\R, \M)$. Then $\fk^c$ is bounded from $\h_1^c(\R, \M)$ to $\h_1^c(\R, H^c \overline{\otimes}\M)$, and thus bounded from $\h_1^c(\R, \M)$ into $\h_1^c(\R, B(H) \overline{\otimes}\M)$. Interpolating this with the boundedness of $\fk^c$ from ${\bmo}_{0}^{c}  (\mathbb{R}^{d},\mathcal{M} )$
to ${\bmo}^{c}  (\mathbb{R}^{d},B  (H )\overline{\otimes}\mathcal{M} )$, we deduce the desired assertion in (2).

\end{proof}

\begin{rmk}\label{rem: L_p H h_p}
Let $1\leq p\leq 2$. Since $L_\infty (\N)  \subseteq \bmo^c(\R,\M) $, we get $\h_1^c(\R,\M)\subseteq L_1(\N)$. By Theorem \ref{thm: interpolation} and the fact that $\h_2^c(\R,\M) = L_2(\N) $, we have $\h_{p}^{c}  (\mathbb{R}^{d},\mathcal{M} )\subseteq L_{p}  (\mathcal{N} )$. Then Corollary \ref{Cor C-Z} ensures that
\[
  \|  \fk^{c}  (f ) \|  _{L_{p}  (\mathcal{N};H^{c} )}\lesssim  \|  \fk^{c}  (f ) \|  _{\h_{p}^{c}  (\mathbb{R}^{d},B  (H )\overline{\otimes}\mathcal{M} )}\lesssim  \|  f \|  _{\h_{p}^{c}  (\mathbb{R}^{d},\mathcal{M} )}
\]
for any $f\in \h_{p}^{c}  (\mathbb{R}^{d},\mathcal{M} )$.
\end{rmk}

\section{General characterizations}\label{section-general charact}

Applying the operator-valued Calder\'on-Zygmund theory developed in the last section, we will show  that the Poisson kernel in the square functions which are used to define $\h_p^c(\R,\M)$ can be replaced by any reasonable test function. As an application, we are able to compare the operator-valued  local Hardy spaces $\h_p^c(\R,\M)$ defined in this paper with the operator-valued  Hardy spaces $\mathcal{H}_p^c(\R,\M)$ in \cite{Mei2007}. We will use multi-index notation. For $m=  (m_{1},\cdots,m_{d} )\in\mathbb{N}_{0}^{d}$ and $s=  (s_{1},\cdots,s_{d} )\in\mathbb{R}^{d}$,
we set $s^{m}=s_1^{m_{1}}\cdots s_d^{m_{d}}$. Let $  |m |_{1}=m_{1}+\cdots+m_{d}$
and $D^{m}=\frac{\partial^{m_{1}}}{\partial s_{1}^{m_{1}}}\cdots\frac{\partial^{m_{d}}}{\partial s_{d}^{m_{d}}}$.

\subsection{General characterizations} 
Let $\Phi$ be a complex-valued infinitely differentiable function defined on $\R\backslash\lbrace 0\rbrace$. Recall that $\widetilde{\Gamma}= \{ (t,\varepsilon )\in\mathbb{R}_{+}^{d+1}:  |t |<\varepsilon<1 \} $ and $\Phi_\e(s) =\e^{-d}  \Phi(\frac{s}{\e})$.
 For any $f\in L_{1}  (\mathcal{M};\mathrm R_{d}^{c} )+L_{\infty}  (\mathcal{M};\mathrm R_{d}^{c} )$,
we define the local versions of the conic and radial square functions
of $f$ associated to $\Phi$ by
\begin{equation*}
\begin{split}
s_{\Phi}^{c}  (f )  (s )  &=  \Big(\iint_{\widetilde{\Gamma}}  |\Phi_{\varepsilon}*f  (s+t ) |^{2}\frac{dtd\varepsilon}{\varepsilon^{d+1}} \Big)^{\frac{1}{2}},\thinspace s\in\mathbb{R}^{d},\\
g_{\Phi}^{c}  (f )  (s )& =   \Big(\int_{0}^{1}  |\Phi_{\varepsilon}*f  (s ) |^{2}\frac{d\varepsilon}{\varepsilon} \Big)^{\frac{1}{2}},\thinspace s\in\mathbb{R}^{d}.
\end{split}
\end{equation*}
The function $\Phi$ that we use to characterize the operator-valued local Hardy spaces satisfies the following conditions:
\begin{enumerate}[\rm(1)]
\item Every $D^m \Phi$  with $0\leq |m|_1 \leq d$ makes $f\mapsto s_{D^m \Phi}^cf$ and $f\mapsto g_{D^m \Phi}^cf$  Calder\'on-Zygmund singular integral operators in Corollary \ref{Cor C-Z};
\item There exist functions $\Psi, \psi$ and $\phi$ such that 
\begin{equation}
\widehat{\phi}  (\xi )\overline{\widehat{\psi}(\xi)}+\int_{0}^{1}\widehat{\Phi}  (\varepsilon\xi )\overline{\widehat{\Psi}  (\varepsilon\xi )}\frac{d\varepsilon}{\varepsilon}=1, \quad   \forall\, \xi\in \R;  \label{eq: basic2}
\end{equation}
\item The above $\Psi$ and $\psi$ make $d\mu_g=  |\Psi_\e*g(s)|^2\frac{d\e ds}{\e}$ and $\phi*g$ satisfy: 
\[
 \max\Big\{\big\|\underset{\substack{s\in Q\subset \R\\ |Q|<1}}{\sup{} ^+} \frac{1}{|Q|}\int_{T(Q)}
 d\mu _g \big\|_{\frac{q}{2}}^{\frac{1}{2}},\,  \|  \psi*g \|  _{q}\Big\} \lesssim  \|  g \|  _{{\bmo}_q^{c}} \quad \text{for} \,\, q> 2;
\]
\item The above $\phi$ makes $f\mapsto \phi*f$  a Calder\'on-Zygmund singular integral operator in Corollary \ref{Cor C-Z}.
\end{enumerate}

Fix the four  functions $\Phi, \Psi, \phi, \psi$ as above. The following is one of our main results in this section, which states that the functions $\Phi, \phi$ satisfying the above four conditions give a general characterization for $\h_p^c(\R,\M)$.

\begin{thm}\label{thm main1}
Let $1\leq p<\infty$ and $\phi$, $\Phi$ be as above. For any $f\in L_{1}  (\mathcal{M};\mathrm R_{d}^{c} )+L_{\infty}  (\mathcal{M};\mathrm R_{d}^{c} )$, $f\in \h_{p}^{c}  (\mathbb{R}^{d},\mathcal{M} )$  if and only if
$s_{\Phi}^{c}  (f )\in L_{p}  (\mathcal{N} )$ and
$\phi*f\in L_{p}  (\mathcal{N} )$  if and only if $g_{\Phi}^{c}  (f )\in L_{p}  (\mathcal{N} )$
and $\phi*f\in L_{p}  (\mathcal{N} )$. If this is the
case, then
\begin{equation}
 \|  f \|  _{\h_{p}^{c}}\approx  \|  s_{\Phi}^{c}  (f )  \|  _{p}+  \|  \phi*f \|  _{p}\approx  \|  g_{\Phi}^{c}  (f )  \|  _{p}+  \|  \phi*f \|  _{p} \label{eq: main}
\end{equation}
 with the relevant constants depending only on $d, p$, and the pairs  $(\Phi,\Psi)$ and $  (\phi, \psi)$.
\end{thm}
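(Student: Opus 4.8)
The plan is to mimic the strategy of the non-local case in \cite{XXX17}, reducing everything to the $\ell_2$-valued Calder\'on--Zygmund theory of Corollary \ref{Cor C-Z} together with the reproducing formula \eqref{eq: basic2}. First I would observe that $g_\Phi^c(f)(s) = \|\fk^c_1(f)(s)\|_{H_1}$ and $s_\Phi^c(f)(s) = \|\fk^c_2(f)(s)\|_{H_2}$ for suitable Hilbert spaces $H_1 = L_2((0,1),\frac{d\e}{\e})$ and $H_2 = L_2(\widetilde\Gamma,\frac{dtd\e}{\e^{d+1}})$ and kernels $\fk_i$ built from $\Phi$; similarly $\phi*f = \phi^c(f)$. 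By hypothesis (1), each of these is a Calder\'on--Zygmund operator in the sense of Corollary \ref{Cor C-Z}, so
$$\|s_\Phi^c(f)\|_p + \|\phi*f\|_p \lesssim \|f\|_{\h_p^c}, \qquad \|g_\Phi^c(f)\|_p + \|\phi*f\|_p \lesssim \|f\|_{\h_p^c}$$
for $1\le p<\infty$ and every $f\in\h_p^c(\R,\M)$. (Here one uses that $g^c$ itself, the square function with the Poisson kernel, defines an equivalent norm together with $\|\mathrm P*f\|_p$, as announced after the definition of $g^c$ and to be proved via Lemma \ref{lem: s equi g}; this is the ``easy'' half.)

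The substance is the reverse inequality $\|f\|_{\h_p^c} \lesssim \|s_\Phi^c(f)\|_p + \|\phi*f\|_p$ (and likewise with $g_\Phi^c$). The idea is to run the duality argument. For $1<p<\infty$, pair $f$ against $g\in L_q(\N)$ (or, for $p=1$, against $g\in\bmo^c(\R,\M)$), and use the reproducing identity \eqref{eq: basic2}: writing $\xi\mapsto 1$ as $\widehat\phi\,\overline{\widehat\psi} + \int_0^1\widehat\Phi(\e\xi)\overline{\widehat\Psi(\e\xi)}\frac{d\e}{\e}$ and applying the operator-valued Plancherel formula \eqref{eq: Planchel-1}, one gets
$$\tau\int_\R f(s)g^*(s)\,ds = \tau\int_\R \phi*f(s)\,(\psi*g(s))^*\,ds + \tau\int_\R\int_0^1 \Phi_\e*f(s)\,(\Psi_\e*g(s))^*\,\frac{d\e}{\e}\,ds.$$
The first term is bounded by $\|\phi*f\|_p\,\|\psi*g\|_q$, and by hypothesis (3) (with the fact $L_q(\N)\subset\bmo_q^c$ and Corollary \ref{cor: Carleson=Dbmo}) one has $\|\psi*g\|_q \lesssim \|g\|_{\bmo_q^c} \lesssim \|g\|_q$. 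For the second term one estimates it, via Cauchy--Schwarz in the $(t,\e)$-integral and the argument of Lemma \ref{lem: duality}, by $\|s_\Phi^c(f)\|_p$ times a Carleson-measure bound for $d\mu_g = |\Psi_\e*g(s)|^2\frac{d\e ds}{\e}$, which is exactly what hypothesis (3) supplies. Taking the supremum over $g$ with $\|g\|_q\le1$ gives $\|f\|_{\h_p^c}\lesssim\|s_\Phi^c(f)\|_p+\|\phi*f\|_p$; the conic-versus-radial equivalence $\|s_\Phi^c(f)\|_p\approx\|g_\Phi^c(f)\|_p$ is then obtained by the same kind of majorization used in Lemma \ref{lem:poison g Lusin compare} (averaging the harmonic-like quantity over balls), or more cleanly by applying the already-established equivalence to both $\Phi$ and its ``conic'' companion.

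The case $p=1$ is handled separately but in parallel: pair against $g\in\bmo_0^c(\R,\M)$ dense in the predual, use \eqref{eq: basic2} and the Carleson bound of hypothesis (3) with $q=\infty$ (i.e. Lemma \ref{lem: general Carleson} / Corollary \ref{cor: Carleson=Dbmo}), and conclude via the $\h_1^c$--$\bmo^c$ duality of Theorem \ref{thm: duality P}. A final point is to check that the various convolution operators are genuinely well-defined on $L_1(\M;\mathrm R_d^c)+L_\infty(\M;\mathrm R_d^c)$ and that the interchange of $\tau$, the $s$-integral, and the $(t,\e)$-integral is legitimate; as in Lemma \ref{lem: duality} this is done by first assuming $f$ compactly supported and sufficiently nice, approximating $\tau$ by finite traces $e_i\cdot e_i$, and then passing to the limit. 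The main obstacle, as in \cite{XXX17}, is precisely the control of the ``error'' Carleson term in the duality pairing: making the truncated conic integral interact correctly with the $\bmo_q^c$ Carleson condition requires the square-net decomposition and the mean-function estimate (Lemma \ref{lem: Mean function}) exactly as in the proof of Lemma \ref{lem: duality}, and it is here that the local truncation forces the extra hypotheses on $\Phi$ (the decay encoded in condition (1), guaranteeing the size estimate at infinity needed for Corollary \ref{Cor C-Z}, and the companion functions $\phi,\psi$ absorbing the ``large time'' part that the non-local argument never sees).
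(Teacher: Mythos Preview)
Your overall strategy is right---the paper does exactly this: easy direction via Calder\'on--Zygmund (conditions (1) and (4)), hard direction via the reproducing formula \eqref{eq: basic2} and a Carleson-type duality estimate. But there is a genuine gap in your duality argument. For $1<p<2$ you propose to pair $f$ against $g\in L_q(\N)$ and then take the supremum over $\|g\|_q\le1$. That supremum recovers $\|f\|_{L_p(\N)}$, \emph{not} $\|f\|_{\h_p^c}$; and since $\h_p^c\subset L_p$ for $p\le2$, the inequality $\|f\|_p\lesssim\|s_\Phi^c(f)\|_p+\|\phi*f\|_p$ is strictly weaker than what you need. The paper instead pairs against $g\in\bmo_q^c(\R,\M)=(\h_p^c)^*$ and proves $|\tau\!\int fg^*|\lesssim(\|s_\Phi^c(f)\|_p+\|\phi*f\|_p)\|g\|_{\bmo_q^c}$ directly (Lemma~\ref{lem: main1}), using the Carleson bound from hypothesis (3). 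For $p>2$ the argument flips: the paper pairs against $g\in\h_q^c$ (now $1<q<2$), applies H\"older to the reproducing formula, and bounds $\|g_\Psi^c(g)\|_q+\|\psi*g\|_q$ by $\|g\|_{\h_q^c}$ via the easy direction applied to the companion pair $(\Psi,\psi)$. Your uniform ``pair against $L_q$'' scheme does not cover either range.

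A second, smaller point: your proposed route to the radial estimate via ``the same kind of majorization used in Lemma~\ref{lem:poison g Lusin compare}'' does not work, because that lemma relies on the harmonicity of $\mathrm P_\e(f)$, which general $\Phi$ lacks. The paper uses instead the pointwise inequality $g_\Phi^c(f)(s,\e)^2\lesssim\sum_{|m|_1\le d}s_{D^m\Phi}^c(f)(s,\e)^2$ (Lemma~\ref{lemma 4.3 of XXX}, from \cite{XXX17}); this is precisely why condition (1) is imposed on all $D^m\Phi$ with $|m|_1\le d$. With this substitution the duality bound for $g_\Phi^c$ comes out with an extra factor $\|f\|_{\h_p^c}^{1-p/2}$ (Lemma~\ref{lem:main2}), and one then self-improves by dividing through.
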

One implication of the above theorem is an easy consequence of conditions (1) and (4) that 
\begin{equation}\label{eq: s<h1} 
 \|  s_{\Phi}^{c}  (f )  \|  _{p}+  \|  \phi*f \|  _{p}\lesssim  \|  f \|  _{\h_{p}^{c}}
\end{equation}
\begin{equation}\label{eq: g<h1} 
 \|  g_{\Phi}^{c}  (f )  \|  _{p}+  \|  \phi*f \|  _{p} \lesssim  \|  f \|  _{\h_{p}^{c}}.
\end{equation}
In order to prove the converse inequalities, we need the following lemma, which can be seen as a generalization of Lemma \ref{lem: duality}.

\begin{lem}\label{lem: main1}
Let $1\leq p< 2$, $q$ be its conjugate index and $\Phi$, $\phi$ be the functions satisfying the above assumption. For $f\in \h_{p}^{c}  (\mathbb{R}^{d},\mathcal{M} )\cap L_{2}  (\mathcal{N} )$
and $g\in{\bmo}_q^{c}  (\mathbb{R}^{d},\mathcal{M} )$,
\[
  \big|\tau\int_{\mathbb{R}^{d}}f  (s )g^{*}  (s )ds \big|\lesssim  (  \|  s_{\Phi}^{c}  (f ) \|  _{p}+  \|  \phi*f \|  _{p} )  \|  g \|  _{{\bmo}_q^{c}}.
\]
\end{lem}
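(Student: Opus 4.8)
$$\text{(planning the proof of Lemma \ref{lem: main1})}$$

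The plan is to run the argument of Lemma~\ref{lem: duality} with the Poisson building blocks replaced by $\Phi,\Psi,\phi,\psi$, using the reproducing identity \eqref{eq: basic2} in place of the polarized formula \eqref{eq:polarized} and condition~(3) in place of Lemma~\ref{lem: Caleson<bmo q}. As in Lemma~\ref{lem: duality}, one first reduces to a compactly supported and sufficiently regular $f$, then (approximating by $e_ife_i$) to the case of finite $\tau$, and finally---after adding $\delta 1_\M$ to the auxiliary square functions introduced below---to the case where these are invertible in $\M$. Starting from $\widehat f(\xi)=\widehat f(\xi)\big[\widehat\phi(\xi)\overline{\widehat\psi(\xi)}+\int_0^1\widehat\Phi(\e\xi)\overline{\widehat\Psi(\e\xi)}\frac{d\e}{\e}\big]$, applying the polarized Plancherel formula \eqref{eq: Planchel-1} together with $\widehat{\Phi_\e}(\xi)=\widehat\Phi(\e\xi)$ (and likewise for $\Psi,\phi,\psi$), and using Fubini, one obtains
\[
\tau\int_{\R}f(s)g^*(s)\,ds=\int_0^1\tau\int_{\R}(\Phi_\e*f)(s)(\Psi_\e*g)(s)^*\,ds\,\frac{d\e}{\e}+\tau\int_{\R}(\phi*f)(s)(\psi*g)(s)^*\,ds\ \stackrel{\mathrm{def}}{=}\ \mathrm{I}+\mathrm{II}.
\]
The term $\mathrm{II}$ is handled by Hölder's inequality, $|\mathrm{II}|\le\|\phi*f\|_p\|\psi*g\|_q$, followed by condition~(3), which gives $\|\psi*g\|_q\lesssim\|g\|_{\bmo_q^c}$.

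For $\mathrm{I}$ I would pass to conic form: averaging over $s$ and using $\int_{B(s,\e/2)}dt=c_d(\e/2)^d$ gives $\mathrm{I}=\frac{2^d}{c_d}\tau\int_{\R}\int_0^1\int_{B(s,\e/2)}(\Phi_\e*f)(t)(\Psi_\e*g)(t)^*\frac{dt\,d\e}{\e^{d+1}}\,ds$. Introduce, for $s\in\R$ and $\e\in[0,1]$, the truncated square functions
\[
s_\Phi^c(f)(s,\e)=\Big(\int_\e^1\int_{B(s,r-\frac{\e}{2})}|\Phi_r*f(u)|^2\frac{du\,dr}{r^{d+1}}\Big)^{\frac12},\qquad\overline{s}_\Phi^c(f)(s,\e)=\Big(\int_\e^1\int_{B(s,\frac{r}{2})}|\Phi_r*f(u)|^2\frac{du\,dr}{r^{d+1}}\Big)^{\frac12},
\]
which are decreasing in $\e$, satisfy $\overline{s}_\Phi^c(f)(\cdot,\e)\le s_\Phi^c(f)(\cdot,\e)$ and $s_\Phi^c(f)(\cdot,0)=s_\Phi^c(f)$. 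Inserting $s_\Phi^c(f)(s,\e)^{\frac{p-2}{2}}s_\Phi^c(f)(s,\e)^{\frac{2-p}{2}}$ into the trace and applying the Cauchy--Schwarz inequality yields $\frac{c_d^2}{4^d}\,\mathrm{I}^2\le A\cdot B$, where $A$ pairs $|\Phi_\e*f(t)|^2$ against the weight $s_\Phi^c(f)(s,\e)^{p-2}$ and $B$ pairs $|\Psi_\e*g(t)|^2$ against $s_\Phi^c(f)(s,\e)^{2-p}$, exactly as in Lemma~\ref{lem: duality}.

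The estimate of $A$ transfers verbatim: bounding $s_\Phi^c(f)(s,\e)^{p-2}$ by $\overline{s}_\Phi^c(f)(s,\e)^{p-2}$ (since $p-2<0$), noting $\int_{B(s,\e/2)}|\Phi_\e*f(t)|^2\frac{dt}{\e^{d+1}}=-\partial_\e\overline{s}_\Phi^c(f)(s,\e)^2$, integrating by parts in the $w^*$-sense, and using that $\e\mapsto\overline{s}_\Phi^c(f)(s,\e)$ is decreasing together with the operator monotonicity of $x\mapsto x^{p-1}$ ($0\le p-1<1$) gives $A\lesssim\|s_\Phi^c(f)\|_p^p$. For $B$ I would repeat the dyadic-net device of Lemma~\ref{lem: duality}: with the square net $\{Q_{m,j}\}$ of side $\frac{1}{\sqrt d}2^{-j}$ and centers $c_{m,j}$, put $\mathbb{S}_\Phi^c(f)(s,j)=\big(\int_{2^{-j}}^1\int_{B(c_{m,j},r)}|\Phi_r*f(u)|^2\frac{du\,dr}{r^{d+1}}\big)^{1/2}$ for $s\in Q_{m,j}$; then $B(s,r-\frac{\e}{2})\subset B(c_{m,j},r)$ for $s\in Q_{m,j}$, $\e\ge2^{-j}$, so $s_\Phi^c(f)(s,\e)\le\mathbb{S}_\Phi^c(f)(s,j)$ there, $\mathbb{S}_\Phi^c(f)(s,j)$ increases in $j$, and the increments $d_\Phi(s,k)=\mathbb{S}_\Phi^c(f)(s,k)^{2-p}-\mathbb{S}_\Phi^c(f)(s,k-1)^{2-p}$ are nonnegative (operator monotonicity of $x\mapsto x^{2-p}$), constant on $Q_{m,k}$, and telescope. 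Splitting the $\e$-integral over the layers $[2^{-j},2^{-j+1})$, swapping sums and using Fubini reduces $B$ to a sum over $m,k$ of $\tau\big[d_\Phi(\cdot,k)\int_{T(\widetilde Q_{m,k})}|\Psi_\e*g(u)|^2\frac{du\,d\e}{\e}\big]$ over fixed dilates $\widetilde Q_{m,k}$ of $Q_{m,k}$; condition~(3) then supplies a positive $a\in L_{q/2}(\N)$ with $\|a\|_{q/2}\lesssim\|g\|_{\bmo_q^c}^2$ and $\frac{1}{|Q|}\int_{T(Q)}|\Psi_\e*g|^2\frac{du\,d\e}{\e}\le a(s)$ for $s\in Q$, $|Q|<1$, whence $B\lesssim\tau\int_\R s_\Phi^c(f)(s)^{2-p}a(s)\,ds\le\|s_\Phi^c(f)\|_p^{2-p}\|a\|_{q/2}\lesssim\|s_\Phi^c(f)\|_p^{2-p}\|g\|_{\bmo_q^c}^2$ by Hölder with $\frac{2-p}{p}+\frac{2}{q}=1$. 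Combining, $\mathrm{I}^2\lesssim A\cdot B\lesssim\|s_\Phi^c(f)\|_p^2\|g\|_{\bmo_q^c}^2$, and adding the bound for $\mathrm{II}$ completes the argument.

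The main obstacle is the treatment of $B$: one must verify that the geometric book-keeping of Lemma~\ref{lem: duality}---the inclusions relating $B(s,r/2)$, $B(s,r-\e/2)$ and $B(c_{m,j},r)$, and the identity $\int_{B(s,\e/2)}|\Phi_\e*f|^2\frac{dt}{\e^{d+1}}=-\partial_\e\overline{s}_\Phi^c(f)(s,\e)^2$---goes through unchanged with the measure $\frac{dt\,dr}{r^{d+1}}$ attached to a general $\Phi$, and, crucially, that the role played in Lemma~\ref{lem: duality} by the Carleson estimate of Lemma~\ref{lem: Caleson<bmo q} for $|\partial_\e\mathrm{P}_\e(g)|^2\e\,ds\,d\e$ is exactly what condition~(3) provides for $|\Psi_\e*g|^2\frac{ds\,d\e}{\e}$. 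Everything else is a routine transcription of the proof of Lemma~\ref{lem: duality}; note in particular that conditions (1) and (4) are not used here, only the reproducing identity (2) and the Carleson bound (3).
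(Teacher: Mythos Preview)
Your proposal is correct and follows essentially the same approach as the paper's own proof: you use the reproducing identity \eqref{eq: basic2} to split the pairing into $\mathrm{I}+\mathrm{II}$, handle $\mathrm{II}$ by H\"older and condition~(3), and for $\mathrm{I}$ you replicate the Cauchy--Schwarz splitting $A\cdot B$ of Lemma~\ref{lem: duality} with the same auxiliary truncated square functions $s_\Phi^c(f)(s,\e)$, $\overline{s}_\Phi^c(f)(s,\e)$ and the dyadic-net function $\mathbb{S}_\Phi^c(f)(s,j)$, invoking condition~(3) for the Carleson estimate on $B$. The paper's proof is in fact more terse than yours---it simply refers back to Lemma~\ref{lem: duality} with $\Phi_\e*f$ and $\Psi_\e*g$ in place of $\e\,\partial_\e\mathrm{P}_\e(f)$ and $\e\,\partial_\e\mathrm{P}_\e(g)$---and your observation that only conditions~(2) and~(3) (not~(1) or~(4)) are used here is accurate.
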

\begin{proof}
The proof of this Lemma is very similar to that of Lemma \ref{lem: duality}, we will just point out the necessary modifications to avoid duplication. We need two auxiliary square functions associated with ${\Phi}$. For $s\in \R$, $\e\in [0,1]$, we define
\begin{equation}
s_{{\Phi}}^{c}  (f )  (s,\varepsilon )=  \Big(\int_{\varepsilon}^{1}\int_{B  (s,r-\frac{\e}{2} )}  |{\Phi}_{r}*f  (t) |^{2}\frac{dtdr}{r^{d+1}} \Big)^{\frac{1}{2}},\label{eq: auxiliary s}
\end{equation}
\begin{equation}
\overline{s}_{\Phi}^{c}  (f )  (s,\varepsilon )=  \Big(\int_{\varepsilon}^{1}\int_{B  (s,\frac{r}{2} )}  |\Phi_r*f  (t ) |^{2}\frac{dtdr}{r^{d+1}} \Big)^{\frac{1}{2}}.
\end{equation}
By assumption (2) of $\Phi$,  we have
\begin{equation*}
\begin{split}
&\tau\int_{\mathbb{R}^{d}}f  (s )g^{*}  (s )ds \\
&    =   \tau\int_{\mathbb{R}^{d}}\int_{0}^{1}\Phi_{\varepsilon}*f  (s )  ({\Psi}_{\varepsilon}*g  (s ) )^{*}\frac{dsd\varepsilon}{\varepsilon}+\tau\int_{\mathbb{R}^{d}}\phi*f  (s )  (\psi*g  (s ) )^{*}ds\\
& = \frac{2^{d}}{c_{d}}\tau\int_{\mathbb{R}^{d}}\int_{0}^{1}\int_{B  (s,\frac{\varepsilon}{2} )}\Phi_{\varepsilon}*f  (t )s_{\Phi}^{c}  (f )  (s,\varepsilon )^{\frac{p-2}{2}} s_{\Phi}^{c}(f )(s,\varepsilon )^{\frac{2-p}{2}} (\Psi_{\varepsilon}*g  (t ) )^{*}\frac{dtd\varepsilon}{\varepsilon^{d+1}}ds\\
 &  \;\;\;\;  +\tau\int_{\mathbb{R}^{d}}\phi*f  (s )  (\psi*g  (s ) )^{*}ds\\
&   \stackrel{\mathrm{def}}{=}  \mathrm{ I}+\mathrm{II}.
 \end{split}
\end{equation*}
 Then by the Cauchy-Schwarz inequality,
\begin{equation*}
\begin{split}
| \mathrm{I}| ^{2}  & \lesssim  \tau\int_{\mathbb{R}^{d}}\int_{0}^{1}  \big(\int_{B  (s,\frac{\varepsilon}{2} )}  |\Phi_{\varepsilon}*f  (t ) |^{2}\frac{dt}{\varepsilon^{d+1}} \big)\overline{s}_{\Phi}^{c}  (f )  (s,\varepsilon )^{p-2}d\varepsilon ds\\
   &\;\;\;\; \cdot\tau\int_{\mathbb{R}^{d}}\int_{0}^{1}  \big(\int_{B  (s,\frac{\varepsilon}{2} )}  |\Psi_{\varepsilon}*g  (t ) |^{2}\frac{dt}{\varepsilon^{d+1}} \big)s_{{\Phi}}^{c}  (f )  (s,\varepsilon )^{2-p}d\varepsilon ds\\
 &   \stackrel{\mathrm{def}}{=} A\cdot B.
\end{split}
\end{equation*}
 Replacing $\e \frac{\partial}{\partial \e}\mathrm{P}_\e(f)$ and $\e \frac{\partial}{\partial \e}\mathrm{P}_\e(g)$ in the proof of Lemma \ref{lem: duality} by $\Phi_\e*f$ and $\Psi_\e*g$ respectively and applying Lemma \ref {lem: Carleson Phi} and assumption (3) of $\Psi$ and $\psi$, we get  the estimates for the terms $A$ and $B$ that 
\begin{equation*}
A  \lesssim   \|  s_{\Phi}^{c}  (f ) \|  _{p}^p \quad \text{and}\quad B \lesssim   \|  g \|  _{{\bmo}_q^{c}}^{2}  \|  s_{\Phi}^{c}  (f ) \|  _{p}^{2-p}.
\end{equation*}
The term $\mathrm{II}$ is easy to deal with. By the H\"older inequality,  Lemma  \ref {lem: Carleson Phi} and assumption (3) again, we get
 \begin{equation*}
\Big| \tau\int_{\mathbb{R}^{d}}\phi*f  (s )  (\psi*g  (s ) )^{*}ds\Big|
 \leq   \| \phi* f \|  _{p}  \| \psi*g \|  _{q}\lesssim   \| \phi* f \|  _{p}  \|  g \| _{\bmo_q^c}.
 \end{equation*}
Combining the estimates for $A$, $B$ and $\mathrm{II}$, we finally get the
desired inequality.
\end{proof}

We also need the radial version of Lemma \ref{lem: main1}. To
this end, we need to majorize the radial square function by the conic
one. When we consider the Poisson kernel,
this result follows from the harmonicity of the Poisson integral (see Lemma \ref{lem:poison g Lusin compare}). However, in the general
case, the harmonicity is no longer available. To overcome this difficulty, a more sophisticated inequality has been developped in \cite{XXX17} to compare non-local radial and conic functions.
Observe that the result given in \cite[Lemma 4.3]{XXX17} is a pointwise
one, which also works  for the local version of square functions if we consider integration over the interval $0<\varepsilon<1$.
The following lemma is an obvious consequence of \cite[Lemma 4.3] {XXX17}.

\begin{lem}\label{lemma 4.3 of XXX}
Let $f\in L_{1}  (\mathcal{M}; \mathrm R_{d}^{c} )+L_{\infty}  (\mathcal{M}; \mathrm R_{d}^{c} )$.
Then
\[
g_{\Phi}^{c}  (f )  (s )^{2}\lesssim\sum_{  |m |_{1}\leq d}s_{D^{m}\Phi}^{c}  (f )  (s )^{2},\,  \forall\, s\in\mathbb{R}^{d}.
\]

\begin{lem}
\label{lem:main2}Let $1\leq p< 2$. For $f\in \h_{p}^{c}  (\mathbb{R}^{d},\mathcal{M} )\cap L_{2}  (\mathcal{N} )$
and $g\in{\bmo}_q^{c}  (\mathbb{R}^{d},\mathcal{M} )$, 
\[
 \Big |\tau\int_{\mathbb{R}^{d}}f  (s )g^{*}  (s )ds   \Big |  \lesssim  (  \|  g_{\Phi}^{c}  (f ) \|  _{p}+  \|  \phi*f \|  _{p} )^{\frac{p}{2}}  \|  f \|  _{\h_{p}^{c}}^{1-\frac{p}{2}}  \|  g \|  _{{\bmo}_q^{c}}.
\]

\end{lem}
\end{lem}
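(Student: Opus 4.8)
The plan is to mirror the proof of Lemma \ref{lem: main1}, but carry along an interpolation-type trick that converts one factor of $s_{\Phi}^c(f)$ into $g_{\Phi}^c(f)$ at the cost of a factor of $\|f\|_{\h_p^c}^{1-p/2}$. First I would reduce, as in Lemma \ref{lem: duality}, to the case of a nice, compactly supported $f$ with finite trace, so that all the auxiliary square functions below are invertible after adding $\delta 1_{\M}$. Using assumption (2) of $\Phi$ and the reproducing formula \eqref{eq: basic2}, write
\[
\tau\int_{\R}f(s)g^*(s)ds=\tau\int_{\R}\int_0^1\Phi_\e*f(s)\,(\Psi_\e*g(s))^*\frac{dsd\e}{\e}+\tau\int_{\R}\phi*f(s)\,(\psi*g(s))^*ds.
\]
The second term is handled exactly as the term $\mathrm{II}$ in Lemma \ref{lem: main1}: by H\"older and assumption (3) it is $\lesssim\|\phi*f\|_p\,\|g\|_{\bmo_q^c}$, which is even better than what we need.

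For the first term the key new ingredient is Lemma \ref{lemma 4.3 of XXX}, which gives the pointwise bound $g_{\Phi}^c(f)(s)^2\lesssim\sum_{|m|_1\le d}s_{D^m\Phi}^c(f)(s)^2$; combined with condition (1) on $\Phi$ (each $D^m\Phi$ with $|m|_1\le d$ gives a Calder\'on-Zygmund operator in Corollary \ref{Cor C-Z}) and Remark \ref{rem: L_p H h_p}, this already yields $\|g_{\Phi}^c(f)\|_p\approx\|s_{\Phi}^c(f)\|_p\lesssim\|f\|_{\h_p^c}$, so the three quantities $\|g_{\Phi}^c(f)\|_p$, $\|s_{\Phi}^c(f)\|_p$ and $\|f\|_{\h_p^c}$ (after adding $\|\phi*f\|_p$) are comparable on the dense class under consideration. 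I would then introduce, as in Lemma \ref{lem: main1}, the two-variable square functions $s_{\Phi}^c(f)(s,\e)$ and $\overline s_{\Phi}^c(f)(s,\e)$ from \eqref{eq: auxiliary s}, insert $s_{\Phi}^c(f)(s,\e)^{(p-2)/2}s_{\Phi}^c(f)(s,\e)^{(2-p)/2}$ into the $\e$-integral, and apply Cauchy–Schwarz to split the first term as $A^{1/2}B^{1/2}$, where $A$ carries $\Phi_\e*f$ with weight $\overline s_{\Phi}^c(f)^{p-2}$ and $B$ carries $\Psi_\e*g$ with weight $s_{\Phi}^c(f)^{2-p}$.

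The estimate of $A$ is identical to that in Lemma \ref{lem: duality}/\ref{lem: main1}: using $\overline s_{\Phi}^c(f)(s,\e)\le s_{\Phi}^c(f)(s,\e)$, integration by parts in $\e$, monotonicity of $\overline s_{\Phi}^c(f)(s,\cdot)$ and $1\le p<2$, one gets $A\lesssim\|\overline s_{\Phi}^c(f)(\cdot,0)\|_p^p\le\|s_{\Phi}^c(f)\|_p^p\approx\|g_{\Phi}^c(f)\|_p^p$, where in the last step we use Lemma \ref{lemma 4.3 of XXX}; but I would instead bound $A\lesssim\|f\|_{\h_p^c}^p$ directly (this is the crude bound), and separately note $A\lesssim\|g_{\Phi}^c(f)\|_p^p$ — the point is that $A$ contributes $A^{1/2}$ and we will keep only $\|f\|_{\h_p^c}^{p/2}$ from it, \emph{no}: re-examining, the clean split is to bound $A\lesssim\|g_{\Phi}^c(f)\|_p^p$ via Lemma \ref{lemma 4.3 of XXX} and then $A^{1/2}\lesssim\|g_{\Phi}^c(f)\|_p^{p/2}\le(\|g_{\Phi}^c(f)\|_p+\|\phi*f\|_p)^{p/2}$. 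The estimate of $B$ is the same as in Lemma \ref{lem: duality}: build the dyadic net $Q_{m,j}$, define $\mathbb S^c(f)(s,j)$ and the nonnegative increments $d(s,k)=\mathbb S^c(f)(s,k)^{2-p}-\mathbb S^c(f)(s,k-1)^{2-p}$, use the Carleson measure bound for $d\mu_g=|\Psi_\e*g|^2\frac{d\e ds}{\e}$ from assumption (3)/Lemma \ref{lem: Carleson Phi} together with Lemma \ref{lem: Carleson < bmo}-style averaging, sum the telescoping series, and arrive at
\[
B\lesssim\tau\int_{\R}\mathbb S^c(f)(s,+\infty)^{2-p}a(s)ds=\tau\int_{\R}S_{\Phi}^c(f)(s)^{2-p}a(s)ds\le\|S_{\Phi}^c(f)\|_p^{2-p}\|a\|_{q/2}\lesssim\|f\|_{\h_p^c}^{2-p}\|g\|_{\bmo_q^c}^2.
\]
Multiplying, $|\mathrm I|\lesssim A^{1/2}B^{1/2}\lesssim(\|g_{\Phi}^c(f)\|_p+\|\phi*f\|_p)^{p/2}\|f\|_{\h_p^c}^{(2-p)/2}\|g\|_{\bmo_q^c}$, which is exactly the claimed bound (recall $1-p/2=(2-p)/2$); combining with the $\mathrm{II}$-estimate finishes the proof. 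The main obstacle is the asymmetry introduced by using the \emph{radial} function $g_{\Phi}^c(f)$ in one slot while the auxiliary decreasing functions $s_{\Phi}^c(f)(s,\e)$ are intrinsically conic; bridging this is precisely the role of Lemma \ref{lemma 4.3 of XXX}, and one must be careful that it is applied to $S_{\Phi}^c(f)=s_{\Phi}^c(f)(\cdot,0)$ rather than to the truncated two-variable version, and that condition (1) on $\Phi$ is invoked to control all the $s_{D^m\Phi}^c(f)$ appearing in that inequality by $\|f\|_{\h_p^c}$ via Remark \ref{rem: L_p H h_p}.
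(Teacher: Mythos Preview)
Your proposal has a genuine gap: you insert the \emph{conic} weight $s_{\Phi}^c(f)(s,\varepsilon)^{(p-2)/2}s_{\Phi}^c(f)(s,\varepsilon)^{(2-p)/2}$ into the $\varepsilon$-integral, exactly as in Lemma~\ref{lem: main1}. With that choice the $A$ term, after the telescoping argument, controls $\|\overline s_{\Phi}^c(f)(\cdot,0)\|_p^p\approx\|s_{\Phi}^c(f)\|_p^p$, not $\|g_{\Phi}^c(f)\|_p^p$. Your attempt to convert this via Lemma~\ref{lemma 4.3 of XXX} goes in the wrong direction: that lemma bounds the radial square function from above by conic ones, never the reverse. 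Appealing instead to an equivalence $\|s_{\Phi}^c(f)\|_p\approx\|g_{\Phi}^c(f)\|_p$ is circular, since the half of Theorem~\ref{thm main1} giving $\|f\|_{\h_p^c}\lesssim\|g_{\Phi}^c(f)\|_p+\|\phi*f\|_p$ is precisely what Lemma~\ref{lem:main2} is invoked to prove.

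The paper's proof fixes this by using the \emph{radial} truncated weight $g_{\Phi}^c(f)(s,\varepsilon)=\big(\int_\varepsilon^1|\Phi_r*f(s)|^2\frac{dr}{r}\big)^{1/2}$ in the Cauchy--Schwarz split. Then $A'$ genuinely collapses to $\|g_{\Phi}^c(f)\|_p^p$ because $|\Phi_\varepsilon*f(s)|^2/\varepsilon=-\partial_\varepsilon\big(g_{\Phi}^c(f)(s,\varepsilon)^2\big)$. The price is that $B'$ now carries the radial weight $g_{\Phi}^c(f)(s,\varepsilon)^{2-p}$, and it is \emph{here} (not in $A$) that Lemma~\ref{lemma 4.3 of XXX} is used, in its truncated two-variable form $g_{\Phi}^c(f)(s,\varepsilon)^2\lesssim\sum_{|m|_1\le d}s_{D^m\Phi}^c(f)(s,\varepsilon)^2$; your remark that it should be applied only at $\varepsilon=0$ is the opposite of what is needed. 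After this replacement, the dyadic Carleson argument from Lemma~\ref{lem: duality} runs for each $D^m\Phi$ and yields $B'\lesssim\|g\|_{\bmo_q^c}^2\sum_{|m|_1\le d}\|s_{D^m\Phi}^c(f)\|_p^{2-p}\lesssim\|g\|_{\bmo_q^c}^2\|f\|_{\h_p^c}^{2-p}$, the last step using condition~(1) and \eqref{eq: s<h1}.
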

\begin{proof}
This proof is similar to that of Lemma \ref{lem: main1} and we keep the notation there. Let $f\in \h_{p}^{c}  (\mathbb{R}^{d},\mathcal{M} )$ with
compact support (relative to the variable of $\mathbb{R}^{d}$). We
assume that $f$ is sufficiently nice so that all calculations below
are legitimate. Now we need the radial version of $s_{\Phi}^c(f)(s,\e)$,
\[
g_{\Phi}^{c}  (f )  (s,\varepsilon )=  \big(\int_{\varepsilon}^{1}  |\Phi_{r}*f  (s ) |^{2}\frac{dr}{r} \big)^{\frac{1}{2}}
\]
 for $s\in\mathbb{R}^{d}$ and $0\leq \varepsilon\leq 1$. By approximation,
we can assume that $g_{\Phi}^{c}  (f )  (s,\varepsilon )$
is invertible for every $  (s,\varepsilon )\in S$. By \eqref{eq: basic2},
\eqref{eq: Planchel-1} and the Fubini theorem, we have
\begin{equation*}
\begin{split}
 &  \Big |\tau\int_{\mathbb{R}^{d}}f  (s )g^{*}  (s )ds  \Big |^{2}\\
 & \lesssim  \tau\int_{\mathbb{R}^{d}}\int_{0}^{1}  |\Phi_{\varepsilon}*f  (s ) |^{2}g_{\Phi}^{c}  (f )  (s,\varepsilon )^{p-2}\frac{d\varepsilon ds}{\varepsilon}\cdot\tau\int_{\mathbb{R}^{d}}\int_{0}^{1}  |\Psi_{\varepsilon}*g  (s ) |^{2}g_{\Phi}^{c}  (f )  (s,\varepsilon )^{2-p}\frac{d\varepsilon ds}{\varepsilon}\\
 &  \;\;\;\; +   \Big |\tau\int_{\mathbb{R}^{d}}\phi*f  (s )  (\psi*g  (s ) )^{*}ds  \Big |^{2}\\
 & \stackrel{\mathrm{def}}{=}  A^{\prime}B^{\prime}+\mathrm{II}^{\prime}.
\end{split}
\end{equation*}
$\mathrm{II}^{\prime}$
is treated exactly in the same way as before,
$$\mathrm{II}^{\prime}\lesssim  \|  \phi*f \|  _{p}^{2}  \|  \psi*g \|  _{q}^{2} \lesssim   \|  \phi*f \|  _{p}^{p} \|  f\|  _{\h_p^c}^{2-p} \|  g\| _{\bmo_q^c}^2.$$
$A^{\prime}$ is also estimated similarly as in Lemma \ref{lem: main1}, we have
$ A^{\prime}\lesssim \|  g_{\Phi}^{c}  (f ) \|  _{p}^p$.

To estimate $B^{\prime}$, we notice that the proof of \cite[Lemma 1.3]{XXX17}
also gives
\[
g_{\Phi}^{c}  (f )  (s,\varepsilon )^{2}\lesssim\sum_{  |m |_{1}\leq d}s_{D^{m}\Phi}^{c}  (f )  (s,\varepsilon )^{2},
\]
where $s_{D^{m}{\Phi}}^{c}  (f )  (s,\varepsilon )$ is defined by \eqref{eq: auxiliary s} with $D^{m}\Phi$ instead of $\Phi$. Then by the above inequality, Lemma \ref{lem: Carleson Phi} and inequality \eqref{eq: s<h1} with $D^{m}\Phi$ instead of $\Phi$, we obtain
\begin{equation*}
\begin{split}
B^{\prime}  & \lesssim  \sum_{  |m |_{1}\leq d}\tau\int_{\mathbb{R}^{d}}\int_{0}^{1}  |\Psi_{\varepsilon}*g  (s ) |^{2}s_{D^{m}\Phi}^{c}  (f )  (s,\varepsilon )^{2-p}\frac{d\varepsilon ds}{\varepsilon}\\
&   \lesssim   \sum_{  |m |_{1}\leq d}  \|  g \|  _{{\bmo}_q^{c}}^{2}  \|  s_{D^{m}\Phi}^{c}  (f ) \|  _{p}^{2-p}\\
&   \lesssim   \|  g \|  _{{\bmo}_q^{c}}^{2}  \|  f \|  _{\h_p^c}^{2-p}.
\end{split}
\end{equation*}
Therefore,
$$  |\tau\int_{\mathbb{R}^{d}}f  (s )g^{*}  (s )ds |^2 \lesssim   (  \|  g_{\Phi}^{c}  (f ) \|  _{p}  +\|  \phi*f\| _{p}  )^p  \|  f \| ^{2-p} _{\h_p^c}\|  g\| _{\bmo_q^c}^2 ,$$
which completes the proof.
\end{proof}

\begin{proof}[Proof of Theorem \ref{thm main1}]
From Lemmas \ref{lem: main1}, \ref{lem:main2} and Theorem  \ref{thm: duality P}, we conclude that if $1\leq p\leq 2$, we have
\begin{equation*}
\begin{split}
  \|  f \|  _{\h_{p}^{c}} &  \lesssim   \|  s_{\Phi}^{c}  (f ) \|  _{p}+  \|  \phi*f \|  _{p},\\
  \|  f \|  _{\h_{p}^{c}} &  \lesssim  \|  g_{\Phi}^{c}  (f ) \|  _{p}+  \|  \phi*f \|  _{p}.
  \end{split}
\end{equation*}
For the case $2<p<\infty$, by Theorem \ref{cor: duality}, we can choose $g\in \h_q^c(\R,\M)$ (with $q$ the conjugate index of $p$) with norm one such that 
\[
\|f\|_{\h_p^c}\approx \tau \int_{\R}f(s)g^*(s)ds =\tau \int_{\R}\int_0^1\Phi_\e*f(s)\cdot (\Psi_\e *g(s))^*\frac{ds d\e}{\e}+\tau \int_{\R}\phi*f(s)(\psi*g(s))^*ds.
\]
Then by the H\"older inequality and \eqref{eq: g<h1} (applied to $g,\Psi$ and $q$),

\begin{equation*}
\begin{split}
\| f\|_{\h_p^c} & \lesssim   \| g_\Phi^c(f)\|_p \|g_\Psi^c(g)\|_q+\|\phi*f\|_p \|\psi*g\|_q \\
&  \lesssim   ( \| g_\Phi^c(f)\|_p+\|\phi*f\|_p) \| g \|_{\h_q^c} = \| g_\Phi^c(f)\|_p+\|\phi*f\|_p.
\end{split}
\end{equation*}
Similarly, we have
\[
\| f\|_{\h_p^c} \lesssim \| s_\Phi^c(f)\|_p+\|\phi*f\|_p.
\]
Therefore, combined with \eqref{eq: g<h1} and  \eqref{eq: s<h1}, we have  proved the  assertion.
\end{proof}

The rest part of this subsection is devoted to explaining how Theorem \ref{thm main1} generalizes the characterization of $\h_p^c(\R,\M)$. 

Firstly and most naturally, we show how Theorem \ref{thm main1} covers the original definition of $\h_p^c(\R,\M)$. Let us take $\Phi=-2\pi I(\mathrm{P})$ and $\phi=\mathrm{P}$ for example. A simple calculation shows that we can choose $\Psi=-8\pi I(\mathrm{P})$ and $\psi= 4\pi I(\mathrm{P})+\mathrm{P}$ to fulfil \eqref{eq: basic2}.  By the inverse Fourier transform formula, we have
\begin{equation*}
\begin{split}
-2\pi f*I(\mathrm{P})_{\e}(t)& = -2\pi\int e^{2\pi {\rm i}t\cdot\xi}\widehat{f}(\xi)|\e\xi |e^{-2\pi \e |\xi |}d\xi\\
& =  \e\frac{\partial}{\partial\e}\int e^{2\pi {\rm i}t\cdot\xi}\widehat{f}(\xi)e^{-2\pi \e |\xi |}d\xi= \e\frac{\partial}{\partial\e}\mathrm{P}_\e(f)(t).
\end{split}
\end{equation*}
So we return back to the original definition of $\h_p^c(\R,\M)$. Theorem \ref{thm main1} implies that
\[
\|  f\| _{\h_p^c}\approx   \|  s_{\Phi}^{c}  (f ) \|  _{p}+  \|  \phi*f \|  _{p}
\approx   \|  g_{\Phi}^{c}  (f ) \|  _{p}+  \|  \phi*f \|  _{p}.
\]
In particular, we have the following equivalent norm of $\h_p^c(\R,\M)$:
\begin{cor}\label{cor: s equi g}
Let $1\leq p<\infty$. Then for any $f\in \h_{p}^{c}  (\mathbb{R}^{d},\mathcal{M} )$, we have
$$ \|f\|_{\h_p^c} \approx  \|  g^{c}  (f )\| _{p} + \|\mathrm{P}*f\|_p.$$
\end{cor}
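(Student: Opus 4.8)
The plan is to deduce the statement from the general characterization Theorem \ref{thm main1} by specializing the test functions so that the quantities $s_\Phi^c(f)$, $g_\Phi^c(f)$ and $\phi*f$ appearing there collapse exactly onto $s^c(f)$, $g^c(f)$ and $\mathrm P*f$. Concretely, one takes $\Phi=-2\pi I(\mathrm P)$ and $\phi=\mathrm P$, and, as auxiliary functions, $\Psi=-8\pi I(\mathrm P)$ and $\psi=\mathrm P+4\pi I(\mathrm P)$. Since $\widehat{\mathrm P}(\xi)=e^{-2\pi|\xi|}$ and $I^1$ has symbol $|\xi|$, a direct evaluation of the elementary integral $\int_0^1\e|\xi|^2e^{-4\pi\e|\xi|}\,d\e$ shows that $\widehat\phi\,\overline{\widehat\psi}+\int_0^1\widehat\Phi(\e\xi)\overline{\widehat\Psi(\e\xi)}\tfrac{d\e}{\e}=1$ for all $\xi$, i.e. the reproducing identity \eqref{eq: basic2} holds.

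Before invoking Theorem \ref{thm main1} one must check that this quadruple $(\Phi,\Psi,\phi,\psi)$ satisfies its hypotheses (1)--(4). In physical space $\Phi(u)=-d\,\mathrm P(u)-u\cdot\nabla\mathrm P(u)$, which is $C^\infty$ on all of $\R$ and, along with every derivative $D^m\Phi$ with $|m|_1\le d$, decays at least like $(1+|u|)^{-d-1}$ and has a Lipschitz modulus of order one; the same holds for $\phi=\mathrm P$. This furnishes, for each such $D^m\Phi$ and for $\phi$, a Calder\'on--Zygmund kernel of the type in Corollary \ref{Cor C-Z} (the bounded Fourier transform being clear, the decay at infinity holding with $\rho=1$, and the Lipschitz regularity with $\gamma=1$), which yields (1) and (4); the argument is the one already carried out in \cite{XXX17} for the non-local square functions, the only new point being the power decay at infinity of $\Phi$ and its derivatives. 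For (3): $\Psi$ is Schwartz with $\widehat\Psi(0)=0$, and $\widehat\psi(\xi)=(1+4\pi|\xi|)e^{-2\pi|\xi|}$ lies in $H_2^\sigma(\R)$ for every $\sigma$, so Lemma \ref{lem: Caleson<bmo q} (the $q$-analogue of Lemma \ref{lem: general Carleson}) provides precisely the Carleson and $L_q$ bounds required. I expect this verification of (1) and (4) --- establishing the extra decay at infinity demanded by the inhomogeneous Calder\'on--Zygmund theory of Section \ref{section-CZ} --- to be the only step involving genuine work, and it is a routine estimate on $\mathrm P$ and its derivatives.

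With the hypotheses in place, the identification is immediate from the inverse Fourier transform formula:
\[
\Phi_\e*f(s)=-2\pi\int_{\R}e^{2\pi{\rm i}\,s\cdot\xi}\,\widehat f(\xi)\,|\e\xi|\,e^{-2\pi\e|\xi|}\,d\xi=\e\,\frac{\partial}{\partial\e}\mathrm P_\e(f)(s).
\]
Plugging this into the definitions of $s_\Phi^c$ and $g_\Phi^c$ and cancelling the resulting factor $\e^2$ against $\e^{-(d-1)}$, resp. $\e^{-1}$, gives $s_\Phi^c(f)=s^c(f)$ and $g_\Phi^c(f)=g^c(f)$ pointwise, while $\phi*f=\mathrm P*f$ by construction. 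Theorem \ref{thm main1} then reads, for $1\le p<\infty$,
\[
\|f\|_{\h_p^c}\approx\|s_\Phi^c(f)\|_p+\|\phi*f\|_p\approx\|g_\Phi^c(f)\|_p+\|\phi*f\|_p=\|g^c(f)\|_p+\|\mathrm P*f\|_p,
\]
and since the first of these quantities is literally $\|s^c(f)\|_p+\|\mathrm P*f\|_p=\|f\|_{\h_p^c}$ by the definition of the $\h_p^c$-norm, the asserted equivalence $\|f\|_{\h_p^c}\approx\|g^c(f)\|_p+\|\mathrm P*f\|_p$ follows at once.
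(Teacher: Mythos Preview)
Your proposal is correct and follows essentially the same approach as the paper: the corollary is deduced from Theorem \ref{thm main1} by specializing to $\Phi=-2\pi I(\mathrm P)$, $\phi=\mathrm P$, $\Psi=-8\pi I(\mathrm P)$, $\psi=\mathrm P+4\pi I(\mathrm P)$, verifying the identity \eqref{eq: basic2} and observing via the inverse Fourier transform that $\Phi_\e*f=\e\,\partial_\e\mathrm P_\e(f)$, so that $s_\Phi^c(f)=s^c(f)$ and $g_\Phi^c(f)=g^c(f)$. Your verification of hypotheses (1)--(4) is a bit more explicit than the paper's (which leaves this to the reader), but the argument is the same.
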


Secondly, consider  $\Phi$ to be a Schwartz function on $\R$ satisfying:
\begin{equation}\label{eq: Phi condition}
\begin{cases}
\Phi \text{ is of vanishing mean};\\
\Phi \text{ is nondegenerate in the sense of } \eqref{eq: nondegenerate}.
\end{cases}
\end{equation}
Set $\Phi_{\varepsilon}(s)=\varepsilon^{-d}\Phi(\frac{s}{\varepsilon})$
for $\varepsilon>0$.  In the sequel, we will show that every Schwartz function satisfying \eqref{eq: Phi condition} fulfils the four conditions in the beginning of this subsection. So they all can be used to characterize $\h_p(\R,\M)$.

It is a well-known elementary fact (ef. e.g. \cite[p. 186]{Stein1993}) that there exists a Schwartz function $\Psi$ of vanishing mean such that
\begin{equation}\label{eq: basic}
\int_{0}^{\infty}\widehat{\Phi}(\varepsilon\xi)\overline{\widehat{\Psi}(\varepsilon\xi)}\frac{d\varepsilon}{\varepsilon}=1,\,  \forall\,\xi\in\mathbb{R}^{d}\setminus\left\{ 0\right\} .
\end{equation}

\begin{lem}\label{lem: Schwartz}
$\int_{0}^{1}\widehat{\Phi}  (\varepsilon\cdot )\overline{\widehat{\Psi}  (\varepsilon\cdot )}\frac{d\varepsilon}{\varepsilon}$  is an infinitely differentiable function on $\R$ if we define its value at the origin as $0$.
\end{lem}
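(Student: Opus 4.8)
The plan is to reduce the assertion to a routine application of differentiation under the integral sign; the only delicate point is the interplay between the weight $\frac{d\e}{\e}$ and the lower endpoint $\e\to 0$, and this is precisely neutralized by the vanishing mean of $\Phi$ (equivalently, of $\Psi$).

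First I would set $\phi:=\widehat{\Phi}\cdot\overline{\widehat{\Psi}}$ and $F(\xi):=\int_{0}^{1}\phi(\e\xi)\,\frac{d\e}{\e}$. Since $\widehat{\Phi}$ and $\widehat{\Psi}$ are Schwartz functions, so is their product $\phi$, and since $\Phi$ (and also $\Psi$) has vanishing mean, $\phi(0)=\widehat{\Phi}(0)\,\overline{\widehat{\Psi}(0)}=0$. The mean value inequality then gives $|\phi(\e\xi)|\le \e|\xi|\,\|\nabla\phi\|_{\infty}$, so $|\phi(\e\xi)|/\e\lesssim|\xi|$ is bounded on $(0,1]$ for each fixed $\xi$; hence the integral defining $F(\xi)$ converges absolutely for every $\xi\in\R$, and $F(0)=0$ because the integrand vanishes identically when $\xi=0$, which is consistent with the stated convention.

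Next I would differentiate under the integral sign. For a multi-index $m$ with $|m|_{1}\ge 1$ the chain rule gives $\partial_{\xi}^{m}\big(\phi(\e\xi)/\e\big)=\e^{|m|_{1}-1}(D^{m}\phi)(\e\xi)$, which is dominated by $\e^{|m|_{1}-1}\|D^{m}\phi\|_{\infty}$, an $L_{1}$-function of $\e$ on $(0,1)$ independent of $\xi$; combined with the local bound on the integrand itself from the previous step, the standard dominated-differentiation theorem applies, and iterating it over $|m|_{1}$ yields that $F\in C^{\infty}(\R)$ with $\partial^{m}F(\xi)=\int_{0}^{1}\e^{|m|_{1}-1}(D^{m}\phi)(\e\xi)\,d\e$ for every $m$ with $|m|_1\ge1$; continuity of each $\partial^{m}F$ follows from dominated convergence, the integrand being jointly continuous and uniformly dominated. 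In particular $\partial^{m}F(0)=(D^{m}\phi)(0)/|m|_{1}$ is finite, so no difficulty occurs at the origin.

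I do not anticipate a genuine obstacle: the entire content is that $\phi(0)=0$ converts the potentially logarithmically divergent factor $\frac{d\e}{\e}$ near $\e=0$ into something harmless, which is exactly the vanishing-mean hypothesis already in force. As an alternative one could Taylor-expand $\phi$ at the origin, integrate the polynomial part explicitly (it contributes a polynomial in $\xi$, hence a smooth function) and control the higher-order remainder by the same domination; but the direct argument above is shorter and I would present that one.
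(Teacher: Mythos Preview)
Your proof is correct. Both you and the paper exploit the same mechanism---that $\widehat{\Phi}(0)=0$ kills the potential logarithmic divergence in $\frac{d\e}{\e}$---but the executions differ: the paper Taylor-expands $\widehat{\Phi}(\e\xi)$ and $\widehat{\Psi}(\e\xi)$ separately at the origin, observes that each carries a factor of $\e$, and concludes that the integrand and all its $\xi$-derivatives are uniformly integrable near $\xi=0$; you instead set $\phi=\widehat{\Phi}\,\overline{\widehat{\Psi}}$, use $\phi(0)=0$ and the mean-value inequality once, then apply the chain rule $\partial_\xi^m\big(\phi(\e\xi)/\e\big)=\e^{|m|_1-1}(D^m\phi)(\e\xi)$ together with dominated differentiation. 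Your route is slightly more economical (no remainder terms to track, and it handles all $\xi$ at once rather than reducing to the origin), while the paper's Taylor argument makes the order of vanishing in $\e$ more explicit. You even mention the Taylor alternative at the end, which is precisely what the paper does.
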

\begin{proof}
To prove the assertion, it suffices to show that $\int_{0}^{1}\widehat{\Phi}  (\varepsilon\cdot )\overline{\widehat{\Psi}  (\varepsilon\cdot )}\frac{d\varepsilon}{\varepsilon}$ is infinitely differentiable at the origin. Given $\e \in (0,1]$, we expand $\widehat{\Phi}  (\varepsilon\cdot )$ in the Taylor series at the origin
\[
\widehat{\Phi}  (\varepsilon\xi )=\sum_{|\gamma |_1\leq N}D^{\gamma}\widehat{\Phi}(0)\frac{\e^{|\gamma|_1}\xi^{\gamma}}{\gamma !}+ \sum_{|\gamma |_1= N+1}R_{\gamma}(\e\xi) \,\xi^{\gamma},
\]
with the remainder of integral form equal to 
$$R_{\gamma}(\varepsilon\xi)  = \frac{ (N+1) \varepsilon^{N+1}}{\gamma !} \int_0^1   (1-\theta) ^{N} D^\gamma\widehat{\Phi}(\theta  \varepsilon  \xi) d\theta \,.$$
Since $\widehat{\Phi}(0)=0$, the above Taylor series implies that
\[
\widehat{\Phi}  (\varepsilon\xi )=\sum_{1\leq |\gamma|_1\leq N}D^{\gamma}\widehat{\Phi}(0)\frac{\e^{|\gamma|_1}\xi^{\gamma}}{\gamma !}+ \sum_{|\gamma |_1= N+1}R_{\gamma}(\varepsilon \xi) \,\xi^{\gamma}.
\]
Similarly, we have
\[
\widehat{\Psi}  (\varepsilon\xi )=\sum_{1\leq |\beta |_1 \leq N}D^{\beta}\widehat{\Psi}(0)\frac{\e^{|\beta|_1}\xi^{\beta}}{\beta !}+ \sum_{|\beta |_1= N+1}R'_{\beta}(\varepsilon\xi) \,\xi^{\beta},
\]
where $R'_\beta$ is the integral form remainder of $\widehat{\Psi}$.
Thus, both $\widehat{\Phi}  (\varepsilon\xi )$ and $\widehat{\Psi}  (\varepsilon\xi )$ contain only powers of $\varepsilon$ with order at least $1$.  Therefore, the integral $\int_{0}^{1}\widehat{\Phi}  (\varepsilon\xi )\overline{\widehat{\Psi}  (\varepsilon\xi )}\frac{d\varepsilon}{\varepsilon}$ (and the integrals of arbitrary order derivatives of $\widehat{\Phi}  (\varepsilon\xi )$ and $\widehat{\Psi}  (\varepsilon\xi )$) converge uniformly for $\xi\in \R$ close to the origin. We then obtain that $\int_{0}^{1}\widehat{\Phi}  (\varepsilon\xi )\overline{\widehat{\Psi}  (\varepsilon\xi )}\frac{d\varepsilon}{\varepsilon}$ is infinitely differentiable at the origin $\xi=0$.
\end{proof}

It follows immediately from Lemma \ref{lem: Schwartz} that $\int_{1}^{\infty}\widehat{\Phi}  (\varepsilon\cdot )\overline{\widehat{\Psi}  (\varepsilon\cdot )}\frac{d\varepsilon}{\varepsilon}$ is a Schwartz function if we define its value at the origin by $1$. Then we can find two other functions $\phi$, $\psi$ such that  $\widehat{\phi}, \widehat{\psi}\in H_2^\sigma(\R)$, $\widehat{\phi}(0)>0, \widehat{\psi}(0)> 0$ and 
\begin{equation}
\widehat{\phi}  (\xi )\overline{\widehat{\psi}(\xi)}+\int_{0}^{1}\widehat{\Phi}  (\varepsilon\xi )\overline{\widehat{\Psi}  (\varepsilon\xi )}\frac{d\varepsilon}{\varepsilon}=1, \quad   \forall\, \xi\in \R;  \label{eq: basic2'}
\end{equation}
Indeed, for $\beta >0$ large enough, the function $(1+|\cdot |^2)^{-\beta}$ belongs to $H_2^\sigma(\R)$. On the other hand, if $F\in \mathcal{S}(\R)$, the function $(1+|\cdot |^2)^{\beta}F$ is still in $H_2^\sigma(\R)$. Thus we obtain \eqref{eq: basic2}.

Now let show that conditions (1) and (4) hold for $\Phi$, $\phi$ satisfying \eqref{eq: Phi condition}. First, we deal with the case $1\leq p\leq 2$.  Let $H=L_{2}  (  (0,1 ),\frac{d\varepsilon}{\varepsilon} )$.
Define the kernel $\fk:\mathbb{R}^{d}\rightarrow H$ by $\fk  (s )=\Phi_{\cdot}  (s )$
with $\Phi_{\cdot}  (s ):\varepsilon\mapsto\Phi_{\varepsilon}  (s )$.
Then we can check that
\[
\sup_{\xi\in\mathbb{R}^{d}}  \|  \widehat{\Phi}  (\varepsilon\xi ) \|  _{H}<\infty,\;\;\thinspace  \|  \Phi_{\varepsilon}  (s ) \|  _{H}\lesssim\frac{1}{  |s |^{d+1}},\thinspace\;  \forall\, s\in\mathbb{R}^{d}\setminus  \{ 0 \} \]
 and that
  \[  \|  \nabla\Phi_{\varepsilon}  (s ) \|  _{H}\lesssim\frac{1}{  |s |^{d+1}}, \thinspace\;  \forall\, s\in\mathbb{R}^{d}\setminus  \{ 0 \}.
\]
Thus, $\fk $ satisfies the
assumption of Corollary \ref{Cor C-Z}. By Remark \ref{rem: L_p H h_p}, we have, for any $1\leq p\leq 2$,
\begin{equation*}
  \|  \Phi_{\e}*f \|  _{L_{p}  (\mathcal{N};H^{c} )}  =   \|  g_{\Phi}^{c}  (f ) \|  _{p}\lesssim   \|  f \|  _{\h_{p}^{c}}.
\end{equation*}
The treatment of $s_{\Phi}^{c}$ is similar. In this case, we take the Hilbert
space $H=L_{2}  (\widetilde{\Gamma},\frac{dtd\varepsilon}{\varepsilon^{d+1}} )$.
On the other hand, $\widehat{\phi}\in H_2^\sigma(\R)$ implies $\phi\in L_1(\R)$, then $\|\phi* f\|_{L_p(\mathcal{N})}\lesssim \| f\|_{L_p(\mathcal{N})}\lesssim \|f\|_{\h^c_p}$.
Thus, combining the above estimates, we obtain
\begin{equation*}
  \|  g_{\Phi}^{c}  (f ) \|  _{p}+  \|  \phi*f \|  _{p}  \lesssim  \|  f \|  _{\h_{p}^{c} }
  \end{equation*}
  \begin{equation*}
 \|  s_{\Phi}^{c}  (f ) \|  _{p}+  \|  \phi*f \|  _{p} \lesssim  \|  f \|  _{\h_{p}^{c}}. 
  \end{equation*}
Then, a simple duality argument using \eqref{eq: basic2} and  Theorem \ref{cor: duality} gives the above inequalities for the case $p>2$. Moreover, it is obvious that if we replace $\Phi$ by $D^m \Phi$, the above two inequalities still hold for any $1\leq p<\infty$.

In the end, it remains to check the condition (3) for $\Psi$, $\psi$ obtained in \eqref{eq: basic} and \eqref{eq: basic2'}. This can be done by showing a Carleson measure characterization of $\bmo_q^c$ by general test functions. The proof of the following lemma has the same pattern with that of Lemma \ref{lem: general Carleson}, so is left to the reader.

\begin{lem}\label{lem: Carleson Phi}
Let $2<q\leq \infty $, $g\in{\bmo}_q^{c}  (\mathbb{R}^{d},\mathcal{M} )$ and
$d\mu_{g}=  |\Psi_{\varepsilon}*g  (s ) |^{2}\frac{dsd\varepsilon}{\varepsilon}$. Then $d\mu_{g}$
is an $\mathcal{M}$-valued $q$-Carleson measure on the strip $\mathbb{R}^d\times (0,1)$. Furthermore, let $\psi$ be any function on $\R$ such that
\begin{equation}\label{eq: psi}
\widehat{\psi}\in H_2^\sigma(\R) \quad \text{with} \quad \sigma>\frac{d}{2}.
\end{equation}
We have
\[
 \max\Big\{\big\|\underset{\substack{s\in Q\subset \R\\ |Q|<1}}{\sup{} ^+} \frac{1}{|Q|}\int_{T(Q)}
 d\mu _g \big\|_{\frac{q}{2}}^{\frac{1}{2}},\,  \|  \psi*g \|  _{q}\Big\} \lesssim  \|  g \|  _{{\bmo}_q^{c}}.
\]
\end{lem}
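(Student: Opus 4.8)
The statement to prove is Lemma \ref{lem: Carleson Phi}: that for $g \in \bmo_q^c(\R,\M)$, the measure $d\mu_g = |\Psi_\e * g(s)|^2 \frac{ds\,d\e}{\e}$ is an $\M$-valued $q$-Carleson measure on the strip, and that both the Carleson quantity and $\|\psi * g\|_q$ are dominated by $\|g\|_{\bmo_q^c}$, provided $\widehat{\psi} \in H_2^\sigma(\R)$ with $\sigma > d/2$. The paper says this follows the pattern of Lemma \ref{lem: general Carleson}, and indeed the plan is to mimic that argument in the $L_{q/2}(\N;\ell_\infty)$ (resp. $L_q(\N)$) setting rather than the $\M$-scalar setting. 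The key structural facts about $\Psi$ (a Schwartz function with $\widehat{\Psi}(0)=0$, hence vanishing mean) that drive Lemma \ref{lem: general Carleson} are: (i) $\int_{\R}\Psi_\e(s)\,ds = 0$; (ii) $\sup_{\xi}\int_0^\infty |\widehat{\Psi}(\e\xi)|^2\frac{d\e}{\e} < \infty$; (iii) $|\Psi_\e(s)| \lesssim \frac{\e}{(\e+|s|)^{d+1}}$ (more generally $\Psi_\e$ has fast decay; the $(d+1)$-power suffices). All three hold for our Schwartz $\Psi$.

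The main body would proceed as follows. Fix a cube $Q$ with $|Q| < 1$ and write $2Q$ for its concentric double. Decompose $g = g_1 + g_2 + g_3$ exactly as in Lemma \ref{lem: Carleson < bmo}: $g_1 = (g - g_{2Q})\mathbbm{1}_{2Q}$, $g_2 = (g - g_{2Q})\mathbbm{1}_{\R\setminus 2Q}$, $g_3 = g_{2Q}$ a constant. By vanishing mean of $\Psi$, $\Psi_\e * g_3 = 0$, so $\Psi_\e * g = \Psi_\e * g_1 + \Psi_\e * g_2$ and the Carleson integral over $T(Q)$ splits (up to a factor $2$ from \eqref{eq: 2-1}) into the $g_1$ and $g_2$ contributions. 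For the $g_1$ term: extend the $T(Q)$ integral to $\R \times (0,\infty)$, apply the operator-valued Plancherel formula \eqref{eq: Planchel} together with fact (ii) to bound $\int_{\R}\int_0^\infty |\Psi_\e * g_1(s)|^2\frac{ds\,d\e}{\e} \lesssim \int_{\R}|g_1(s)|^2\,ds = \int_{2Q}|g - g_{2Q}|^2\,ds$; this last quantity, by definition of $\bmo_q^c$ via the operators $a,b$ in \eqref{eq: a}, is $\leq |2Q|\cdot a(s)$ for $s\in 2Q$ (here one uses that $Q$ and $2Q$ have comparable, $<$ some fixed bound, volumes, so both cases of the $\bmo_q^c$ norm are controlled — I would handle the boundary case $|2Q|\ge 1$ by Remark \ref{|Q|=1 eq |Q|geq1}'s $q$-analogue). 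Dividing by $|Q|$ gives the pointwise domination by $a(s)$ up to a constant. For the $g_2$ term: use fact (iii) and \eqref{eq: 2-1} to get $|\Psi_\e * g_2(s)|^2 \lesssim \frac{\e}{\cdots}\int_{\R\setminus 2Q}\frac{|g(t)-g_{2Q}|^2}{(\e+|s-t|)^{d+1}}\,dt$, then run the standard dyadic annuli estimate $\sum_{k\ge1}\int_{2^{k+1}Q\setminus 2^kQ}$, producing $\lesssim \frac{1}{l(Q)}\sup_k \frac{1}{|2^{k+1}Q|}\int_{2^{k+1}Q}|g-g_{2Q}|^2$, which is again $\lesssim a(s)$ pointwise (using \eqref{eq: a} on each dilate, since those dilates can still have volume $<1$ or, if not, are covered by the $q$-analogue of Remark \ref{|Q|=1 eq |Q|geq1}). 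The $\e$-integral $\int_0^{l(Q)}\e\cdot\frac{\e}{(|s-c_Q|^2+\e^2)^{(d+2)/2}}d\e$ converges and contributes the needed $l(Q)$ factor. Assembling, $\frac{1}{|Q|}\int_{T(Q)}d\mu_g \lesssim a(s)$ for all $s\in Q$, $|Q|<1$, which is precisely the $q$-Carleson condition, and taking $\|\cdot\|_{q/2}$ of $a$ (using $\|a\|_{q/2} \lesssim \|g\|_{\bmo_q^c}^2$) gives the first half of the displayed inequality.

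For the term $\|\psi * g\|_q$: here I would follow the corresponding part of Lemma \ref{lem: Carleson < bmo} and the observation right after it. Cover $\R$ by unit cubes $Q_m = Q_0 + m$, $m \in \Z^d$, write $\psi * g(s) = \sum_m \int_{Q_m}\psi(t)g(s-t)\,dt$, apply \eqref{eq: 2-1} to each term to get $|\psi*g(s)|^2 \lesssim \big(\sum_m (\int_{Q_m}|\psi(t)|^2dt)^{1/2}\big)^2 \cdot \sup_m \frac{1}{|Q_m|}\int_{Q_m}|g(s-t)|^2\,dt \lesssim \|\widehat{\psi}\|_{H_2^\sigma}^2 \cdot b(s)$, where the bound on $\sum_m(\int_{Q_m}|\psi|^2)^{1/2}$ by $\|\widehat\psi\|_{H_2^\sigma}$ is the Sobolev-embedding estimate displayed just before Lemma \ref{lem: general Carleson} (this is where $\sigma > d/2$ enters), and $b$ is the operator from \eqref{eq: b}. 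Taking $\|\cdot\|_q$ and using $\|b\|_{q/2} \lesssim \|g\|_{\bmo_q^c}^2$ finishes. The only genuinely delicate point — and the one I'd flag as the main obstacle — is the bookkeeping of cube volumes throughout: the $\bmo_q^c$ definition only directly controls cubes of volume $<1$ (for the oscillation) and $=1$ (for the raw $L^2$ average), so every dilate $2^{k+1}Q$ appearing in the $g_2$ estimate that has volume $\ge 1$ must be routed through the $q$-analogue of Remark \ref{|Q|=1 eq |Q|geq1} to be absorbed into $a$ (or a comparable operator); making this uniform in $k$ so the dyadic sum still converges is the one place the local truncation genuinely differs from Mei's non-local argument. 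Everything else is a routine transcription of Lemmas \ref{lem: Carleson < bmo} and \ref{lem: general Carleson} with $\M$-norms replaced by $L_{q/2}(\N)$- and $L_q(\N)$-norms and the pointwise domination formulation \eqref{eq: a}--\eqref{eq: b} used in place of the supremum-over-cubes formulation.
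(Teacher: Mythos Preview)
Your proposal is correct and follows exactly the route the paper intends: it explicitly says the proof ``has the same pattern with that of Lemma~\ref{lem: general Carleson}'' and leaves the details to the reader, and your outline is precisely the $q$-analogue of the argument behind Lemmas~\ref{lem: Carleson < bmo} and~\ref{lem: general Carleson}, with the pointwise domination formulation \eqref{eq: a}--\eqref{eq: b} replacing the $\|\cdot\|_\M$-suprema. Your identification of the cube-volume bookkeeping as the only genuinely local wrinkle is also on target.

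One small imprecision worth flagging in the $\psi*g$ step: you claim the pointwise operator inequality $\sup_m \int_{Q_m}|g(s-t)|^2\,dt \lesssim b(s)$, but \eqref{eq: b} only gives $\int_Q|g|^2 \le b(s)$ when $s\in Q$, and for $m\neq 0$ the cube $s-Q_m$ need not contain $s$. The fix is routine: after the weighted Cauchy--Schwarz step you already have $|\psi*g(s)|^2 \lesssim \big(\sum_m \alpha_m^{1/2}\big)\sum_m \alpha_m^{1/2}\, g^\#(s-m)$ with $\alpha_m=\int_{Q_m}|\psi|^2$ and $g^\#$ as in Lemma~\ref{bmoq-equi}; now take the $L_{q/2}$-norm directly, use translation invariance $\|g^\#(\cdot-m)\|_{q/2}=\|g^\#\|_{q/2}$, and sum the convergent series $\sum_m\alpha_m^{1/2}\lesssim\|\widehat\psi\|_{H_2^\sigma}$. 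This yields $\|\psi*g\|_q\lesssim\|g\|_{\bmo_q^c}$, which is all the lemma asserts.
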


\begin{rmk}\label{rmk: Carleson Phi}
It is worthwhile to note that, if $\Psi$ and $\psi$ are determined by \eqref{eq: basic} and \eqref{eq: basic2'}, the opposite of the above lemma is also true. This can be deduced by a similar argument as that of Corollary \ref{cor: Carleson=Dbmo}; we omit the details. 
\end{rmk}

By the discussion above, we deduce the following corollary from Theorem \ref{thm main1}.
\begin{cor}
Let $\Phi$ be the Schwartz function on $\R$ satisfying \eqref{eq: Phi condition} and $\phi$ be the function given by \eqref{eq: basic2'}. Then for any $1\leq p<\infty$, we have 
\begin{equation}
 \|  f \|  _{\h_{p}^{c}}\approx  \|  s_{\Phi}^{c}  (f )  \|  _{p}+  \|  \phi*f \|  _{p}\approx  \|  g_{\Phi}^{c}  (f )  \|  _{p}+  \|  \phi*f \|  _{p} \label{eq: main}
\end{equation}
 with the relevant constants depending only on $d, p, \Phi$ and $\phi$.
 
 \end{cor}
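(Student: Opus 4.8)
The plan is to derive this corollary as a direct specialization of Theorem \ref{thm main1}. The work amounts to checking that the specific pair $(\Phi,\phi)$ — with $\Phi$ a Schwartz function satisfying \eqref{eq: Phi condition} and $\phi$ the function obtained from \eqref{eq: basic2'} — together with the companion functions $\Psi$ from \eqref{eq: basic} and $\psi$ from \eqref{eq: basic2'}, fulfils the four hypotheses listed just before Theorem \ref{thm main1}. Most of this verification has in fact already been carried out in the discussion following the proof of Theorem \ref{thm main1}, so the proof of the corollary is chiefly a matter of assembling those observations into a single statement.

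First I would record condition (2): this is precisely the content of Lemma \ref{lem: Schwartz} together with the remark immediately after it. Indeed, Lemma \ref{lem: Schwartz} shows $\int_0^1\widehat{\Phi}(\e\cdot)\overline{\widehat{\Psi}(\e\cdot)}\tfrac{d\e}{\e}$ is infinitely differentiable (with value $0$ at the origin), hence $\int_1^\infty\widehat{\Phi}(\e\cdot)\overline{\widehat{\Psi}(\e\cdot)}\tfrac{d\e}{\e}$ is a Schwartz function with value $1$ at the origin; multiplying this by a large power $(1+|\cdot|^2)^{\beta}$ and then dividing by the same power, one produces $\phi,\psi$ with $\widehat{\phi},\widehat{\psi}\in H_2^\sigma(\R)$, $\widehat{\phi}(0)>0$, $\widehat{\psi}(0)>0$, and \eqref{eq: basic2'} holding. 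Next, conditions (1) and (4): for $1\le p\le 2$ one uses $H=L_2((0,1),\tfrac{d\e}{\e})$ (resp.\ $H=L_2(\widetilde\Gamma,\tfrac{dtd\e}{\e^{d+1}})$) and checks that the kernels $\fk(s)=\Phi_\cdot(s)$ (resp.\ the conic kernel) satisfy the size and Lipschitz bounds of Corollary \ref{Cor C-Z}; since these estimates are stable under replacing $\Phi$ by any $D^m\Phi$ with $|m|_1\le d$, this gives condition (1), while $\widehat{\phi}\in H_2^\sigma(\R)$ forces $\phi\in L_1(\R)$ and hence $\phi\ast$ is a Calder\'on--Zygmund operator, giving condition (4); the case $p>2$ follows by the duality argument via \eqref{eq: basic2'} and Theorem \ref{cor: duality} already indicated. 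Finally, condition (3) is exactly Lemma \ref{lem: Carleson Phi} applied to $\Psi$ and $\psi$ (note $\widehat{\psi}\in H_2^\sigma(\R)$ with $\sigma>\tfrac d2$, so \eqref{eq: psi} holds).

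With all four conditions verified, Theorem \ref{thm main1} applies verbatim and yields \eqref{eq: main}, with constants depending only on $d$, $p$, and the pairs $(\Phi,\Psi)$, $(\phi,\psi)$; since $\Psi$ and $\psi$ are themselves determined by $\Phi$ and $\phi$, the constants depend only on $d,p,\Phi,\phi$, as claimed. There is no substantial obstacle here: the corollary is a packaging result. If anything, the only mild subtlety is making sure the choice of $\phi,\psi$ in \eqref{eq: basic2'} can genuinely be arranged with $\widehat\phi,\widehat\psi\in H_2^\sigma(\R)$ while still satisfying the normalization $\widehat\phi(0)>0$, $\widehat\psi(0)>0$ — but this is handled by the $(1+|\cdot|^2)^{\pm\beta}$ trick recalled above, and one should double-check that the resulting $\psi$ indeed meets \eqref{eq: psi} so that Lemma \ref{lem: Carleson Phi} is applicable. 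I will therefore present the proof as a short verification of conditions (1)--(4) followed by an invocation of Theorem \ref{thm main1}.

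\begin{proof}
We apply Theorem \ref{thm main1} to the pair $(\Phi,\phi)$, with $\Psi$ given by \eqref{eq: basic} and $\psi$ given by \eqref{eq: basic2'}. It remains to check that these functions satisfy conditions (1)--(4) preceding Theorem \ref{thm main1}.

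Condition (2) is \eqref{eq: basic2'}, which is valid by Lemma \ref{lem: Schwartz} and the discussion following it: $\int_1^\infty\widehat{\Phi}(\e\cdot)\overline{\widehat{\Psi}(\e\cdot)}\frac{d\e}{\e}$ is a Schwartz function equal to $1$ at the origin, and multiplying and dividing by a sufficiently large power $(1+|\cdot|^2)^\beta$ produces $\phi,\psi$ with $\widehat{\phi},\widehat{\psi}\in H_2^\sigma(\R)$, $\widehat{\phi}(0)>0$, $\widehat{\psi}(0)>0$ and \eqref{eq: basic2'}. In particular $\widehat{\psi}\in H_2^\sigma(\R)$ with $\sigma>\frac{d}{2}$, so $\psi$ satisfies \eqref{eq: psi}; hence condition (3) follows from Lemma \ref{lem: Carleson Phi} applied to $\Psi$ and $\psi$.

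For conditions (1) and (4), fix $m$ with $0\leq|m|_1\leq d$. Taking $H=L_2((0,1),\frac{d\e}{\e})$ and the kernel $\fk(s)=(D^m\Phi)_{\cdot}(s)$, one checks the bounds
\[
\sup_{\xi\in\R}\|\widehat{D^m\Phi}(\e\xi)\|_H<\infty,\quad \|(D^m\Phi)_\e(s)\|_H\lesssim\frac{1}{|s|^{d+1}},\quad \|\nabla (D^m\Phi)_\e(s)\|_H\lesssim\frac{1}{|s|^{d+1}}
\]
for $s\neq 0$, so that $\fk$ satisfies the hypotheses of Corollary \ref{Cor C-Z}; the conic square function is treated in the same way with $H=L_2(\widetilde{\Gamma},\frac{dtd\e}{\e^{d+1}})$. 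This gives condition (1). Since $\widehat{\phi}\in H_2^\sigma(\R)$ implies $\phi\in L_1(\R)$, the operator $f\mapsto\phi*f$ is bounded on $L_p(\N)$ and a fortiori is a Calder\'on--Zygmund singular integral operator in the sense of Corollary \ref{Cor C-Z}, which is condition (4).

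Thus all four conditions hold, and Theorem \ref{thm main1} yields \eqref{eq: main} with constants depending only on $d$, $p$, and the pairs $(\Phi,\Psi)$, $(\phi,\psi)$. Since $\Psi$ and $\psi$ are determined by $\Phi$ and $\phi$, these constants depend only on $d$, $p$, $\Phi$ and $\phi$.
\end{proof}
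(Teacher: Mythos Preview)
Your proposal is correct and follows the same approach as the paper. The paper itself does not give a separate proof of this corollary; it simply states that it follows from Theorem \ref{thm main1} together with the preceding discussion verifying conditions (1)--(4) for Schwartz $\Phi$ satisfying \eqref{eq: Phi condition} and the associated $\phi,\Psi,\psi$, which is precisely what you have written out.
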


\subsection{Discrete characterizations}

In this subsection, we give a discrete characterization for operator-valued local Hardy spaces. To this end, we need some modifications of the four conditions in the beginning of last subsection.
The square functions $s_{\Phi}^c(f)$ and $g_{\Phi}^c(f)$ can be discretized as follows:
\begin{equation*}
\begin{split}
 g_{\Phi}^{c,D}(f)(s)  &= \Big(\sum_{j\geq 1} |\Phi_j*f (s)|^2\Big)^{\frac 1 2},\\
 s_{\Phi}^{c,D}(f)(s) &= \Big(\sum_{j\geq 1} 2^{dj}\int_{B(s, 2^{-j})} |\Phi_j*f (t)|^2 dt\Big)^{\frac 1 2}.
\end{split}
\end{equation*}
Here $\Phi_j$ is the inverse Fourier transform of $\Phi(2^{-j}\cdot)$. This time, to get a resolvent of the unit on $\R$, we need to assume that $\Phi,\Psi, \phi, \psi$ satisfy
 \begin{equation}\label{eq: reproduceD 2}
  \sum_{j=1}^\infty\widehat{\Phi} (2^{-j}\xi)\, \overline{\widehat{\Psi} (2^{-j}\xi)}+\widehat{\phi}(\xi)\overline{\widehat{\psi}(\xi)}=1, \quad   \forall\, \xi \in \R.
 \end{equation}
In brief, the complex-valued infinitely differentiable function $\Phi$ considered in this subsection satisfies:
\begin{enumerate}[\rm(1)]
\item Every $D^m \Phi$  with $0\leq |m|_1 \leq d$ makes $f\mapsto s_{D^m \Phi}^{c,D}f$ and $f\mapsto g_{D^m \Phi}^{c,D}f$  Calder\'on-Zygmund singular integral operators in Corollary \ref{Cor C-Z};
\item There exist functions $\Psi, \psi$ and $\phi$ that fulfil \eqref{eq: reproduceD 2};
\item The above $\Psi$ and $\psi$ make $d\mu_f^D = \sum_{j\geq 1}  |\Psi_{j}*f(s)|^2 ds \times d\delta_{2^{-j}}(\e)$ (with $\delta_{2^{-j}}(\e)$ the unit Dirac mass at the point $2^{-j}$) and $\phi*f$ satisfy: 
\[
 \max\Big\{\big\|\underset{\substack{s\in Q\subset \R\\ |Q|<1}}{\sup{} ^+} \frac{1}{|Q|}\int_{T(Q)}
d\mu_f^D \big\|_{\frac{q}{2}}^{\frac{1}{2}},\,  \|  \psi*f \|  _{q}\Big\} \lesssim  \|  f \|  _{{\bmo}_q^{c}} \quad \text{for} \,\, q> 2;
\]
\item The above $\phi$ makes $f\mapsto \phi*f$  a Calder\'on-Zygmund singular integral operator in Corollary \ref{Cor C-Z}.
\end{enumerate}

\begin{rmk}
 Any  Schwartz function that has vanishing mean and is nondegenerate in the sense of
\eqref{eq: nondegenerate} satisfies all the four conditions above.
 \end{rmk}

  The following discrete version of Theorem \ref{thm main1} will play a crucial role in the study of operator-valued Triebel-Lizorkin spaces on $\R$ in our forthcoming paper \cite{XX18}.  Now we fix the pairs $(\Phi, \Psi)$ and $(\phi,\psi)$ satisfying  the above four conditions.

\begin{thm}\label{thm: equivalence hpD}
Let  $1\leq p<\infty$. Then for any $f\in L_{1}  (\mathcal{M};\mathrm R_{d}^{c} )+L_{\infty}  (\mathcal{M};\mathrm R_{d}^{c} )$,
 $f\in \h_p^c(\R,\M)$  if and only if  $s_{\Phi}^{c, D}(f)\in L_{p}(\N)$ and $\phi *f\in L_{p}(\N)$   if and only if $g_{\Phi}^{c, D}(f)\in L_{p}(\N)$ and $\phi *f\in L_{p}(\N)$. Moreover,
 $$
 \|f\|_{\h^c_p}\approx \|s_{\Phi}^{c, D}(f)\|_{p}+\|\phi *f\|_{p} \approx \|g_{\Phi}^{c, D}(f)\|_{p}+\|\phi *f\|_{p} $$
with the relevant constants depending only on $ d, p$, and the pairs $(\Phi, \Psi)$ and $(\phi ,\psi)$.
\end{thm}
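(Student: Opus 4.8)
The plan is to follow the proof of Theorem~\ref{thm main1} essentially line by line, making three systematic replacements: the continuous resolution of the identity \eqref{eq: basic2} is replaced by its discrete counterpart \eqref{eq: reproduceD 2}; the continuous Carleson estimates of Lemma~\ref{lem: Carleson Phi} are replaced by condition~(3) above; and integrals $\int_0^1(\cdot)\,\tfrac{d\e}{\e}$ are replaced by sums $\sum_{j\geq1}(\cdot)$. The two inequalities $\|s_\Phi^{c,D}(f)\|_p+\|\phi*f\|_p\lesssim\|f\|_{\h_p^c}$ and $\|g_\Phi^{c,D}(f)\|_p+\|\phi*f\|_p\lesssim\|f\|_{\h_p^c}$ are the ``easy'' half: one checks that the $\ell_2$-valued kernels $s\mapsto(\Phi_j(s))_{j\geq1}$ and $s\mapsto(2^{dj/2}\Phi_j(s+\cdot)\un_{B(0,2^{-j})})_{j\geq1}$ (valued in $\bigoplus_{j\geq1}L_2(B(0,2^{-j}),2^{dj}dv)$) satisfy the hypotheses of Corollary~\ref{Cor C-Z} --- the Fourier transform bound reduces to $\sup_\xi\sum_{j\geq1}|\widehat\Phi(2^{-j}\xi)|^2<\infty$, which holds since $\widehat\Phi(0)=0$, and the size and regularity estimates follow from the Schwartz decay of $\Phi$ exactly as in the continuous case --- so that $f\mapsto s_\Phi^{c,D}(f)$ and $f\mapsto g_\Phi^{c,D}(f)$ are $H$-valued Calder\'on--Zygmund operators, hence bounded on $\h_p^c$ by Remark~\ref{rem: L_p H h_p}; and $\phi\in L_1(\R)$ because $\widehat\phi\in H_2^\sigma(\R)$, so $\|\phi*f\|_p\lesssim\|f\|_p\lesssim\|f\|_{\h_p^c}$ for $1\leq p\leq2$, the range $p>2$ being obtained by the usual duality argument using \eqref{eq: reproduceD 2} and Theorem~\ref{cor: duality}. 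The same applies to $D^m\Phi$ for every $|m|_1\leq d$.

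The core of the converse is the discrete analogue of Lemmas~\ref{lem: main1} and~\ref{lem:main2}: for $1\leq p<2$ with conjugate index $q$, $f\in\h_p^c(\R,\M)\cap L_2(\N)$ and $g\in\bmo_q^c(\R,\M)$,
\[
\Big|\tau\int_{\mathbb{R}^d}f(s)g^*(s)ds\Big|\lesssim\big(\|s_\Phi^{c,D}(f)\|_p+\|\phi*f\|_p\big)\|g\|_{\bmo_q^c},
\]
together with the radial version (with $g_\Phi^{c,D}$ in place of $s_\Phi^{c,D}$ and the extra factor $\|f\|_{\h_p^c}^{1-p/2}$). One expands $\tau\int fg^*=\sum_{j\geq1}\tau\int\Phi_j*f\,(\Psi_j*g)^*+\tau\int\phi*f\,(\psi*g)^*$ by \eqref{eq: reproduceD 2}, introduces the discrete two-variable square functions --- for instance $s_\Phi^{c,D}(f)(s,k)^2=\sum_{j\geq k}2^{dj}\int_{B(c_{m,j},2^{-j})}|\Phi_j*f(t)|^2dt$ for $s\in Q_{m,k}$, using the square-net partition of the proof of Lemma~\ref{lem: duality}, and its ``inner-cone'' companion $\overline s_\Phi^{c,D}$ --- inserts the factor $s_\Phi^{c,D}(f)(s,k)^{(p-2)/2}s_\Phi^{c,D}(f)(s,k)^{(2-p)/2}$, applies the Cauchy--Schwarz inequality to reach a product $A\cdot B$, and then bounds $A$ by the monotonicity of $k\mapsto\overline s_\Phi^{c,D}(f)(s,k)$ and a discrete Abel-summation replacing the fundamental theorem of calculus, and bounds $B$ by condition~(3) (the $q$-Carleson property of $d\mu^D_f$) as the Carleson estimate was used for $B$ in Lemma~\ref{lem: duality}. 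The radial statement additionally uses the pointwise bound $g_\Phi^{c,D}(f)(s)^2\lesssim\sum_{|m|_1\leq d}s_{D^m\Phi}^{c,D}(f)(s)^2$, the discrete form of \cite[Lemma~4.3]{XXX17}. Granting these lemmas, Theorem~\ref{thm: duality P} gives $\|f\|_{\h_p^c}\lesssim\|s_\Phi^{c,D}(f)\|_p+\|\phi*f\|_p$ (and the radial counterpart) for $1\leq p\leq2$; for $2<p<\infty$ one dualizes via Theorem~\ref{cor: duality}, choosing $g\in\h_q^c$ of norm one that almost realizes $\|f\|_{\h_p^c}$, expanding $\tau\int fg^*$ by \eqref{eq: reproduceD 2}, and applying H\"older in $L_p(\N;\ell_2)\times L_q(\N;\ell_2)$ together with the easy direction applied to $g$.

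The main obstacle I anticipate is purely technical and concentrated in the estimate of $A$: the continuous computation, which rewrote $\int_\e^1\int_{B(s,r/2)}|\Phi_r*f|^2\,\overline s^c(f)(s,r)^{p-2}$ as $-\tfrac{\partial}{\partial\e}\overline s^c(f)(s,\e)^2\cdot\overline s^c(f)(s,\e)^{p-2}$ and then used w$^*$-differentiability and monotonicity in $\e$, must now be carried out as a telescoping sum for an operator-valued (non-commuting) decreasing sequence. Concretely one needs the elementary but slightly delicate inequality $\tau\big[\sum_k(b_k-b_{k+1})\,b_{k+1}^{\,r}\big]\lesssim_r\tau[b_1^{\,r+1}]$ for $-1<r<0$ and $b_1\geq b_2\geq\cdots\geq0$, obtained from the operator monotonicity of $t\mapsto t^{r}$, and one must also verify that all the approximation reductions of Lemma~\ref{lem: duality} --- restriction to compactly supported nice $f$, reduction to a finite trace via $e_i f e_i$, and invertibility of the square functions after adding $\delta 1_\M$ --- go through verbatim in the discrete setting. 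Beyond this, the only other routine-but-necessary verification is that $s_{D^m\Phi}^{c,D}$ and $g_{D^m\Phi}^{c,D}$ genuinely fit the kernel hypotheses of Corollary~\ref{Cor C-Z} for each $|m|_1\leq d$, which reduces, as above, to the Schwartz decay and vanishing mean of $\Phi$.
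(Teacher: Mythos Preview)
Your proposal is correct and follows essentially the same route as the paper, which proves discrete analogues of Lemmas~\ref{lem: main1} and~\ref{lem:main2} (these are Lemmas~\ref{S-dualD} and~\ref{g-dualD}) and then concludes exactly as in Theorem~\ref{thm main1}. Your anticipated obstacle in the $A$-estimate is precisely where the paper spends its effort: with the increasing truncations $\overline s_j=\overline s_\Phi^{c,D}(f)(s,j)$ centered at $s$ (not at the cube centers~$c_{m,j}$, which are reserved for the $B$-term), the paper writes $\tau\sum_j\overline s_j^{\,p-2}(\overline s_j^{\,2}-\overline s_{j-1}^{\,2})=\tau\sum_j\overline s_j^{\,p-1}(\overline s_j-\overline s_{j-1})+\tau\sum_j\overline s_j^{\,p-2}\overline s_{j-1}(\overline s_j-\overline s_{j-1})$ and controls the second sum by conjugating with $\overline s^{(p-1)/2}$, $\overline s=\overline s_\infty$ --- a slightly different factorization than the inequality you wrote down, but the same mechanism.
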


The following paragraphs are devoted to the proof of Theorem \ref{thm: equivalence hpD} which is similar to that of Theorem \ref{thm main1}. We will just indicate the necessary modifications. We first prove
the discrete counterparts of Lemmas \ref{lem: main1} and \ref{lem:main2}.

\begin{lem}\label{S-dualD}
Let $1\leq p< 2$ and $q$ be the conjugate index of $p$. For any $f\in \h_p^c(\R,\M)\cap L_2(\N)$ and $g\in\bmo_q^c(\R,\M)$,
 $$\Big|\tau \int _{\R} f(s)g^*(s)ds\Big| \lesssim \big(\|s^{c,D}_\Phi(f)\|_{p}+\| \phi *f \|_{p}\big)\,\|g\|_{\bmo_q^c}\,.
 $$
 \end{lem}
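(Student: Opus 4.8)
The plan is to transcribe the proof of Lemma~\ref{lem: main1} almost verbatim, replacing the continuous dilation parameter $\e\in(0,1)$ by the dyadic index $j\ge1$, the dilates $\Phi_\e,\Psi_\e$ by $\Phi_j,\Psi_j$, the reproducing identity \eqref{eq: basic2} by its discrete form \eqref{eq: reproduceD 2}, and every integral $\int_0^1(\cdot)\frac{d\e}{\e}$ by a sum $\sum_{j\ge1}(\cdot)$. First I would reduce to a compactly supported, sufficiently nice $f$, and then, by compressing with an increasing net of $\tau$-finite projections and adding $\delta1_\M$, assume that $\tau$ is finite and that the discrete auxiliary square functions below are invertible in $\M$. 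Using \eqref{eq: reproduceD 2} and the polarized Plancherel identity \eqref{eq: Planchel-1} one writes
\[
\tau\int_{\R}f(s)g^*(s)\,ds=\sum_{j\ge1}\tau\int_{\R}\Phi_j*f(s)\,(\Psi_j*g(s))^*\,ds+\tau\int_{\R}\phi*f(s)\,(\psi*g(s))^*\,ds=:\mathrm{I}+\mathrm{II}.
\]
The term $\mathrm{II}$ is immediate: by H\"older and the discrete condition~(3) on $\psi$, $\mathrm{II}\le\|\phi*f\|_p\|\psi*g\|_q\lesssim\|\phi*f\|_p\|g\|_{\bmo_q^c}$.

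For $\mathrm{I}$ I would first pass to a ``discrete conic'' form: for each $j$, averaging $\Phi_j*f(t)(\Psi_j*g(t))^*$ in $s$ over $B(t,2^{-j-1})$ and applying Fubini turns $\mathrm{I}$ into a fixed dimensional multiple of $\sum_{j\ge1}2^{(j+1)d}\tau\int_{\R}\int_{B(s,2^{-j-1})}\Phi_j*f(t)(\Psi_j*g(t))^*\,dt\,ds$. Then I would introduce the discrete auxiliary square functions $s_\Phi^{c,D}(f)(s,j_0)$ and $\overline s_\Phi^{c,D}(f)(s,j_0)$ (dyadic tails $\sum_{j\ge j_0}2^{(j+1)d}\int_{B}|\Phi_j*f|^2$ over balls comparable to $2^{-j}$ centred at $s$, with slightly different apertures, playing the roles of $s_\Phi^c(f)(s,\e)$ and $\overline s_\Phi^c(f)(s,\e)$ of Lemma~\ref{lem: main1}); both are decreasing in $j_0$ and $s_\Phi^{c,D}(f)(s,1)\lesssim s_\Phi^{c,D}(f)(s)$. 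Inserting $s_\Phi^{c,D}(f)(s,j)^{(p-2)/2}$ and $s_\Phi^{c,D}(f)(s,j)^{(2-p)/2}$ in the $j$-th term and using the operator Cauchy--Schwarz inequality gives $|\mathrm{I}|^2\lesssim A\cdot B$, where $A$ carries the weight $\overline s_\Phi^{c,D}(f)(s,j)^{p-2}$ on the $\Phi_j*f$ factor and $B$ the weight $s_\Phi^{c,D}(f)(s,j)^{2-p}$ on the $\Psi_j*g$ factor. The term $A$ is then handled by summation by parts in $j$: writing $u_j=\overline s_\Phi^{c,D}(f)(s,j)$ one has $2^{(j+1)d}\int_{B(s,2^{-j-1})}|\Phi_j*f|^2=u_j^2-u_{j+1}^2=u_j(u_j-u_{j+1})+(u_j-u_{j+1})u_{j+1}$, and then the cyclicity of $\tau$, the positivity of $u_j-u_{j+1}$, the operator monotonicity of $t\mapsto t^{p-1}$ (valid since $0\le p-1<1$, so that $u_j^{p-1}\le u_1^{p-1}$) and a telescoping sum — exactly as in the estimate of $A$ in Lemma~\ref{lem: duality} — give $A\lesssim\|s_\Phi^{c,D}(f)\|_p^p$.

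The estimate of $B$ is the main obstacle, paralleling the harder part of Lemma~\ref{lem: duality}. One covers $\R$ at each dyadic scale $2^{-j}$ by a square net $\{Q_{m,j}\}_m$ so that $s_\Phi^{c,D}(f)(s,j)\le\mathbb S^{c,D}(f)(s,j)$ for $s\in Q_{m,j}$, where $\mathbb S^{c,D}(f)(\cdot,j)$ is the discrete square function recentred at the cube centre $c_{m,j}$, hence constant on $Q_{m,j}$ and increasing in $j$; with $d(s,k)=\mathbb S^{c,D}(f)(s,k)^{2-p}-\mathbb S^{c,D}(f)(s,k-1)^{2-p}\ge0$, which is constant on $Q_{m,k}$ and sums over $k\le j$ to $\mathbb S^{c,D}(f)(s,j)^{2-p}$, one exchanges the two summations and reduces $B$ to a sum over the cubes $Q_{m,k}$ of local Carleson masses of the discrete measure $d\mu_g^D$ at scale $2^{-k}$. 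The discrete condition~(3), applied with $g$ in place of $f$, together with the $L_{q/2}(\N;\ell_\infty)$ description of $\|\cdot\|_{\bmo_q^c}$, provides a single positive operator $a\in L_{q/2}(\N)$ with $\|a\|_{q/2}\lesssim\|g\|_{\bmo_q^c}^2$ dominating all these local masses simultaneously; the telescoping then gives $B\lesssim\tau\int_{\R}s_\Phi^{c,D}(f)(s)^{2-p}a(s)\,ds\le\|s_\Phi^{c,D}(f)\|_p^{2-p}\|a\|_{q/2}\lesssim\|s_\Phi^{c,D}(f)\|_p^{2-p}\|g\|_{\bmo_q^c}^2$. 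Combining $|\mathrm{I}|^2\lesssim A\cdot B$ with the bound on $\mathrm{II}$ yields the claim. The delicate points are constructing the discrete square nets so that $s_\Phi^{c,D}(f)(s,j)\le\mathbb S^{c,D}(f)(s,j)$ holds on each $Q_{m,j}$, and extracting from condition~(3) one dominating operator $a$ that is valid across all dyadic scales at once; once these are in place, the summation-by-parts and operator-monotonicity steps are identical to those of Lemma~\ref{lem: duality}.
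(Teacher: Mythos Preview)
Your overall plan—splitting via \eqref{eq: reproduceD 2} into $\mathrm I+\mathrm{II}$, handling $\mathrm{II}$ by H\"older and condition~(3), and breaking $\mathrm I$ into $A\cdot B$ by Cauchy--Schwarz with a square-function weight—is exactly what the paper does. The gap is in your choice of the discrete auxiliary square functions. You define $s_\Phi^{c,D}(f)(s,j_0)$ and $\overline s_\Phi^{c,D}(f)(s,j_0)$ as \emph{tails} $\sum_{k\ge j_0}(\cdot)$, decreasing in $j_0$. But under the correspondence $\e\leftrightarrow 2^{-j}$, the continuous object $s_\Phi^c(f)(s,\e)=\bigl(\int_\e^1\cdots\bigr)^{1/2}$ corresponds to a \emph{partial sum} $\sum_{k\le j}(\cdot)$, increasing in $j$; this is the convention the paper uses. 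With the tail convention the $B$-estimate collapses: writing $u_j^{2-p}=\sum_{k\ge j}e(s,k)$ with $e(s,k)=u_k^{2-p}-u_{k+1}^{2-p}\ge0$ and interchanging sums gives $\sum_k e(s,k)\sum_{j\le k}2^{dj}\int_{B(s,2^{-j-1})}|\Psi_j*g|^2$, i.e.\ an inner sum over \emph{coarse} levels $j\le k$, whereas the discrete Carleson condition~(3) controls the mass over the tent $T(Q)$, hence only the \emph{fine} levels $j\ge k$. Your sentence ``$\mathbb S^{c,D}(f)(\cdot,j)$ is \dots\ increasing in $j$; with $d(s,k)\ge0$'' already signals the inconsistency: a recentred \emph{tail} is decreasing in $j$, so $d(s,k)\le0$.

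The paper therefore works with the increasing partial sums
\[
s^{c,D}_\Phi(f)(s,j)=\Big(\sum_{1\le k\le j}2^{dk}\!\!\int_{B(s,2^{-k}-2^{-j-1})}|\Phi_k*f|^2\Big)^{1/2},\quad
\overline s^{c,D}_\Phi(f)(s,j)=\Big(\sum_{1\le k\le j}2^{dk}\!\!\int_{B(s,2^{-k-1})}|\Phi_k*f|^2\Big)^{1/2},
\]
and then $B$ goes through exactly as you describe. The price is that $A$ is no longer ``exactly as in the estimate of $A$ in Lemma~\ref{lem: duality}'': there is no chain rule, so one writes $\overline s_j^{\,2}-\overline s_{j-1}^{\,2}=(\overline s_j-\overline s_{j-1})\overline s_j+\overline s_{j-1}(\overline s_j-\overline s_{j-1})$, uses trace cyclicity to turn the first piece into $\tau[\overline s_j^{\,p-1}(\overline s_j-\overline s_{j-1})]\le\tau[\overline s^{\,p-1}(\overline s_j-\overline s_{j-1})]$, and bounds the second piece by a H\"older-type argument with the contraction $\overline s^{(1-p)/2}\overline s_j^{\,p-2}\overline s_{j-1}\overline s^{(1-p)/2}$; both pieces then telescope to $\tau[\overline s^{\,p}]\le\|s_\Phi^{c,D}(f)\|_p^p$. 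The paper flags this explicitly (``The term $\mathrm A$ is less easy to estimate than the corresponding term $\mathrm A$ in the proof of Lemma~\ref{lem: main1}''). Once you switch to partial sums, the remainder of your outline is correct and matches the paper.
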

\begin{proof}
First, note that by \eqref{eq: reproduceD 2}, we have
\begin{equation*}
\tau \int _{\R} f(s)g^*(s)ds =
\tau\int_{\R}\sum_{j\geq 1}\Phi_j*f(s) \big(\Psi_{j} *g(s)\big)^*\,ds+\tau\int_{\R} \phi *f(s)\big(\psi*g(s)\big)^*ds .
\end{equation*}
The second term on the right hand side of the above formula is exactly the same as the corresponding term $\mathrm {II}$  in the proof of Lemma \ref{lem: main1}.   Now we need the discrete versions of  $s_{\Phi}^{c}$ and $\overline{s}_{\Phi}^{c}$: For $j\geq 1$, $s\in \R$, let
  \begin{equation*}
\begin{split}
  s^{c,D}_\Phi(f)(s,j) &  = \Big(\sum_{1\leq k\leq j} 2^{dk}\int_{B(s,2^{-k}-2^{-j-1})} |\Phi_j*f(t)|^2 dt\Big)^{\frac12}\\
  \overline{s}^{c,D}_\Phi(f)(s,j)  &= \Big(\sum_{1\leq k\leq j}2^{dk}\int_{B(s,2^{-k-1})} |\Phi_j*f(t)|^2 dt\Big)^{\frac12}.
  \end{split}
\end{equation*}
Denote $s_{\Phi}^{c,D}(f)(s,j)$ and $\overline{s}_{\Phi}^{c,D}(f)(s,j)$ simply by $s(s,j)$ and $\overline{s}(s,j)$, respectively. By approximation, we may assume that $s(s,j)$  and $\overline{s}(s,j)$ are invertible for every $s\in\R$ and $j\geq 1$.  By the Cauchy-Schwarz inequality,
\begin{equation*}
\begin{split}
 &  \Big|\tau\int_{\R}\sum_{j\geq 1} \Phi_j*f(s)\big(\Psi_{j} *g(s)\big)^*\,ds \Big|^2\\
 & =  \Big|\frac{2^d}{c_d}\,\tau\int_{\R}\sum_j 2^{dj}\int_{B(s, 2^{-j-1})} \Phi_j*f(t)\big(\Psi_{j} *g(t)\big)^*\,dt\,ds  \Big|^2\\
 & \lesssim  \tau\int_{\R} \sum_j s(s,j)^{p-2}
 \Big( 2^{dj}\int_{B(s, 2^{-j-1}))} |\Phi_j*f(t)|^2 \,dt\Big)
  ds  \\
 &  \quad \, \,\,\cdot \tau\int_{\R} \sum_j s(s,j)^{2-p}
 \Big( 2^{dj}\int_{B(s, 2^{-j-1})} |\Psi_{j}*g (t)|^2 \,dt\Big)ds \\
 & \stackrel{\mathrm{def}}{=}  {\mathrm A} \cdot {\rm B}.
  \end{split}
\end{equation*}
The term $\mathrm A$ is less easy to estimate than the corresponding term $\mathrm A$  in the proof of Lemma \ref{lem: main1}. To deal with it we simply  set
$\overline{s}_j=\overline{s}(s,j)$ and $\overline{s}=\overline{s}(s,+\infty)\leq s^{c,D}(f)(s)$. Then
 \begin{equation*}
\begin{split}
  {\rm A} &= \tau\int_{\R} \sum_{j\geq 1} s_j^{p-2}(\overline{s}_j^2-\overline{s}_{j-1}^2)ds \\
 &\leq \tau\int_{\R} \sum_j \overline{s}_j^{p-2}(\overline{s}_j^2-\overline{s}_{j-1}^2)ds\\
 &= \tau\int_{\R} \sum_j (\overline{s}_j-\overline{s}_{j-1} )ds+\tau\int_{\R} \sum_j\overline{s}_j^{p-2}\overline{s}_{j-1}( \overline{s}_j-\overline{s}_{j-1})ds,
 \end{split}
\end{equation*}
where $\overline{s}_{0}=0$. Since $1\leq p < 2$, $\overline{s}_j^{p-1}\leq s^{p-1}$, we have
 $$ \tau\int_{\R} \sum_j \overline{s}_j^{p-1} (\overline{s}_j-\overline{s}_{j-1})ds\lesssim \tau\int_{\R}\overline{s}^p ds\leq \|s^{c,D}(f)\|^p_{p}.$$
On the other hand,
\[
\tau\int_{\R} \sum_j\overline{s}_j^{p-2}\overline{s}_{j-1}(\overline{s}_j-\overline{s}_{j-1})ds=\tau\int_{\R} \sum_j
\overline{s}^{\frac{1-p}{2}} \overline{s}_j^{p-2}\overline{s}_{j-1}\overline{s}^{\frac{1-p}{2}}\overline{s}^{\frac{p-1}{2}}(\overline{s}_j-\overline{s}_{j-1})\overline{s}^{\frac{p-1}{2}}ds,
\]
since $\overline{s}_j\geq \overline{s}_{j-1}$ for any $j\geq 1$, we have  $\overline{s}^{\frac{1-p}{2}} \overline{s}_j^{p-2}\overline{s}_{j-1}\overline{s}^{\frac{1-p}{2}}\leq 1$. Thus, by the H\"older inequality,
 $$
 \tau\int_{\R} \sum_j\overline{s}_j^{p-2}\overline{s}_{j-1}(\overline{s}_j-\overline{s}_{j-1})ds
 \leq\tau\int_{\R} \sum_j
\overline{s}^{\frac{p-1}{2}}(\overline{s}_j-\overline{s}_{j-1})\overline{s}^{\frac{p-1}{2}} ds
 =\tau\int_{\R} \overline{s}^p ds \leq \|s^{c,D}_\Phi(f)\|_p^p.
 $$
Combining the preceding inequalities, we get the desired estimate of A:
 $${\rm A}\leq 2\|s_\Phi^{c,D}(f)\|_{p}^p.$$

The estimate of the term $\rm B$ is, however,  almost identical to that of  $\rm B$ in the proof of Lemma \ref{lem: main1}. There are only two minor differences. The first one concerns the square function  $\mathbb S ^c(f)(s, j)$ in \eqref{eq: square function}: it is now replaced by
$$
 \mathbb S ^c(f)(s,j)=\Big(\sum_{1\leq k\leq j}2^{dk}\int_{B(c_{m,j}, 2^{-k})} |\Phi_j*f(t)|^2 \,dt\Big)^{\frac12}\;\;\textrm{ if }\;\;
 s\in Q_{m,j} . 
 $$
  Then we have $s(s, j)\le \mathbb S ^c(f)(s,j)$. The second difference is about the Carleson characterization of $\bmo_q^c$; we now use its discrete analogue, namely, $d\mu_g^D$. Apart from these two differences, the remainder of the argument is identical to that in the proof of Lemma \ref{lem: main1}.
 \end{proof}

 \begin{lem}\label{g-dualD}
  Let $1\leq p<2$ and $f\in \h^c_p(\R,\M)\cap L_2(\N)$,  $g\in \bmo_q^c(\R,\M)$. Then
 $$
 \Big|\tau \int _{\R} f(s)g^*(s)ds \Big|\lesssim \Big(\|g^{c,D}_\Phi(f)\|_p+\|\phi *f\|_p\Big)^{\frac{p}{2}}\,\|f\|_{\h^c_p}^{1-\frac p 2}\,\|g\|_{\bmo_q^c}\,.
 $$
 \end{lem}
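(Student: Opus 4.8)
The plan is to carry out the proof of Lemma \ref{lem:main2} in discretized form, reusing the combinatorial bookkeeping already developed for Lemma \ref{S-dualD}. As in Lemma \ref{lem:main2}, it suffices to treat $f\in\h_p^c(\R,\M)$ with compact support relative to the variable of $\R$ and sufficiently nice. First I would use \eqref{eq: reproduceD 2}, \eqref{eq: Planchel-1} and Fubini's theorem to write
\[
\tau\int_{\R}f(s)g^*(s)ds=\tau\int_{\R}\sum_{j\geq 1}\Phi_j*f(s)\big(\Psi_j*g(s)\big)^*ds+\tau\int_{\R}\phi*f(s)\big(\psi*g(s)\big)^*ds,
\]
and introduce the discrete truncated radial square function
\[
g_{\Phi}^{c,D}(f)(s,j)=\Big(\sum_{1\leq k\leq j}|\Phi_k*f(s)|^2\Big)^{\frac12},\qquad j\geq 1,
\]
which increases in $j$ and converges to $g_{\Phi}^{c,D}(f)(s)$; by approximation one may assume $g_{\Phi}^{c,D}(f)(s,j)$ invertible in $\M$ for all $s$ and $j$. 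Inserting $g_{\Phi}^{c,D}(f)(s,j)^{\frac{p-2}{2}}g_{\Phi}^{c,D}(f)(s,j)^{\frac{2-p}{2}}$ into the first sum and applying the Cauchy--Schwarz inequality, the problem reduces to the estimate
\[
\Big|\tau\int_{\R}f(s)g^*(s)ds\Big|^2\lesssim A'\cdot B'+\mathrm{II}',
\]
where
\[
A'=\tau\int_{\R}\sum_{j\geq1}|\Phi_j*f(s)|^2\,g_{\Phi}^{c,D}(f)(s,j)^{p-2}ds,\qquad B'=\tau\int_{\R}\sum_{j\geq1}|\Psi_j*g(s)|^2\,g_{\Phi}^{c,D}(f)(s,j)^{2-p}ds,
\]
and $\mathrm{II}'=\big|\tau\int_{\R}\phi*f(s)(\psi*g(s))^*ds\big|^2$.

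The term $\mathrm{II}'$ is routine: the H\"older inequality, the Carleson bound of condition (3) (i.e. $\|\psi*g\|_q\lesssim\|g\|_{\bmo_q^c}$) and $\|\phi*f\|_p\lesssim\|f\|_{\h_p^c}$ (condition (4), via Corollary \ref{Cor C-Z} and Remark \ref{rem: L_p H h_p}) give $\mathrm{II}'\lesssim\|\phi*f\|_p^p\,\|f\|_{\h_p^c}^{2-p}\,\|g\|_{\bmo_q^c}^2$. The term $A'$ is the crux. Writing $|\Phi_j*f(s)|^2=g_{\Phi}^{c,D}(f)(s,j)^2-g_{\Phi}^{c,D}(f)(s,j-1)^2$ (with the convention $g_{\Phi}^{c,D}(f)(s,0)=0$) and using that $g_{\Phi}^{c,D}(f)(s,j)$ is increasing in $j$ with limit $g_{\Phi}^{c,D}(f)(s)$, I would run the discrete summation-by-parts argument used for the term $\mathrm A$ in the proof of Lemma \ref{S-dualD}: split each difference $g_{\Phi}^{c,D}(f)(s,j)^2-g_{\Phi}^{c,D}(f)(s,j-1)^2$ symmetrically, use $1\leq p<2$ together with the operator monotonicity of $t\mapsto t^{p-1}$, and telescope, reaching $A'\lesssim\|g_{\Phi}^{c,D}(f)\|_p^p$. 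This is the only genuinely new point compared with Lemma \ref{lem:main2} (in the continuous radial case the corresponding $A'$ was merely a continuous integration by parts), and it is where the order-of-operators care in the noncommutative setting is needed.

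For $B'$ I would first invoke the discrete two-parameter radial--conic comparison
\[
g_{\Phi}^{c,D}(f)(s,j)^2\lesssim\sum_{|m|_1\leq d}s_{D^m\Phi}^{c,D}(f)(s,j)^2,
\]
which follows from the proof of \cite[Lemma 4.3]{XXX17} (cf. Lemma \ref{lemma 4.3 of XXX}) adapted to the discrete truncation; here $s_{D^m\Phi}^{c,D}(f)(s,j)$ is the auxiliary discrete conic square function built from $D^m\Phi$ as in the proof of Lemma \ref{S-dualD}. Using the operator monotonicity of $t\mapsto t^{2-p}$ to insert this bound into $B'$ and then repeating verbatim the estimate of the term $\mathrm B$ in the proof of Lemma \ref{S-dualD} --- square-net partition, the discrete $q$-Carleson measure $d\mu_g^D$ of condition (3), and the H\"older inequality --- I would obtain $B'\lesssim\sum_{|m|_1\leq d}\|g\|_{\bmo_q^c}^2\,\|s_{D^m\Phi}^{c,D}(f)\|_p^{2-p}$, and condition (1) (which gives $\|s_{D^m\Phi}^{c,D}(f)\|_p\lesssim\|f\|_{\h_p^c}$ through Corollary \ref{Cor C-Z} and Remark \ref{rem: L_p H h_p}) yields $B'\lesssim\|g\|_{\bmo_q^c}^2\,\|f\|_{\h_p^c}^{2-p}$. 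Putting the three estimates together,
\[
\Big|\tau\int_{\R}f(s)g^*(s)ds\Big|^2\lesssim\big(\|g_{\Phi}^{c,D}(f)\|_p+\|\phi*f\|_p\big)^p\,\|f\|_{\h_p^c}^{2-p}\,\|g\|_{\bmo_q^c}^2,
\]
and taking square roots gives the assertion. The main obstacle, exactly as in the passage from Lemma \ref{lem: main1} to Lemma \ref{S-dualD}, is the estimate of $A'$: it replaces the integration by parts of Lemma \ref{lem:main2} and must be carried out with the usual precautions about noncommutativity; setting up the discrete two-parameter radial--conic comparison correctly is the secondary ingredient.
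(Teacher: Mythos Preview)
Your proposal is correct and takes essentially the same approach as the paper: the same discrete truncated radial square function $g_{\Phi}^{c,D}(f)(s,j)$, the same Cauchy--Schwarz splitting into the product of two terms (your $A'$ and $B'$ are the paper's $\mathrm{I}'$ and $\mathrm{II}'$) plus the $\phi$--$\psi$ remainder, and the same ingredients for each piece --- the summation-by-parts from Lemma~\ref{S-dualD} for $A'$, and the discrete radial--conic comparison from \cite[Lemma~4.3]{XXX17} combined with the discrete Carleson estimate for $B'$. The paper's write-up is terser but structurally identical.
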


\begin{proof}
 We use the truncated version of $g^{c,D}_\Phi(f)$:
 $$
 g^{c,D}_{\Phi}(f)(s,j) =\Big( \sum_{k\leq j} |\Phi_{k}*f (s)|^2\Big)^{\frac12}\,.
 $$
The proof of \cite[Lemma 4.3] {XXX17} is easily adapted to the present setting to ensure
 $$
 g^{c,D}_{\Phi}(f)(s,j)^2\lesssim \sum_{ |m|_1\leq d} s^{c,D}_{D^m {\Phi}}(f)(s, j)^2\,.
 $$
Then
  $$\Big|\tau \int _{\R} f(s)g^*(s)ds \Big| ^2\le {\rm I'} \cdot {\rm II'}+ \Big|\tau\int \phi *f(s)\big(\psi*g(s)\big)^*ds\Big|,$$
where
 \begin{equation*}
\begin{split}
  {\rm I'}&=  \tau\int_{\R}\sum_j g_{\Phi}^{c,D}(f)(s,j) ^{p-2} |\Phi_j*f(s)|^2  ds \,,\\
 {\rm II'}&=  \tau\int_{\R} \sum_j g_{\Phi}^{c,D}(f)(s,j)^{2-p} |\Psi_{j}*g(s)|^2 ds\,.
 \end{split}
\end{equation*}
Both terms ${\rm I}' $ and ${\rm II}' $ are estimated exactly as before, so we have
  $${\rm I}' \leq 2\| g_\Phi^c(f)\|^p_p\;\textrm{ and }\;
  {\rm II}'\lesssim \|f\|^{2-p}_{\h^c_p}\,\|g\|_{\bmo_q^c}^2\,.$$
This gives the announced assertion.  
\end{proof}

Armed with the previous two lemmas and the Calder\'on-Zygmund theory in section \ref{section-CZ}, we can prove Theorem \ref{thm: equivalence hpD} in the same way as Theorem \ref{thm main1}. Details are left to the reader.

We also include a discrete Carleson measure characterization of $\bmo_q^c$ by general test functions. Much as the characterization in Lemma \ref{lem: Carleson Phi} and Remark \ref{rmk: Carleson Phi}, it is a byproduct of the proof of Theorem \ref{thm: equivalence hpD}.

\begin{cor}
Let $2<q \leq \infty$, $\psi$ and $\Psi$ be given in \eqref{eq: reproduceD 2}. Assume further
\begin{equation*}
\widehat{\psi}\in H_2^\sigma(\R) \quad \text{with} \quad \sigma>\frac{d}{2}.
\end{equation*}
 Then for every $g\in \bmo^c_q$, we have
\begin{equation*}
\|  g\|  _{\bmo ^c_q}\approx\| \psi*g\|_q+ \Big\| \underset{\substack{s\in Q\subset \R\\ |Q|<1}}{\sup{} ^+} \frac{1}{|  Q| }\int_Q\sum_{j\geq -\log_2(l(Q))} |\Psi_j*g (s)| ^2ds\Big\| _{\frac q 2}^\frac{1}{2}.
\end{equation*}
\end{cor}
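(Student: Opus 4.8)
The plan is to follow the proof of Corollary \ref{cor: Carleson=Dbmo}, with the continuous square functions replaced by the dyadic ones from the proof of Theorem \ref{thm: equivalence hpD}, and to treat the two halves of the asserted equivalence separately. Throughout, let $p$ denote the conjugate index of $q$, so that $1\le p<2$, and write $\h_p^c=\h_p^c(\R,\M)$, $\bmo_q^c=\bmo_q^c(\R,\M)$, $d\mu_g^D=\sum_{j\ge1}|\Psi_j*g(s)|^2\,ds\times d\delta_{2^{-j}}(\e)$.

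\emph{Proof that the right-hand side is $\lesssim\|g\|_{\bmo_q^c}$.} First, $\|\psi*g\|_q\lesssim\|g\|_{\bmo_q^c}$ is immediate from Lemma \ref{lem: Caleson<bmo q}: since $\widehat{\psi}\in H_2^\sigma(\R)$ with $\sigma>d/2$, that lemma produces a positive $b\in L_{q/2}(\N)$ with $|\psi*g(s)|^2\le b(s)$ for all $s$ and $\|b\|_{q/2}\lesssim\|g\|_{\bmo_q^c}^2$, whence $\||\psi*g|^2\|_{q/2}\le\|b\|_{q/2}$. For the second term one shows that $d\mu_g^D$ is a $q$-Carleson measure on the strip $S$ with $q$-Carleson norm $\lesssim\|g\|_{\bmo_q^c}^2$; this is the discrete counterpart of Lemma \ref{lem: Carleson Phi}, proved by the dyadic-sum version of the argument for Lemma \ref{lem: general Carleson} (hence of Lemma \ref{lem: Carleson < bmo}): for a cube $Q$ with $|Q|<1$ one writes $g=g_1+g_2+g_3$ relative to $2Q$ as there, discards $g_3$ using $\widehat{\Psi}(0)=0$, treats the local part $g_1$ by Plancherel together with the uniform bound $\sup_{\xi}\sum_{j\ge1}|\widehat{\Psi}(2^{-j}\xi)|^2<\infty$, and treats the tail $g_2$ by the Schwartz decay $|\Psi_{2^{-j}}(s)|\lesssim 2^{-j}(2^{-j}+|s|)^{-(d+1)}$ and a geometric summation over dyadic annuli around $c_Q$; the passage from the resulting pointwise estimate to the $L_{q/2}(\N;\ell_\infty)$-norm is then carried out, exactly as in Lemma \ref{lem: Caleson<bmo q}, by dominating each relevant cube by a scale-dependent conditional expectation (Lemma \ref{lem: dyadic cover}) and invoking Junge's maximal inequality from \cite{Junge}. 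Since $\int_{T(Q)}d\mu_g^D=\int_Q\sum_{j\ge-\log_2(l(Q))}|\Psi_j*g(s)|^2\,ds$ for $|Q|<1$, this is precisely the second term in the statement.

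\emph{Proof that $\|g\|_{\bmo_q^c}$ is $\lesssim$ the right-hand side.} By Lemma \ref{lem: duality} and Theorem \ref{thm: duality P}, $\|g\|_{\bmo_q^c}\approx\|\ell_g\|_{(\h_p^c)^*}$ where $\ell_g(f)=\tau\int_\R f(s)g^*(s)\,ds$, so it suffices to bound $|\ell_g(f)|$ by a constant times $\|f\|_{\h_p^c}$ times the right-hand side of the asserted equivalence, for $f\in\h_p^c\cap L_2(\N)$ compactly supported and nice (such $f$ being dense in $\h_p^c$). For such $f$, the reproducing identity \eqref{eq: reproduceD 2} gives
\begin{equation*}
\tau\int_\R f(s)g^*(s)\,ds=\tau\int_\R\sum_{j\ge1}\Phi_j*f(s)\,(\Psi_j*g(s))^*\,ds+\tau\int_\R\phi*f(s)\,(\psi*g(s))^*\,ds,
\end{equation*}
and re-running the estimates of the terms $\mathrm A$, $\mathrm B$ and $\mathrm{II}$ in the proof of Lemma \ref{S-dualD}, but keeping the quantities $\|\psi*g\|_q$ and $\big\|\sup{}^+\frac{1}{|Q|}\int_{T(Q)}d\mu_g^D\big\|_{q/2}^{1/2}$ explicit rather than absorbing them into $\|g\|_{\bmo_q^c}$, shows that $|\ell_g(f)|$ is controlled by $\big(\|s_\Phi^{c,D}(f)\|_p+\|\phi*f\|_p\big)$ times the right-hand side. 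Theorem \ref{thm: equivalence hpD} identifies $\|s_\Phi^{c,D}(f)\|_p+\|\phi*f\|_p$ with $\|f\|_{\h_p^c}$ up to constants, and the claim follows.

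\emph{Main obstacle.} No genuinely new idea is needed; the work is bookkeeping. One must verify that the dyadic-sum versions of the auxiliary square functions $s_\Phi^{c,D}(f)(s,j)$, $\overline{s}_\Phi^{c,D}(f)(s,j)$ and of the net square function $\mathbb S^c(f)(s,j)$ of \eqref{eq: square function} behave under the geometric reindexing in the proof of Lemma \ref{S-dualD} exactly as their continuous analogues in the proof of Lemma \ref{lem: duality}, and that $d\mu_g^D$ slots into the Fubini/annulus estimates in the proof of Lemma \ref{lem: general Carleson} with no loss. Both are routine once the continuous proofs are in hand, which is why the details are omitted.
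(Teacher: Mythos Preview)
Your proposal is correct and follows essentially the same route as the paper. The paper does not give a separate proof but states that the corollary is a byproduct of the proof of Theorem~\ref{thm: equivalence hpD}, exactly as Lemma~\ref{lem: Carleson Phi} and Remark~\ref{rmk: Carleson Phi} were byproducts in the continuous case; your outline---one direction via the discrete Carleson estimate for $d\mu_g^D$ and Lemma~\ref{lem: Caleson<bmo q}, the other by re-running the estimates of Lemma~\ref{S-dualD} while keeping the Carleson and $\|\psi*g\|_q$ terms explicit and then invoking Theorem~\ref{thm: equivalence hpD} and duality---is precisely the intended argument.
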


\subsection{The relation between $\h_{p}  (\mathbb{R}^{d},\mathcal{M} )$ and $\mathcal{H}_{p}(\mathbb{R}^{d},\mathcal{M} )$}

Due to the noncommutativity, for any $1< p<\infty$ and $p\neq 2$, the column operator-valued local Hardy space $\h_p^c(\R,\M)$ and the column operator-valued Hardy space $\mathcal H _p^c(\R,\M)$ are not equivalent. On the other hand, if we consider the  mixture spaces $\h_{p}  (\mathbb{R}^{d},\mathcal{M} )$ and $\mathcal{H}_{p}(\mathbb{R}^{d},\mathcal{M} )$, then we will have the same situation as in the classical case.

Since $\|  \mathrm{P} *f\| _p\lesssim \|  f\| _p \lesssim\|  f\| _{\mathcal{H}_{p}^{c}}$ for any $1\leq p\leq 2$, we deduce the inclusion
\begin{equation}\label{eq: inclu p<2}
\mathcal{H}_p^c(\R,\M) \subset \h_p^c(\R,\M) \quad \text{for}\quad 1\leq p\leq 2.
\end{equation}
Then by the duality obtained in Theorem  \ref{cor: duality}, we have
\begin{equation}\label{eq: inclu p>2}
\h_p^c(\R,\M)\subset \mathcal{H}_p^c(\R,\M) \quad \text{for}\quad  2< p <\infty.
\end{equation}

However, we can see from the following proposition that we do not have the inverse inclusion of \eqref{eq: inclu p<2} nor \eqref{eq: inclu p>2}. 
\begin{prop}
 Let $\phi $ be a function on $\R$ such that $\widehat{\phi }(0)\geq 0$ and $\widehat{\phi }\in H^\sigma_2(\R)$ with $\sigma>\frac{d}{2}$. Let $2< p<\infty$. If for any  $f\in \mathcal{H}_{p}^{c}(\R,\M)$,
\begin{equation}\label{eq: varphi Hp}
\|  \phi *f\| _p\lesssim \|  f\| _{\mathcal{H}_{p}^{c}},
\end{equation}
then we must have  $\widehat{\phi }(0)= 0$.

\end{prop}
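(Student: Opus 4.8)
The plan is to argue by contradiction. Assume $\widehat{\phi}(0)>0$; I will show that this would force $\mathcal{H}_p^c(\R,\M)$ and $L_p(\N)$ to have equivalent norms when $2<p<\infty$, contradicting Mei's description of the non-local Hardy spaces. The mechanism is a dilation argument which exploits the exact scale-invariance of the non-local Lusin square function, together with the fact that $\phi_{1/\lambda}$ is, up to the multiplicative constant $\widehat{\phi}(0)$, an approximate identity as $\lambda\to\infty$.

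First I would record the scaling identities. For $\lambda>0$ and $f\in L_p(\N)$ set $f_\lambda(s)=f(s/\lambda)$. A change of variables in the Poisson integral gives $\mathrm{P}_\e(f_\lambda)(s)=\mathrm{P}_{\e/\lambda}(f)(s/\lambda)$; differentiating in $\e$ and then substituting $t=\lambda t'$, $\e=\lambda\e'$ in the definition of $s^c$ (the cone $|t|<\e$ and the weight $\e^{-(d-1)}dt\,d\e$ scale consistently) yields $S^c(f_\lambda)(s)=S^c(f)(s/\lambda)$, hence $\|f_\lambda\|_{\mathcal{H}_p^c}=\lambda^{d/p}\|f\|_{\mathcal{H}_p^c}$. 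On the other hand, writing $\phi_{1/\lambda}(v)=\lambda^{d}\phi(\lambda v)$ (in the notation $\phi_\e=\e^{-d}\phi(\cdot/\e)$), one checks $(\phi*f_\lambda)(s)=(\phi_{1/\lambda}*f)(s/\lambda)$, so $\|\phi*f_\lambda\|_{L_p(\N)}=\lambda^{d/p}\|\phi_{1/\lambda}*f\|_{L_p(\N)}$. Since $L_p(\N)\subseteq\mathcal{H}_p^c(\R,\M)$ for $2<p<\infty$ (by Mei's equality $\mathcal{H}_p=\mathcal{H}_p^c\cap\mathcal{H}_p^r=L_p(\N)$), the hypothesis applies to $f_\lambda$ and, after cancelling $\lambda^{d/p}$,
\[
\|\phi_{1/\lambda}*f\|_{L_p(\N)}\lesssim\|f\|_{\mathcal{H}_p^c}\qquad(\lambda>0,\ f\in L_p(\N)),
\]
with a constant independent of $\lambda$ and $f$.

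Next I would let $\lambda\to\infty$. Since $\widehat{\phi}\in H_2^\sigma(\R)$ with $\sigma>\frac d2$ we have $\phi\in L_1(\R)$ and $\int_{\R}\phi=\widehat{\phi}(0)$, so from $\phi_{1/\lambda}*f-\widehat{\phi}(0)f=\int_{\R}\phi_{1/\lambda}(t)\bigl(f(\cdot-t)-f(\cdot)\bigr)dt$, Minkowski's integral inequality and the continuity of translations in $L_p(\N)$ ($p<\infty$) give $\phi_{1/\lambda}*f\to\widehat{\phi}(0)f$ in $L_p(\N)$ for every $f\in L_p(\N)$. Passing to the limit in the displayed inequality yields $\widehat{\phi}(0)\,\|f\|_{L_p(\N)}\lesssim\|f\|_{\mathcal{H}_p^c}$ for all $f\in L_p(\N)$. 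Combined with the automatic bound $\|f\|_{\mathcal{H}_p^c}\le\|f\|_{\mathcal{H}_p}\lesssim\|f\|_{L_p(\N)}$ (valid for $p>2$) and with $\widehat{\phi}(0)>0$, this shows that $\|\cdot\|_{\mathcal{H}_p^c}$ and $\|\cdot\|_{L_p(\N)}$ are equivalent on $L_p(\N)$, i.e.\ $\mathcal{H}_p^c(\R,\M)=L_p(\N)$ with equivalent norms. This contradicts the well-known fact that the column Hardy space $\mathcal{H}_p^c(\R,\M)$ is strictly different from $L_p(\N)$ when $p\neq2$ (equivalently $\mathcal{H}_p^c(\R,\M)\neq\mathcal{H}_p^r(\R,\M)$; see \cite{Mei2007}). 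Hence $\widehat{\phi}(0)=0$.

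The main obstacle is the bookkeeping in the dilation step: establishing the exact identity $S^c(f_\lambda)=S^c(f)(\cdot/\lambda)$ with the correct power of $\lambda$, and observing that the constant in the hypothesis may be reused with $f$ replaced by the moving family $f_\lambda$; this, together with the operator-valued approximate-identity statement $\phi_{1/\lambda}*f\to\widehat{\phi}(0)f$ in $L_p(\N)$, is the technical heart. The closing contradiction uses only the standard failure of the column/row equivalence for noncommutative $\M$ and $p\neq2$; the remaining steps are routine.
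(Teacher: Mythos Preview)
Your proof is correct and follows essentially the same route as the paper: both argue by contradiction and exploit the exact dilation-invariance of $\mathcal{H}_p^c$ to upgrade the single estimate $\|\phi*f\|_p\lesssim\|f\|_{\mathcal{H}_p^c}$ to the uniform family $\|\phi_\e*f\|_p\lesssim\|f\|_{\mathcal{H}_p^c}$ for all $\e>0$, and then derive $\|f\|_p\lesssim\|f\|_{\mathcal{H}_p^c}$, which is impossible for $p>2$.

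The only difference lies in how the inequality $\|f\|_p\lesssim\|f\|_{\mathcal{H}_p^c}$ is extracted from the uniform family. The paper restricts to positive band-limited $f$ (with $\operatorname{supp}\widehat f\subset\{|\xi|\le N\}$), picks a single $\e_0$ with $\widehat\phi(\e_0\xi)$ bounded below on that ball, and inverts the multiplier. You instead let $\e\to0$ and use that $\phi\in L_1(\R)$ (which follows from $\widehat\phi\in H_2^\sigma$, $\sigma>d/2$) so that $\phi_{1/\lambda}$ acts as an approximate identity, yielding the inequality for \emph{all} $f\in L_p(\N)$ with a constant independent of $f$. Your variant is a bit cleaner: it avoids the band-limitation step and makes the final contradiction with $\mathcal{H}_p^c(\R,\M)\neq L_p(\N)$ immediate.
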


\begin{proof}
We prove the assertion by contradiction.  Suppose that there exists $\phi$ such that $\widehat{\phi }(0)> 0$, $\widehat{\phi }\in H^\sigma_2(\R)$ and \eqref{eq: varphi Hp} holds for any $f\in \mathcal H _p^c(\R,\M)$. Since both $\mathcal H _p^c(\R,\M)$ and $L_p(\N)$ are homogeneous spaces, we have, for any $\e >0$,
\[
\|\phi* f(\e \cdot)\|_p=\|(\phi_\e* f)(\e \cdot)\|_p=\e^{-\frac{d}{p}}\|\phi_\e* f\|_p
\quad \text{and}\quad \|f(\e\cdot)\|_{\mathcal H _p^c}=\e^{-\frac{d}{p}}\| f\|_{\mathcal H _p^c}.
\]
This implies that
\begin{equation}\label{eq: for all e}
\|\phi_\e* f\|_p \lesssim \| f\|_{\mathcal H _p^c},
\end{equation}
 for any $\e>0$ with the relevant constant independent of $\e$. Now we consider a function $f \in L_p(\N)$ which takes values in $\mathcal{S}_\M^+$ and such that $\supp \widehat{f}$ is compact, i.e. there exists a positive real number $N$ such that $\supp \widehat{f}\subset \{\xi\in \R: |\xi|\leq N\}$. Since $\widehat{\phi }(0)> 0$, we can find $\e_0>0$ and $c>0$ such that $\widehat{\phi}(\e _0\xi)\geq c$ whenever $|\xi|\leq N$. Thus, in this case, $\|\phi_\e* f\|_p\geq c \| f\|_p$. Then by \eqref{eq: for all e}, we have
 \[
 \| f\|_p\lesssim \| f\|_{\mathcal H _p^c},
 \]
 which leads to a contradiction when $p>2$. Therefore, $\widehat{\phi}(0)=0$.
\end{proof}

By the definition of the $\h_p^c$-norm and the duality in Theorem \ref{cor: duality}, we get the following result:
\begin{cor}
Let $1\leq p<\infty$ and $p\neq 2$. $\h_p^c(\R,\M)$ and $\mathcal H _p^c(\R,\M)$ are not equivalent.
\end{cor}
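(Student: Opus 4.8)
The plan is to rule out the equivalence separately in the three ranges $2<p<\infty$, $1<p<2$ and $p=1$, arguing by contradiction in each. The range $2<p<\infty$ is the substantial one; the other two are reduced to it (or to an analogous failure at the endpoint $p=\infty$) by duality, and the case $p=1$ also admits a quick explicit counterexample.

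First I would handle $2<p<\infty$. By \eqref{eq: inclu p>2} the inclusion $\h_p^c(\R,\M)\subset\mathcal H_p^c(\R,\M)$ is continuous, so if the two spaces coincided with equivalent norms we would in particular have $\|f\|_{\h_p^c}\lesssim\|f\|_{\mathcal H_p^c}$ for all $f\in\mathcal H_p^c(\R,\M)$; since $\|\mathrm P*f\|_p$ is one of the two summands of $\|f\|_{\h_p^c}$, this gives $\|\mathrm P*f\|_p\lesssim\|f\|_{\mathcal H_p^c}$ for every $f$. Now $\widehat{\mathrm P}(\xi)=e^{-2\pi|\xi|}$ has $\widehat{\mathrm P}(0)=1>0$ and lies in $H_2^\sigma(\R)$ for every $\sigma>0$ (it is bounded and rapidly decaying), so $\phi=\mathrm P$ satisfies the hypotheses of the proposition immediately above; that proposition would then force $\widehat{\mathrm P}(0)=0$ — a contradiction. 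Hence $\h_p^c(\R,\M)\neq\mathcal H_p^c(\R,\M)$ for $2<p<\infty$.

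For $1<p<2$, with $q$ its conjugate index ($2<q<\infty$), I would pass to duals: Theorem \ref{cor: duality} identifies $\h_p^c(\R,\M)^*$ with $\h_q^c(\R,\M)$, and Mei's duality (\cite{Mei2007}) identifies $\mathcal H_p^c(\R,\M)^*$ with $\mathcal H_q^c(\R,\M)$, both via the bilinear form $(f,g)\mapsto\tau\int_\R f(s)g^*(s)\,ds$. Thus, were the identity map an isomorphism of $\mathcal H_p^c(\R,\M)$ onto $\h_p^c(\R,\M)$, its adjoint — which, under these identifications, is exactly the inclusion $\h_q^c(\R,\M)\hookrightarrow\mathcal H_q^c(\R,\M)$ of \eqref{eq: inclu p>2} — would be an isomorphism, i.e. $\h_q^c(\R,\M)=\mathcal H_q^c(\R,\M)$ with equivalent norms, contradicting the case just settled. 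For $p=1$ the same reduction applies, now via $\h_1^c(\R,\M)^*=\bmo^c(\R,\M)$ (Theorem \ref{thm: duality P}) and $\mathcal H_1^c(\R,\M)^*=\BMO^c(\R,\M)$, contradicting Proposition \ref{BMO-leq-bmo} being an isomorphism; alternatively, and more directly, I would take $f=\eta\otimes x$ with $\eta\in\mathcal S(\R)$ such that $\int_\R\eta\neq0$ and $0\neq x\in\mathcal S_\M$. Since $\mathrm P_\varepsilon(f)=(\mathrm P_\varepsilon*\eta)\otimes x$, every square function of $f$ factors as the corresponding scalar square function of $\eta$ times $(x^*x)^{1/2}$, so $\|f\|_{\h_1^c}=\|x\|_{L_1(\M)}\big(\|s^c(\eta)\|_1+\|\mathrm P*\eta\|_1\big)<\infty$ (every Schwartz function lies in the classical local Hardy space), while $\|f\|_{\mathcal H_1^c}=\|x\|_{L_1(\M)}\,\|\eta\|_{\mathcal H_1(\R)}=\infty$ (a Schwartz function with non-vanishing integral is not in $\mathcal H_1(\R)$); thus $f\in\h_1^c(\R,\M)\setminus\mathcal H_1^c(\R,\M)$.

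The remaining verifications — that $\widehat{\mathrm P}\in H_2^\sigma(\R)$, the tensor factorization of the square functions of $\eta\otimes x$, and the classical facts about Schwartz functions in $\h_1(\R)$ and $\mathcal H_1(\R)$ — are routine and I would deal with them briefly. The one point requiring real care is the duality bookkeeping used for $1<p<2$: one must check that the two dual pairings agree and that the identity map is compatible with them on the common dense class of nice functions, so that dualizing the hypothetical isomorphism genuinely yields an isomorphism between $\h_q^c(\R,\M)$ and $\mathcal H_q^c(\R,\M)$ rather than merely between abstract dual spaces. That is where I expect the only subtlety to lie.
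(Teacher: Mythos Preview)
Your proposal is correct and follows essentially the same approach the paper intends: apply the preceding proposition with $\phi=\mathrm P$ to rule out $p>2$, then use the $\h_p^c$--$\h_q^c$ duality (Theorem~\ref{cor: duality}) together with Mei's $\mathcal H_p^c$--$\mathcal H_q^c$ duality to transfer the contradiction to $1<p<2$, and handle $p=1$ either by the $\h_1^c$--$\bmo^c$ duality or by your explicit tensor counterexample. One small imprecision: you assert $\widehat{\mathrm P}\in H_2^\sigma(\R)$ for every $\sigma>0$, but since $\mathcal F[\widehat{\mathrm P}]=\mathrm P$ decays like $|s|^{-d-1}$, one actually has $\widehat{\mathrm P}\in H_2^\sigma(\R)$ only for $\sigma<\frac d2+1$; this still covers the needed range $\sigma>\frac d2$, so the argument goes through unchanged.
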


Although $\h_p^c(\R,\M)$ and $\mathcal H _p^c(\R,\M)$ do not coincide when $p\neq 2$, for those functions whose Fourier transforms vanish at the origin, their $\h_p^c$-norms and $\mathcal H _p^c$-norms are still equivalent.

\begin{thm}
Let $\phi \in\mathcal{S}$ such that  $\int_{\mathbb{R}^{d}}\phi (s)ds=1$.
\begin{enumerate}[\rm(1)]
\item If $1\leq p\leq 2$ and $f\in \h_p^c(\R,\M)$, then $f- \phi * f \in \mathcal{H}_{p}^{c}(\mathbb{R}^{d},\mathcal{M})$ and $\left\Vert f- \phi * f  \right\Vert _{\mathcal{H}_{p}^{c}}\lesssim\left\Vert f\right\Vert _{\h_{p}^{c}}$.
\item If $2< p<\infty$ and $f\in \mathcal{H}_p^c(\R,\M)$, then $f- \phi * f \in \h_{p}^{c}(\mathbb{R}^{d},\mathcal{M})$ and $\left\Vert f- \phi * f  \right\Vert _{\h_{p}^{c}}\lesssim\left\Vert f\right\Vert _{\mathcal{H}_{p}^{c}}$.
\end{enumerate}
\end{thm}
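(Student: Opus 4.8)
The plan is to exploit the general characterizations of both $\h_p^c(\R,\M)$ (Theorem \ref{thm main1}, or rather the corollary following it) and $\mathcal H_p^c(\R,\M)$ (Lemma \ref{equivalence Hp} of \cite{XXX17}), together with the fact that subtracting $\phi*f$ kills the low-frequency part of $f$. Fix a Schwartz function $\Theta$ with vanishing mean, nondegenerate in the sense of \eqref{eq: nondegenerate}, so that by Lemma \ref{equivalence Hp}
$$\|h\|_{\mathcal H_p^c}\approx \|g_\Theta^c(h)\|_p=\Big\|\big(\int_0^\infty|\Theta_\e*h(s)|^2\tfrac{d\e}{\e}\big)^{\frac12}\Big\|_p,$$
while by the Corollary after Theorem \ref{thm main1}, for a suitable Schwartz $\Phi$ of vanishing mean and nondegenerate, and the associated $\phi_0$ (with $\widehat{\phi_0}(0)>0$) from \eqref{eq: basic2'},
$$\|h\|_{\h_p^c}\approx \|g_\Phi^c(h)\|_p+\|\phi_0*h\|_p.$$
The key algebraic point is that since $\int\phi=1$ we have $\widehat{\phi}(0)=1$, hence $\widehat{1-\phi}(\xi)=1-\widehat\phi(\xi)$ vanishes at $\xi=0$ to infinite order in no sense, but at least $\widehat{1-\phi}(0)=0$; this is exactly the cancellation needed to control the behavior of $f-\phi*f$ near frequency $0$.

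The heart of the argument is to split the integral $\int_0^\infty|\Theta_\e*(f-\phi*f)(s)|^2\tfrac{d\e}{\e}$ into $\int_0^1$ and $\int_1^\infty$. For the part $\int_0^1$: write $\Theta_\e*(f-\phi*f)=\Theta_\e*f-\Theta_\e*\phi*f$. On this range $\e\le1$, $\Theta_\e*f$ is controlled by $g_\Phi^c(f)$ (or $g_\Theta^c(f)$, $g_{D^m\Phi}^c(f)$) via the reproducing formula \eqref{eq: basic2} or \eqref{eq: basic}: one expresses $\Theta_\e$ in terms of $\Phi_r$ ($r\le1$) and $\phi_0$, so that $\int_0^1|\Theta_\e*f|^2\tfrac{d\e}{\e}\lesssim g_\Phi^c(f)^2+|\phi_0*f|^2$ pointwise — this is really the same computation underlying Lemma \ref{equivalence Hp}/Theorem \ref{thm main1}. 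The term $\Theta_\e*\phi*f$ with $\e\le1$ is a nice convolution with an $L_1$-normalized kernel of bounded support-type behavior, so $\int_0^1|\Theta_\e*\phi*f|^2\tfrac{d\e}{\e}\lesssim |\phi_0*f|^2$ after absorbing through another reproducing identity (or simply by noting $\Theta_\e*\phi$ has integrable radial-$L_2$ norm and vanishing mean). For the part $\int_1^\infty$: here the crucial cancellation enters. Since $\widehat{1-\phi}(0)=0$, the symbol $\widehat\Theta(\e\xi)(1-\widehat\phi(\xi))$ decays in $\e$ uniformly: for $\e\ge1$, $|\widehat\Theta(\e\xi)|$ is small unless $|\xi|\gtrsim1/\e$, where $|1-\widehat\phi(\xi)|$ is already of order $1$, but crucially the product, viewed as a family of multipliers $m_\e(\xi)=\widehat\Theta(\e\xi)(1-\widehat\phi(\xi))$, satisfies $\int_1^\infty\sup_\xi|m_\e(\xi)|^2\tfrac{d\e}{\e}<\infty$ together with Hörmander-type derivative bounds; then the Hilbert-space-valued Calderón–Zygmund theory (Corollary \ref{Cor C-Z}) applied to the kernel $s\mapsto(\mathbbm1_{\e>1}\Theta_\e*\psi_{1-\phi})(s)$ with values in $L_2((1,\infty),\tfrac{d\e}{\e})$ gives $\|(\int_1^\infty|\Theta_\e*(f-\phi*f)|^2\tfrac{d\e}{\e})^{1/2}\|_p\lesssim \|f-\phi*f\|_{\h_p^c}\lesssim\|f\|_{\h_p^c}$ for $1\le p\le2$ (using $\h_p^c\subset L_p(\N)$ from Remark \ref{rem: L_p H h_p} when convenient, and boundedness of $I*\phi$-type operators). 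This establishes (1); assertion (2) then follows from (1) by the duality $\h_p^c(\R,\M)^*=\h_q^c(\R,\M)$ and $\mathcal H_p^c(\R,\M)^*=\mathcal H_q^c(\R,\M)$ (Theorem \ref{cor: duality}), since $f\mapsto f-\phi*f$ is essentially self-adjoint up to replacing $\phi$ by $\widetilde\phi(s)=\overline{\phi(-s)}$, which still has integral $1$.

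More precisely for (2): given $2<p<\infty$ and $f\in\mathcal H_p^c(\R,\M)$, for any $g\in\h_q^c(\R,\M)$ with $1<q<2$ one uses a polarized reproducing identity to write $\tau\int(f-\phi*f)(s)g^*(s)ds=\tau\int f(s)\big((1-\widetilde\phi)*g\big)^*(s)ds$ and estimates this by $\|f\|_{\mathcal H_p^c}\,\|(1-\widetilde\phi)*g\|_{\mathcal H_q^c}\lesssim\|f\|_{\mathcal H_p^c}\,\|g\|_{\h_q^c}$ by part (1) (with $q$ in place of $p$); taking the supremum over $g$ and invoking Theorem \ref{cor: duality} gives $\|f-\phi*f\|_{\h_p^c}\lesssim\|f\|_{\mathcal H_p^c}$. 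The main obstacle I anticipate is the bookkeeping in the $\int_1^\infty$ range of (1): verifying that the operator-valued kernel $\mathsf k(s)=\big(\mathbbm1_{\e>1}\,(\Theta_\e*(\delta_0-\phi))(s)\big)_{\e>1}$, with values in $H=L_2((1,\infty),\tfrac{d\e}{\e})$, genuinely satisfies all three hypotheses of Corollary \ref{Cor C-Z} — in particular the size estimate $\|\mathsf k(s)\|_H\lesssim|s|^{-d-\rho}$ at infinity, which is where the extra decay demanded by the local theory (as opposed to the non-local case) is actually used and must be extracted from the vanishing of $\widehat{1-\phi}$ at the origin combined with the Schwartz decay of $\Theta$; one should check $\rho$ can be taken, say, equal to $1$. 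Everything else is a combination of the reproducing-formula manipulations already present in the proofs of Theorem \ref{thm main1} and Lemma \ref{equivalence Hp}, and the duality machinery of Section \ref{section-dual}.
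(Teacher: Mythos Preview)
Your proposal is correct and follows essentially the same route as the paper: split the $\mathcal H_p^c$-square function into $\int_0^1$ and $\int_1^\infty$, handle the tail $\int_1^\infty$ via the Hilbert-valued Calder\'on--Zygmund machinery of Corollary \ref{Cor C-Z} with $H=L_2((1,\infty),\tfrac{d\e}{\e})$, and deduce (2) from (1) by duality. The paper carries out exactly this computation, obtaining the size estimate $\|\fk(s)\|_H\lesssim|s|^{-d-1/2}$ via the mean-value trick $\Phi_\e(s)-\Phi_\e*\phi(s)=\int[\Phi_\e(s)-\Phi_\e(s-t)]\phi(t)\,dt$, which is precisely where $\int\phi=1$ enters.

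One simplification you are missing: for the $\int_0^1$ part you propose to split $\Theta_\e*(f-\phi*f)$ into two pieces and control each via reproducing formulas. The paper is more direct: it takes $\Theta=\Phi$ (the same test function used for $\h_p^c$), so that the $\int_0^1$ part is literally $g_\Phi^c(f-\phi*f)$, and then simply observes that $\phi*f\in\h_p^c$ with $\|\phi*f\|_{\h_p^c}\lesssim\|f\|_{\h_p^c}$ (since $\phi\in\mathcal S$ is a CZ kernel satisfying Lemma \ref{C-Z lem}), hence $f-\phi*f\in\h_p^c$ and Theorem \ref{thm main1} bounds this term by $\|f-\phi*f\|_{\h_p^c}\lesssim\|f\|_{\h_p^c}$ in one stroke.
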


\begin{proof}
(1) Let $f\in \h_{p}^{c}(\mathbb{R}^{d},\mathcal{M})$ and $\Phi$ be a nondegenerate Schwartz function with vanishing mean. By the general characterization of $\mathcal{H}_p^c(\R,\M)$ in Lemma \ref{equivalence Hp}, $\|f- \phi * f  \|_{\mathcal{H}_p^c(\R,\M)}\approx \|G^c_\Phi(f- \phi * f  )\|_p$. Let us split $\|G^c_\Phi(f- \phi * f  )\|_p$ into two parts:
\begin{equation*}
\begin{split}
 &   \| G^c_\Phi(f- \phi * f  )\|_{p}\\
 & \lesssim \Big\|\big(\int_{0}^{1}\left|\Phi_{\varepsilon}*(f- \phi * f  )\right|^{2}\frac{d\varepsilon}{\varepsilon}\big)^{\frac{1}{2}}\Big\|_{p}+\Big\|\big(\int_{1}^{\infty}\left|\Phi_{\varepsilon}*(f- \phi * f  )\right|^{2}\frac{d\varepsilon}{\varepsilon}\big)^{\frac{1}{2}}\Big\|_{p}\\
 & =  \Big\|\big(\int_{0}^{1}|\Phi_{\varepsilon}*(f- \phi * f )|^{2}\frac{d\varepsilon}{\varepsilon}\big)^{\frac{1}{2}}\Big\|_{p}+\Big\|\big(\int_{1}^{\infty}|(\Phi_{\varepsilon}-\Phi_{\varepsilon}*\phi )*f |^{2}\frac{d\varepsilon}{\varepsilon} \big)^{\frac{1}{2}}\Big\|_{p}.
\end{split}
\end{equation*}
In order to estimate the first term in the last equality, we notice that  $\phi *f\in \h_{p}^{c}(\mathbb{R}^{d},\mathcal{M})$, 
 thus we have $f- \phi * f  \in \h_{p}^{c}(\mathbb{R}^{d},\mathcal{M})$. Then by Theorem \ref{thm main1}, this term can be majorized from above by $\left\Vert f\right\Vert _{\h_{p}^{c}}$.

To deal with the second term, we express it
as a Calder\'on-Zygmund operator with Hilbert-valued kernel. Let $H=L_{2}((1,+\infty),\frac{d\varepsilon}{\varepsilon})$
and define the kernel $\fk:\mathbb{R}^{d}\rightarrow H$ by $\fk(s)=\Phi_{\cdot}(s)-\Phi_{\cdot}*\phi (s)$ ($\Phi_{\cdot}(s)$
being the function $\varepsilon\mapsto\Phi_{\varepsilon}(s)$).
Now we prove that $\fk$ satisfies the hypotheses
of Corollary \ref{Cor C-Z}. The condition (1) of that corollary  is easy to verify. So we only check the conditions (2) and (3) there. By the fact that  $\int_{\mathbb{R}^{d}}\phi (s)ds=1$ and the mean value theorem, we have
\begin{equation*}
\begin{split}
\big|(\Phi_{\varepsilon}-\Phi_{\varepsilon}*\phi )(s)\big|
 & = \Big|\int_{\mathbb{R}^{d}}\left[\Phi_{\varepsilon}(s)-\Phi_{\varepsilon}(s-t)\right]\phi (t)dt\Big|\\
 & \leq  \int_{\mathbb{R}^{d}}\left|t\right|\frac{1}{\varepsilon^{d+1}}\sup_{0<\theta <1}\big| \nabla\Phi\big(\frac{s-\theta t}{\varepsilon}\big)\big|\left|\phi (t)\right|dt .
\end{split}
\end{equation*}
Then we split the last integral into two parts:
\begin{equation*}
\begin{split}
\big\|(\Phi_{\cdot}-\Phi_{\cdot}*\phi )(s)\big\|_{H}
 &  \lesssim   \Big(\int_{1}^{\infty}\big(\int_{\left|t\right|<\frac{\left|s\right|}{2}}\left|t\right|\frac{1}{\varepsilon^{d+1}}\sup_{0<\theta <1}\big|\nabla\Phi\big(\frac{s-\theta t}{\varepsilon}\big)\big|\left|\phi (t)\right|dt\big)^{2}\frac{d\varepsilon}{\varepsilon}\Big)^{\frac{1}{2}}\\
 &\;\;\;\;+\Big(\int_{1}^{\infty}\big(\int_{\left|t\right|>\frac{\left|s\right|}{2}}\left|t\right|\frac{1}{\varepsilon^{d+1}}\sup_{0<\theta <1}\big|\nabla\Phi\big(\frac{s-\theta t}{\varepsilon}\big)\big|\left|\phi (t)\right|dt\big)^{2}\frac{d\varepsilon}{\varepsilon}\Big)^{\frac{1}{2}}\\
& \stackrel{\mathrm{def}}{=}  \rm I+\rm II.
\end{split}
\end{equation*}
If $|t |<\frac{|s |}{2}$, we have $|s-\theta t |\geq\frac{|s|}{2}$,
thus $|\nabla\Phi(\frac{s-\theta t}{\varepsilon})|\lesssim\frac{\varepsilon^{d+\frac{1}{2}}}{\left|s\right|^{d+\frac{1}{2}}}$ for any $0\leq \theta \leq 1$. Then
\begin{equation*}
{\rm I }    \lesssim \big(\int_{1}^{\infty}\frac{1}{\varepsilon^2}     d\e\big)^{\frac{1}{2}}\frac{1}{\left|s\right|^{d+\frac{1}{2}}} \lesssim  \frac{1}{\left|s\right|^{d+\frac{1}{2}}}.
\end{equation*}
When $| t |>\frac{|s|}{2}$, since $\phi \in\mathcal{S}$,
we have $\int_{\left|t\right|>\frac{\left|s\right|}{2}}\left|t\right|\left|\phi (t)\right|dt\lesssim\frac{1}{\left|s\right|^{d+\frac{1}{2}}}$.
Hence
\begin{equation*}
{\rm II}    \lesssim \big(\int_{1}^{\infty}\frac{1}{\varepsilon^{2d+2}}\frac{d\varepsilon}{\varepsilon}\big)^{\frac{1}{2}}\cdot\frac{1}{\left|s\right|^{d+\frac{1}{2}}} \lesssim  \frac{1}{\left|s\right|^{d+\frac{1}{2}}}.
\end{equation*}
The estimates of $\rm I$ and $\rm II$ imply
\[
\| (\Phi_{\varepsilon}-\Phi_{\varepsilon}*\phi )(s)\| _{H}\lesssim\frac{1}{\left|s\right|^{d+\frac{1}{2}}}.
\]
In a similar way, we obtain
\[
\| \nabla (\Phi_{\varepsilon}-\Phi_{\varepsilon}*\phi )(s)\| _{H}\lesssim\frac{1}{\left|s\right|^{d+1}}.
\]
Thus, it follows from Corollary \ref{Cor C-Z} that $\Big\|(\int_{1}^{\infty}\left|(\Phi_{\varepsilon}-\Phi_{\varepsilon}*\phi )*f\right|^{2}\frac{d\varepsilon}{\varepsilon})^{\frac{1}{2}}\Big\|_{p}$ is also majorized from above by $\left\Vert f\right\Vert _{\h_{p}^{c}}$.

(2) The case $p>2$ can be deduced from the duality between $\h_p^c$ and $\h_q^c$ (Theorem \ref{cor: duality}) and that between $\mathcal{H}_p^c$ and $\mathcal{H}_q^c$ ($q$ being the conjugate index of $p$). There exists  $g\in \h_q^c(\R,\M)$ with norm one such that 
\begin{equation*}
\begin{split}
\| f-\phi*f \|_{\h_p^c}& =\Big| \tau \int_{\R}(f-\phi*f)(s)g^*(s)ds \Big| \\ 
& =   \Big| \tau \int_{\R}f(s)(g^*-\phi * g^*)(s)ds\Big| \\
& \leq   \| f\|_{\mathcal{H}_p^c}  \| g-\overline{\phi} * g\|_{\mathcal{H}_q^c} \lesssim  \| f\|_{\mathcal{H}_p^c} \| g\|_{\h_q^c}=\| f\|_{\mathcal{H}_p^c} ,
\end{split}
\end{equation*}
which completes the proof.
\end{proof}

From the interpolation result of  mixture local hardy spaces in Proposition \ref{interp-mix}, we can deduce the equivalence between  mixture local Hardy spaces and $L_p$-spaces.

\begin{prop}\label{equi-Hp-hp-Lp}
 For any $1<p<\infty$, $\h_p(\R,\M)=\mathcal{H}_p(\R,\M)= L_p(\N)$ with equivalent norms.
 \end{prop}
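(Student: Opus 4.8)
The plan is to prove the three-way equivalence $\h_p(\R,\M)=\mathcal{H}_p(\R,\M)= L_p(\N)$ for $1<p<\infty$ in two stages: first establish $\h_p(\R,\M)= L_p(\N)$ by interpolation, and then deduce $\mathcal{H}_p(\R,\M)= L_p(\N)$ from Mei's result together with the relation between the local and non-local Hardy spaces. Recall that Mei already proved $\mathcal{H}_p(\R,\M)= L_p(\N)$ with equivalent norms for $1<p<\infty$, so strictly the content here is the first equality; but since this statement is placed before \eqref{eq: inclu p<2}--\eqref{eq: inclu p>2} are used in the proof of Proposition \ref{interp-mix}, I must be careful to give a self-contained argument.

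First I would handle the mixture space $\h_p(\R,\M)$. For $1<p<\infty$ Theorem \ref{thm: bdd retraction} shows that $\h_p^c(\R,\M)$ is a complemented subspace of $L_{p}(\mathcal{N};L_{2}^{c}(\widetilde{\Gamma}))\oplus_p L_{p}(\mathcal{N})$ via the maps $\fE$ and $\fF$; the analogous statement holds for $\h_p^r(\R,\M)$ by passing to adjoints. By Theorem \ref{interp-mix}, $\big(\bmo(\R,\M), \h_1(\R,\M)\big)_{\frac1p} = L_p(\N)$. On the other hand, I would apply the same complementation to interpolate the mixture spaces: since $\h_p(\R,\M)$ is complemented in a space of the form $L_p(\mathcal{N};L_2^c(\widetilde\Gamma)\oplus L_2^r(\widetilde\Gamma))\oplus_p L_p(\mathcal{N})$ compatibly over $1<p<\infty$, complex interpolation gives $\big(\h_{p_1}(\R,\M),\h_{p_2}(\R,\M)\big)_\theta=\h_p(\R,\M)$ with $\frac1p=\frac{1-\theta}{p_1}+\frac\theta{p_2}$. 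Combining with Theorem \ref{interp-mix}, which identifies the interpolation scale with $L_p$ at the endpoint $\h_1$ side and with $\bmo$/$L_\infty$ at the other end, we conclude $\h_p(\R,\M)=L_p(\N)$ for all $1<p<\infty$, with equivalent norms. Concretely: for $1<p<2$ one reads $L_{p}(\N)=\big(L_\infty(\N),\h_1(\R,\M)\big)_{\frac1p}$ from (the mixed version of) Theorem \ref{interp-mix} after noting $\big(L_\infty(\N),\h_1(\R,\M)\big)_{\frac1p}=\big(\bmo(\R,\M),\h_1(\R,\M)\big)_{\frac1p}$ up to isomorphism, while $\h_p(\R,\M)=\big(\bmo(\R,\M),\h_1(\R,\M)\big)_{\frac1p}$ follows from the mixed analogue of Theorem \ref{thm: interpolation}; the case $2<p<\infty$ is dual, using reflexivity and $\h_2(\R,\M)=L_2(\N)$.

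For the non-local spaces, I would invoke Mei's theorem $\mathcal{H}_p(\R,\M)=L_p(\N)$ directly (it is quoted in the excerpt, just before the Introduction's subsection on BMO: ``for $1<p<\infty$, $\mathcal{H}_p(\mathbb{R}^d, \M)=L_p(\N)$ with equivalent norms''), which immediately gives $\h_p(\R,\M)=\mathcal{H}_p(\R,\M)=L_p(\N)$ once the first equality is proven. Alternatively, to make the chain of inclusions transparent, one uses $\|\mathrm{P}*f\|_p\lesssim\|f\|_p\lesssim\|f\|_{\mathcal{H}_p^c}$ for $1\le p\le 2$ together with the definition of the $\h_p^c$-norm to get $\mathcal{H}_p^c(\R,\M)\subset\h_p^c(\R,\M)$, and then the converse for $2<p<\infty$ by the duality in Theorem \ref{cor: duality}; taking the column and row versions and forming the mixture spaces, together with $\h_p(\R,\M)=L_p(\N)=\mathcal{H}_p(\R,\M)$ on $L_2$, closes the argument.

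The main obstacle is a subtle circularity: Proposition \ref{interp-mix}'s proof as written invokes Proposition \ref{equi-Hp-hp-Lp}, so I must present the argument so that $\h_p(\R,\M)=L_p(\N)$ is extracted from Theorem \ref{interp-mix} (and the complementation in Theorem \ref{thm: bdd retraction}) \emph{before} using the inclusion $\h_1(\R,\M)\subset L_1(\N)$ of Remark \ref{rem: L_p H h_p} and $\mathcal H_p(\R,\M)=L_p(\N)$. Since Remark \ref{rem: L_p H h_p} and Mei's $\mathcal{H}_p=L_p$ result are both available independently, and Theorem \ref{thm: interpolation}/\ref{interp-mix} are proved without reference to Proposition \ref{equi-Hp-hp-Lp} except at the single step $L_{p'}(\N)\subset\big(\h_p(\R,\M),\h_1(\R,\M)\big)_\theta$ (which itself only uses $\h_p^c\subset L_p$ from Remark \ref{rem: L_p H h_p} for $p\le 2$ and duality for $p>2$, not the full equivalence), the circularity is only apparent and can be resolved by ordering the deductions carefully. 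So the real work is just bookkeeping of which earlier result feeds which.

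\begin{proof}
By Mei's theorem (quoted above), $\mathcal{H}_p(\R,\M)=L_p(\N)$ with equivalent norms for every $1<p<\infty$; it remains to identify $\h_p(\R,\M)$ with $L_p(\N)$.

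Fix $1<p<\infty$. By Theorem \ref{thm: bdd retraction}, $\h_p^c(\R,\M)$ is a complemented subspace of $L_{p}\big(\mathcal{N};L_{2}^{c}(\widetilde{\Gamma})\big)\oplus_p L_{p}(\mathcal{N})$ via $\fE$ and $\fF$, and by passing to adjoints $\h_p^r(\R,\M)$ is complemented in $L_{p}\big(\mathcal{N};L_{2}^{r}(\widetilde{\Gamma})\big)\oplus_p L_{p}(\mathcal{N})$; hence the mixture space $\h_p(\R,\M)$ is complemented in
$$
\mathcal{Z}_p:=L_{p}\big(\mathcal{N};L_{2}^{c}(\widetilde{\Gamma})\oplus L_{2}^{r}(\widetilde{\Gamma})\big)\oplus_p L_{p}(\mathcal{N}),
$$
via projections that are compatible for $1<p<\infty$. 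Consequently complex interpolation commutes with these projections, and since $\big(\mathcal{Z}_{p_1},\mathcal{Z}_{p_2}\big)_\theta=\mathcal{Z}_p$ for $\frac1p=\frac{1-\theta}{p_1}+\frac\theta{p_2}$, we get
\begin{equation}\label{eq: hp interp mixture}
\big(\h_{p_1}(\R,\M),\h_{p_2}(\R,\M)\big)_\theta=\h_p(\R,\M),\qquad \tfrac1p=\tfrac{1-\theta}{p_1}+\tfrac\theta{p_2},
\end{equation}
for all $1<p_1,p_2<\infty$.

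On the other hand, the proof of Theorem \ref{interp-mix} (which does not use the present proposition, relying only on Theorem \ref{thm: interpolation}, duality, Wolff's interpolation theorem, and the inclusions $\h_1(\R,\M)\subset L_1(\N)$ and $L_\infty(\N)\subset\bmo(\R,\M)$) shows that
\begin{equation}\label{eq: mixed endpoints}
\big(L_\infty(\N),\h_1(\R,\M)\big)_{\frac1r}=L_r(\N),\qquad 1<r<\infty,
\end{equation}
with equivalent norms, and likewise $\big(\h_r(\R,\M),\h_1(\R,\M)\big)_\theta=L_{r'}(\N)$ for $\frac1{r'}=\frac{1-\theta}r+\theta$. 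Taking $p_1=r$, $p_2=1$ is not permitted in \eqref{eq: hp interp mixture}, but we do not need it: for $1<p<2$, pick $r$ with $1<r<p$; then $p=\frac{1-\theta}r\cdot\text{(nonsense)}$—rather, write $p$ itself as $r'=\frac r{1-\theta}$ for suitable $\theta$, and the equality $\big(\h_r(\R,\M),\h_1(\R,\M)\big)_\theta=L_p(\N)$ from Theorem \ref{interp-mix} together with $\big(\h_r(\R,\M),\h_1(\R,\M)\big)_\theta=\h_p(\R,\M)$ (the mixed analogue of Theorem \ref{thm: interpolation}, proved by the same complementation argument and duality) yields $\h_p(\R,\M)=L_p(\N)$ for $1<p<2$. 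The case $p=2$ is $\h_2(\R,\M)=L_2(\N)$, already recorded. For $2<p<\infty$, by Theorem \ref{cor: duality} the space $\h_p^c(\R,\M)$ is reflexive with dual $\h_q^c(\R,\M)$ ($\frac1p+\frac1q=1$), so $\h_p(\R,\M)$ is reflexive with dual $\h_q(\R,\M)$; since $1<q<2$ we have just shown $\h_q(\R,\M)=L_q(\N)$, hence by duality $\h_p(\R,\M)=L_p(\N)$.

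Combining this with $\mathcal{H}_p(\R,\M)=L_p(\N)$ gives $\h_p(\R,\M)=\mathcal{H}_p(\R,\M)=L_p(\N)$ with equivalent norms for every $1<p<\infty$.
\end{proof}
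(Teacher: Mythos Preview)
Your proposal has a genuine gap, and the route you chose is unnecessarily tangled compared to the paper's argument.

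The core problem is your claim that the \emph{mixture} space $\h_p(\R,\M)$ is complemented in a single space $\mathcal{Z}_p$ via projections compatible in $p$, and hence that $\big(\h_{p_1}(\R,\M),\h_{p_2}(\R,\M)\big)_\theta=\h_p(\R,\M)$ for $1<p_1,p_2<\infty$. Theorem \ref{thm: bdd retraction} gives complementation for $\h_p^c$ and (by adjoints) for $\h_p^r$ separately, but $\h_p$ is a \emph{sum} of these for $p\le 2$ and an \emph{intersection} for $p>2$. Sums and intersections of complemented subspaces are not automatically complemented by a single compatible projection, and in general $(X_0+Y_0,X_1+Y_1)_\theta\neq (X_0,X_1)_\theta+(Y_0,Y_1)_\theta$. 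So \eqref{eq: hp interp mixture} is not justified. The same objection applies to the ``mixed analogue of Theorem \ref{thm: interpolation}'' you invoke: the paper never proves $\big(\bmo(\R,\M),\h_1(\R,\M)\big)_{1/p}=\h_p(\R,\M)$; Theorem \ref{interp-mix} identifies that interpolation space with $L_p(\N)$, and its proof \emph{uses} the present proposition. Your attempt to break the circularity by saying only $\h_p\subset L_p$ is needed is fine for the one step you single out, but you then still need $\big(\h_r,\h_1\big)_\theta=\h_{p'}$ to conclude anything about $\h_{p'}$, and that is exactly the unproved mixed interpolation.

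The paper's proof avoids all of this by staying with the \emph{column} interpolation (Theorem \ref{thm: interpolation}), which is proved without any forward reference, and then sandwiching. From $L_\infty(\N)\subset\bmo^c(\R,\M)$ one has by duality $\h_1^c(\R,\M)\subset L_1(\N)$; interpolating (Theorem \ref{thm: interpolation}) with $\h_2^c=L_2$ yields $\h_p^c(\R,\M)\subset L_p(\N)$ for $1<p\le 2$ and $L_p(\N)\subset\h_p^c(\R,\M)$ for $2<p<\infty$, and the same for rows. Combining with the column/row inclusions $\mathcal{H}_p^c\subset\h_p^c$ for $p\le 2$ and $\h_p^c\subset\mathcal{H}_p^c$ for $p>2$ (equations \eqref{eq: inclu p<2}--\eqref{eq: inclu p>2}) and passing to mixtures gives
\[
\mathcal{H}_p(\R,\M)\subset\h_p(\R,\M)\subset L_p(\N)\quad (1<p\le 2),\qquad L_p(\N)\subset\h_p(\R,\M)\subset\mathcal{H}_p(\R,\M)\quad (2<p<\infty),
\]
and Mei's $\mathcal{H}_p(\R,\M)=L_p(\N)$ closes the chain. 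No mixture-space interpolation is needed, and there is no circularity because only Theorem \ref{thm: interpolation} (not Theorem \ref{interp-mix}) is used.
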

\begin{proof}
It is  known that $\mathcal{H}_p(\R,\M)= L_p(\N)$ with equivalent norms. One can see \cite[Corollary 5.4]{Mei2007} for more details. One the other hand, since $L_\infty(\N)\subset \bmo^c(\R,\M)$, by duality, we get $\h_1^c(\R,\M)\subset L_1(\N)$. Combining \eqref{rem: h2=L2} and the interpolation result in Theorem \ref{thm: interpolation}, we deduce that $\h_p^c(\R,\M)\subset L_p(\N)$ for any $1< p\leq 2$ and $L_p(\N)\subset \h_p^c(\R,\M)$ for any $ 2<p< \infty$. Similarly, we also have $\h_p^r(\R,\M)\subset L_p(\N)$ for any $1< p\leq 2$ and $L_p(\N)\subset \h_p^r(\R,\M)$ for any $2<p< \infty$. Combined with \eqref{eq: inclu p<2} and \eqref{eq: inclu p>2}, we get
\begin{equation}\label{eq: hp subset Lp}
 \mathcal{H} _p(\R,\M) \subset \h_p(\R,\M)\subset L_p(\N) \quad \text{for}\quad 1< p\leq 2,
\end{equation}
and
\begin{equation}\label{eq: Lp subset hp}
L_p(\N)\subset \h_p(\R,\M) \subset  \mathcal{H} _p(\R,\M) \quad \text{for}\quad 2<p< \infty.
\end{equation}
Then \eqref{eq: hp subset Lp}, \eqref{eq: Lp subset hp} and  \cite[Corollary 5.4]{Mei2007} imply that 
 \[
\h_p(\R,\M)=\mathcal{H}_p(\R,\M)= L_p(\N) \quad \text{for}\quad 1< p< \infty,
 \]
which completes the proof.
\end{proof}

\section{The atomic decomposition}

In this section, we give the atomic decomposition of $\h_1^c(\R,\M)$. The atomic decomposition of $\mathcal{H}_1^c(\R,\M)$ studied in \cite{Mei2007} and the characterizations obtained in the last section will be the main tools for us.
\begin{defn}\label{def: atom h1}
Let $Q$ be a cube in $\R$ with $| Q|\leq 1$. If $| Q|=1$, an $\h _1^c$-atom associated with $Q$ is a function $a\in L_{1}  (\mathcal{M};L_{2}^{c}  (\mathbb{R}^{d} ) )$ such that
\begin{itemize}
\item $\supp a\subset Q$;
\item $\tau\big(\int_{Q}  |a  (s ) |^{2}ds\big)^{\frac{1}{2}}\leq  |Q |^{-\frac{1}{2}}$.
\end{itemize}
If $|Q|<1$, we assume additionally: 
\begin{itemize}
\item $\int_{Q}a(s)ds=0.$
\end{itemize}
\end{defn}

Let $\h_{1,{\rm at}}^{c}  (\mathbb{R}^{d},\mathcal{M} )$ be the
space of all $f$ admitting a representation of the form
\[
f=\sum_{j=1}^\infty \lambda_ja_j,
\]
where the $a_{j}$'s are $\h_1^{c}$-atoms and $\lambda_{j}\in\mathbb{C}$ such that $\sum_{j=1}^\infty  |\lambda_{j} |<\infty$. The above
series  converges in the sense of distribution.
We equip $\h_{1,{\rm at}}^{c} (\mathbb{R}^{d},\mathcal{M} )$ with
the following norm:
\[
  \|  f \|  _{\h_{1,{\rm at}}^{c}}=\inf  \{ \sum_{j=1}^\infty  |\lambda_{j} |: f=\sum_{j=1}^\infty\lambda_{j}a_{j};\,\mbox{\ensuremath{a_{j}}'s are \ensuremath{\h_1^{c}} -atoms, }\mbox{\ensuremath{\lambda}}_{j}\in\mathbb{C} \} .
\]
Similarly, we define the row version $\h_{1,{\rm at}}^{r}  (\mathbb{R}^{d},\mathcal{M} )$.
Then we set
\[
\h_{1,{\rm at}}  (\mathbb{R}^{d},\mathcal{M} )=\h_{1,{\rm at}}^{c} (\mathbb{R}^{d},\mathcal{M} )+\h_{1,{\rm at}}^{r}  (\mathbb{R}^{d},\mathcal{M} ).
\]

\begin{thm}\label{thm:atomic h1}
We have $\h_{1,{\rm at}}^{c} (\mathbb{R}^{d},\mathcal{M} )=\h_{1}^{c}  (\mathbb{R}^{d},\mathcal{M} )$
with equivalent norms.
\end{thm}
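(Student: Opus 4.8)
The plan is to prove the two inclusions $\h_{1,\mathrm{at}}^{c}(\R,\M)\subset\h_1^c(\R,\M)$ and $\h_1^c(\R,\M)\subset\h_{1,\mathrm{at}}^{c}(\R,\M)$ separately, following the strategy of Mei for the non-local case but using the localized characterizations established earlier, in particular Theorem \ref{thm main1} (general characterization with a test function $\phi$ with $\widehat\phi(0)>0$) and Corollary \ref{cor: s equi g}.

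\textbf{Easy inclusion.} First I would show that every $\h_1^c$-atom $a$ satisfies $\|a\|_{\h_1^c}\lesssim 1$ with a constant independent of the atom. Fix a Schwartz function $\Phi$ of vanishing mean, nondegenerate in the sense of \eqref{eq: nondegenerate}, and a companion $\phi$ given by \eqref{eq: basic2'}; by Theorem \ref{thm main1} it suffices to bound $\|s_\Phi^c(a)\|_1+\|\phi*a\|_1$. For an atom associated with a cube $Q$ of volume $<1$ (so $\int a=0$), split the integral defining $s_\Phi^c(a)$ and $\|\phi*a\|_1$ over $2Q$ and its complement: on $2Q$ use Cauchy--Schwarz, the $L_2$-normalization of $a$ and $|2Q|^{1/2}$; on the complement use the cancellation $\int a=0$ together with the decay/smoothness of $\Phi_\e$ and $\phi$ exactly as in the classical argument (and as in the proof of Lemma \ref{lem: Carleson < bmo}). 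For an atom with $|Q|=1$ there is no cancellation, but then the off-diagonal decay of $\Phi_\e$ (for $\e<1$) and of $\phi$ (whose Fourier transform lies in $H_2^\sigma$, hence $\phi\in L_1$ with good decay) still gives an $\ell_1$-summable estimate over the translates $Q+m$. Then for $f=\sum_j\lambda_j a_j$ one gets $\|f\|_{\h_1^c}\le\sum_j|\lambda_j|\,\|a_j\|_{\h_1^c}\lesssim\sum_j|\lambda_j|$, and taking the infimum yields $\|f\|_{\h_1^c}\lesssim\|f\|_{\h_{1,\mathrm{at}}^c}$; one must also check that an atomic series converges in the ambient space $L_1(\M;\mathrm R_d^c)+L_\infty(\M;\mathrm R_d^c)$ so that $\h_1^c$-membership is meaningful.

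\textbf{Hard inclusion.} The reverse inclusion $\h_1^c(\R,\M)\subset\h_{1,\mathrm{at}}^c(\R,\M)$ is the main obstacle. The idea is to reduce to the known non-local atomic decomposition of $\mathcal H_1^c(\R,\M)$ from \cite{Mei2007}. Given $f\in\h_1^c(\R,\M)$, pick $\phi\in\mathcal S$ with $\int\phi=1$ and write $f=(f-\phi*f)+\phi*f$. By the theorem comparing $\h_p^c$ and $\mathcal H_p^c$ proved just above (part (1), case $p=1$), $f-\phi*f\in\mathcal H_1^c(\R,\M)$ with $\|f-\phi*f\|_{\mathcal H_1^c}\lesssim\|f\|_{\h_1^c}$; Mei's atomic decomposition gives $f-\phi*f=\sum_j\lambda_j b_j$ with non-local $\mathcal H_1^c$-atoms $b_j$ and $\sum_j|\lambda_j|\lesssim\|f\|_{\h_1^c}$. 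An $\mathcal H_1^c$-atom supported in a cube of volume $\le1$ is (a bounded multiple of) an $\h_1^c$-atom, while an $\mathcal H_1^c$-atom supported in a large cube $Q$ can be chopped, using the dyadic covering Lemma \ref{lem: dyadic cover} together with Lemma \ref{lem: Mean function}, into $O(|Q|)$ pieces each essentially an $\h_1^c$-atom of volume one, at the cost of an extra summable factor; this handles the large-cube atoms. For the remaining term $\phi*f$: since $\widehat\phi(0)=1\ne0$ and $\phi$ can be taken with $\widehat\phi\in H_2^\sigma$, Theorem \ref{thm main1} gives $\|\phi*f\|_1\lesssim\|f\|_{\h_1^c}$, and moreover $\phi*f$ is a ``nice'' function; a compactly supported $L_1(\M;L_2^c)$ function supported in a cube of volume $\ge1$ and with no cancellation assumption decomposes directly into volume-one $\h_1^c$-atoms by partitioning its support into unit cubes and using the $L_1(\M;L_2^c)$-norm. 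The genuinely delicate point is to make the cutting of $\phi*f$ (which is not compactly supported) into such atoms while controlling the $\ell_1$-sum of the coefficients by $\|\phi*f\|_1$; here one uses a stopping-time/Calderón--Zygmund decomposition of $\phi*f$ at height $\sim1$ relative to the unit grid, together with the pointwise control coming from $\|\phi*f\|_{L_\infty(\M;\mathrm R_d^c)}$-type estimates.

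\textbf{Conclusion.} Combining the two inclusions with the open mapping theorem gives the equivalence of norms. I expect the decomposition of the low-frequency part $\phi*f$ and the careful bookkeeping of coefficients in the chopping of large non-local atoms to be where the real work lies; everything else is a localization of Mei's arguments and a direct application of the machinery (Theorems \ref{thm main1}, \ref{cor: duality}, Lemmas \ref{lem: dyadic cover}, \ref{lem: Mean function}) already developed in the paper.
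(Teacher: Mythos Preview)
Your easy inclusion is fine and close in spirit to the paper's, though the paper streamlines it: for atoms on cubes with $|Q|<1$ it simply quotes that these are already $\mathcal H_1^c$-atoms and uses $\mathcal H_1^c\subset\h_1^c$, and for $|Q|=1$ it chooses $\Phi,\phi$ with \emph{compact} support (in the discrete characterization, Theorem~\ref{thm: equivalence hpD}) so that $\Phi_j*a$ and $\phi*a$ are supported in $3Q$ and no off-diagonal estimate is needed at all. Your direct off-diagonal argument would also go through.

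For the hard inclusion your route is genuinely different from the paper's, and it has a real gap in the treatment of the low-frequency piece $\phi*f$. The chopping of a large $\mathcal H_1^c$-atom $b$ on $Q$ into unit-cube $\h_1^c$-atoms does work: with $X_j=\bigl(\int_{Q_j}|b|^2\bigr)^{1/2}$ one has the operator inequality $\bigl(\sum_j X_j\bigr)^2\le N\sum_j X_j^2$, hence $\sum_j\tau(X_j)\le N^{1/2}\tau\bigl[(\sum_j X_j^2)^{1/2}\bigr]\le|Q|^{1/2}|Q|^{-1/2}=1$. But for $\phi*f$ you need
\[
\sum_{m}\tau\Bigl[\Bigl(\int_{Q_m}|\phi*f|^2\Bigr)^{1/2}\Bigr]\lesssim\|f\|_{\h_1^c},
\]
and this is not a consequence of $\phi*f\in L_1(\mathcal N)$ (already false in the scalar case: take tall thin spikes in each unit cube). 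Reducing it via the decay of $\phi$ only brings you back to $\sum_m\tau[(\int_{Q_m}|f|^2)^{1/2}]\lesssim\|f\|_{\h_1^c}$, which is precisely the atomic decomposition you are trying to prove. The ``stopping-time/Calder\'on--Zygmund decomposition'' you invoke is, in the operator-valued setting, a deep and delicate tool (cf.\ Parcet's pseudo-localization) and is not available in the form you need here; Lemma~\ref{lem: Mean function} goes the wrong way for this purpose.

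The paper avoids this obstacle entirely by a duality argument: having shown $\h_{1,\mathrm{at}}^c\subset\h_1^c$, it identifies $(\h_{1,\mathrm{at}}^c)^*$ directly with $\bmo^c$. Any $\ell\in(\h_{1,\mathrm{at}}^c)^*$ acts on $\mathcal H_1^c$-atoms (which, after the chopping above, lie in $\h_{1,\mathrm{at}}^c$ with norm $\le1$), so by Mei's duality $\ell$ is given by some $g\in\BMO^c$; testing $\ell$ on the unit balls of $L_1(\mathcal M;L_2^c(Q))$ for $|Q|=1$ (which sit inside $\h_{1,\mathrm{at}}^c$) yields $\sup_{|Q|=1}\|(\int_Q|g|^2)^{1/2}\|_{\mathcal M}\le\|\ell\|$, hence $g\in\bmo^c$. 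Since $\h_{1,\mathrm{at}}^c\subset\h_1^c$ is dense and both have dual $\bmo^c$, they coincide. This sidesteps any constructive decomposition of $\phi*f$.
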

\begin{proof}
First, we show the inclusion $\h_{1,{\rm at}}^c(\R,\M)\subset \h_{1}^c(\R,\M)$. To this end, it suffices to prove that for any atom $a$ in Definition \ref{def: atom h1}, we have
\begin{equation}\label{eq: a bounded}
\| a\|_{\h_1^c}\lesssim 1.
\end{equation}
Recall that the atomic decomposition of $\mathcal{H}_1^c(\R,\M)$ has been  considered in \cite{Mei2007}. An $\mathcal{H}_1^c$-atom is a function $b\in L_{1}  (\mathcal{M};L_{2}^{c}  (\mathbb{R}^{d} ) )$
such that, for some cube $Q$,
\begin{itemize}
\item $\supp b\subset Q$;
\item $\int_{Q}b(s)ds=0$;
\item $\tau\big(\int_{Q}  | b  (s ) |^{2}ds\big)^{\frac{1}{2}}\leq  |Q |^{-\frac{1}{2}}$.
\end{itemize}
If $a$ is supported in $Q$ with $|Q|<1$, then $a$ is also an $\mathcal{H}_1^c$-atom, so $\|a\|_{\h_1^c} \lesssim \|a\|_{\mathcal{H}_1^c}\lesssim 1$. Now assume that the supporting cube $Q$ of $a$ is of side  length one.
 We use the discrete characterization obtained in Theorem \ref{thm: equivalence hpD}, i.e.
\[
\| a \|_{\h_1^c}\approx \big\| (\sum_{j=1}^{\infty}|\Phi_j*a|^2)^{\frac{1}{2}} \big\|_{1}+\|\phi*a\|_{1}.
\]
Apart from the assumption on $\Phi$ and $\phi$ in Theorem \ref{thm main1}, we may take $\Phi$ and $\phi$ satisfying
$$\supp \Phi, \,\supp \phi \subset B_1=\{s\in\R: |s|\leq 1\}.$$
Then
\[
\supp \phi*a \subset 3Q \;\;\;\mbox{and}\,\;\;\supp \Phi_\e*a \subset 3Q \quad \text{for any}\quad 0<\e<1.
\]
By the Cauchy-Schwarz inequality we have
\[
\|\phi*a\|_{1}\leq \int_{3Q} \big(\int_{Q}|\phi (t-s)|^2ds\big)^\frac{1}{2} \cdot \tau \big( \int |a(s)|^2 ds\big)^\frac{1}{2}\,dt\lesssim  1.
\]
Similarly,
\begin{equation*}
\begin{split}
 \big\| (\sum_{j=1}^{\infty}|\Phi_j*a|^2)^{\frac{1}{2}} \big\|_{1} & =  \tau \int_{3Q} (\sum_{j=1}^{\infty}|\Phi_j*a(s)|^2)^{\frac{1}{2}}ds\\
&  \lesssim   \tau \Big( \int_{3Q}  \sum_{j=1}^{\infty}|\Phi_j*a(s)|^2 ds\Big)^\frac{1}{2}\\
&  =  \tau \Big( \int_{\R} \sum_{j=1}^{\infty}|\widehat{\Phi}(2^{-j}\xi)\widehat{a}(\xi)|^2 d\xi\Big)^\frac{1}{2}\\
&  \leq   \tau \big( \int |a(s)|^2 ds\big)^\frac{1}{2}\leq 1.
\end{split}
\end{equation*}
Therefore, $\h_{1,{\rm at}}^c(\R,\M)\subset \h_{1}^c(\R,\M)$.

Now we turn to proving the reverse inclusion. Observe that  $\mathcal{H}_1^c$-atoms are also $\h_1^c$-atoms. Then by the atomic decomposition of $\mathcal{H}_1^c(\R,\M)$ and the duality between $\mathcal{H}_1^c(\R,\M)$ and $\BMO^c(\R,\M)$, every continuous functional $\ell$ on
$\h_{1,{\rm at}}^{c} (\mathbb{R}^{d},\mathcal{M} )$ corresponds to a function $g\in \BMO^c(\R,\M)$. Moreover, since for any cube $Q$ with side length one, $L_{1}\big(\mathcal{M};L_{2}^{c}  (Q ) \big)\subset \h_{1,{\rm at}}^{c} (\mathbb{R}^{d},\mathcal{M} )$, $\ell $ induces a continuous
functional on $L_{1}\big(\mathcal{M};L_{2}^{c}  (Q ) \big)$
with norm less than or equal to $ \|  \ell \|  _{  (\h_{1,{\rm at}}^{c})^{*}}$.
Thus, the function $g$ satisfies the condition that 
\begin{equation}\label{eq: g bmo}
g\in \BMO^c(\R,\M)\quad \text{and}\quad\underset{\substack{Q\subset \R\\ |Q|=1}}\sup \| g|_Q\|_{L_\infty(\M;L_{2}^{c}(Q))}\leq \|  \ell \|  _{  (\h_{1,{\rm at}}^{c})^{*}}.
\end{equation}
Consequently, $g\in \bmo^c(\R,\M)$ and $$\ell  (f )=\tau\int_{\mathbb{R}^{d}}f  (s )g^{*}  (s )ds,\,  \forall\, f\in \h_{1,{\rm at}}^{c} (\mathbb{R}^{d},\mathcal{M} ).$$
Thus, $\h_{1,{\rm at}}^c(\R,\M)^*\subset \bmo ^c(\R,\M)$. On the other hand, by the previous result, we have $\bmo ^c(\R,\M)\subset \h_{1,{\rm at}}^c(\R,\M)^*$. Thus, $\h_{1,{\rm at}}^c(\R,\M)^*=\bmo ^c(\R,\M)$ with equivalent norms. Since $\h_{1,{\rm at}}^c(\R,\M)\subset \h_{1}^c(\R,\M)$ densely, we deduce that $\h_{1,{\rm at}}^c(\R,\M)=\h_{1}^{c}  (\mathbb{R}^{d},\mathcal{M} )$ with equivalent norms.
\end{proof}

\noindent{\bf Acknowledgements.} The authors are greatly indebted to Professor Quanhua Xu for having suggested to them the subject of this paper, for many helpful discussions and very careful reading of this paper. The authors are partially supported by the the National Natural Science Foundation of China (grant no. 11301401).


\begin{thebibliography}{0}



 \bibitem{BCPY2010} T. Bekjan, Z. Chen,  M. Perrin and Z. Yin.
\newblock {Atomic decomposition and interpolation for Hardy spaces of noncommutative martingales. }
 \newblock {\it J. Funct. Anal. } 258 (2010), no. 7, 2483-2505. 
 
 

 \bibitem{BL1976} J. Bergh and J. L\"{o}fstr\"{o}m.
\newblock {Interpolation Spaces: An Introduction}.
\newblock Springer, Berlin, 1976.



\bibitem{CXY2013}Z. Chen, Q. Xu and Z. Yin.
 \newblock Harmonic analysis on quantum tori.
 \newblock {\it Comm. Math. Phys.} 322 (2013), no. 3, 755-805.



\bibitem{CMS1985}R. Coifman, Y. Meyer and E. M. Stein.
 \newblock Some new function spaces and their applications to Harmonic analysis.
   \newblock {\it J. Funct. Anal.} 62 (1985), no. 2, 304-335.

\bibitem{CW1977}R. Coifman and G. Weiss.
\newblock Extensions of Hardy spaces and their use in analysis.
\newblock {\it Bull. Amer. Math. Soc.} 83 (1977), no. 4, 569-645.


\bibitem{Conde-13}J. Conde.
\newblock A note on dyadic coverings and nondoubling Calderón-Zygmund theory.
\newblock{\it J. Math. Anal. Appl.} 397 (2013), 785-790.



 \bibitem{EG1977} R. E. Edwards and G. I. Gaudry.
\newblock Littlewood-Paley and multiplier theory.
\newblock Ergebnisse der Mathematik und ihrer Grenzgebiete, Band 90. Springer-Verlag, Berlin-New York, 1977.


\bibitem{FS1972}C. Fefferman and E. M. Stein.
\newblock $H_{p}$ spaces of several variables.
\newblock {\em Acta Math.} 129 (1972), no. 3-4, 137-193.

\bibitem{Fr-To-Wei-88}M. Frazier, R. Torres and G. Weiss.
 \newblock The boundedness of Calderón-Zygmund operators on the spaces $\dot F_p^{\alpha,q}$.
  \newblock {\it Rev. Mat. Iberoam.} Vol. 4 (1988), 41-72.


\bibitem{C-F}J. García-Cuerva and  J.L. Rubio de Francia.
\newblock Weighted norm inequalities and related topics.
 \newblock North-Holland Mathematics Studies, 116. Notas de Matemática, 104. North-Holland Publishing Co., Amsterdam, 1985. x+604 pp.

\bibitem{Garnett1981}J. B. Garnett.
 \newblock Bounded analytic functions.
  \newblock Pure and Applied Mathematics, 96. Academic Press, Inc. [Harcourt Brace Jovanovich, Publishers], New York-London, 1981. xvi+467 pp.

\bibitem{Goldberg1979}D. Goldberg.
\newblock A local version of real Hardy spaces.
\newblock {\it Duke Math. J.} 46 (1979), no. 1, 27-42.


\bibitem{Gra} L. Grafakos.
\newblock Classical Fourier analysis.
\newblock Second Edition. Springer-Verlag, New York, 2008.

\bibitem{HLMP2014}G. Hong, L. D. L\'{o}pez-Sánchez, J. M. Martell, and J.
Parcet.
\newblock Calderón-Zygmund operator associated to matrix-valued kernels.
\newblock {\it Int. Math. Res.} Not. 2014, no. 5, 1221-1252.

\bibitem{H-M2012}G. Hong and T. Mei. 
\newblock John-Nirenberg inequality and atomic decomposition for noncommutative martingales.
\newblock {\it  J. Funct. Anal.} 263 (2012), no. 4, 1064-1097. 



\bibitem{Hytonen-10}T. Hyt\"{o}nen.
\newblock The real-variable Hardy space and BMO. \newblock Lecture notes of a course at the University of Helsinki, Winter 2010.


\bibitem{JSDZ2017}  Y. Jiao, F. Sukochev, D. Zanin and D. Zhou.
\newblock Johnson-Schechtman inequalities for noncommutative martingales. 
\newblock {\it J. Funct. Anal.} 272 (2017), no. 3, 976-1016. 



\bibitem{Junge} M. Junge.
 \newblock Doob's inequality for non-commutative martingales.
 \newblock {\it J. Reine Angew. Math.} 549 (2002), 149-190.

\bibitem{Junge-Musat} M. Junge and M. Musat.
  \newblock A noncommutative version of the John-Nirenberg theorem.
  \newblock {\it Trans. Amer. Math. Soc.} 359 (2007), no. 1, 115-142.

\bibitem{JLX2006}M. Junge, C. Le Merdy and Q. Xu.
\newblock $H^{\infty}$-functional calculus and square functions on noncommutative $L_{p}$-spaces.
  \newblock {\em Astérisque} No. 305 (2006), vi+138 pp.

\bibitem{Junge-Xu-03} M. Junge and Q. Xu.
\newblock Non-commutative Burkholder/Rosenthal Inequalities.
\newblock {\it Ann. Prob.} 31
(2003), no. 2, 948-995.

\bibitem{Junge-Xu-08} M. Junge and Q. Xu. 
\newblock Noncommutative  Burkholder/Rosenthal inequalities. II. Applications. 
\newblock {\it Israel J. Math.} 167 (2008), 227-282. 


\bibitem{Junge-Xu-2007}M. Junge and Q. Xu.
 \newblock { Noncommutative maximal ergodic theorems.}
 \newblock {\it J. Amer. Math. Soc.} 20 (2007), 385-439.



\bibitem{JM2010} M. Junge and T. Mei.
\newblock Noncommutative Riesz transforms - a probabilistic approach.
\newblock  {\it Amer. J. Math.} 132 (2010), 611-681.

\bibitem{JM2011} M. Junge and T. Mei.
\newblock BMO spaces associated with semigroups of operators.
\newblock {\it Math. Ann.} 352 (2012), 691-743.

\bibitem{JMP2014}M. Junge and T. Mei and J. Parcet.
 \newblock Smooth Fourier multipliers on group von Neumann algebras.
\newblock  {\it  GAFA.} 24 (2014), 1913-1980.


\bibitem{Lance}C. Lance.
\newblock Hilbert $C^*$-modules. A toolkit for operator algebraists.
\newblock London Mathematical Society Lecture Note Series, 210. Cambridge University Press, Cambridge, 1995. x+130 pp.


\bibitem{LPP1991} F. Lust-Piquard and G. Pisier.
\newblock Noncommutative Khintchine and Paley Inequalities.
\newblock {\em  Ark. Mat.} 29 (1991), 241-260.


\bibitem{Mei-03}T. Mei.
\newblock BMO is the intersection of two translates of dyadic BMO. 
\newblock {\it C. R. Math. Acad. Sci. Paris} 336 (2003), no. 12, 1003-1006. 


\bibitem{Mei2007}T. Mei.
\newblock Operator-valued Hardy Spaces.
\newblock {\it Mem. Amer. Math. Soc.} 188 (2007), no. 881, vi+64 pp.

\bibitem{Mei-tent}T. Mei.
\newblock Tent spaces associated with semigroups of operators.
\newblock {\it J. Funct. Anal.} 255 (2008), no. 12, 3356-3406.



\bibitem{Musat} M. Musat.
\newblock Interpolation between non-commutative BMO and non-commutative $L_p$-spaces.
\newblock {\it J. Funct. Anal. }202 (2003), no. 1, 195-225.



\bibitem{Parcet} J. Parcet.
 \newblock Pseudo-localization of singular integrals and noncommutative Calderón-Zygmund theory.
\newblock {\it J. Funct. Anal.} 256 (2009), no. 2, 509-593.


\bibitem{PR2006} J. Parcet and N. Randrianantoanina.
\newblock Gundy's decomposition for non-commutative martingales and applications.
\newblock {\it Proc. London Math. Soc.} 93 (2006), 227-252.



\bibitem{Pisier-2016} G. Pisier.
\newblock Martingales in Banach spaces.
\newblock Cambridge Studies in Advanced Mathematics, 155. Cambridge University Press, Cambridge, 2016. xxviii+561 pp.


\bibitem{Pisier} G. Pisier.
 \newblock Noncommutative vector valued $L_p$ spaces and completely $p$-summing maps.
 \newblock {\em Ast\'erisque.} 247 (1998), vi+131 pp.

\bibitem{PiXu97} G. Pisier and Q. Xu.
\newblock Non-commutative Martingale Inequalities.
\newblock {\it Comm. Math. Phys.} 189 (1997), 667-698.

\bibitem{PX2003}G. Pisier and Q. Xu.
\newblock Noncommutative $L_{p}$-spaces.
\newblock Handbook of the geometry of Banach spaces, Vol. 2, 1459-1517, North-Holland, Amsterdam, 2003.




\bibitem{Ran2002} N. Randrianantoanina.
\newblock Noncommutative martingale transforms.
\newblock  {\it J. Funct. Anal.} 194 (2002), 181-212.

\bibitem{Ran2007} N. Randrianantoanina.
\newblock   Conditional square functions for noncommutative martingales.
\newblock  {\it Ann. Prob.} 35 (2007), 1039-1070.


\bibitem{RW2017} N. Randrianantoanina and L. Wu. 
\newblock Noncommutative Burkholder/Rosenthal inequalities associated with convex functions.
\newblock  {\it  Ann. Inst. Henri Poincar\'e Probab. Stat. } 53 (2017), no. 4, 1575-1605. 



\bibitem{Ran-Wu-Xu-2017} N. Randrianantoanina, L. Wu and Q. Xu.
\newblock Noncommutative Davis type decompositions and applications.
\newblock	arXiv:1712.01374.




\bibitem{Stein1970}E. M. Stein.
\newblock Singular integrals and differentiability properties of functions.
 \newblock Princeton Mathematical Series, No. 30 Princeton University Press, Princeton, N.J. 1970.

\bibitem{Stein1993}E. M. Stein.
 \newblock Harmonic analysis: real-variable methods, orthogonality, and oscillatory integrals.
 \newblock Princeton Mathematical Series, 43. Monographs in Harmonic Analysis, III. Princeton University Press, Princeton, NJ, 1993.

\bibitem{SW1971}E. M. Stein and G. Weiss.
\newblock Introduction to Fourier analysis on Euclidean spaces.
\newblock Princeton Mathematical Series, No. 32. Princeton University Press, Princeton, N.J., 1971.


\bibitem{Torres1991}R. H. Torres.
\newblock Boundedness results for operators with singular kernels on distribution spaces.
 \newblock {\it Mem. Amer. Math. Soc.} 90 (1991), no. 442, viii+172 pp.

 
\bibitem{Tri1992}H. Triebel.
  \newblock Theory of function spaces. II.
  \newblock Monographs in Mathematics, 84. Birkh\"auser Verlag, Basel, 1992.

\bibitem{Wolff}  T. Wolff.
\newblock A note on interpolation spaces.
\newblock Harmonic analysis (Minneapolis, Minn., 1981), pp. 199-204,
Lecture Notes in Math., 908, Springer, Berlin-New York, 1982.



\bibitem{XX18}R. Xia and X. Xiong.
\newblock Operator-valued inhomogeneous Triebel-Lizorkin spaces.
\newblock {Preprint}.


\bibitem{XX18-2}R. Xia and X. Xiong.
\newblock Mapping properties of operator-valued pseudo-differential
operators.
\newblock {Preprint}.



\bibitem{XXX17}R. Xia, X. Xiong and Q. Xu.
\newblock Characterizations of operator-valued
Hardy spaces and applications to harmonic analysis on quantum tori.
\newblock {\it Adv. Math.} 291 (2016), 183-227.

\bibitem{XXY2015}X. Xiong, Q. Xu and Z. Yin.
\newblock Function spaces on quantum tori.
\newblock {\it C. R. Math. Acad. Sci. Paris} 353 (2015), no. 8, 729-734.

\bibitem{XXY17}X. Xiong, Q. Xu and Z. Yin.
\newblock Sobolev, Besov and Triebel-Lizorkin spaces on quantum
tori.
\newblock {\it Mem. Amer. Math. Soc.} 252 (2018), no. 1203, vi+118 pp.


\bibitem{Xu2007}Q. Xu.
\newblock Noncommutative $L_{p}$-spaces and martingale
inequalities.
\newblock Book manuscript, 2007.
\end{thebibliography}
\end{document}